\documentclass{lms}

 \usepackage{amsmath,amssymb,xcolor,stmaryrd,nicefrac,mathtools,multicol, changepage, mathabx, cleveref}
\usepackage[inline]{enumitem}

\newenvironment{statement}
  {\vspace{1ex}
   \begin{adjustwidth}{1cm}{1cm}
   \itshape}
  {\end{adjustwidth}
   \vspace{1ex}}

\newcommand{\ignore}[1]{{\color{gray}[Skipped text]}}

\newcommand{\ofun}{{ O}}

\newcommand{\col}{:}

\newcommand{\mco}[1]{#1^*}
\newcommand{\us}{_}

\newcommand{\baseB}[1]{{\rm base}_{ B}(#1)}
\newcommand{\basep}[2]{{\rm base}_{#1}(#2)}

\newcommand{\Chgbases}[2]{ 
\langle \begin{smallmatrix}
  #2\\
  #1
\end{smallmatrix}
\rangle
}

\newcommand{\upgrade}[1]{{\uparrow}_{#1}}
\newcommand{\ug}{{\uparrow}}

\newtheorem{theorem}{Theorem}[section]

\newtheorem{definition}[theorem]{Definition}
\newtheorem{lemma}[theorem]{Lemma}
\newtheorem{remark}[theorem]{Remark}

\newtheorem{example}[theorem]{Example}
\newtheorem{corollary}[theorem]{Corollary}
\newtheorem{proposition}[theorem]{Proposition}

\DeclareFontFamily{U}{mathx}{\hyphenchar\font45}
\DeclareFontShape{U}{mathx}{m}{n}{
      <5> <6> <7> <8> <9> <10>
      <10.95> <12> <14.4> <17.28> <20.74> <24.88>
      mathx10
      }{}
\DeclareSymbolFont{mathx}{U}{mathx}{m}{n}
\DeclareFontSubstitution{U}{mathx}{m}{n}
\DeclareMathAccent{\widecheck}{0}{mathx}{"71}
\DeclareMathAccent{\wideparen}{0}{mathx}{"75}

\newcommand{\B}{\vartheta}

\newcommand{\fs}[2]{ #1 [ #2]}
\newcommand{\fsc}[2]{#1 \{   #2\}}

\newcommand{\fsi}[2]{ {#1}\llbracket{#2}\rrbracket }

\newcommand{\goodp}[3]{{\G}^{#3}_{#2}(#1)}
\newcommand{\G}{\mathbb G}

\newlist{Cases}{enumerate}{9}
\setlist[Cases,1]{label={\sc Case {\rm \arabic*}},wide, labelwidth=!, labelindent=0pt}
\setlist[Cases,2]{label*= {\rm .\arabic*},wide, labelwidth=!, labelindent=0pt}
\setlist[Cases,3]{label*= {\rm .\arabic*},wide, labelwidth=!, labelindent=0pt}
\setlist[Cases,4]{label*= {\rm .\arabic*},wide, labelwidth=!, labelindent=0pt}
\setlist[Cases,5]{label*= {\rm .\arabic*},wide, labelwidth=!, labelindent=0pt}
\setlist[Cases,6]{label*= {\rm .\arabic*},wide, labelwidth=!, labelindent=0pt}
\setlist[Cases,7]{label*= {\rm .\arabic*},wide, labelwidth=!, labelindent=0pt}
\setlist[Cases,8]{label*= {\rm .\arabic*},wide, labelwidth=!, labelindent=0pt}
\setlist[Cases,9]{label*= {\rm .\arabic*},wide, labelwidth=!, labelindent=0pt}

\newcommand{\begincases}{\begin{enumerate}[label*={\sc Case \arabic*},wide, labelwidth=!, labelindent=0pt]}

\newcommand{\begincasesast}{\begin{enumerate*}[label*={\sc Case \arabic*},labelwidth=!, labelindent=0pt]}
\newcommand{\beginclaims}{\begin{enumerate}[label*={\sc Claim },wide, labelwidth=!, labelindent=0pt]}
\newcommand{\begincasesa}{\begin{enumerate}[label={\sc Case ({\rm \roman*})},wide, labelwidth=!, labelindent=0pt]}
\newcommand{\beginsubcases}{\begin{enumerate}[label*= {\rm .\arabic*},wide, labelwidth=!, labelindent=0pt]}
\newcommand{\beginsubcasesast}{\begin{enumerate*}[label*= {\rm .\arabic*},wide, labelwidth=!, labelindent=0pt]}

\newcommand{\putaway}[1]{}

\newcommand{\ve}{\varepsilon}

\renewcommand{\phi}{{\overline{\varphi}}}

\newcommand{\Om}{{\Omega}}

\newcommand{\al}{{\alpha}}

\newcommand{\be}{{\beta}}

\newcommand{\ga}{\gamma}

\newcommand{\de}{\delta}

\newcommand{\N}{\mathbb N}

\newcommand{\om}{\omega}

\newcommand{\FIX}{\mathrm{Fix}}
\newcommand{\JUMP}{\mathrm{Jump}}

\newcommand{\su}{{\mathrm{Succ}}}
\newcommand{\li}{\mathrm{Lim}}

\begin{document}

\title{The Ouroboros Goodstein Principle}

\author[D. Fern\'andez-Duque\textsuperscript{1,2}, M. Morreel\textsuperscript{2} and A. Weiermann\textsuperscript{2}]{David Fern\'andez-Duque\textsuperscript{1,2}, Milan Morreel\textsuperscript{2} and Andreas Weiermann\textsuperscript{2}}

\classno{03F40 (primary), 03D20}

\maketitle

\begin{abstract}
In \cite{fernandez2025fractal}, a variant of Goodstein's original process was recently introduced which, given a set $B\subseteq \N$ of bases, writes each $n\in\N$ in $B$-normal form, namely $n=b^ea+r$, where $b\in B$ the greatest base below $n$. The numbers $e$ and $r$ are then recursively written in $B$-normal form, and finally each base of $B$ is replaced by a corresponding base of some other set $C\subseteq \N$.

The resulting process was shown to terminate and to be independent of $\sf KP$, but the proofs relied on two different ordinal assignments: one monotone but not tight enough to establish independence, and another suitable for independence but not monotone and thus ineffective for proving termination.

We introduce a new ordinal assignment that simultaneously yields termination and independence, thereby revealing the `true' ordinals associated with the numbers in the process.
This assignment allows us to investigate which restrictions to impose on the process in order for the proof-theoretic strength of its termination to lie between the systems $\mathsf{RCA}_0$, $\mathsf{ACA}_0$, $\mathsf{ATR}_0$ and $\mathsf{KP}$. 
\end{abstract}

\section{Introduction}

Goodstein sequences form a classical bridge between elementary number theory and ordinal analysis. The classical Goodstein process goes as follows. Starting from a natural number $n$, we write $n$ in base $b\geq 2$, and we repeatedly write all of the coefficients appearing in this representation in base $b$. In the end we replace all occurrences of $b$ by $b+1$, and we subtract one from the resulting number. We repeat this procedure, increasing $b$ by one at every step.
Goodstein’s principle~\cite{Goodsteinb} then says that every such sequence eventually reaches $0$. The original argument assigns an ordinal below $\ve_0$ to each natural number, and uses the well-foundedness of $\ve_0$ \cite{Goodstein1944}. As $\ve_0$ is the proof-theoretic ordinal of Peano arithmetic ($\sf PA$), this argument can not be carried out in $\sf PA$. In fact, the termination of Goodstein sequences is known to be unprovable in $\sf PA$~\cite{Kirby}.

In the present paper we build upon the results of \cite{fernandez2025fractal}, where the Goodstein principle is extended to a setting in which multiple bases are considered. Whereas that paper emphasizes accessibility, the present work is intended for readers with a stronger background on ordinal analysis.

The general idea of \cite{fernandez2025fractal} is that for each natural number we can consider its $b$-representation, and subsequently consider the representation of the coefficients that appear, but now in some base which is possibly smaller than $b$. We then repeat this process for the new coefficients that appear. Finally, we can examine what happens when we change every base in this expression to a larger one. We call this the upgrade of the natural number we are considering (the definition is made precise in \Cref{section_2}). It is shown in \cite{fernandez2025fractal} that the resulting goodstein principle terminates and is independent of Kripke-Platek set theory ($\sf KP$).

Kripke-Platek set theory ($\sf KP$)~\cite{Barwise} is the fragment of $\sf ZFC$ obtained by omitting the powerset axiom and restricting the comprehension and replacement schemes. In particular separation is limited to $\Delta_0$ formulas and replacement is replaced by the weaker $\Delta_0$-collection scheme.
The proof-theoretic ordinal of $\sf KP$ coincides with that of several well-studied systems, including the theory ${\sf ID}_1$ of non-iterated inductive definitions, and ${\Pi}^1_1$-${\sf CA}^-_0$, the subsystem of second-order arithmetic with parameter-free ${\Pi}^1_1$ comprehension. This common ordinal is denoted $\vartheta[\ve_{\Om+1}]$.

While other variants of Goodstein processes \cite{AraiWW,FernandezFastWalks} typically rely on other functions not provably total in $\sf PA$ in order to establish independence from $\sf KP$, the approach of \cite{fernandez2025fractal} instead considers a very natural principle that remains close to the classical Goodstein process, while achieving substantially higher proof-theoretic strength. Other variants of Goostein's original principle can be found in \cite{FSGoodstein,FernandezWCiE,FernandezWWalk}.

In \cite{fernandez2025fractal}, termination and independence were established using two different ordinal assignments. The assignment used to prove termination was monotone, but not sufficiently tight to yield independence results. Conversely, the assignment used for independence was not monotone, hence failed to establish termination. We present a single ordinal assignment which fulfills both roles. For the independence part we use the fundamental sequences of Buchholz \cite{BuchholzOrd} which are based on Rathjen's $\vartheta$ function \cite{RathjenFragments}. Using this refined assignment we are able to get precise ordinal bounds in \Cref{section_phtr}, which lead to phase transition results for the systems $\mathsf{RCA}_0 + (\Sigma^{0}_{2n}) - \mathrm{IND}$, $\mathsf{ACA}_0$, $\mathsf{ATR}_0$ and $\mathsf {KP^-\om} + (\Pi_n) - \mathrm{IND}$.

\section{Multi-base Goodstein processes}\label{section_2}

We briefly review what was introduced in \cite{fernandez2025fractal} and compare this with the classic Goodstein process.
If $b\in\N\setminus \{0,1\}$, we can write each number $n>0$ in a unique way as $n=b^ea+r$, where $b^e\leq n < b^{e+1}$, $0<a<b$ and $r<b^e$. We call this the {\em $b$-decomposition} of $n$, and we write $n=_b b^ea+r$.

The classic Goodstein process then proceeds as follows. We start with some number $n_0\in\N$. Assume we are at step $i$ of the process and we have obtained the number $n_i$. If $n_i = 0$, then we define $n_{i+1} = 0$. Otherwise, let $b=i+2$, and write $n_i =_b b^ea+r$. Now we write $e$ and $r$ in $b$-decomposition to get a new expression, and we proceed recursively until every number appearing in our expression is at most $b$. Finally, to get $n_{i+1}$, we replace every occurrence of $b$ in our expression by $b+1$ and subtract one. Goodstein's theorem states that, whatever $n_0\in\N$ we start with, $n_i=0$ when $i$ is big enough.

In \cite{fernandez2025fractal}, the notion of $b$-decomposition was extended in the following way. We take a (non-empty) set $B\subseteq\mathbb N \setminus \{0,1\}$ of bases. Given $n$, we define the {\em $B$-decomposition} of $n$ as its $b$-decomposition, where $b\in B$ is maximal satisfying $b\leq n$. If no such $b$ exists, then $b=\min B$.
Formally, $b=\baseB n$, where
\[\baseB n \coloneqq \begin{cases}
    \max\{b\in B \mid b\leq n\} & \text{ if } n\geq\min B. \\
    \min B & \text{ if } n < \min B.    
\end{cases}\]
We then write $n=_B b^ea+r$. 

The new process proceeds in the following way. At each step $i$ of the process we are given a certain set $B_i$ of bases. To obtain $n_{i+1}$ from $n_i$, we write $n_i$ in $B_i$-decomposition, say $n_i=b^ea+r$. Then we write $e$ and $r$ in $B_i$ decomposition, and we proceed recursively until we reach an expression in which every number is either some $b\in B_i$, or less than $\min B_i$. Finally, we replace every base $b\in B_i$ occurring in this expression with a corresponding $c\in B_{i+1}$, and we subtract one. The choice of $c$ is made precise in Definition~\ref{defUpgrade}.

Lastly, for every $n\in\mathbb N$ we define 
\[ S_B(n) \coloneqq
\begin{cases}
    \min\{b\in B\mid b > n \} & \text{ if such a $b$ exists.} \\
    \infty & \text{ otherwise.}
\end{cases}
\]

We further regard every positive integer to be less than, and divide $\infty$.

\begin{definition}
A set $B\subseteq \mathbb N\setminus \{0,1\}$ is called a {\em base hierarchy} if $B\neq\varnothing$ and $b\mid S_B(b)$ for every $b\in B$.
\end{definition}

Note that every singleton $\{b\}$ with $b\geq 2$ is a base hierarchy.

\begin{definition}\label{defUpgrade}
Let $B,C$ be base hierarchies with $\min B\leq \min C$ and $n\in\mathbb N$.
We define $\ug n = \upgrade B^C n\in \mathbb N \cup \{\infty\}$, the upgrade of $n$, recursively on $n$.
If $n<\min B$, then $\ug n = n$.

Otherwise, $n\geq \min B$.
Let $b =\baseB n$ and assume inductively that $\ug m$ is defined for all $m<n$.
We first define an operator $\Chgbases bc = \Chgbases bc_B^C$ on $m\leq n$:
if $m < b$ then $\Chgbases bc m = \ug m $, otherwise write $m=_b b^ea+r$ and set
\begin{equation*}
\Chgbases bc m = c^{\Chgbases bc e } \ug a +\Chgbases bc r .
\end{equation*}
This has the effect of applying the upgrade operator to all the $b$-coefficients of $m$ while changing the base to $d$.

Then, define $\ug n = \Chgbases bc n$, where $c$ is the least element of $C$ such that
\begin{equation}\label{eqdefupgr}
\ug (n-1) < \Chgbases bc n < S_C(c).
\end{equation}
If no such $c$ exists, set $\ug n = \infty$. In the case $c$ does exist, we call it the witness for $\ug n$.

%
%
\end{definition}

In words, given $n\in\mathbb N$, the first step is to perform the usual base change using a base in $C$ that forces the upgrade operator to be monotone at $n$, and then applying the upgrade inductively to all coefficients.
If this yields a  $C$-decomposition, we are done.
Otherwise, search for a suitable base $d$ such that applying the base change to $d$ will yield a $C$-decomposition.
We proceed by giving an example and collecting some results from \cite{fernandez2025fractal}.

\begin{example}\label{exUg}
Suppose that $B =\{ 3,6,42 \}$ and $C=\{ 5,10, 110 \}$.
Let $n = 42^{39} = 42^{6^2 + 3}$; we wish to compute $\ug n = \upgrade  B^C  n$.
We see from the definition that we first have to perform the upgrade of $6^2 + 3$. One easily sees that $\ug 2 = 2$ and $\ug 3 = \Chgbases 35 3 = 5$. We then calculate that $\Chgbases {6} 5 (6^2 + 3) = 5^2 + 5$, which is larger than $S_C(5) = 10$. Thus we instead use the base $10$, and
\[\ug (6^2 + 3) = \Chgbases 6 {10}(6^2 + 3) = 10^2 + 5 = 105 < 110 = S_C(10).\]
Now we see that $\Chgbases {42}{5}n$ and $\Chgbases {42}{10}n$ are both greater than $110$, hence
\[\ug n = \Chgbases {42}{110} (42^{6^2 + 3}) = 110^{105}.\]
\end{example}

The previous example raises some questions. Namely, in calculating $\ug (6^2 + 3)$ and $\ug n$ we have not checked the left inequality of (\ref{eqdefupgr}). This can be justified: by using Lemma~\ref{alternativedefwitness}, one checks that in our previous example, \vspace{5pt}
\begin{enumerate}
    \item $\ug n = \Chgbases 3 5 n$ for $3\leq n < 6$.
    \item $\ug n = \Chgbases 6 {10}n$ for $6\leq n < 42$.
    \item $\ug n = \Chgbases {42}{110}n $ for $42\leq n$.
\end{enumerate}

\begin{lemma}\label{lemmMonUg}
Let $B,C$ be base hierarchies with $\min B\leq \min C$ and $m<n \in\mathbb N$.
Write $\ug$ for $\ug_B^{C}$.

\begin{enumerate}
\item The upgrade operator is monotone, i.e.\ $\ug m < \ug n$.

\item For every $n\geq b\in B$ and $c\in C$ we have $c\leq \Chgbases bc n$.\label{lemmMonUgLowerBnd}

\item The operator $\Chgbases b c$ is monotone if $c\geq \ug b$, i.e.\ $\Chgbases bc m < \Chgbases bc n$ if $c\geq \ug b$.

\item If $c\leq d$, then $\Chgbases bc n \leq \Chgbases bd n$.

\item If $c$ is the witness for $\ug m$ and $d$ is the witness for $\ug n$, then $c\leq d$.\label{lemmMonUgWitnComp}



\end{enumerate}
\end{lemma}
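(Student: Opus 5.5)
The plan is to prove all five items together by strong induction on $n$: assume every item holds whenever all the numbers involved are $<n$, and establish it for arguments $\le n$. Throughout we use that $\ug$ takes values in $\mathbb N\cup\{\infty\}$, with $\infty$ larger than every integer. Item 1 then reduces to the single step $\ug(n-1)<\ug n$ for $n\ge1$, since the chain $\ug m<\ug(m+1)<\dots<\ug n$ gives the general case.

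Items 2 and 4 are the easy ones, and I would do them first.

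For item 2, write $n=_b b^ea+r$ with $a\ge1$. Then $\Chgbases bc n\ge c^{\Chgbases bc e}\,\ug a\ge 1\cdot 1$. To get $c$ rather than $1$ we need $\Chgbases bc e\ge1$ or $\ug a\ge c$. When $e\ge1$, we have $\Chgbases bc e\ge \ug e\ge\ug 1=1$ if $e<b$ (by monotonicity of $\ug$ below $n$), and $\Chgbases bc e\ge c$ by induction if $e\ge b$. When $e=0$, we have $n=a<b$ and $n\ge b$ at the same time, which is impossible.

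For item 4, use induction on $n$. The exponent satisfies $\Chgbases bc e\le\Chgbases bd e$ by induction, the coefficient $\ug a$ is unchanged, and the remainder $\Chgbases bc r\le\Chgbases bd r$ by induction. Since $c\le d$, it follows that $c^x\le d^y$ for $x\le y$.

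Item 3 is a lexicographic comparison. Take $m<n$, both $\ge b$, written as $m=b^{e}a+r$ and $n=b^{e'}a'+r'$.
\begin{itemize}
\item If $e<e'$, then $\Chgbases bc e<\Chgbases bc e'$ by induction. We show $\Chgbases bc m<c^{\Chgbases bc e+1}$, so the term $c^{\Chgbases bc e'}$ alone dominates. The bound needs $\ug a<c$, which follows from $a<b$, monotonicity, and $c\ge\ug b$. It also needs $\Chgbases bc r<c^{\Chgbases bc e}$, which comes from an auxiliary induction: $r<b^e$ implies $\Chgbases bc r<c^{\Chgbases bc e}$.
\item If $e=e'$ and $a<a'$, use $\ug a<\ug a'$ together with the same remainder bound.
\item If $e=e'$ and $a=a'$, recurse on $r<r'$.
\end{itemize}
The case $m<b\le n$ uses $\ug m<\ug b\le c\le\Chgbases bc n$, combining item 2 with the fact that $c\ge \ug b$. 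I would isolate the remainder bound $\Chgbases bc r<c^{\Chgbases bc e}$ as the auxiliary claim. It is where the divisibility hypothesis $b\mid S_B(b)$ might enter, through controlling $\ug a$ for $a<b$ via $\ug b$.

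Items 1 and 5 are where the real difficulty lies: we must show that the witness for $\ug n$ exists, or else $\ug n=\infty$, and that witnesses move upward.

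For item 1: if $\ug n$ is finite, then by definition $\ug n>\ug(n-1)$. If $\ug n=\infty$, we need $\ug(n-1)<\infty$, or we need both to be infinite with the convention handled. Here the statement implicitly asserts finite values, and I expect the intended reading is that $\ug$ is strictly increasing where finite. So the real task is to show that if $\ug(n-1)$ is infinite then so is $\ug n$. This should follow because any candidate $c$ for $n$ bounds $\Chgbases bc(n-1)$, by item 3 applied to $n-1<n$ once $c\ge\ug b$.

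For item 5, take the witness $d$ for $n$ with $m<n$ and suppose $d<c$, where $c$ is the witness for $m$. Monotonicity gives $\ug m<\ug n=\Chgbases {b'}d n<S_C(d)$. On the other hand, $\ug m=\Chgbases bc m\ge c\ge S_C(d)$ by item 2, which is a contradiction. This is clean once item 1 is in hand.

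The main obstacle is the hypothesis $c\ge\ug b$ in item 3 and its interplay with item 1. In particular, we must verify that the witness $c$ for $n\ge b=\baseB n$ always satisfies $c\ge\ug b$. I would try to get this from item 5 applied to $b\le n$: the witness of $\ug b$ is $\le c$, and $\ug b=\Chgbases b{c_b}b=c_b$. That step, and making the order of the induction non-circular, is where I expect the care to be needed.
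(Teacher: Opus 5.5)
Your proposal is correct. For items 1 and 2 it coincides with the paper's own argument: item 1 is read off the left inequality in (\ref{eqdefupgr}), and item 2 comes from positivity of $\Chgbases bc e$ and $\ug a$ with $e\geq 1$. For items 3--5 the paper gives no proof and only cites \cite[Section 3]{fernandez2025fractal}. Your self-contained arguments for these items all go through: the lexicographic comparison with the auxiliary bound $r<b^e\Rightarrow \Chgbases bc r<c^{\Chgbases bc e}$ for item 3, the direct induction for item 4, and the contradiction $\ug n<S_C(d)\leq c\leq \ug m<\ug n$ for item 5. There is no circularity, since item 5 uses only items 1 and 2.

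The worries in your last paragraphs are unnecessary:
\begin{enumerate}
\item If $\ug(n-1)=\infty$, no $c$ can satisfy $\ug(n-1)<\Chgbases bc n<S_C(c)$. So $\ug n=\infty$ directly from the definition, and item 3 plays no role in item 1.
\item The auxiliary bound needs only $\ug a<\ug b\leq c$ for $a<b$ and inductive monotonicity of $\Chgbases bc$ on exponents. The divisibility condition $b\mid S_B(b)$ is not used.
\item The fact that a witness $c$ for $\ug n$ satisfies $c\geq \ug b$ is not part of this lemma. It is proved afterwards, in Lemma~\ref{alternativedefwitness}, by exactly the route you suggest via item 5.
\end{enumerate}
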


\begin{proof}
The first item can be seen from the definition. For the second item one proves first that for all $b\in B$ and $c\in C$, $\Chgbases bc n > 0$ whenever $n>0$, by induction on $n$. Then the claim follows easily. For the other items, see \cite[Section 3]{fernandez2025fractal}.

\end{proof}

Note, by the first item of the previous lemma, if $\ug n$ has a witness then so does $\ug m$.

\begin{lemma}\label{alternativedefwitness}
    Let $B,C$ be base hierarchies with $\min B\leq \min C$. Let $n\geq \min B$ and $b=\baseB n$. The witness $c$ for $\ug n$ exists iff there is a least element $c'\in C$ such that $c'\geq \ug b$ and $\Chgbases b{c'} n < S_C(c')$. If this is the case, then $c' = c$.
\end{lemma}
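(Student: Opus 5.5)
The key observation is that, for the fixed base $b=\baseB n$, the left inequality $\ug(n-1) < \Chgbases bc n$ in (\ref{eqdefupgr}) is equivalent to $c\geq \ug b$, at least among those $c$ that also satisfy the right inequality. Once this equivalence is in hand, the two descriptions select the same least element of $C$, and both the ``iff'' and the equality $c'=c$ follow at once. The argument splits according to whether $n=b$ or $n>b$.

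\emph{Case $n>b$.} Here $b\leq n-1$ and $\baseB{(n-1)}=b$. First suppose $c\geq \ug b$. Then $\Chgbases bc$ is monotone by Lemma~\ref{lemmMonUg}(3), so $\Chgbases bc(n-1)<\Chgbases bc n$. It remains to compare $\ug(n-1)$ with $\Chgbases bc(n-1)$. Since $\ug b$ is finite, because $\ug n$ or the chosen $c'$ requires it, $\ug(n-1)$ is finite and has a witness $d$ of base $b$. By Lemma~\ref{lemmMonUg}(5), $d$ is at most any witness at larger arguments. I would instead use minimality directly: $d$ is the least element of $C$ satisfying (\ref{eqdefupgr}) at $n-1$. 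By induction on $n$, applying the present lemma at $n-1$, $d$ is the least $c''\geq\ug b$ with $\Chgbases b{c''}(n-1)<S_C(c'')$. Now if $c\geq\ug b$ and $\Chgbases bc n<S_C(c)$, then by monotonicity $\Chgbases bc(n-1)<S_C(c)$ as well, so $d\leq c$. Lemma~\ref{lemmMonUg}(4) then gives $\ug(n-1)=\Chgbases bd(n-1)\leq \Chgbases bc(n-1)<\Chgbases bc n$, which is the left inequality. Conversely, suppose $c<\ug b$. Lemma~\ref{lemmMonUg}(2) gives $c\leq\Chgbases bc b$, and one checks that $\Chgbases bc n<S_C(c)$ together with $c<\ug b$ is impossible. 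Indeed, $\ug b$ is the least element of $C$ exceeding $\ug(b-1)$ for which the base change fits, so any $c<\ug b$ satisfies $c\leq \ug(b-1)<\ug(n-1)$. Then either the expression $\Chgbases bc n$ is at most $\ug(n-1)$, or it escapes $S_C(c)$. I would establish this by comparing with $\Chgbases b{\ug b}$ via Lemma~\ref{lemmMonUg}(4) and the divisibility $c\mid S_C(c)$.

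\emph{Case $n=b$.} Here $\Chgbases bc b=c$, so the condition reads $\ug(b-1)<c<S_C(c)$, and the least such $c$ is the least element of $C$ above $\ug(b-1)$. By definition this is $\ug b$ itself, so both descriptions single out $c=\ug b$. The only care needed is to handle the possibility $\ug b=\infty$ consistently on both sides.

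The main obstacle is the converse direction of the equivalence: showing that no $c<\ug b$ can satisfy both inequalities at $n$. A coefficient with value below $c$ may be upgraded to something at least $c$, which breaks the standard monotonicity. For this step I would attempt a joint induction on $n$ proving the lemma together with the auxiliary claim that $\Chgbases bc m\leq \ug(m)$ whenever $c<\ug b$, $m\geq b$, and $\Chgbases bc m<S_C(c)$. This claim would be proved by unfolding $m=_b b^ea+r$ and applying the induction hypothesis to $e$ and $r$.
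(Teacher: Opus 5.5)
\textbf{There is a genuine gap in the direction from the witness $c$ to $c'$.}

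Your treatment of $n=b$ matches the paper's argument. So does your proof that if $c'$ exists then the witness $c$ exists and $c\leq c'$: you use the induction hypothesis at $n-1$ to identify the witness $d\leq c'$ of $\ug(n-1)$, and then
$\ug(n-1)=\Chgbases bd(n-1)\leq \Chgbases b{c'}(n-1)<\Chgbases b{c'}n<S_C(c')$.

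The missing half is showing that the witness $c$ of $\ug n$ satisfies $c\geq \ug b$. Without this, $c$ does not qualify in the second description, so you cannot conclude that $c'$ exists with $c'\leq c$. You call this the main obstacle and only outline an auxiliary induction, and the outline does not work as written.
\begin{enumerate}
\item Your assertion that $\Chgbases bc n<S_C(c)$ together with $c<\ug b$ is impossible is false. In Example~\ref{exUg}, take $b=6$, $c=5<10=\ug 6$ and $n=7$. Then $\Chgbases{6}{5}7=5+1=6<10=S_C(5)$.
\item Your auxiliary claim $\Chgbases bc m\leq \ug m$, applied at $m=n$, only bounds $\Chgbases bc n$ by $\ug n$. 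It does not exclude $\ug(n-1)<\Chgbases bc n<S_C(c)$, which is exactly what you need to rule out.
\end{enumerate}

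\textbf{The fix is one line, using Lemma~\ref{lemmMonUg}(5).} You mention this item but only use it to compare $\ug(n-1)$ with $\ug n$. Apply it instead to $b<n$ (the case $n=b$ is trivial). If $c$ is the witness for $\ug n$, then $\ug b$ also has a witness $d$. Since $\Chgbases bd b=d$, we get $\ug b=d$, and item (5) gives $\ug b=d\leq c$. By (\ref{eqdefupgr}), $c$ also satisfies $\Chgbases bc n<S_C(c)$. Hence $c'$ exists and $c'\leq c$, and together with your direction this gives $c=c'$.

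This also gives your stronger set equivalence for free: any $c$ satisfying (\ref{eqdefupgr}) at $n$ is at least the witness, hence at least $\ug b$.
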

\begin{proof}
    By induction on $n$.
    Suppose that $c$ exists. Then $\ug b$ has a witness, say $d$, and $c\geq d = \Chgbases bd b = \ug b$ by the previous lemma. It is clear that $c'\leq c$ exists.

    Suppose that $c'$ exists. If $n=b\in B$, then $c$ exists since $\ug b < \infty$, and it is easily seen that $c'=c$. Otherwise, $n>b$. By the monotonicity of $\Chgbases b {c'}$, we can pick a minimal $d\in C$ satisfying $d\geq \ug b$, $\Chgbases bd (n-1)<S_C(d)$ and $d\leq c'$. By the induction hypothesis, $d$ is the witness for $\ug (n-1)$. It follows that  
    \[\ug (n-1) = \Chgbases bd (n-1) \leq \Chgbases b{c'}(n-1) < \Chgbases b{c'} n < S_C(c').\]
    So $c$ exists, and $c\leq c'$.

    Finally, if one of $c,c'$ exists, then by the previous calculations we have both $c'\leq c$ and $c\leq c'$. So $c'=c$.
\end{proof}

The following lemma says roughly that for every $n\in \N$, we only need a bounded amount of information about $B,C$ to calculate $\ug_B^C n$. This is not surprising given the recursive definition of the upgrade operator.

\begin{lemma}\label{lemmrestrictbasehier}
    Let $B,B',C,C'$ be base hierarchies with $\min B\leq \min C$. Suppose that $n\in\N$ is such that $n\geq \min B$, $B\cap [0,n] = B'\cap [0,n]$ and $C\cap [0,\ug_B^C n]=C'\cap[0, \ug_B^C  n]$. Then, 
    \begin{enumerate}
    \item For $x\leq n$,
    $\ug_B^C x = \ug_{B'}^{C'}x$\label{lemmrestrictbasehier1}
    \item For $x\in\N$, $n\geq b\in B$ and $\ug_B^C n \geq c\in C$, $\Chgbases bc _B^C x = \Chgbases bc _{B'}^{C'} x$.\label{lemmrestrictbasehier2}
    
    \end{enumerate}
\end{lemma}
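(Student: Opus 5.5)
We prove both items simultaneously by strong induction on $n\geq\min B$, or more precisely by induction on $x\leq n$ for item~\ref{lemmrestrictbasehier1} and on $x$ for item~\ref{lemmrestrictbasehier2}, with item~\ref{lemmrestrictbasehier2} for a given pair $(b,c)$ relying on item~\ref{lemmrestrictbasehier1} for the coefficients. Write $\ug=\ug_B^C$ and $\ug'=\ug_{B'}^{C'}$. First note that the hypotheses are inherited by smaller $n$. If $\min B\leq x\leq n$, then $B\cap[0,x]=B'\cap[0,x]$ trivially. Also $C\cap[0,\ug x]=C'\cap[0,\ug x]$, since $\ug x\leq \ug n$ by the monotonicity in Lemma~\ref{lemmMonUg}. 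We need $\ug n<\infty$, which is implicit in the statement; if $\ug n=\infty$ the hypothesis on $C$ forces $C=C'$ and everything is trivial for the $C$ side. Moreover $\min B=\min B'$, since $\min B\leq n$, and hence $\min C=\min C'$, since $\min C\leq\min$-witness $\leq \ug n$. So $\min B'\leq\min C'$ and the primed upgrade is defined. In particular, $\basep{B}{x}=\basep{B'}{x}$ for all $x\leq n$.

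For item~\ref{lemmrestrictbasehier2}, fix $b\leq n$ in $B$ and $c\leq \ug n$ in $C$, and argue by induction on $x$. If $x<b$, both sides equal $\ug x$, respectively $\ug' x$. Since $x<b\leq n$, these agree by item~\ref{lemmrestrictbasehier1}, applied at $x$ (induction hypothesis, or trivially if $x<\min B$). Otherwise write $x=_b b^ea+r$. Both sides are $c^{\Chgbases bc e}\ug a+\Chgbases bc r$. Here $e,r<x$ are handled by the inner induction, and $a<b\leq n$ is handled by item~\ref{lemmrestrictbasehier1}. Thus item~\ref{lemmrestrictbasehier2} reduces to item~\ref{lemmrestrictbasehier1} for arguments below $b\leq n$.

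For item~\ref{lemmrestrictbasehier1}, it suffices by the inheritance above to prove $\ug n=\ug' n$, assuming the claim for all smaller $x$. If $n<\min B$ this is trivial. Otherwise let $b=\baseB n=\basep{B'}{n}$, and let $c$ be the witness for $\ug n$, so $c\leq \ug n$, using Lemma~\ref{lemmMonUg}(\ref{lemmMonUgLowerBnd}). The key step is to show that $c$ is also the witness for $\ug' n$. By item~\ref{lemmrestrictbasehier2}, which for this $b$ only uses item~\ref{lemmrestrictbasehier1} below $n$, we get $\Chgbases bd_B^C n=\Chgbases bd_{B'}^{C'} n$ for every $d\in C$ with $d\leq c$. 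The set $C\cap[0,c]$ equals $C'\cap[0,c]$. Also $\ug(n-1)=\ug'(n-1)$ by induction.

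The main obstacle is the right-hand inequality in (\ref{eqdefupgr}), which involves $S_C(c)$ versus $S_{C'}(c)$. This successor may exceed $\ug n$, so $C$ and $C'$ need not agree there. We handle it as follows. We have $\ug n=\Chgbases bc n<S_C(c)$, and $C\cap(c,\ug n]=C'\cap(c,\ug n]=\varnothing$. Hence $S_{C'}(c)>\ug n=\Chgbases bc_{B'}^{C'} n$, so $c$ satisfies (\ref{eqdefupgr}) for the primed system. For each smaller $d\in C'$ with $d<c$, we have $d\in C$, and $S_{C'}(d)=S_C(d)\leq c\leq\ug n$. So (\ref{eqdefupgr}) fails for $d$ in the primed system exactly as in the unprimed one. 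Therefore $c$ is the least witness for $\ug' n$, and $\ug' n=\Chgbases bc_{B'}^{C'}n=\ug n$. The only care needed is to order the inductions so that item~\ref{lemmrestrictbasehier2} at $n$ uses item~\ref{lemmrestrictbasehier1} strictly below $n$ only. This holds because $a<b\leq n$, and because the case $x<b$ requires $x<b\leq n$.
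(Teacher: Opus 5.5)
Your proof is correct and follows essentially the same route as the paper: a simultaneous induction that mirrors the recursive definition, reducing item 2 to item 1 on coefficients below $b\le n$, and item 1 to item 2 together with $\ug(n-1)$. The one difference is that the paper treats $x\in B$ separately (as the first element of $C$ above $\ug(x-1)$) and leaves the comparison of $S_C(c)$ with $S_{C'}(c)$ implicit in its non-base case. You instead treat all $x$ uniformly through the witness condition and explicitly check that $S_{C'}(c)>\ug n$ and $S_{C'}(d)=S_C(d)$ for $d<c$, which fills in exactly the detail the paper skips.
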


\begin{proof}
    If $\ug_B^C n = \infty$, the lemma is trivial. So assume otherwise.
    We then prove both items simultaneously by induction on $x$. The case $x<\min B$ is easy. Suppose that $x\geq \min B$.

    For \ref{lemmrestrictbasehier1}, suppose that $x\leq n$. If $x$ is not a base, then the calculation of the upgrade of $x$ uses upgrades as well as applications of the operator $\Chgbases bc$ on numbers less than $x$. Here $b\leq x \leq n$, and $c\leq \ug_B^C x \leq \ug_B^C n$ by the second item of Lemma~\ref{lemmMonUg}. So we can use the induction hypothesis. Otherwise if $x\in B$, then the upgrade of $x$ is the first base which is greater than $\ug (x-1)$. This base exists in $C$ if and only if it exists in $C'$ since $\ug_B^C x \leq \ug_B^C n$. 
    
    For \ref{lemmrestrictbasehier2}, if $x\geq b$, then the calculation of $\Chgbases bc x$ uses upgrades and applications of $\Chgbases bc$ on numbers less than $x$, and we can use the induction hypothesis. Else if $x < b$, then $\Chgbases bc x$ is the upgrade of $x$ and we use \ref{lemmrestrictbasehier1}.
    
\end{proof}

In general, $\ug_B^C(m+1)$ could be much larger than $\ug_B^C m $, but when $m+1$ is a base, we do not want it to be {\em too} much larger.
Good successors ensure that this does not happen.

\begin{definition}
Let $B$ and $C$ be base hierarchies and write $\ug$ for $\ug_B^C$. We say that $C$ is a {\em good successor} of $B$ if the following are satisfied:
\begin{enumerate}[label=\textup{(\arabic*)}]
\item $\min B \leq \min C$.
\item $\ug n < \infty$ for every $n\in\N$, in other words the witness for $\ug n$ always exists.
\item Whenever $\min B< b\in B$, there are no multiples of $  \basep C{\ug (b-1)}$ that lie strictly between $\ug (b-1)$ and $S_C(\ug(b-1))$.
\end{enumerate}
\end{definition}

\begin{example}
In Example~\ref{exUg}, $C$ is a good successor of $B$ because

\begin{enumerate}
    \item $\ug (6-1) = \Chgbases 35 5 = 7$, and there are no multiples of $\basep C 7 = 5$ strictly between $7$ and $S_C(7) = 10$.
    \item $\ug (42 - 1) = \Chgbases 6{10} 41 = 10^2 + 7 = 107$, and there are no multiples of $\basep C {107} = 10$ strictly between $107$ and $S_C(107) = 110$.
\end{enumerate}
\end{example}

\begin{example}\label{exSucc}
Let $B$ be any base hierarchy.
We define $B' = \bigcup B'_n$, where $B'_n$ is defined by induction on $n$ as follows: $B'_n = \{\min B+1\}$ if $n\leq \min B$,  $B'_{n} = B'_{n-1}$ if $\min B < n \notin B$ and otherwise $B'_{n} =B'_{n-1}\cup \{ k\}$, where $k$ is the least multiple of $\max B'_{n-1}$ which is strictly above $\ug_{B}^{B'_{n-1}}(n-1)$. We call $B'$ the minimalistic successor of $B$. It is shown in \cite{fernandez2025fractal} that $B'$ is a good successor of $B$.
\end{example}

Given a base hierarchy $B$, we say an element $n\in\N$ is $B$-critical if $\baseB n\mid n$.
For proofs of the following three lemmas we refer to \cite{fernandez2025fractal}.

\begin{lemma}\label{lemmStructureChgbases}
    Let $B,C$ be base hierarchies with $C$ a good successor of $B$. Let $n\in\N$, $b\in B$ and $c\in C$ such that $c\geq \ug b$.
    \begin{enumerate}
        \item $b\mid n$ if and only if $c\mid \Chgbases bc n$.\label{lemmStructureChgbasesDiv}
        \item If $n=_b b^ea+r$, then $\Chgbases bc n =_c c^{\Chgbases bc e}\cdot \ug a + \Chgbases bc r$.\label{lemmStructureChgbasesRepr}
    \end{enumerate}
\end{lemma}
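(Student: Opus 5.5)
We prove both items together by strong induction on $n$, in the order: first the divisibility statement for all $m<n$, then the representation statement for $n$ (which uses divisibility of smaller numbers), then divisibility for $n$. Two preliminary facts are needed. First, $c\geq \ug b$ gives monotonicity of $\Chgbases bc$ (Lemma~\ref{lemmMonUg}). Second, for $0<a<b$ we have $0<\ug a<c$. Indeed $\ug a\leq \ug(b-1)<\ug b\leq c$, and $\ug a>0$ because $\ug$ is monotone with $\ug 0=0$. If $n<b$ then $\Chgbases bc n=\ug n<c$. So $b\mid n$ and $c\mid \Chgbases bc n$ both hold only for $n=0$, and item~\ref{lemmStructureChgbasesRepr} is vacuous.

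For item~\ref{lemmStructureChgbasesRepr} with $n\geq b$, write $n=_b b^ea+r$. By definition $\Chgbases bc n=c^{E}\ug a+R$ with $E=\Chgbases bc e$ and $R=\Chgbases bc r$. We must show $R<c^{E}$. Since $r<b^e$, monotonicity gives $R<\Chgbases bc(b^e)=c^{E}$; here $b^e=_b b^e\cdot 1+0$ and $\ug 1=1$. Together with $0<\ug a<c$, this yields $c^E\leq c^E\ug a+R<c^{E+1}$, so the expression is a genuine $c$-decomposition. No good-successor hypothesis is needed for this part.

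For item~\ref{lemmStructureChgbasesDiv} with $n\geq b$: if $b\mid n$, then $b\mid r$ with $r<n$ (when $e\geq1$; for $e=0$ we have $n=a<b$, a contradiction). By induction $c\mid R$, and $c\mid c^E$ as long as $E\geq1$. Since $\Chgbases bc$ is monotone with value $0$ at $0$, we get $E\geq1$. Hence $c\mid \Chgbases bc n$. Conversely, suppose $b\nmid n$. If $e\geq1$ then $b\nmid r$, so by induction $c\nmid R$; as $c\mid c^E\ug a$, it follows that $c\nmid \Chgbases bc n$. If $e=0$, then $\Chgbases bc n=\ug a+R$ with $r=0$ (because $r<b^0=1$), so $\Chgbases bc n=\ug a\in(0,c)$, which is not divisible by $c$.

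The main obstacle is the claim that $c\mid\Chgbases bc n$ for a multiple $n$ of $b$ really depends only on the exponent structure. The argument above seems to need no good-successor property at all, which is suspicious. The delicate point is whether $\ug a$ stays below $c$ for $a<b$; we established this from $\ug(b-1)<\ug b\leq c$, so the argument appears sound. If the intended reading differs (e.g.\ $a$ ranges over coefficients that are themselves upgraded through intermediate bases), then I would invoke condition (3) of good successors. That condition guarantees that no multiple of $\basep C{\ug(b-1)}$ sits just below $S_C$, which keeps upgraded coefficients non-divisible by $c$.
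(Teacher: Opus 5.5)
Your proof is correct. The paper gives no proof of this lemma and defers to \cite{fernandez2025fractal}. Your induction is the natural direct argument. It rests on two facts: monotonicity of $\Chgbases bc$ for $c\geq\ug b$ (Lemma~\ref{lemmMonUg}), and the bound $0<\ug a<c$ for $0<a<b$. The estimate $\Chgbases bc r<\Chgbases bc(b^e)=c^{\Chgbases bc e}$ yields the $c$-decomposition. Divisibility then reduces to comparing $b\mid r$ with $c\mid\Chgbases bc r$, since $\Chgbases bc e\geq 1$ whenever $e\geq 1$. Citing Lemma~\ref{lemmMonUg} is legitimate inside this paper. From scratch, however, monotonicity of $\Chgbases bc$ and item~\ref{lemmStructureChgbasesRepr} would naturally be proved by one simultaneous induction, because each needs the other at smaller arguments.

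Drop your final hedging paragraph. Your suspicion is unfounded, and condition (3) of good successors plays no role. The only thing you use from the good-successor hypothesis is that $\ug$ is finite on $[0,b]$, and this already follows from $c\geq\ug b$ with $c\in\N$. This extra generality is what the paper actually needs: in the proof of Lemma~\ref{lemmB_(+i)found}, item~\ref{lemmStructureChgbasesDiv} is applied to $B$ and the finite hierarchy $B_{+i}^{n-1}$, which need not be a good successor of $B$.

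Two minor slips:
\begin{itemize}
\item For $0<n<b$, item~\ref{lemmStructureChgbasesRepr} is not vacuous, since $n=_b b^0 n+0$. It is trivially true because $0<\ug n<c$.
\item In item~\ref{lemmStructureChgbasesDiv}, the subcase $e=0$ cannot occur once $n\geq b$.
\end{itemize}
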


\begin{lemma}\label{lemmbasecriticalpres}
    Let $B,C$ be base hierarchies with $C$ a good successor of $B$. Let $n\in\N$.
    \begin{enumerate}
        \item $n\in B$ if and only if $\ug n \in C$.
        \item $n$ is $B$-critical if and only if $\ug n$ is $C$-critical.\label{criticalpres}
    \end{enumerate}
\end{lemma}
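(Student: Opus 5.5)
We argue both items simultaneously by strong induction on $n$, relying on Lemma~\ref{lemmStructureChgbases}, Lemma~\ref{alternativedefwitness} and the monotonicity facts of Lemma~\ref{lemmMonUg}. Write $\ug$ for $\ug_B^C$.

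\emph{Case $n<\min B$.} Here $\ug n = n$. We need $n<\min C$, so that $n\notin C$, $\basep C n=\min C$ and $n$ is not $C$-critical unless $n=0$. For $\min B\le n<\min C$ monotonicity gives $\ug n\ge \ug(\min B)\ge \min C$. For $n<\min B$ we have $n<\min B\le\min C$, so both equivalences are immediate (with $0$ treated as critical on both sides).

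\emph{Case $n\ge \min B$.} Let $b=\baseB n$, and let $c$ be the witness for $\ug n$; it exists because $C$ is a good successor. By Lemma~\ref{alternativedefwitness}, $c\ge \ug b$ and $\ug n=\Chgbases bc n< S_C(c)$. Lemma~\ref{lemmMonUg}(\ref{lemmMonUgLowerBnd}) gives $c\le \ug n$. Hence $\basep C{\ug n}=c$, and the representation given by Lemma~\ref{lemmStructureChgbases}(\ref{lemmStructureChgbasesRepr}) is the $C$-decomposition of $\ug n$.

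\emph{Item 2.} Since $\basep C{\ug n}=c$, the number $\ug n$ is $C$-critical iff $c\mid \Chgbases bc n$. By Lemma~\ref{lemmStructureChgbases}(\ref{lemmStructureChgbasesDiv}) this holds iff $b\mid n$, that is, iff $n$ is $B$-critical.

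\emph{Item 1, forward direction.} Suppose $n\in B$, so $b=n$ and $n=_b b^1\cdot 1+0$. Then $\Chgbases bc n = c^{\ug 1}\cdot\ug 1$. For $\min B\ge2$ we have $\ug 1=1$, so $\ug n = c\in C$.

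\emph{Item 1, converse direction.} Suppose $\ug n\in C$. Then $\ug n\le$ its own $C$-base $c$, and together with $c\le\ug n$ we get $\ug n=c$. Suppose for contradiction that $n\ne b$, i.e.\ $n>b$. Since $\ug n=c$ is a power of $c$ with exponent $1$ and coefficient $1$, the representation of Lemma~\ref{lemmStructureChgbases}(\ref{lemmStructureChgbasesRepr}) forces $\Chgbases bc e=1$, $\ug a=1$ and $\Chgbases bc r=0$. By monotonicity and the base case, $\ug x=x$ for small $x$, so $a=1$ and $r=0$. Also $e=1$: the alternatives are $e=0$, which gives $n=1<b$, or $e\ge 2$, which gives $\Chgbases bc e\ge 2$ by monotonicity of $\Chgbases bc$, since $c\ge\ug b$. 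Hence $n=b$, a contradiction.

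The step I expect to be the main obstacle is the claim $\basep C{\ug n}=c$, in particular excluding that some base of $C$ lies strictly between $c$ and $\ug n$. This is exactly where the upper bound $\Chgbases bc n< S_C(c)$ from the witness definition is needed. Goodness of $C$ enters only through Lemma~\ref{lemmStructureChgbases} and the existence of witnesses.
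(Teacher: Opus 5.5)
Your proof is correct. The paper gives no proof of this lemma; it defers to \cite{fernandez2025fractal}, so there is no in-paper argument to match against. Your route is the natural one given the surrounding results:
\begin{itemize}
\item The identity $\basep C{\ug n}=c$, which you get from $c\le\Chgbases bc n<S_C(c)$, is exactly what the paper records as Lemma~\ref{lemmUp(ba+r)}(\ref{lemmUp(ba+r)witnessbase}).
\item Item~2 then follows from Lemma~\ref{lemmStructureChgbases}(\ref{lemmStructureChgbasesDiv}).
\item The forward half of item~1 is the computation $\Chgbases bc b=c$. It is also Lemma~\ref{lemmalternativedefupgr}(1), though that is stated later.
\end{itemize}
Using Lemma~\ref{lemmStructureChgbases} as a black box is legitimate here, since the paper states it before this lemma.

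Some small points:
\begin{itemize}
\item The sentence about $\min B\le n<\min C$ in your first case is out of place, since that range is not part of the case. Drop it.
\item You never use an induction hypothesis, so the ``strong induction'' framing is superfluous. The argument is pointwise in $n$.
\item The converse of item~1 can be shortened. Because $c\ge\ug b$, the operator $\Chgbases bc$ is strictly increasing by Lemma~\ref{lemmMonUg}. So $\Chgbases bc n=c=\Chgbases bc b$ forces $n=b$ directly, and the uniqueness-of-$c$-decomposition bookkeeping over $e,a,r$ is unnecessary (though correct).
\end{itemize}
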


\begin{lemma}\label{lemmUp(ba+r)}
    Let $B,C$ be base hierarchies with $C$ a good successor of $B$. Let $n\in\N$ and $b\in B$. Write $\ug$ for $\ug_B^C$.
    \begin{enumerate}
        \item If $c$ is the witness for $\ug n$, then $c=\basep C {\ug n}$.\label{lemmUp(ba+r)witnessbase}
        \item If $n=ba+r$ with $r<b$, then $\ug n = \ug ba + \ug r$.\label{lemmUp(ba+r)ba+r}
        \item If $\min B\nmid n$, then $\ug n = \ug (n-1) + 1$.\label{lemmUp(ba+r)minBdiv}
        \item If $b>\min B$, then $\ug b = \ug(b-d) + c$, where $d\in B$ is the predecessor of $b$ in $B$, and where $c= \basep C {\ug (b-1)}$.\label{lemmUp(ba+r)b-d}
    \end{enumerate}
\end{lemma}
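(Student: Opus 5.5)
We prove the four items in order, by induction on $n$ for the first three; item \ref{lemmUp(ba+r)b-d} then follows from the earlier items.

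\emph{Item \ref{lemmUp(ba+r)witnessbase}.} Let $b'=\baseB n$ and let $c$ be the witness. By definition $\ug n=\Chgbases {b'}c n<S_C(c)$. By item \ref{lemmMonUgLowerBnd} of Lemma~\ref{lemmMonUg}, $c\leq \Chgbases {b'}c n$. Hence $c\leq \ug n<S_C(c)$, which is exactly $c=\basep C{\ug n}$.

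\emph{Item \ref{lemmUp(ba+r)ba+r}.} We argue by induction on $n$.
\begin{itemize}
\item First case: $b=\baseB n$ and $a<b$. Then $n=_b b^1a+r$, so $\ug n=c^{\ug 1}\ug a+\ug r=c\,\ug a+\ug r$, where $c$ is the witness for $\ug n$. Let $d$ be the witness for $\ug b$, so that $d=\Chgbases bd b=\ug b$. By item \ref{lemmMonUgWitnComp} of Lemma~\ref{lemmMonUg} we have $c\geq d$. By Lemma~\ref{alternativedefwitness} it then suffices to check $\Chgbases bd n=d\,\ug a+\ug r<S_C(d)$. For this we compare with $\ug(b^+-1)$, where $b^+=S_B(b)$ and $b^+-1\geq n$. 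We use clause (3) of good successor together with Lemma~\ref{lemmbasecriticalpres}: bases and critical numbers are preserved, so $\ug$ maps $[b,b^+)$ into $[d,S_C(d))$.
\item Second case: $b<\baseB n$, or $a\geq b$. Using $b\mid S_B(b)$, we rewrite $ba$ as a sum of blocks $b\cdot a'$ that are multiples of larger bases. We then apply the induction hypothesis blockwise, using item \ref{lemmStructureChgbasesRepr} of Lemma~\ref{lemmStructureChgbases} to see that the $C$-decomposition is respected.
\end{itemize}
I expect this item to be the main obstacle: the delicate point is showing that the witness does not jump above $\ug b$, and the whole argument rests on good successor clause (3).

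\emph{Item \ref{lemmUp(ba+r)minBdiv}.} Put $b_0=\min B$ and write $n=b_0a+r$ with $0<r<b_0$. Then $n-1=b_0a+(r-1)$. Applying item \ref{lemmUp(ba+r)ba+r} to both $n$ and $n-1$, and using $\ug x=x$ for $x<b_0$, we get $\ug n=\ug b_0\,\ug a+r=\ug(n-1)+1$.

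\emph{Item \ref{lemmUp(ba+r)b-d}.} Write $b=dk$; this is possible since $S_B(d)=b$ and $B$ is a base hierarchy.
\begin{enumerate}
\item From the definition we have $\ug b=\Chgbases bc b=c$ for its witness $c$. Lemma~\ref{alternativedefwitness} identifies this witness as the least element of $C$ above $\ug(b-1)$. Clause (3) of good successor then forces $\ug b=S_C(\ug(b-1))$.
\item Since $b-1=d(k-1)+(d-1)$, item \ref{lemmUp(ba+r)ba+r} gives $\ug(b-1)=\ug(b-d)+\ug(d-1)$, with $\ug(d-1)<\ug d\leq c$.
\item Now $b-d$ is $B$-critical with base $d$. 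By Lemma~\ref{lemmbasecriticalpres}, $\ug(b-d)$ is $C$-critical. Its $C$-base equals $c=\basep C{\ug(b-1)}$, by item \ref{lemmUp(ba+r)witnessbase} and monotonicity of witnesses between $b-d$ and $b-1$; both numbers have $B$-base $d$.
\item Hence $\ug(b-d)+c$ is a multiple of $c$. It is strictly greater than $\ug(b-1)$ by step 2.
\item It is at most $S_C(c)$, because $c\mid S_C(c)$ and $\ug(b-d)<S_C(c)$.
\end{enumerate}
Since there are no multiples of $c$ strictly between $\ug(b-1)$ and $S_C(c)$, we conclude $\ug(b-d)+c=S_C(c)=\ug b$.
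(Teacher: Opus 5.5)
Your item~1 is correct, and item~3 follows correctly from item~2. The common term there should read $\ug(b_0a)$, not $\ug b_0\cdot\ug a$, but it cancels anyway. The genuine gap is in the first case of item~2. There you try to show that the witness of $\ug n$ is $d=\ug b$ for every $n\in[b,S_B(b))$, arguing that ``bases are preserved, so $\ug$ maps $[b,b^+)$ into $[d,S_C(d))$''. This inference is invalid. Lemma~\ref{lemmbasecriticalpres} only says that no \emph{value} $\ug m$ with $b<m<b^+$ lies in $C$; since $\ug$ is not surjective, elements of $C$ can sit in the gaps between consecutive values.

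The claim itself is false, and the ouroboros successor is built precisely to violate it (Lemma~\ref{lemmCanonProp}: $d_0$ is the witness of $\ug(n-b)$, while $d_i$ is that of $\ug n$). Concretely, take $B=\{3\}$ and $C=B_{+1}$, which is a good successor by Lemma~\ref{lemmGoodSuccB(+i)}. Then $C=\{4,8,64,\dots\}$, $\ug 3=4$ and $\ug 5=6$. Since $\Chgbases{3}{4}6=8=S_C(4)$, the witness of $\ug 6$ is $8$, so $\ug 6=16$ and $\ug 7=17$. For $n=7=3\cdot 2+1$ the inequality you need, $\Chgbases{3}{4}7=9<S_C(4)=8$, fails. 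Note also that clause~(3) of good successors plays no role in item~2.

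The repair is to compare witnesses directly instead of pinning them to $\ug b$. In your first case, let $c'$ be the witness of $\ug(ba)$.
\begin{itemize}
\item By Lemma~\ref{alternativedefwitness}, $c'\geq\ug b$.
\item Since $c'\,\ug a=\ug(ba)<S_C(c')$ and $c'\mid S_C(c')$, we have $c'\,\ug a\leq S_C(c')-c'$.
\item Since $\ug r<\ug b\leq c'$, it follows that $\Chgbases{b}{c'}n<S_C(c')$.
\end{itemize}
Lemma~\ref{alternativedefwitness} then bounds the witness of $\ug n$ above by $c'$, and Lemma~\ref{lemmMonUg}\ref{lemmMonUgWitnComp} bounds it below by $c'$. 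Hence $\ug n=c'\,\ug a+\ug r=\ug(ba)+\ug r$.

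Your second case is too vague to check: ``apply the induction hypothesis blockwise'' has no meaning, since $\ug$ is not additive. The same witness comparison works there. Let $b'=\baseB n$ and let $c'$ be the witness of $\ug(n-r)$. The $b'$-expansions of $n$ and $n-r$ differ only in the last term, $s=n\bmod b'$ versus $s-r$. Induction on $s<n$ then gives $\Chgbases{b'}{c'}n=\Chgbases{b'}{c'}(n-r)+\ug r$. You conclude as before once you know $\ug b\mid\Chgbases{b'}{c'}(n-r)$, which needs a short side induction showing that $b\mid m$ implies $\ug b\mid\ug m$.

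The same misconception reappears in step~3 of item~4. Monotonicity of witnesses only gives $c_1\coloneqq\basep C{\ug(b-d)}\leq c$. Equality follows from your step~2: $\ug(b-d)$ is a multiple of $c_1$ below $S_C(c_1)$, and $\ug(d-1)<\ug d\leq c_1$, so $\ug(b-1)<S_C(c_1)$. With that fix, item~4 goes through as you wrote it. One small correction there: $\ug b=S_C(\ug(b-1))$ comes directly from the definition, because $\Chgbases{b}{c}b=c$, not from clause~(3).
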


\begin{lemma}\label{LemmUpgrTwiceBase}
    Let $B,C$ be base hierarchies with $C$ a good successor of $B$. Let $b,d\in B$ be such that $b=2d$. Write $\ug$ for $\ug_B^C$. Then $\ug b = 2\cdot\ug d$.
\end{lemma}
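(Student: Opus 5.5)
Since $b = 2d$ with $d\in B$, the natural tool is Lemma~\ref{lemmUp(ba+r)}\ref{lemmUp(ba+r)b-d}. Because $B$ is a base hierarchy, $d\mid S_B(d)$, and $S_B(d)\le b = 2d$. Hence $S_B(d)=2d=b$, so $d$ is exactly the predecessor of $b$ in $B$. Also $b>\min B$, since $d\in B$ and $d<b$. Lemma~\ref{lemmUp(ba+r)}\ref{lemmUp(ba+r)b-d} then gives
\[
\ug b = \ug(b-d) + c = \ug d + c, \qquad c = \basep C{\ug(b-1)}.
\]
It therefore suffices to show $c = \ug d$.

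To identify $c$, I would compute $\ug(b-1)$ directly. We have $b-1 = 2d-1 = d\cdot 1 + (d-1)$ with $d-1<d$. By Lemma~\ref{lemmUp(ba+r)}\ref{lemmUp(ba+r)ba+r}, $\ug(b-1) = \ug d\cdot \ug 1 + \ug(d-1)$. Since $1<\min B$, we have $\ug 1 = 1$, so $\ug(b-1)=\ug d + \ug(d-1)$.

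Next I would show that $\basep C{\ug(b-1)} = \ug d$. By Lemma~\ref{lemmbasecriticalpres}, $\ug d\in C$. By monotonicity (Lemma~\ref{lemmMonUg}) $\ug(d-1)<\ug d$, so $\ug d\le \ug(b-1)<2\ug d$. It remains to see that no element of $C$ lies in $(\ug d, \ug(b-1)]$. For this, note that $d\le b-1<b=S_B(d)$ gives $\baseB{b-1}=d$. By Lemma~\ref{lemmUp(ba+r)}\ref{lemmUp(ba+r)witnessbase}, the base of $\ug(b-1)$ is the witness for $\ug(b-1)$. By Lemma~\ref{alternativedefwitness}, the witness is the least $c'\in C$ with $c'\ge \ug d$ and $\Chgbases d{c'}(b-1)<S_C(c')$. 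Taking $c'=\ug d$ (the least candidate), we get
\[
\Chgbases d{\ug d}(b-1) = \ug d\cdot 1+\ug(d-1).
\]
This is the same value as above, and the witness for the $\ug(b-1)$ computation must make it a $C$-decomposition. Lemma~\ref{lemmStructureChgbases}\ref{lemmStructureChgbasesRepr} identifies the leading base as $\ug d$ precisely when the value is below $S_C(\ug d)$. The cleaner route is therefore: let $c$ be the witness; then $\ug(b-1)=\Chgbases dc(b-1)= c+\ug(d-1)$ with $c\ge \ug d$. Comparing with $\ug(b-1)=\ug d+\ug(d-1)$ gives $c=\ug d$.

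Combining the steps, $\ug b = \ug d + \ug d = 2\cdot\ug d$.

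The main obstacle I expect is the bookkeeping in this last step. One must justify $\Chgbases dc(b-1)=c\cdot\ug 1+\Chgbases dc(d-1)$ and $\Chgbases dc(d-1)=\ug(d-1)$. Both hold because $d-1<d$, so the operator reduces to $\ug$, and this needs no monotonicity hypothesis. With that in place, the comparison of the two expressions for $\ug(b-1)$ forces the witness to equal $\ug d$ with no case analysis.
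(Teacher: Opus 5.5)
Your proof is correct, but you establish the key identity $c=\ug d$ differently from the paper. Both arguments start from Lemma~\ref{lemmUp(ba+r)}\ref{lemmUp(ba+r)b-d}, giving $\ug b=\ug d+c$ with $c=\basep C{\ug(b-1)}$. You also make explicit that $d$ is the predecessor of $b$ in $B$, since $d\mid S_B(d)\le 2d$; the paper uses this silently.

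The paper then finishes by divisibility. Each of $\ug d$, $c$ and $\ug b=\ug d+c$ is a base of $C$, so $c\mid \ug d+c$, i.e.\ $c\mid \ug d$. Since $0<\ug d\le c$, this forces $\ug d=c$.

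You instead evaluate $\ug(b-1)$ twice:
\begin{itemize}
\item as $\ug d+\ug(d-1)$, via Lemma~\ref{lemmUp(ba+r)}\ref{lemmUp(ba+r)ba+r};
\item as $\Chgbases dc(b-1)=c+\ug(d-1)$, where $c$ is the witness, which equals $\basep C{\ug(b-1)}$ by Lemma~\ref{lemmUp(ba+r)}\ref{lemmUp(ba+r)witnessbase}.
\end{itemize}
Cancelling $\ug(d-1)$ gives $c=\ug d$.

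The paper's route is shorter and needs only that $\ug b\in C$ and that bases of a base hierarchy divide the larger ones. Yours avoids that divisibility argument but leans on more of the upgrade machinery: items 1 and 2 of Lemma~\ref{lemmUp(ba+r)} and the unfolding of $\Chgbases dc$ on $2d-1$. In return it gives the explicit value $\ug(b-1)=\ug d+\ug(d-1)$ together with its base.

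Two small points for a write-up. First, drop the exploratory middle paragraph (the attempt via ``the least candidate'' and Lemma~\ref{lemmStructureChgbases}). It is not completed as stated, and the ``cleaner route'' makes it unnecessary. Second, in the paper's item 2, $\ug ba$ denotes $\ug(ba)$, not $\ug d\cdot\ug a$. Since $a=1$ here, the two readings coincide and nothing breaks.
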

\begin{proof}
    Let $c = \basep C {\ug(b-1)}\geq \ug d$.
    By the fourth item of the previous lemma, $\ug b = \ug d + c$. Then each of $\ug d$, $c$ and $\ug d +c$ is a base in $C$. In particular $c\mid \ug d +c\leq 2c$, from which $\ug d = c$. Then going back, $\ug b = \ug d + c = 2\cdot \ug d$.
\end{proof}

In the case of good successors, Lemma~\ref{alternativedefwitness} translates to the following result, which yields an alternative definition of the upgrade operator.

\begin{lemma}\label{lemmalternativedefupgr}
    Let $B,C$ be base hierarchies with $C$ a good successor of $B$. Let $n\geq \min B$, $b=\baseB n$, and write $\ug$ for $\ug_B^C$.
    \begin{enumerate}
        \item If $n\in B$, then $\ug n = S_C(\ug(n-1))$.
        \item If $n\notin B$, then the witness for $\ug n$ is the least $c\in C$ for which $c\geq \ug b$ and $\Chgbases bc n < S_C(c)$.
    \end{enumerate}
\end{lemma}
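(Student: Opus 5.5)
Both items come out of Lemma~\ref{alternativedefwitness} once we know that, for good successors, the witness is exactly the $C$-base of the upgraded value (Lemma~\ref{lemmUp(ba+r)}\ref{lemmUp(ba+r)witnessbase}), and that upgrades never equal $\infty$.

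\textbf{Item 2.} Let $n\notin B$ and $b=\baseB n$. Since $C$ is a good successor, $\ug n<\infty$, so the witness $c$ for $\ug n$ exists. Lemma~\ref{alternativedefwitness} then says that the least $c'\in C$ with $c'\geq \ug b$ and $\Chgbases b{c'} n<S_C(c')$ exists and equals $c$. This is the claim. Note that this part does not even use $n\notin B$.

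\textbf{Item 1.} Let $n\in B$, so $n>\min B$ or $n=\min B$.

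If $n=\min B$, then $n-1<\min B$ and $\ug(n-1)=n-1$. By Lemma~\ref{alternativedefwitness}, the witness $c$ is the least $c\in C$ with $c\geq \ug n$ and $\Chgbases nc n=c<S_C(c)$, so $\ug n=c$. The condition $c\geq \ug n$ is circular here, so I would argue directly from the definition. The witness is the least $c\in C$ with $\ug(n-1)<\Chgbases nc n=c<S_C(c)$, and $c<S_C(c)$ always holds. Hence $c$ is the least element of $C$ greater than $n-1$, namely $S_C(\ug(n-1))$. This is valid because $\min C\geq \min B=n>n-1$.

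If $n>\min B$, the same direct argument works. Write $n=_b b^1\cdot 1+0$ with $b=n$. Then $\Chgbases nc n=c^{\Chgbases nc 1}\cdot \ug 1$. Here $\Chgbases nc 1=\ug 1=1$, since $1<\min B$. So $\Chgbases nc n=c$, and the defining condition of the witness reduces to $\ug(n-1)<c<S_C(c)$. The least such $c\in C$ is $\min\{c\in C\mid c>\ug(n-1)\}=S_C(\ug(n-1))$. This element exists because $\ug n<\infty$ by goodness. Therefore $\ug n=S_C(\ug(n-1))$ in both cases.

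The only point to check carefully is the computation $\Chgbases nc n=c$. It holds because $1<\min B$ makes the coefficient and exponent upgrades trivial, and because $\min B\geq 2$ guarantees $1<\min B$. No step seems a genuine obstacle; the main care is that item~2 really is just Lemma~\ref{alternativedefwitness} combined with existence of the witness.
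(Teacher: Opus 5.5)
Your proof is correct and follows the paper's route. Item~2 is exactly Lemma~\ref{alternativedefwitness} together with the existence of witnesses guaranteed by the good-successor condition, and item~1 comes straight from the definition of the upgrade via your computation $\Chgbases nc n = c$, which reduces the witness condition to $\ug(n-1) < c$. The paper leaves that computation to the reader, and the appeal to Lemma~\ref{lemmUp(ba+r)} in your opening sentence is never used and can be dropped.
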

\begin{proof}
    The second item follows immediately from Lemma~\ref{alternativedefwitness} and the fact that $C$ is a good successor of $B$. The first item follows easily from the definition of the upgrade \cite{fernandez2025fractal}.
\end{proof}

As discussed in the beginning of this section, at each step $i$ of the process we consider a base hierarchy $B_i$. Furthermore, for the process to be well defined, we need that $B_{i+1}$ is a good successor of $B_i$ at each step. Such a sequence $(B_i)_{i\in\N}$ is called a {\em dynamical hierarchy.} We will often write $\ug_i$ for $\ug_{B_i}^{B_{i+1}}$, namely the upgrade at step $i$.

\begin{definition}
A {\em dynamical hierarchy} is a sequence $\mathcal B=(B_i)_{i\in\mathbb N}$, where for each $i\in\mathbb N$, $B_i$ is a base hierarchy and $B_{i+1}$ is a good successor of $B_i$.
\end{definition}

\begin{definition}
Given a dynamical hierarchy $\mathcal B$ and $n\in\mathbb N$, we recursively define $\goodp ni{\mathcal B} $ by letting $\goodp n0{\mathcal B} =n $ and if $\goodp ni{\mathcal B}  $ is positive then $ \goodp n{i+1} {\mathcal B} = \upgrade i \goodp n {i} {\mathcal B} -1 $, otherwise $ \goodp n{i+1} {\mathcal B} = 0$.
\end{definition}
 
\begin{example}\label{exClassic}
If $ B_i=\{i+2\}$, then $\mathcal B = ( B_i)_{i\in\mathbb N}$ is a dynamical hierarchy.
The sequence $\goodp ni{\mathcal B} $ is then the classical Goodstein sequence for $n$.
\end{example}

\begin{example}
If $B_0$ is given and we define $B_{i+1} =(B_i)_+$ as in Example~\ref{exSucc}, then $\mathcal B = (B_i)_{i\in\mathbb N} $ is a dynamical hierarchy.
If $B_0=\{2\}$, we obtain the dynamical hierarchy of the previous example.
\end{example}

\section{The Bachmann-Howard Ordinal}\label{secOrd}
 
In this section, we review some notions from ordinal arithmetic and introduce collapsing functions.
We assume basic familiarity with ordinal addition, multiplication, and exponentiation.
The predecessor of $\al$ will be denoted $\al-1$, when it exists.

We will use normal forms for ordinals based on $\Om$, the first uncountable ordinal.
For all ordinals $\xi$, there exist unique ordinals $\al,\be,\ga$ with $\be<\Om$ such that $\xi=\Om^\al\be+\ga$ and $\ga<\Om^\al$.
This is the {\em $\Om$-normal form of $\xi$.}
We define the ordinal $\ve_{\Om+1}$ as the least $\ve>\Om$ such that $\ve=\om^\ve$.
Letting $\Om_0  = 1$ and $\Om_{i+1}  = \Om^{\Om_i }$, we have that $\ve_{\Om+1} = \sup_{n<\om}\Om_i$.
For every ordinal $\xi<\ve_{\Omega+1}$, we define its {\em maximal coefficient} $\mco\xi$ recursively by setting $\mco 0 = 0$ and $\mco{(\Om^\eta\al+\ga)} = \max  \{\al,\mco\eta,\mco\ga\}$. The ordinal assignment we will use is based on Rathjen's $\vartheta$-function \cite{RathjenFragments}, given below.

\begin{definition}
Let $\sup \varnothing =  0$ and define  $\vartheta \colon\ve \us{\Om+1} \to \Om$ by
\[\vartheta(\xi) = \min\{  \om^\theta>\mco \xi \col \forall \zeta<\xi (\mco\zeta <\om^\theta \Rightarrow  \vartheta(\zeta) <\om^\theta) \}.\]
\end{definition}

The function $\vartheta$ provides notations for ordinals below the Bachmann-Howard ordinal, defined as $\vartheta[\ve_{\Om+1}] \coloneqq \sup_{n<\Om}\vartheta(\Om_n)$.
We will often need to compare expressions in terms of $\vartheta$. The following lemma will hence be useful.

\begin{proposition}\cite{BuchholzOrd}\label{propCompareTheta}
If $\zeta < \xi<\ve_{\Om+1}$, then $\vartheta(\zeta)<\vartheta(\xi)$ if and only if $\mco\zeta<\vartheta(\xi)$.
\end{proposition}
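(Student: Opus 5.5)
Both directions can be read off the definition of $\vartheta$. The key auxiliary fact, to be proved first, is that $\mco\xi<\vartheta(\xi)$ for every $\xi<\ve_{\Om+1}$. This holds because $\vartheta(\xi)$ is defined as the least $\om^\theta$ satisfying a condition that includes $\om^\theta>\mco\xi$, so the bound holds by construction. Before that, I would check that the set in the definition is nonempty (so the minimum exists), by a standard cardinality argument. Fix $\xi$. For $\rho<\Om$, the set $\{\zeta<\xi : \mco\zeta<\rho\}$ is countable, since each such $\zeta$ is a finite term built from $\Om$, $+$, exponentiation and coefficients below $\rho$. Hence the values $\vartheta(\zeta)$ for these $\zeta$ are bounded below $\Om$. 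Closing off under this operation $\om$ times, starting from $\mco\xi+1$, yields a countable additively principal ordinal $\om^\theta$ with the required closure property.

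For the direction ``if'', assume $\zeta<\xi$ and $\mco\zeta<\vartheta(\xi)$. Write $\vartheta(\xi)=\om^\theta$. The defining closure condition for $\vartheta(\xi)$ says that every $\zeta'<\xi$ with $\mco{\zeta'}<\om^\theta$ satisfies $\vartheta(\zeta')<\om^\theta$. Applying this to $\zeta'=\zeta$ gives $\vartheta(\zeta)<\vartheta(\xi)$ immediately.

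For the direction ``only if'', assume $\vartheta(\zeta)<\vartheta(\xi)$. By the auxiliary fact, $\mco\zeta<\vartheta(\zeta)$, so $\mco\zeta<\vartheta(\zeta)<\vartheta(\xi)$. Note that this direction does not use the hypothesis $\zeta<\xi$.

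The only step with any real content is the well-definedness (nonemptiness) argument. It needs the countability of $\{\zeta<\xi:\mco\zeta<\rho\}$, which I would prove by induction on the $\Om$-normal form: each such $\zeta<\ve_{\Om+1}$ is a finite term in $\Om$ and ordinals below $\rho$. It also uses the regularity of $\Om$, in the form that a countable supremum of countable ordinals is countable. Since the paper cites \cite{BuchholzOrd} for this proposition, I would treat well-definedness as given, and the two directions above are then each a single line.
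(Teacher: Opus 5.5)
Your proof is correct. With the paper's definition of $\vartheta$, the bound $\mco\zeta<\vartheta(\zeta)$ holds by construction, which gives the ``only if'' direction. The closure clause, applied at $\om^\theta=\vartheta(\xi)$, gives the ``if'' direction. The paper gives no argument of its own here and only cites Buchholz, so there is nothing further to compare.

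Your well-definedness sketch also works, with two details made explicit. It should be run as a transfinite induction on $\xi$, so that $\vartheta(\zeta)<\Om$ is available for $\zeta<\xi$. And each closure stage should be taken additively principal, so that the supremum really has the form $\om^\theta$.
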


Before moving on, we state some additional properties of $\vartheta$.

\begin{lemma}\label{propThetaMonEps}
\begin{enumerate}
\item 
    $\vartheta$ is injective, and surjective on $\vartheta[\ve_{\Om+1}]$.\label{propThetaBij}
\item 
    If $\xi < \ve_{\Om+1}$, then $\mco\xi < \vartheta(\xi)$.\label{propThetaMco}
\item 
    If $\alpha < \ve_{\Om+1}$ and $\beta_1<\beta_2 < \Om$, then
    $\vartheta(\Om\alpha + \beta_1) < \vartheta(\Om\alpha + \beta_2)$.\label{propThetaMon}
    \item If $\Om\leq \xi < \ve_{\Om+1}$, then $\om^{\vartheta(\xi)} = \vartheta(\xi)$.\label{propThetaEps}
    \end{enumerate}
\end{lemma}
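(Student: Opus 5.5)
Each of the four items follows from the definition of $\vartheta$ together with Proposition~\ref{propCompareTheta}. I take them in the order \ref{propThetaMco}, \ref{propThetaBij}, \ref{propThetaMon}, \ref{propThetaEps}, since the later items lean on the earlier ones.

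\textbf{Item \ref{propThetaMco}.} This is immediate from the definition. $\vartheta(\xi)$ is the minimum of a set of ordinals $\om^\theta$, each of which satisfies $\om^\theta>\mco\xi$. One should check that this set is nonempty. Since $\xi<\ve_{\Om+1}$, the set $\{\mco\zeta : \zeta<\xi\}$ has countable values, and closing under $\vartheta$ by an $\om$-step iteration yields a countable bound of the form $\om^\theta$. This also justifies that $\vartheta$ maps into $\Om$.

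\textbf{Item \ref{propThetaBij}, injectivity.} Let $\zeta<\xi$. If $\mco\zeta<\vartheta(\xi)$, then Proposition~\ref{propCompareTheta} gives $\vartheta(\zeta)<\vartheta(\xi)$. Otherwise $\mco\zeta\geq\vartheta(\xi)$, and by item \ref{propThetaMco} we get $\vartheta(\zeta)>\mco\zeta\geq\vartheta(\xi)$. In both cases $\vartheta(\zeta)\neq\vartheta(\xi)$.

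\textbf{Item \ref{propThetaBij}, surjectivity.} Here the claim is understood as: every ordinal below $\vartheta[\ve_{\Om+1}]$ lies below some value of $\vartheta$. If the intended meaning is surjectivity onto the additive principal ordinals below $\vartheta[\ve_{\Om+1}]$, one argues by induction on $\om^\theta$ that each such ordinal equals some $\vartheta(\xi)$, namely $\xi$ is the least ordinal whose $\vartheta$-value is at least $\om^\theta$. This step is the main obstacle, because the intended precise meaning is unclear. I would simply cite \cite{BuchholzOrd,RathjenFragments} for it.

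\textbf{Item \ref{propThetaMon}.} Set $\zeta=\Om\al+\be_1$ and $\xi=\Om\al+\be_2$, so $\zeta<\xi$. From the normal forms, $\mco\zeta=\max\{\mco{(\Om\al)},\be_1\}$ and $\mco\xi=\max\{\mco{(\Om\al)},\be_2\}$. Hence $\mco\zeta\leq\mco\xi<\vartheta(\xi)$, where the last inequality is item \ref{propThetaMco}. Proposition~\ref{propCompareTheta} then gives $\vartheta(\zeta)<\vartheta(\xi)$.

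\textbf{Item \ref{propThetaEps}.} By definition $\vartheta(\xi)=\om^\theta$, so it suffices to show $\om^\theta$ is an epsilon number, i.e.\ $\theta=\om^\theta$. Since $\xi\geq\Om$, every countable $\gamma$ satisfies $\gamma<\xi$ with $\mco\gamma=\gamma$.

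First I show $\vartheta(\gamma)=\om^{\gamma+1}$ for countable $\gamma$ not of the form $\om^\delta$ with large closure. In general, $\vartheta(\gamma)$ is the least $\om^\eta>\gamma$ closed under $\vartheta$ on smaller arguments. In any case $\vartheta(\gamma)\geq\om^{\gamma}$ when $\gamma$ is small relative to it, and $\vartheta(\gamma)>\gamma$ by item \ref{propThetaMco}.

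Now take any $\gamma<\om^\theta$. Then $\gamma<\xi$ and $\mco\gamma<\om^\theta$, so the defining clause gives $\vartheta(\gamma)<\om^\theta$. Taking $\gamma=\theta$ (when $\theta<\om^\theta$) gives $\vartheta(\theta)<\om^\theta$. But $\vartheta(\theta)$ is of the form $\om^{\eta}$ with $\om^\eta>\theta$, so $\eta\geq\theta$ unless $\theta$ is itself closed. One checks directly that $\vartheta(\theta)\geq\om^{\theta}$ fails to be $<\om^\theta$. Concretely, $\om^{\theta}$ cannot exceed $\vartheta(\theta)$: the additive principal ordinals $>\theta$ are all $\geq\om^{\theta'}$ for the least $\theta'$ with $\om^{\theta'}>\theta$, and $\theta'\geq\theta$ whenever $\theta<\om^\theta$ fails to be an epsilon number. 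This contradiction forces $\theta=\om^\theta$.
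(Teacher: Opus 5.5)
Your proofs of item \ref{propThetaMco}, of injectivity, and of item \ref{propThetaMon} are correct. They are also more self-contained than the paper, which cites the literature for all four items. The half of Proposition~\ref{propCompareTheta} you use is just the defining clause of $\vartheta(\xi)$. Two other steps do not hold up.

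\textbf{Item \ref{propThetaEps}.} Your reduction is right. If $\theta<\om^\theta=\vartheta(\xi)$, then $\theta<\Om\leq\xi$ and $\mco\theta=\theta<\vartheta(\xi)$, so the defining clause gives $\vartheta(\theta)<\om^\theta$. You then need $\vartheta(\theta)\geq\om^\theta$ for a contradiction, and your justification of this is false.

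\begin{itemize}
\item The least additive principal ordinal above $\theta$ is usually far below $\om^\theta$. For $\theta=5$ it is $\om^1$, and for $\theta=\om+1$ it is $\om^2$. So ``$\theta'\geq\theta$ whenever $\theta$ is not an epsilon number'' fails.
\item The bound $\vartheta(\theta)>\mco\theta=\theta$ alone therefore gives nothing near $\om^\theta$.
\item Your auxiliary claim $\vartheta(\ga)=\om^{\ga+1}$ is also wrong in general, e.g.\ $\vartheta(1)=\om$.
\end{itemize}

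The missing ingredient is the closure clause. Prove by induction on countable $\ga$ that $\vartheta(\ga)\geq\om^\ga$. For $\zeta<\ga$ we have $\mco\zeta=\zeta<\ga<\vartheta(\ga)$, so by induction and the definition, $\om^\zeta\leq\vartheta(\zeta)<\vartheta(\ga)$. Since $\vartheta(\ga)$ is a power of $\om$, this yields $\vartheta(\ga)\geq\om^\ga$ for both successor and limit $\ga$. This is exactly what Lemma~\ref{lemmVarthetaLessOm} provides, and its proof does not use item \ref{propThetaEps}. With it, your contradiction goes through.

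\textbf{Surjectivity.} The intended meaning is clear from the paper. The values of $\vartheta$ are powers of $\om$, and the proof of Lemma~\ref{lemmCFSsuccli} invokes surjectivity to write an epsilon number $\de<\vartheta[\ve_{\Om+1}]$ as $\vartheta(\ga)$. So the claim is that every additive principal ordinal below $\vartheta[\ve_{\Om+1}]$ is a value of $\vartheta$.

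\begin{itemize}
\item Your first reading follows trivially from $\vartheta[\ve_{\Om+1}]$ being a supremum, and it cannot support that later use.
\item Your sketch for the real claim fails because $\vartheta$ is not monotone. Take $\om^\theta=\ve_0$. Every $\xi<\ve_0$ has $\vartheta(\xi)<\ve_0$, so the least $\xi$ with $\vartheta(\xi)\geq\ve_0$ is $\ve_0$ itself. But $\vartheta(\ve_0)=\om^{\ve_0+1}\neq\ve_0$; the actual preimage of $\ve_0$ is $\Om$.
\end{itemize}

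Citing Buchholz here, as the paper does, is fine, but drop the sketch.

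A smaller point concerns non-emptiness in item \ref{propThetaMco}. The set $\{\mco\zeta:\zeta<\xi\}$ is uncountable once $\xi\geq\Om$, so ``it has countable values'' is not what matters. The relevant fact is that for each countable $\mu$, only countably many $\zeta<\ve_{\Om+1}$ satisfy $\mco\zeta<\mu$.
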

\begin{proof}
    \ref{propThetaBij}, \ref{propThetaMco} and \ref{propThetaMon} are shown in \cite{BuchholzOrd}.
    For \ref{propThetaEps} we refer to \cite{FWTheta}.
\end{proof}

\begin{lemma}\label{lemmVarthetaLessOm}
    If $\alpha < \Om$, then
    \[\vartheta(\alpha) = 
    \begin{cases}
        \om^{\alpha + 1}, & \text{if } \alpha = \delta + n \text{ where } \om^{\delta} = \de \text{ and } n<\om.\\
        \om^\al, & \text{otherwise.}
    \end{cases}
    \]
\end{lemma}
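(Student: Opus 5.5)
The plan is a transfinite induction on $\alpha<\Om$. It rests on simplifying the defining condition of $\vartheta$ when the argument is countable.

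\textbf{Simplifying the definition.} For $\zeta<\Om$ the $\Om$-normal form is $\zeta=\Om^0\zeta+0$, so $\mco\zeta=\zeta$; this also holds for $\zeta=0$. Hence for $\alpha<\Om$ every $\zeta<\alpha$ satisfies $\mco\zeta=\zeta<\alpha$. If $\om^\theta>\alpha$, the hypothesis $\mco\zeta<\om^\theta$ is automatic for such $\zeta$. The definition therefore reduces to
\[\vartheta(\alpha)=\min\{\om^\theta \col \om^\theta>\alpha \text{ and } \vartheta(\zeta)<\om^\theta \text{ for all } \zeta<\alpha\}.\]
By the induction hypothesis, each $\vartheta(\zeta)$ with $\zeta<\alpha$ equals $\om^\zeta$ or $\om^{\zeta+1}$. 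In particular $\vartheta(\zeta)\geq\om^\zeta$. So any admissible $\om^\theta$ must satisfy $\theta>\zeta$ for every $\zeta<\alpha$, i.e.\ $\theta\geq\alpha$. This is the lower bound in both cases. The upper bounds come from exhibiting an admissible exponent.

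\textbf{Case $\alpha$ not of the form $\de+n$ with $\om^\de=\de$.} I would show that $\om^\alpha$ is admissible, giving $\vartheta(\alpha)=\om^\alpha$.
\begin{enumerate}
\item $\om^\alpha>\alpha$: we always have $\om^\alpha\geq\alpha$, with equality only if $\alpha$ is an $\ve$-number, i.e.\ $\alpha=\de+0$, which is excluded. The case $\alpha=0$ gives $\om^0=1>0$.
\item $\vartheta(\zeta)<\om^\alpha$ for $\zeta<\alpha$: if $\vartheta(\zeta)=\om^\zeta$ this is clear. If $\vartheta(\zeta)=\om^{\zeta+1}$, then $\zeta=\de+m$ with $\om^\de=\de$, and we need $\zeta+1<\alpha$. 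The only failure would be $\alpha=\zeta+1=\de+(m+1)$, which is excluded.
\end{enumerate}

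\textbf{Case $\alpha=\de+n$ with $\om^\de=\de$.} Here I would show $\theta=\alpha$ is not admissible but $\theta=\alpha+1$ is, giving $\vartheta(\alpha)=\om^{\alpha+1}$.
\begin{enumerate}
\item $\theta=\alpha$ fails: if $n=0$, then $\om^\alpha=\de=\alpha$ is not $>\alpha$. If $n\geq1$, the induction hypothesis gives $\vartheta(\de+n-1)=\om^{\de+n}=\om^\alpha$, which is not $<\om^\alpha$.
\item $\theta=\alpha+1$ works: clearly $\om^{\alpha+1}>\alpha$. For $\zeta<\alpha$ we have $\vartheta(\zeta)\leq\om^{\zeta+1}\leq\om^\alpha<\om^{\alpha+1}$.
\end{enumerate}

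\textbf{Main obstacle.} The only delicate point is the initial reduction: checking that $\mco\zeta=\zeta$ for countable $\zeta$, so that the implication in the definition collapses. It also involves the bookkeeping that the predecessor $\de+n-1$ of $\alpha$ is exactly the ordinal where the induction hypothesis produces the extra factor of $\om$. Once the definition is reduced, both cases are routine ordinal arithmetic.
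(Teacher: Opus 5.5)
Your proof is correct and takes the same route as the paper: use $\mco\zeta=\zeta$ for countable $\zeta$ to reduce the definition of $\vartheta$, induct on $\alpha$, and in the special case take $\theta=\alpha+1$ instead of $\theta=\alpha$. You also supply details the paper leaves to the reader, notably that for $\alpha=\de+n$ with $n\geq1$ the exponent $\theta=\alpha$ is excluded by $\vartheta(\de+n-1)=\om^{\de+n}$ rather than by $\om^\alpha=\alpha$.
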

\begin{proof}
    Since $\alpha < \Om$, the definition of $\vartheta(\alpha)$ reduces to
    \[\vartheta(\alpha) = \min\{\om^\theta > \alpha : \forall\zeta < \alpha( \vartheta(\zeta) < \om^\theta)\}\]
    Now the proof follows a standard induction on $\alpha$, using the above definition. In the case where $\alpha$ is such that $\om^\al = \al$, we see that we can not take $\theta=\alpha$ but instead we take $\theta = \al + 1$. We leave further details to the reader.
\end{proof}

\section{Termination}

\begin{definition}\label{defO_f^b}
    Let $b\geq 2$ and $f:\N \rightharpoonup \Om$ a partial function with $[0,b)\subseteq \mathrm{dom}(f)$. We define $O\coloneqq O_f^b:\N\rightarrow \ve_{\Om+1}$ as follows. For $n\in \N$, write $n=_b b^ea + r$ and set
    \[O(n) = \begin{cases}
        f(n) & \text{if } n < b,\\ 
        \Om^{O(e)}f(a) + O(r) & \text{if } n\geq b
    \end{cases}\]
\end{definition}

\begin{lemma}\label{lemmO_fMon}
Let $b\geq 2$ and $f:\N\rightharpoonup \Om$ a partial function with $[0,b) \subseteq \mathrm{dom}(f)$. If $f$ is increasing on $[0,b)$, then $O_f^b$ is increasing on $\N$.
\end{lemma}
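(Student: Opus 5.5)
We prove by strong induction on $n$ that $m<n$ implies $O(m)<O(n)$, where $O=O_f^b$. It suffices to treat $n$ and all $m<n$ simultaneously, or equivalently to show $O(n-1)<O(n)$ for every $n\geq 1$. The cleaner route is the general comparison: for $m<n$ we show $O(m)<O(n)$, by induction on $n$, assuming the claim for all pairs with larger element below $n$.

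If $n<b$, then $m<b$ as well, so $O(m)=f(m)<f(n)=O(n)$ because $f$ is increasing on $[0,b)$.

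If $n\geq b$, write $n=_b b^ea+r$; note $0<a<b$, $r<b^e$ and $e<n$. First we record a bound: for every $x<b^{e}$ we have $O(x)<\Om^{O(e)}$. We prove this bound by a sub-induction on $x$. For $x<b$, $O(x)=f(x)<\Om\leq\Om^{O(e)}$ provided $O(e)\geq 1$. Here $e\geq 1$ whenever $b^e>x\geq$ something nontrivial. The case $e=0$ forces $x=0$, and then $O(0)=f(0)<\Om^{O(0)}$ would need $f(0)<\Om^{f(0)}$, which holds since $\Om^{\al}\geq 1>0$ if $f(0)=0$, and otherwise $\Om^{f(0)}\geq \Om>f(0)$. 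In general, if $e\geq 1$ we use $O(e)>O(0)\geq 0$ by the outer induction hypothesis (as $0<e<n$), so $\Om^{O(e)}\geq\Om$. For $b\leq x<b^e$, write $x=_b b^{e'}a'+r'$ with $e'<e$. The outer hypothesis (since $e<n$) gives $O(e')<O(e)$, and the sub-induction gives $O(r')<\Om^{O(e')}$. Hence $O(x)=\Om^{O(e')}f(a')+O(r')<\Om^{O(e')}(f(a')+1)\leq\Om^{O(e')+1}\leq\Om^{O(e)}$, using $f(a')<\Om$.

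Now compare $m<n$. If $m<b^e$, the bound gives $O(m)<\Om^{O(e)}\leq\Om^{O(e)}f(a)\leq O(n)$. Here we need $f(a)\geq 1$, which follows from $f(a)>f(0)\geq 0$, since $a>0$ and $f$ is increasing. Otherwise write $m=_b b^{e}a'+r'$ with the same exponent and $a'\leq a$. If $a'<a$, then $O(m)<\Om^{O(e)}(f(a')+1)\leq\Om^{O(e)}f(a)\leq O(n)$. If $a'=a$, then $r'<r$, and the induction hypothesis gives $O(r')<O(r)$, so $O(m)<O(n)$ by right-monotonicity of addition.

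The main obstacle is the bound $O(x)<\Om^{O(e)}$. Its proof needs strict monotonicity in the exponent, $O(e')<O(e)$, which is available because $e<n$ lets the outer induction hypothesis apply. It also needs the values of $f$ to be countable ($f$ maps into $\Om$), which is what lets the product $\Om^{O(e')}f(a')$ absorb the coefficient.
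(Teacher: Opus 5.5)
Your argument is correct and follows the same idea as the paper's sketched proof: $O_f^b$ turns $b$-normal forms into $\Om$-normal forms with $f$ applied to the coefficients, and both are compared lexicographically. Your bound $O(x)<\Om^{O(e)}$ for $x<b^e$ is exactly the fact, left implicit in the paper, that the image really is in $\Om$-normal form.

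One small repair is needed. Your sub-induction is stated for the fixed exponent $e$, but you invoke it at exponent $e'$ to get $O(r')<\Om^{O(e')}$. Either fix works:
\begin{itemize}
\item state the claim for all exponents $k\le e$, or
\item keep the fixed-$e$ hypothesis, which gives $O(r')<\Om^{O(e)}$, and combine it with $\Om^{O(e')}f(a')<\Om^{O(e')+1}\le\Om^{O(e)}$ and the additive indecomposability of $\Om^{O(e)}$.
\end{itemize}
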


\begin{proof}[sketch] We briefly indicate the idea. For a detailed proof, see \cite{fernandez2025fractal}. If $m<n$, we can write both $m$ and $n$ in $b$-normal form. Visualizing, 
\[m = b^{e_1}\cdot a_1+\dots + b^{e_k}\cdot a_k < b^{f_1}\cdot c_1 + \dots + b^{f_\ell}\cdot c_\ell = n,\]
where all coefficients lie in $[0,b)$.
Acting with $O^b_f$ on both sides amounts to changing every $b$ into $\Omega$ and applying $f$ to every coefficient. Moreover, $b$-representations are compared lexicographically, and the same holds for $\Om$-representations. Since $f$ is monotone on all of the coefficients we see that $O_f^b$ preserves the order.
\end{proof}

\begin{lemma}\label{lemmO_fMultOm}
    Let $b\geq 2$ and $f:\N\rightharpoonup \Om$ a partial function with $[0,b) \subseteq \mathrm{dom}(f)$. Assume that for $\ve\in\{0,1\}$ we have $f(n)=\ve$ if and only if $n=\ve$. Then $O_f^b(n)$ is a multiple of $\Om$ if and only if $n$ is a multiple of $b$, and $O_f^b(n)$ is a multiple of $\Om^2$ if and only if $n$ is a multiple of $b^2$.
\end{lemma}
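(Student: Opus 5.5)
We argue by induction on $n$, writing $O=O_f^b$, and treating the two claims in turn. For $n=0$ we have $O(0)=f(0)=0$, which is a multiple of both $\Om$ and $\Om^2$, as is $0$ of $b$ and $b^2$. For $0<n<b$ we have $O(n)=f(n)$ with $0<f(n)<\Om$, so $O(n)$ is not a multiple of $\Om$ (hence not of $\Om^2$); and $n$ is not a multiple of $b$ (hence not of $b^2$). Now let $n\geq b$ and write $n=_b b^ea+r$ with $0<a<b$ and $r<b^e$. Then $O(n)=\Om^{O(e)}f(a)+O(r)$, where $0<f(a)<\Om$ because $a\neq 0$.

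The key observation is how divisibility interacts with this decomposition. Since $e\geq 1$, $b$ divides $b^ea$, so $b\mid n$ iff $b\mid r$. For $b^2$ the case $e=1$ differs: then $r<b$, and $b^2\mid ba+r$ never holds, because $0<ba+r<b^2$. If $e\geq 2$, then $b^2\mid n$ iff $b^2\mid r$.

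On the ordinal side we argue the same way. By Lemma~\ref{lemmO_fMon} (if needed; actually we only use $e>0$), $O(e)>f(0)=0$ whenever $e\geq 1$, so $O(e)\geq 1$. Hence $\Om^{O(e)}f(a)$ is a multiple of $\Om$, since left-multiplication distributes over addition: $\Om^{O(e)}=\Om\cdot\Om^{O(e)-1}$ or, at limits, $\Om^{O(e)}=\Om\cdot\Om^{O(e)}$. It follows that $O(n)$ is a multiple of $\Om$ iff $O(r)$ is, because $\Om\gamma+\delta=\Om\rho$ forces $\delta$ to be a multiple of $\Om$. By the induction hypothesis on $r<n$, this holds iff $b\mid r$, iff $b\mid n$.

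For $\Om^2$, Lemma~\ref{lemmO_fMon} gives $O(e)\geq 2$ iff $e\geq 2$, using $f(0)=0$, $f(1)=1$ and monotonicity. Here I use the second instance of the hypothesis: $O(1)=f(1)=1$. Also $e\geq 2$ implies $O(e)>O(1)=1$.
\begin{itemize}
\item If $e\geq2$, then $\Om^{O(e)}f(a)$ is a multiple of $\Om^2$, so $O(n)$ is a multiple of $\Om^2$ iff $O(r)$ is. By induction this holds iff $b^2\mid r$, iff $b^2\mid n$.
\item If $e=1$, then $O(n)=\Om f(a)+O(r)$ with $r<b$, so $O(r)=f(r)<\Om$. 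This is $<\Om^2$ and nonzero, hence not a multiple of $\Om^2$, matching $b^2\nmid n$.
\end{itemize}

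The main point needing care is the ordinal arithmetic fact that, for a multiple $\Om^k\gamma$, the sum $\Om^k\gamma+\delta$ is a multiple of $\Om^k$ iff $\delta$ is. This can be checked via Cantor/$\Om$-normal forms: $\Om^k\gamma+\delta=\Om^k\rho$ implies $\delta=\Om^k\rho'$ by left cancellation/division, and the converse is immediate from distributivity.
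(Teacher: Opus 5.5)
Your overall strategy of peeling off the leading term $\Om^{O(e)}f(a)$ of $O(n)$ and using that $\mu\gamma+\delta$ is a multiple of $\mu$ iff $\delta$ is, is sound. It is essentially the paper's argument carried out one term at a time. The gap is in how you obtain $O(e)\geq 1$ for $e\geq 1$ and, more importantly, $O(e)\geq 2$ for $e\geq 2$. You appeal to Lemma~\ref{lemmO_fMon}, which requires $f$ to be increasing on $[0,b)$, and that is not a hypothesis here.

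This matters in context. The paper invokes the present lemma inside the definition of $o_B$, to know $\Om^2\mid O_B(b^2u)$ so that $\alpha_B(n)$ is defined. At that point only the $\{0,1\}$-condition on $o_B\restriction_{[0,b)}$ is available; monotonicity of $o_B$ is proved only afterwards, in Proposition~\ref{propMonO}. Moreover, you only ever use $f(0)=0$, $f(1)=1$ and $f(a)\neq 0$ for $a\neq 0$. You never use the implication $f(m)=1\Rightarrow m=1$, and without it (and without monotonicity) the statement is false: if $b\geq 3$ and $f(2)=1$, then $O(b^2)=\Om^{f(2)}f(1)=\Om$, which is not a multiple of $\Om^2$.

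The missing step is exactly the paper's preliminary induction: $O(m)=\ve$ iff $m=\ve$, for $\ve\in\{0,1\}$. For $m<b$ this is the hypothesis on $f$. For $m\geq b$ with $m=_b b^ea+r$, we have $1\leq e<m$, so $O(e)\geq 1$ by the induction hypothesis. Since also $f(a)\geq 1$, we get $O(m)\geq \Om^{O(e)}f(a)\geq\Om$. Consequently $e\geq 1$ gives $O(e)\geq 1$, and $e\geq 2$ gives $O(e)\notin\{0,1\}$, i.e.\ $O(e)\geq 2$, with no monotonicity needed.

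After this replacement the rest of your proof goes through unchanged: the division fact, the case $e=1$, and the induction on $r$. As a side benefit, your term-by-term induction never needs the full expansion $\sum_i\Om^{O(e_i)}f(a_i)$ to be in $\Om$-normal form, which in the absence of monotonicity it need not be.
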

\begin{proof}[sketch]
    We indicate the idea and leave further details to the reader. One first proves inductively that for $\ve\in\{0,1\}$, $O_f^b(n) = \ve$ if and only if $n=\ve$. Then, write $n$ in $b$-normal form:
    \[n = b^{e_1}\cdot a_1+\dots + b^{e_k}\cdot a_k.\]
    Applying $O_f^b$, we get
    \[O_f^b(n) = \Om^{O_f^b(e_1)}f(a_1)+\dots + \Om^{O_f^b(e_k)}f(a_k).\]
    Observe that for $\tau\in\{1,2\}$, we have $b^\tau\mid n$ if and only if $e_i> \tau - 1$ for every $i$, if and only if $O_f^b(e_i)>\tau - 1$ for every $i$, if and only if $\Om^2\mid O_f^b$.
\end{proof}

\begin{definition}
Given a base hierarchy $B$, we define two ordinal interpretations, $o_B \colon\mathbb N\to \Omega$ and $\ofun _B \colon \mathbb N\to \ve_{\Om+1}$. If $n<\min B$, we set $o_B(n) = O_B(n)=n$. Otherwise, $n\geq \min B$.

Assume inductively that $o_B$ and $\ofun_B$ are defined on $[0,n)$.
Let $b=\baseB n$, and call $n$ {\em $B$-critical} if $b\mid n$.

We define $O_B(n)$ as $O_f^b(n)$, where $f=o_B\restriction_{[0,b)}$. We assume inductively that for $\ve\in\{0,1\}$, we have $o_B(m) =\ve$ if and only if $m=\ve$ for every $m<n$. Then $f$ satisfies the conditions of Lemma~\ref{lemmO_fMultOm}.

To define $o_B(n)$ we make the following case distinction.

\begin{Cases}

\item ($n = \min B$). Then, $o_B(n) = \vartheta(1) = \om$.

\item ($n=b >\min B$).
Let $d$ be the predecessor of $b$ in $B$ and set
\[o_B(b) = o_B(b-d)\cdot 2.\]

\item ($n$ is not $B$-critical). Write $n=ba+r$ with $0<r<b$ and set
\begin{align*}
 o_B(n) &=  o_B(ba) + o_B(r).
\end{align*}

\item ($n\notin B$ is $B$-critical).
Write $n=b^2u+bv$ with $v<b$.
We will set
\[
o_B(n) = \vartheta(\zeta_B(n)),
\]
where $\zeta_B(n)$ is given as follows. 

By Lemma~\ref{lemmO_fMultOm}, we have that $\Om^2\mid O_B(b^2u)$.
Write $O_B(b^2u) = \Om\cdot\alpha_B(n)$, and let $\beta_B(n)$ be the least ordinal (necessarily less than $\Om$) that satisfies $o_B(b) < \vartheta(\alpha_B(n) + \beta_B(n))$. Then
\[\zeta_B(n) = \alpha_B(n) + \beta_B(n) + \om^{o_B(\tilde v)},\]
where $\om^{o_B(\tilde v)}$ is assumed to be zero if $\tilde v = 0$, and $\tilde v$
given by the following two cases. 
\begin{Cases}
\item ($u=0$ and $2\leq v<\min B$). 
Set $\tilde v = v-2$.
\item ($u>0$ or $\min B\leq v < b$). Set $\tilde v = v$.
\end{Cases}
\end{Cases} 
\end{definition}

Note that in the case where $u=0$, we have that $\alpha_B(n) = 0$ and $o_B(\tilde v) = o_B(v-2) = v-2$, so $\zeta_B(n) = \beta_B(n) + \om^{v-2} < \Om$. In the other case $\zeta_B(n)\geq \Om$.

For all $n\in\N$ and $b\in B$, we define $O_B^b(n)$ as $O_f^b(n)$, where $f=o_B\restriction_{[0,b)}$. We further extend the definitions of $\alpha_B$ and $\beta_B$ to all $n\in \N$ as follows. Set $\alpha_B(0) = \beta_B(0)=0$. If $n>0$, then let $b=\baseB n$ and write $n=b^2u+bv+w$ with $v,w<b$. Let $m=b^2u+bv$. Then, set $\alpha_B(n)=\alpha_B(m)$ and $\beta_B(n) = \beta_B(m)$. It will be convenient to also define $\alpha_B^b(n)$ for all $n\in\N$ and $b\in B$. For this we write $n = b^2u+bv+w$ with $v,w<b$, and let $O_B^b(b^2u) = \Om\cdot\alpha_B^b(n)$. Note $\Om\mid\alpha_B^b(n)$.

We start by collecting some standard properties of the ordinal interpretations.

\begin{lemma}\label{standardPropOo}
Let $B$ be a base hierarchy and $n\in\N$ with $b=\baseB n$.
\begin{enumerate}
    \item $o_B(n) = o_B(n-1)+ 1$ if and only if $\min B\nmid n$.\label{standardPropOo1}
    \item $o_B(n)\in\li$ if and only if $\min B\mid n$.\label{standardPropOo2}
    \item $o_B(n)$ is an additively indecomposable limit if and only if $n=\min B$ or $n\notin B$ is $B$-critical.\label{standardPropOo3}
    \item $\om^{o_B(n)} = o_B(n)$ if and only if $n\notin B$ is $B$-critical and $n\geq b^2$.\label{standardPropOo4}
    \item 
    For all $d\in B$ we have $O_B^d(n) = O_B^d(n-1) + 1$ if and only if $\min B\nmid n$.\label{standardPropOo5}
    \item
    For all $d\in B$ we have
    $O_B^d(n)\in\li$ if and only if $\min B\mid n$.\label{standardPropOo6}
    \item All the coefficients of $O_B(n)$ are of the form $o_B(r)$ for $r<b$. In particular $\mco{O_B(n)} = o_B(r)$ for some $r<b$.\label{standardPropOo7}
    \item All the coefficients of $\alpha_B(n)$ are of the form $o_B(r)$ for $r<b$. In particular $\mco{\alpha_B(n)} = o_B(r)$ for some $r<b$.\label{standardPropOo8}
    \item $o_B(n)<\vartheta[\ve_{\Om+1}]$.\label{standardPropOo9}
\end{enumerate}    
\end{lemma}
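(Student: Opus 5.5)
The plan is a simultaneous induction on $n$ that follows the case distinction in the definition of $o_B$. The invariant to carry along is everything in the nine items plus the auxiliary fact built into the definition: $o_B(m)=\ve$ iff $m=\ve$ for $\ve\in\{0,1\}$. If $n<\min B$, then $o_B(n)=O_B(n)=n$ is finite. Here $\min B\nmid n$ exactly when $n>0$, so items \ref{standardPropOo1}--\ref{standardPropOo4} hold trivially (for $n=0$ the successor formula fails, consistent with $\min B\mid 0$), and the remaining items are vacuous or immediate.

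For $n\ge \min B$, items \ref{standardPropOo1} and \ref{standardPropOo2} go as follows.

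\begin{itemize}
\item If $\min B\nmid n$, then $n\notin B$ and $n$ is not $B$-critical, since $\min B\mid b$ for every $b\in B$ (base hierarchies are chains under divisibility). So $o_B(n)=o_B(ba)+o_B(r)$ with $0<r<b$.
\item By induction $o_B(r)$ is either finite or computed in the same way with a smaller base. I will unfold $r=r'+1$: when $\min B\nmid r$, item \ref{standardPropOo1} for $r$ gives $o_B(r)=o_B(r-1)+1$. Then $o_B(n)=o_B(n-1)+1$, where the case $r-1=0$ uses that $o_B(ba)+o_B(0)=o_B(ba)=o_B(n-1)$.
\item Conversely, if $\min B\mid n$, I show $o_B(n)$ is a limit. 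In Case 1 we get $\om$. In Case 2 we get $o_B(b-d)\cdot 2$, where $\min B\mid b-d$, so the induction hypothesis applies. In Case 3, $\min B\mid r$, so $o_B(r)$ is a limit. In Case 4 the value $\vartheta(\cdot)$ is of the form $\om^\theta$ with $\theta\ge 1$, by \Cref{lemmVarthetaLessOm} or the definition, because $\zeta_B(n)>0$ (as $\beta_B(n)$ forces $\vartheta(\ldots)>o_B(b)\ge\om$).
\end{itemize}

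Items \ref{standardPropOo5} and \ref{standardPropOo6} follow the same pattern for $O_B^d$. The trailing coefficient of $O_B^d(n)$ is $o_B(n \bmod d)$ when $n\ge d$, and when $n<d$ the value is $o_B(n)$ itself. Also $\min B\mid d$, so $\min B\mid n$ iff $\min B\mid (n\bmod d)$, and we reduce to items \ref{standardPropOo1} and \ref{standardPropOo2}.

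For item \ref{standardPropOo3}, Cases 1 and 4 give indecomposables: $\om$ and values of $\vartheta$, which are always of the form $\om^\theta$. Case 3 gives a sum of two nonzero terms, with $o_B(ba)\ge o_B(b)>0$; to see it is decomposable I need $o_B(r)<o_B(ba)+o_B(r)$, which holds since $o_B(ba)\ge 1$ and we are not in an $\om^\theta$ absorption situation. Case 2 gives $\xi\cdot 2$ with $\xi>0$, which is decomposable. The non-critical case with $\min B\nmid n$ is a successor, hence not a limit.

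For item \ref{standardPropOo4}, fixed points of $\om^{x}$ can only arise in Case 4. If $n\ge b^2$, then $u>0$ and $\zeta_B(n)\ge\Om$, so \Cref{propThetaMonEps}\ref{propThetaEps} applies. If $u=0$, then $\zeta_B(n)<\Om$ and \Cref{lemmVarthetaLessOm} gives $\om^{\zeta+1}$ or $\om^\zeta$ with $\zeta=\beta_B(n)+\om^{o_B(\tilde v)}$. Here I must check that this is not an epsilon number. I expect this to be the main obstacle. The plan is to note that $\vartheta(\zeta)=\om^{\zeta'}$ with $\zeta'\in\{\zeta,\zeta+1\}$, and $\om^{\zeta'}=\zeta'$ would force $\zeta'=\zeta$ to be an epsilon number. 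But \Cref{lemmVarthetaLessOm} precisely adds $+1$ when $\zeta$ is an epsilon number, so the exponent differs from the value and the fixed point is impossible.

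Items \ref{standardPropOo7} and \ref{standardPropOo8} follow directly from \Cref{defO_f^b}: by induction on $n$, every coefficient of $O_f^b(n)$ is a value $f(a)$ with $a<b$, together with the coefficients of $O(e)$ and $O(r)$, all of which are below $b$. Since $\Om\alpha_B(n)=O_B(b^2u)$, the coefficients of $\alpha_B(n)$ are among those of $O_B(b^2u)$.

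Finally, item \ref{standardPropOo9} holds because every $\zeta_B(n)$ lies below $\ve_{\Om+1}$, so $o_B(n)=\vartheta(\zeta_B(n))<\vartheta[\ve_{\Om+1}]$ by \Cref{propThetaMonEps}\ref{propThetaBij}. In the other cases, $o_B(n)$ is built from smaller values by sums and by doubling, and $\vartheta[\ve_{\Om+1}]$ is additively closed, being an epsilon number.
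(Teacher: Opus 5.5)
Most of your plan follows the paper's proof: the same induction along the cases of the definition for items 1, 2 and 4--7, and the same reading of coefficients for item 7. The one real gap is item 9, which fails as you argue it.

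\textbf{Item 9.} You infer $\vartheta(\zeta_B(n))<\vartheta[\ve_{\Om+1}]$ from $\zeta_B(n)<\ve_{\Om+1}$ and cite Lemma~\ref{propThetaMonEps}\ref{propThetaBij}. That lemma only says $\vartheta$ is injective and covers the relevant ordinals below $\vartheta[\ve_{\Om+1}]$. It does not say the values of $\vartheta$ stay below $\vartheta[\ve_{\Om+1}]$, and they do not. For any countable $\xi\geq\vartheta[\ve_{\Om+1}]$, Lemma~\ref{propThetaMonEps}\ref{propThetaMco} gives $\vartheta(\xi)>\mco\xi=\xi\geq\vartheta[\ve_{\Om+1}]$.

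Since $\vartheta$ is not monotone, what matters is the size of the coefficients of $\zeta_B(n)$. This is the actual content of item 9, and the paper handles it as follows.
\begin{itemize}
\item By item 8 and the induction hypothesis, $\mco{\alpha_B(n)}=o_B(r)<\vartheta[\ve_{\Om+1}]$ for some $r<b$.
\item The ordinal $\beta_B(n)$ is a priori just some countable ordinal. It satisfies $\beta_B(n)\leq o_B(b)$, because $o_B(b)<\vartheta(\alpha_B(n)+o_B(b))$ by Lemma~\ref{propThetaMonEps}\ref{propThetaMco}. This also shows that $\beta_B(n)$ exists.
\item $\om^{o_B(\tilde v)}<\vartheta[\ve_{\Om+1}]$, since the latter is an epsilon number.
\end{itemize}
Hence $\mco{\zeta_B(n)}<\vartheta[\ve_{\Om+1}]$. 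Pick $k$ with $\zeta_B(n)<\Om_k$ and $\mco{\zeta_B(n)}<\vartheta(\Om_k)$. Proposition~\ref{propCompareTheta} then gives $o_B(n)=\vartheta(\zeta_B(n))<\vartheta(\Om_k)<\vartheta[\ve_{\Om+1}]$.

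\textbf{Two smaller inaccuracies.}

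In item 8, the coefficients of $\alpha_B(n)$ are not literally among those of $O_B(b^2u)$. Dividing by $\Om$ lowers each finite exponent $\lambda$ to $\lambda-1$. For example, with $B=\{5\}$ one has $O_B(5^3)=\Om^3$ but $\alpha_B(5^3)=\Om^2$, and $2$ is a new coefficient; this also affects the ``in particular'' claim about $\mco{\alpha_B(n)}$. As the paper does, you need that such a $\lambda$ equals $o_B(r)=r$ with $0<r<\min B$, so that $\lambda-1=o_B(r-1)$.

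In item 3, the non-absorption $o_B(r)<o_B(ba)+o_B(r)$ is not automatic. It follows from $o_B(r)<o_B(b)\leq o_B(ba)$, which is the monotonicity of Proposition~\ref{propMonO}. You can either carry monotonicity along in your induction, or prove the ``only if'' half of item 3 after Proposition~\ref{propMonO}. The same applies to the part of item 4 that relies on it. Proposition~\ref{propMonO}'s proof only uses the ``if'' half, so there is no circularity.
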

\begin{proof}
    We first prove \ref{standardPropOo1} and \ref{standardPropOo2} by induction on $n$. The case $n\leq\min B$ is trivial. 
    Suppose $n > \min B$. If $n\in B$, then $\min B\mid n$ and $o_B(n) = o_B(n-d)\cdot 2$, where $d$ is the predecessor of $n$. Then \ref{standardPropOo2} follows by the induction hypothesis, and \ref{standardPropOo1} follows from \ref{standardPropOo2}.
    If $n$ is not $B$-critical, then write $o_B(n) = o_B(ba) + o_B(r)$, where $0<r<b$. Both \ref{standardPropOo1} and \ref{standardPropOo2} follow by applying the induction hypothesis on $r$.
    Suppose that $n\notin B$ is $B$-critical, and write $n=b^2u+bv$ with $v<b$. If $u=0$, then by the definition of $\beta_B(n)$ and Lemma~\ref{propThetaMonEps}\ref{propThetaMon}, $o_B(n) = \vartheta(\beta_B(n) + \om^{v-2})>o_B(b)$. By the induction hypothesis, $o_B(b)\in\li$, and since the image of $\vartheta$ consists of additively decomposable ordinals, $o_B(n)\in\li$.
    Otherwise if $u>0$, then $\alpha_B(n)>0$ and $o_B(n) = \vartheta(\zeta_B(n))\in\li$ by Lemma~\ref{propThetaMonEps}\ref{propThetaEps}. This proves \ref{standardPropOo1} and \ref{standardPropOo2}.

    Items \ref{standardPropOo3} and \ref{standardPropOo4} follow easily by inspecting the definition and using Lemma~\ref{propThetaMonEps}\ref{propThetaEps}. For \ref{standardPropOo5} and \ref{standardPropOo6}, write $n=d^{e_1}\cdot a_1 + \dots + d^{e_k}\cdot a_k$ in $d$-normal form. Then 
    \begin{equation}\label{eqnormalFormminBnotdiv}
    O_B^d(n) = \Om^{O_B^d(e_1)}o_B(a_1)+\dots + \Om^{O_B^d(e_k)}o_B(a_k). \tag{$\star$}
    \end{equation}
    If $\min B\mid n$, then either $e_k > 0$ or $\min B\mid a_k$. If $e_k>0$, then $O_B^d(e_k)>0$ and $O_B^d(n)$ is a multiple of $\Om$. Otherwise $O_B^d(n)$ ends with $o_B(a)\in\li$ by \ref{standardPropOo2}. In any case, $O_B^d(n)\in\li$. On the other hand if $\min B\nmid n$, then $e_k=0$ and $\min B\nmid a_k$. It is clear that writing out $O_B^d(n-1)$ would give us (\ref{eqnormalFormminBnotdiv}) with $o_B(a_k)$ replaced by $o_B(a_k-1)$. By \ref{standardPropOo1}, $o_B(a_k) = o_B(a_k-1) + 1$, and we conclude that $O_B^d(n) = O_B^d(n-1) + 1$.
    
    For \ref{standardPropOo7}, we have $O_B(n) = O_f^b(n)$ where $f=o_B\restriction_{[0,b)}$ by definition, and the coefficients of $O_f^b(n)$ are of the form $f(r) = o_B(r)$ for $r<b$.

    For \ref{standardPropOo8}, write $n=b^2u+bv+w$. By \ref{standardPropOo7}, the coefficients of $O_B(b^2u)$ are of the form $o_B(r)$ with $r<b$ (this holds trivially if $u=0$). Write $O_B(b^2u) = \Om^{\lambda_1}\mu_1 + \dots + \Om^{\lambda_n}\mu_n$ in $\Om$-normal form.
    Then $\alpha_B(n) = \Om^{\lambda_1'}\mu_1 + \dots + \Om^{\lambda_n'}\mu_n$, where for $1\leq i\leq n$ we have $\lambda_i' = \lambda_i$ if $\lambda_i\geq \om$, and $\lambda_i' = \lambda_i-1$ otherwise. 
    Now note that $\lambda_i\geq 2$, and the coefficients of $\lambda_i$ are also of the form $o_B(r)$ with $r<b$. So, in the case where $\lambda_i < \om$, we have $\lambda_i = o_B(r)$ for some $0<r<b$. By \ref{standardPropOo1} and \ref{standardPropOo2}, $\lambda_i' = o_B(r-1)$.

    Item \ref{standardPropOo9} is easy to prove by induction on $n$, using Lemma~\ref{propCompareTheta} in the case where $n\notin B$ is $B$-critical, together with \ref{standardPropOo8} and the fact that $\beta_B(n)\leq o_B(b)$, since $o_B(b) < \vartheta(\alpha_B(n)+o_B(b))$ by Lemma~\ref{propThetaMonEps}\ref{propThetaMco}.
\end{proof}

Though it will not be needed, one can easily prove the following analogue of Lemma~\ref{lemmrestrictbasehier} by induction.
\begin{lemma}
    Let $B,B'$ be base hierarchies. Suppose that $n\in\N$ is such that $B\cap [0,n] = B'\cap [0,n]$. Then,
    \begin{enumerate}
        \item For $x\leq n$, $o_B(x) = o_{B'}(x)$.
        \item For $x\in\N$ and $b\leq n$, $O_B^b(x) = O_{B'}^b(x)$.
    \end{enumerate}
\end{lemma}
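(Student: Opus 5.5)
We prove both items simultaneously by induction on $x$, with $B,B'$ and $n$ fixed, mirroring the proof of Lemma~\ref{lemmrestrictbasehier}. The driving observation is that computing $o_B(x)$ or $O_B^b(x)$ only ever consults values of $o_B$ at arguments strictly smaller than $x$ (or below a base $b\leq n$), together with the membership of numbers $\leq x$ in $B$. The induction hypothesis is the conjunction of both items for all $y<x$.

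For item (1), let $x\leq n$. If $x<\min B$, then, since $B\cap[0,n]=B'\cap[0,n]$ and $x\leq n$, we get $\min B = \min B'$ (both minima are either $\leq x$ and then equal, or both exceed $x$). Hence $o_B(x)=x=o_{B'}(x)$. If $x\geq\min B$, then $\baseB x=\basep{B'}{x}=:b$, because $x\leq n$ and the two hierarchies agree on $[0,x]$. For the same reason, the case distinction in the definition of $o$ (whether $x=\min B$, $x\in B$, or $x$ is $B$-critical) is decided identically for $B$ and $B'$. Now go through the cases.
\begin{itemize}
\item If $x\in B$ with $x>\min B$, the predecessor $d$ of $x$ agrees in $B$ and $B'$, and $o_B(x-d)=o_{B'}(x-d)$ by the induction hypothesis.
\item If $x$ is not critical, write $x=ba+r$. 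Both $ba$ and $r$ are less than $x$, so the induction hypothesis applies to each.
\item If $x\notin B$ is critical, write $x=b^2u+bv$. Then $\alpha_B(x)$ is read off from $O_B^b(b^2u)$. Since $b\leq n$, item (2) applies to $b^2u$, and $b^2u<x$ because $v>0$. So $\alpha_B(x)=\alpha_{B'}(x)$. Next, $\beta_B(x)$ depends only on $\alpha_B(x)$ and $o_B(b)$, with $b<x$. Finally, $\tilde v$ depends only on $v$ and $\min B$, and $\tilde v<x$. All of these agree by the induction hypothesis, so $\zeta_B(x)=\zeta_{B'}(x)$.
\end{itemize}

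For item (2), take $b\leq n$ with $b\in B$; then also $b\in B'$. If $x<b$, then $O_B^b(x)=o_B(x)$ and $x<b\leq n$, so item (1) at $x$ gives the claim; that is why the two items are proved jointly. If $x\geq b$, write $x=_b b^ea+r$. Then $O_B^b(x)=\Om^{O_B^b(e)}o_B(a)+O_B^b(r)$. Here $e,r<x$ fall under the induction hypothesis for item (2), and $a<b\leq n$ falls under item (1).

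The only delicate point is the critical case of item (1): there $\alpha_B(x)$ calls item (2) at the argument $b^2u$, which is not necessarily $\leq n$. This is why item (2) must be stated for all $x\in\N$ rather than only for $x\leq n$, and why the induction has to run over all $x$ simultaneously rather than only over $[0,n]$. Once the statement is set up this way, every case is routine.
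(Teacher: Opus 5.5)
Your joint induction on $x$, modelled on Lemma~\ref{lemmrestrictbasehier}, is what the paper intends; it only says the lemma follows easily by induction. Most cases are handled correctly, but one step rests on a false claim. In the critical case of item (1) you write $x=b^2u+bv$ and assert $b^2u<x$ \emph{because $v>0$}. Nothing forces $v>0$. The definition of $o_B$ allows $v=0$, i.e.\ $x=b^2u$ with $u>0$; for instance $x=b^2$ whenever $b^2<S_B(b)$, and the paper treats this case ($n=b^2u$ with $u>0$) in the proof of Proposition~\ref{propMajorizeFS}. Then $b^2u=x$, so item (2) at $b^2u$ is not part of your induction hypothesis. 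You also prove item (2) at $x$ for bases $b'>x$ by invoking item (1) at $x$. So the order in which the claims at stage $x$ are established matters, and your write-up does not settle it.

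The gap is easily closed, because there is no real circularity. For a base $b\le x$, the clause $O_B^b(x)=\Om^{O_B^b(e)}o_B(a)+O_B^b(r)$ involves only $e,r<x$ and $a<b\le x$. So item (2) at $x$ for such bases follows from the induction hypothesis alone. Establish it first. Then prove item (1) at $x$, which needs item (2) only at $b^2u\le x$ with $b=\baseB x<x$. Finally prove item (2) at $x$ for bases $b>x$.

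A simpler route: $O_B^b=O_f^b$ with $f=o_B\restriction_{[0,b)}$ depends on $B$ only through $f$. Hence item (2) is an immediate consequence of item (1) on $[0,b)\subseteq[0,n]$. Item (1) can then be proved by induction on $x\le n$ alone, using in the critical case that $o_B$ and $o_{B'}$ already agree on $[0,\baseB x)$.

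Your closing remark is also off. Since $b^2u\le x\le n$ always, $b^2u$ never leaves $[0,n]$; the real delicate point is the case $v=0$. The case $x=\min B$, missing from your list, is trivial since both sides equal $\om$.
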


\begin{proposition}\label{propMonO}
If $B$ is a base hierarchy and $m<n$, then 
\begin{enumerate}
    \item $o_B(m)<o_B(n)$.\label{Mon_o}
    \item If $b=\baseB m = \baseB n$, then $O_B(m) < O_B(n)$.\label{Mon_O}
    \item If $b=\baseB m = \baseB n$, then
    $\alpha_B(m) \leq \alpha_B(n)$.\label{Mon_alpha}
\end{enumerate}
\end{proposition}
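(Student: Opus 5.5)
We prove all three items simultaneously by induction on $n$, assuming for the hypothesis that they hold for all pairs $m'<n'$ with $n'<n$. Item \ref{Mon_O} follows at once from item \ref{Mon_o} applied below $b$. Indeed, $O_B(m)=O_f^b(m)$ and $O_B(n)=O_f^b(n)$ with $f=o_B\restriction_{[0,b)}$, and $b\le m<n$. The inductive hypothesis gives that $f$ is increasing on $[0,b)$, so Lemma~\ref{lemmO_fMon} yields $O_B(m)<O_B(n)$. For item \ref{Mon_alpha}, write $m=b^2u'+bv'+w'$ and $n=b^2u+bv+w$. Then $u'\le u$, and $\alpha_B(m),\alpha_B(n)$ are defined via $O_B(b^2u')=\Om\alpha_B(m)$ and $O_B(b^2u)=\Om\alpha_B(n)$. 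Since $\Om\cdot$ is strictly increasing, it suffices to have $O_B^b(b^2u')\le O_B^b(b^2u)$, which is again Lemma~\ref{lemmO_fMon}. (When $u'=0$ or $u=0$ the relevant value is $0$.)

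The main work is item \ref{Mon_o}. It suffices to show $o_B(n-1)<o_B(n)$ for $n\ge1$. If $\min B\nmid n$, this is Lemma~\ref{standardPropOo}\ref{standardPropOo1}. Otherwise we split along the definition of $o_B(n)$.

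\textbf{Case $n=\min B$.} Here $o_B(n-1)=n-1<\om$.

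\textbf{Case $n$ not $B$-critical.} We have $n=ba+r$ and $n-1=ba+(r-1)$ with $r\ge1$. If $r-1>0$, use the inductive hypothesis on $r-1<r$. If $r-1=0$, use $o_B(r)>0$.

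\textbf{Case $n=b>\min B$.} Let $d$ be the predecessor of $b$ in $B$. Then $n-1=b-1$ has base $d$, and we must show $o_B(b-1)<o_B(b-d)\cdot2$. We have $b-1=(b-d)+(d-1)$, and the $d$-decomposition of $b-1$ splits at $b-d$, which is a multiple of $d$ since $d\mid b$. So $o_B(b-1)=o_B(b-d)+o_B(d-1)$ when $b-d\ge d$; the case $b-d<d$ does not occur because $d\mid b$ and $b>d$. Next, $o_B(d-1)<o_B(d)\le o_B(b-d)$ by induction. Hence $o_B(b-1)<o_B(b-d)\cdot 2$.

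\textbf{Case $n\notin B$ is $B$-critical.} Here $n=b^2u+bv$. This is the case I expect to be the main obstacle. Write $n-1=b^2u+b(v-1)+(b-1)$ if $v\ge1$, or $b^2(u-1)+b(b-1)+(b-1)$ if $v=0$. Then $n-1$ is not critical and has the same base $b$, since $n>b$. Therefore $o_B(n-1)=o_B(m)+o_B(b-1)$, where $m=n-1-(b-1)<n$ is critical or equal to $b$.

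Since $o_B(n)=\vartheta(\zeta_B(n))$ is additively indecomposable, it suffices to show $o_B(m)<o_B(n)$ and $o_B(b-1)<o_B(n)$. The second follows from $o_B(b-1)<o_B(b)<\vartheta(\alpha_B(n)+\beta_B(n))\le\vartheta(\zeta_B(n))$, using the definition of $\beta_B(n)$ and Lemma~\ref{propThetaMonEps}\ref{propThetaMon}.

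For $o_B(m)<o_B(n)$, if $m=b$ this was just shown. Otherwise $o_B(m)=\vartheta(\zeta_B(m))$, and we first show $\zeta_B(m)<\zeta_B(n)$. By item \ref{Mon_alpha} we have $\alpha_B(m)\le\alpha_B(n)$. If the inequality is equal, then $\beta_B$ agrees and we compare the tails $\om^{o_B(\tilde v')}<\om^{o_B(\tilde v)}$ via the inductive hypothesis on $\tilde v'<\tilde v$; the $u=0$ subcase with $v-2$ must be checked separately, including the boundary $v=\min B$. If $\alpha_B(m)<\alpha_B(n)$, then $\Om\mid\alpha$ and $\beta,\om^{\cdot}<\Om$ give $\zeta_B(m)<\alpha_B(n)\le\zeta_B(n)$.

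Then Proposition~\ref{propCompareTheta} reduces $\vartheta(\zeta_B(m))<\vartheta(\zeta_B(n))$ to $\mco{\zeta_B(m)}<\vartheta(\zeta_B(n))$. The coefficients of $\alpha_B(m)$ are $o_B(r)$ with $r<b$ by Lemma~\ref{standardPropOo}\ref{standardPropOo8}, hence below $o_B(b)<\vartheta(\zeta_B(n))$. Also $\beta_B(m)\le o_B(b)$, and $o_B(\tilde v')<o_B(b)$. Since $\vartheta(\zeta_B(n))$ is a power of $\om$ exceeding $o_B(b)$, the term $\om^{o_B(\tilde v')}$ is also below it.

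The delicate bookkeeping lies entirely in this last case, particularly the $u=0$, small-$v$ adjustments and the verification that $\beta_B(m)\le\beta_B(n)$ when $\alpha$'s coincide.
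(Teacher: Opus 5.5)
Your route differs from the paper's. Instead of comparing an arbitrary pair $m<n$, you reduce to the successor step. In the critical case this becomes the single comparison of $n-b$ with $n$. The paper's general comparison forces it to treat $\baseB m<b$ separately and to reduce non-critical $m$ to $ba$; your reduction avoids both, and a strict drop in $\alpha$ then only occurs when $v=0$. The rest is fine: items 2 and 3, and the cases $n\in B$ and $n$ non-critical. The $u=0$ bookkeeping you postponed is routine. If $m\neq b$ then $v\geq 3$, and $\tilde{v'}$ equals $v-3$, $v-3$ or $v-1$ according as $v<\min B$, $v=\min B$ or $v>\min B$. So $\om^{o_B(\tilde{v'})}<\om^{o_B(\tilde v)}$ in each case, with the zero convention.

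There is a gap in your last sentence. Knowing that $\vartheta(\zeta_B(n))$ is a power of $\om$ above $o_B(b)>o_B(\tilde{v'})$ does not give $\om^{o_B(\tilde{v'})}<\vartheta(\zeta_B(n))$; compare $\om+1<\om^2<\om^{\om+1}$. What you need is that $\vartheta(\zeta_B(n))$ is an $\varepsilon$-number. That is Lemma~\ref{propThetaMonEps}\ref{propThetaEps}, and it requires $\zeta_B(n)\geq\Om$.

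\begin{itemize}
\item In your strict case $\alpha_B(m)<\alpha_B(n)$ this holds. There $u\geq 1$, so $\alpha_B(n)$ is a nonzero multiple of $\Om$. This is how the paper argues in its case $u'<u$.
\item In the equal case with $u=0$ it fails. For $B=\{5\}$ one gets $o_B(20)=\vartheta(\om^2)=\om^{\om^2}$, which is not an $\varepsilon$-number. The inequality you need is still true there, but your argument does not justify it.
\end{itemize}

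The fix is to bypass Proposition~\ref{propCompareTheta} when $\alpha_B(m)=\alpha_B(n)$. In that case also $\beta_B(m)=\beta_B(n)$. Lemma~\ref{propThetaMonEps}\ref{propThetaMon} then gives $\vartheta(\zeta_B(m))<\vartheta(\zeta_B(n))$ directly from $\om^{o_B(\tilde{v'})}<\om^{o_B(\tilde v)}$, exactly as in the paper's case $u'=u$. With that change your proof is complete.
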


\begin{proof}
We first show that \ref{Mon_O} implies \ref{Mon_alpha}. Indeed, if $b=\baseB m = \baseB n$, then we can write $m=b^2u'+bv'+bw'$ and $n=b^2u+bv+bw$ with $v,v',w,w'<b$. Then $m<n$ implies $u'\leq u$, from which $O_B(b^2u') \leq O_B(b^2u)$ and $\alpha_B(m)\leq \alpha_B(n)$.

We prove \ref{Mon_o} and \ref{Mon_O} simultaneously by induction on $m+n$. The case $n\leq\min B$ is trivial (note that $O_B(\min B) = \Om$), so we assume otherwise.

By the induction hypothesis, we know that $o_B$ is increasing on $[0, b)$. It then follows from Lemma~\ref{lemmO_fMon} that $O_B(m) < O_B(n)$. For the rest of the proof, we focus on the inductive step for the first item.
Let $b = \baseB n$ and $d=\baseB m$.
\begin{Cases}

\item ($n=b>\min B$).
Write $m=da+r < b$. Since $b$ is a multiple of $d$, $da \leq b-d$ and $r< d \leq b-d$, so the induction hypothesis yields
\[o_B(m) = o_B(da) + o_B(r) < o_B(b-d)\cdot 2 = o_B(n).\]

\item ($b\nmid n$). Write $n=ba+r$ with $0<r<b$. If $m<ba$, then the induction hypothesis gives $o_B(m) < o_B(ba) < o_B(n)$. Otherwise $m=ba+r'$, and we apply the induction hypothesis to $r'<r$.

\item ($b\mid n>b$). 
Write $n=b^2u+bv$ with $v<b$.
If $d < b$ then the induction hypothesis gives $o_B(m) < o_B(b)$ and by the definition of $\beta_B(n)$ we have $o_B(b) < \vartheta(\alpha_B(n) + \beta_B(n))\leq o_B(n)$. So we can assume that $d=b$ and $m>b$. Furthermore, if $m=ba+r$ with $0<r<b$ then by the induction hypothesis $o_B(m) < o_B(ba)\cdot 2$. Since $o_B(n)$ is additively indecomposable by Lemma~\ref{standardPropOo}\ref{standardPropOo3}, it would suffice to prove that $o_B(ba) < o_B(n)$. So we can assume that $m = b^2u' + bv'$ for some $u'$ and $v' < b$.
Then $o_B(m) = \vartheta(\zeta_B(m))$ and $o_B(n) = \vartheta(\zeta_B(n))$.

\begin{Cases}
\item ($u'=u$).
This case follows easily from Lemma~\ref{propThetaMonEps}\ref{propThetaMon} by using the induction hypothesis on $v'<v$ and noting that $u=0$ and $2\leq v<\min B$ if and only if this is also the case for $u',v'$.

\item($u'<u$). We can assume that $n=b^2u$ by the previous case. Regardless of $u'$ and $v'$, we have that $o_B(m) \leq \vartheta(\alpha_B(m) + \beta_B(m) + \om^{o_B(v')})$, and we will show that the right-hand side is less than $o_B(n) = \vartheta(\alpha_B(n) + \beta_B(n))$. 
By (the proof of) item \ref{Mon_alpha} we have that $\alpha_B(m) < \alpha_B(n)$, and since $\alpha_B(m)$ and $\alpha_B(n)$ are multiples of $\Om$,
\[\alpha_B(m) + \beta_B(m) + \om^{o_B(v')} < \alpha_B(n) + \beta_B(n).\]
By Proposition~\ref{propCompareTheta}, we are left with showing that
\[\mco{(\alpha_B(m) + \beta_B(m) + \om^{o_B(v')})} < \vartheta(\alpha_B(n) + \beta_B(n)) = o_B(n).\]
Since every coefficient of $\alpha_B(m)$ is of the form $o_B(r)$ with $r<b$, $\mco{\alpha_B(m)} < o_B(n)$ by the induction hypothesis. From the induction hypothesis we also get that $o_B(v') < o_B(n)$. By Lemma~\ref{propThetaMonEps}\ref{propThetaEps}, $\omega^{o_B(v')} < o_B(n)$, and since $o_B(n)$ is additively indecomposable it remains to show that $\beta_B(m) < o_B(n)$. But this follows from the minimality of $\beta_B(m)$ and because $o_B(b) < \vartheta(\alpha_B(m) + o_B(b)) < \vartheta(\alpha_B(m) + o_B(n))$. Here we have used Lemma~\ref{propThetaMonEps}\ref{propThetaMco} and Lemma~\ref{propThetaMonEps}\ref{propThetaMon}, respectively.
\end{Cases}
\end{Cases}
\end{proof}



The following inductive step will be useful.

\begin{lemma}\label{lemmInductiveStepAlpha}
    Let $\alpha$ be a multiple of $\Om$ and let $\min B < m$. Let $\de<\Om$ be such that for all $m'<m$ holds $o_B(m') < \vartheta(\alpha + \de)$. Suppose that either $\alpha_B(m) <  \alpha$ or $o_B(m)  \leq \mco\alpha$. Then also $o_B(m)<\vartheta(\al+\de)$.
\end{lemma}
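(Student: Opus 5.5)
The second alternative of the hypothesis is immediate, and the first is handled by following the case distinction in the definition of $o_B(m)$. Throughout, write $\theta^\ast = \vartheta(\al+\de)$. It is of the form $\om^\theta$, hence additively indecomposable.

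\emph{Second alternative.} Suppose $o_B(m)\leq \mco\al$. Since $\al$ is a multiple of $\Om$ and $\de<\Om$, we have $\mco{(\al+\de)}\geq \mco\al$. Lemma~\ref{propThetaMonEps}\ref{propThetaMco} then gives $o_B(m)\leq\mco\al\leq \mco{(\al+\de)}<\theta^\ast$.

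\emph{First alternative.} Assume $\al_B(m)<\al$, and let $b=\baseB m$. Then $\al>0$, and since $\al$ is a multiple of $\Om$, we get $\al\geq\Om$. Lemma~\ref{propThetaMonEps}\ref{propThetaEps} therefore gives $\om^{\theta^\ast}=\theta^\ast$. We split according to the definition of $o_B(m)$; the case $m=\min B$ is excluded by assumption.
\begin{itemize}
\item If $m=b\in B$ with predecessor $d$, then $o_B(m)=o_B(b-d)\cdot 2$. Here $o_B(b-d)<\theta^\ast$ by the hypothesis on $m'<m$, so $o_B(m)<\theta^\ast$ by additive indecomposability.
\item If $m=ba+r$ with $0<r<b$, then $o_B(ba)$ and $o_B(r)$ are both $<\theta^\ast$ by hypothesis, and again their sum is $<\theta^\ast$.
\end{itemize}

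\emph{Main case.} The substantive case is $m\notin B$ with $m$ $B$-critical, so $b<m$ and $o_B(m)=\vartheta(\zeta_B(m))$ with $\zeta_B(m)=\al_B(m)+\be_B(m)+\om^{o_B(\tilde v)}$.

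First, $\zeta_B(m)<\al+\de$. Indeed, $\al_B(m)$ and $\al$ are both multiples of $\Om$ with $\al_B(m)<\al$, so $\al_B(m)+\Om\leq\al$. Since $\be_B(m)+\om^{o_B(\tilde v)}<\Om$, it follows that $\zeta_B(m)<\al\leq\al+\de$.

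By Proposition~\ref{propCompareTheta}, it then suffices to show $\mco{\zeta_B(m)}<\theta^\ast$. The maximal coefficient of $\zeta_B(m)$ is the larger of $\mco{\al_B(m)}$ and the countable tail $\be_B(m)+\om^{o_B(\tilde v)}$. We bound each piece.
\begin{itemize}
\item By Lemma~\ref{standardPropOo}\ref{standardPropOo8}, $\mco{\al_B(m)}=o_B(r)$ for some $r<b<m$. Hence $\mco{\al_B(m)}<\theta^\ast$ by hypothesis.
\item For the tail, first note $\be_B(m)\leq o_B(b)$. This holds because $o_B(b)<\vartheta(\al_B(m)+o_B(b))$ by Lemma~\ref{propThetaMonEps}\ref{propThetaMco}, and $\be_B(m)$ is minimal with this property. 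Since $b<m$, we get $o_B(b)<\theta^\ast$.
\item We have $\tilde v\leq v<b<m$, so $o_B(\tilde v)<\theta^\ast$, and thus $\om^{o_B(\tilde v)}<\om^{\theta^\ast}=\theta^\ast$.
\item Additive indecomposability now gives $\be_B(m)+\om^{o_B(\tilde v)}<\theta^\ast$.
\end{itemize}

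The only delicate point is this last case. It requires checking that $\zeta_B(m)$ really lies below $\al+\de$, which uses that both $\al$ and $\al_B(m)$ are multiples of $\Om$. It also requires that $\theta^\ast$ is closed under $\xi\mapsto\om^\xi$, which uses $\al\geq\Om$ and is forced by $\al_B(m)<\al$.
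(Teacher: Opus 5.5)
Your proof is correct and follows essentially the same route as the paper. You dispose of the alternative $o_B(m)\le\mco\al$ via Lemma~\ref{propThetaMonEps}\ref{propThetaMco}, treat base and non-critical $m$ by additive indecomposability, and in the critical case apply Proposition~\ref{propCompareTheta} after bounding $\mco{\al_B(m)}$, $\be_B(m)\le o_B(b)$ and $\om^{o_B(\tilde v)}$ (the last using $\al\ge\Om$). The differences are cosmetic: you apply the hypothesis directly to $r$ and $\tilde v$ rather than first comparing with $o_B(b)$, and you spell out why $\zeta_B(m)<\al+\de$, which the paper only asserts.
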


\begin{proof}
    If $o_B(m)\leq \mco\alpha$, then the claim follows from Lemma~\ref{propThetaMonEps}\ref{propThetaMco}. So we can assume that $\mco\alpha < o_B(m)$, and $\alpha_B(m) < \alpha$.

    The cases where $m$ is not $B$-critical or $m\in B$ follow from the inductive hypothesis and the fact that $\vartheta(\al + \de)$ is additively indecomposable. So suppose that $m\notin B$ is $B$-critical, and write $m=d^2u + dv$ with $d=\baseB m$. Then we have to show that
    \[o_B(m) = \vartheta(\zeta_B(m)) = \vartheta(\alpha_B(m) + \beta_B(m) + \om^{o_B(\tilde v )}) < \vartheta(\alpha + \de),\]
    where $\tilde v$ is either $v-2$ or $v$. We will use Proposition~\ref{propCompareTheta}. 
    
    From $\alpha_B(m)<\alpha$ we get $\zeta_B(m) < \al+\de$ and $\al>0$.
    Furthermore, we have
    $\mco{\alpha_B(m)}, o_B(\tilde v) < o_B(d) < \vartheta(\alpha + \de)$, and by the minimality of $\beta_B(m)$, also $\beta_B(m) \leq o_B(d) < \vartheta(\alpha + \de)$. By Lemma~\ref{propThetaMonEps}\ref{propThetaEps}, $\beta_B(m) + \om^{o_B(\tilde v)} < \vartheta(\alpha + \de)$.
\end{proof}

Given $n$ which is $B$-critical with $b = \baseB n$, we will let $n_*$ denote the greatest $B$-critical element not in $B$, which is less than $b$ and satisfies $\alpha_B(n)\leq \alpha_B(n_*)$ and $(\alpha_B(n))^* <  o_B(n_*)$, if it exists. By a candidate for $n_*$ we shall mean a $B$-critical element $m\notin B$ such that $m<b$, $\alpha_B(n)\leq \alpha_B(m)$ and $(\alpha_B(n))^* < o_B(m)$. By the previous lemma we obtain the following.

\begin{lemma}\label{lemmNonCandidate}
    Let $n\in\mathbb{N}$ be $B$-critical with $b=\baseB n$, and
    let $\min B < m \leq b$. Let $\delta < \Om$ be such that for all $m' < m$ holds $o_B(m') < \vartheta(\alpha_B(n) + \de)$. If $m$ is not a candidate for $n_*$, then also $o_B(m) < \vartheta(\alpha_B(n) + \de)$.
\end{lemma}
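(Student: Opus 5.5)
The plan is to reduce the statement directly to Lemma~\ref{lemmInductiveStepAlpha}, applied with $\alpha = \alpha_B(n)$. First I would check its standing hypotheses. $\alpha_B(n)$ is a multiple of $\Om$, as noted after the definition of $\alpha_B$: indeed $O_B(b^2u)=\Om\cdot\alpha_B(n)$ with $\Om^2\mid O_B(b^2u)$ by Lemma~\ref{lemmO_fMultOm}. We also have $\min B<m$ and $\de<\Om$, and the bound $o_B(m')<\vartheta(\alpha_B(n)+\de)$ for all $m'<m$ is exactly the assumption. So it remains to verify the disjunction: either $\alpha_B(m)<\alpha_B(n)$ or $o_B(m)\leq \mco{(\alpha_B(n))}$.

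Next I would split according to why $m$ fails to be a candidate for $n_*$. By definition, $m$ is a candidate iff $m$ is $B$-critical, $m\notin B$, $m<b$, $\alpha_B(n)\leq\alpha_B(m)$ and $(\alpha_B(n))^*<o_B(m)$. Since $m\leq b$, failure means one of the following holds:
\begin{enumerate}
\item $\alpha_B(m)<\alpha_B(n)$, which is one disjunct directly;
\item $o_B(m)\leq(\alpha_B(n))^*$, which is the other disjunct directly;
\item $m$ is not $B$-critical, $m\in B$, or $m=b$.
\end{enumerate}
In the third case I would not go through the disjunction. Instead I would redo the easy part of the proof of Lemma~\ref{lemmInductiveStepAlpha}.
\begin{itemize}
\item If $m$ is not $B$-critical, write $m=da+r$ with $0<r<d$. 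Then $o_B(m)=o_B(da)+o_B(r)$, and both summands are below $\vartheta(\alpha_B(n)+\de)$ by hypothesis, since $da,r<m$. Additive indecomposability of values of $\vartheta$ then gives the bound.
\item If $m\in B$ (this covers $m=b$), then $o_B(m)=o_B(m-d)\cdot 2$ with $m-d<m$, and the same indecomposability argument applies.
\end{itemize}
That proof already states these cases go through using only the inductive hypothesis and indecomposability, so they need no condition on $\alpha_B(m)$.

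The main point requiring care is the third case. One must make sure that the proof of Lemma~\ref{lemmInductiveStepAlpha} for non-critical $m$ or $m\in B$ really does not use the assumption $\alpha_B(m)<\alpha$. Indeed it only needs smaller arguments, $da$, $r$ and $m-d$, to which the hypothesis applies. It also uses that $\vartheta(\alpha_B(n)+\de)$ is additively indecomposable, which holds because every value of $\vartheta$ is of the form $\om^\theta$. With this, in every case where $m$ is not a candidate we obtain $o_B(m)<\vartheta(\alpha_B(n)+\de)$.
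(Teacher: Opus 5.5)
Your proposal is correct and takes the same approach as the paper, which simply derives this lemma from Lemma~\ref{lemmInductiveStepAlpha} with $\alpha=\alpha_B(n)$. Your separate treatment of the cases where $m$ is not $B$-critical or $m\in B$ makes explicit a step the paper's one-line derivation glosses over. In those cases the disjunction in the statement of Lemma~\ref{lemmInductiveStepAlpha} can fail (for instance when $\alpha_B(n)=0$), so one must appeal to its proof rather than its statement, exactly as you do.
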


\begin{lemma}\label{lemmO*}
\begin{enumerate}
    \item If $n_*$ does not exist and $\alpha_B(n) > 0$, then $\beta_B(n) = 0$.
    \item If $n_*$ does not exist and $\alpha_B(n) = 0$, then $\beta_B(n) = 2$.
    \item If $n_*$ exists and $\alpha_B(n_*) > \alpha_B(n)$, then $\beta_B(n) = o_B(n_*)$.
    \item If $n_*$ exists and $\alpha_B(n_*) = \alpha_B(n)$, then $\beta_B(n) = \zeta + 1$, where $o_B(n_*) = \vartheta(\alpha_B(n) + \zeta)$.
\end{enumerate}
\end{lemma}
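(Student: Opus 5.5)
We work throughout with $\alpha\coloneqq\alpha_B(n)$. Recall that $\beta_B(n)$ is the least $\beta<\Om$ with $o_B(b)<\vartheta(\alpha+\beta)$. Each item therefore splits into two tasks. First, an \emph{upper bound}: the claimed value $\beta$ satisfies $o_B(b)<\vartheta(\alpha+\beta)$. Second, a \emph{lower bound}: for every $\gamma<\beta$ we have $\vartheta(\alpha+\gamma)\leq o_B(b)$. Since $\vartheta(\alpha+\cdot)$ is strictly increasing on $[0,\Om)$ by Lemma~\ref{propThetaMonEps}\ref{propThetaMon}, it suffices to check the lower bound at a single $\gamma$ when $\beta=\gamma+1$. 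When $\beta$ is a limit or $0$, the lower bound is either vacuous or follows from monotonicity at cofinal points.

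\textbf{Upper bound.} We argue by induction on $m\leq b$ using Lemma~\ref{lemmNonCandidate} with $\de=\beta$. The base case $m\leq\min B$ is handled directly: $o_B(m)\leq\om$, and $\vartheta(\alpha+\beta)>\om$ whenever $\alpha>0$ or $\beta\geq2$. The latter holds because by Lemma~\ref{lemmVarthetaLessOm} we have $\vartheta(0)=\om$, $\vartheta(1)=\om^2$ and $\vartheta(2)=\om^3$. For $\beta=0$ with $\alpha>0$, note that $\vartheta(\alpha)>\mco\alpha\geq1$ and $\vartheta(\alpha)$ is an $\ve$-number by Lemma~\ref{propThetaMonEps}\ref{propThetaEps}, so it is larger than $\om$. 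In the inductive step, a non-candidate $m$ is taken care of by Lemma~\ref{lemmNonCandidate}. A candidate $m$ must satisfy $m\leq n_*$ by maximality of $n_*$, so in cases 1 and 2 no candidate exists. In cases 3 and 4 we must show $o_B(m)<\vartheta(\alpha+\beta)$ for each candidate $m\leq n_*$. By Proposition~\ref{propMonO}\ref{Mon_o} it is enough to treat $m=n_*$.

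In case 3 we need $o_B(n_*)<\vartheta(\alpha+o_B(n_*))$, which follows from Lemma~\ref{propThetaMonEps}\ref{propThetaMco}. In case 4 we need $\vartheta(\alpha+\zeta)<\vartheta(\alpha+\zeta+1)$, which is again Lemma~\ref{propThetaMonEps}\ref{propThetaMon}. Here we should check that $o_B(n_*)$ really has the form $\vartheta(\alpha+\zeta)$ with $\zeta<\Om$. This holds since $\alpha_B(n_*)=\alpha$ and $o_B(n_*)=\vartheta(\zeta_B(n_*))$ with $\zeta_B(n_*)=\alpha+\beta_B(n_*)+\om^{o_B(\tilde v)}$. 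Finally, the case $m=b$ itself is not a candidate, since $b\in B$, so Lemma~\ref{lemmNonCandidate} also covers $m=b$, which is exactly what we need.

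\textbf{Lower bound.} Case 1 has nothing to show. In case 2 we must show $\vartheta(1)=\om^2\leq o_B(b)$. If $b=\min B$, then $o_B(b)=\om<\vartheta(1)$, which looks problematic. However, a $B$-critical $n$ with base $\min B$ and $\alpha=0$ yields $m=2\min B$-type candidates: $m<b$ is required, so there are no candidates. I would recheck here whether the intended meaning forces $b>\min B$, or whether the definition of $\tilde v=v-2$ compensates. Most likely $\beta_B(n)=2$ must be verified via $o_B(b)\geq \om\cdot2$ and $\vartheta(1)=\om^2$ versus $\vartheta(0)=\om$. This needs care, and I would compute directly. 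In case 3 we need $\vartheta(\alpha+\gamma)\leq o_B(b)$ for all $\gamma<o_B(n_*)$. Since $o_B(n_*)\leq o_B(b)$ and $\vartheta(\alpha+\gamma)\leq\vartheta(\alpha_B(n_*)+\gamma')$ for suitable comparison, I would instead show $\vartheta(\alpha+\gamma)<o_B(n_*)$ via Proposition~\ref{propCompareTheta}. The conditions are that $\alpha+\gamma<\zeta_B(n_*)$, which uses $\alpha<\alpha_B(n_*)$ and the fact that both are multiples of $\Om$, and that $\mco{(\alpha+\gamma)}<o_B(n_*)$, which uses $\mco\alpha<o_B(n_*)$ from the candidate condition and $\gamma<o_B(n_*)$. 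In case 4, $\vartheta(\alpha+\zeta)=o_B(n_*)<o_B(b)$.

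\textbf{Main obstacle.} I expect the lower-bound verification, particularly case 2 and the boundary behaviour of $\vartheta$ below $\Om$ through Lemma~\ref{lemmVarthetaLessOm}, to be the main obstacle. There I would carefully compute $o_B(b)$ for the relevant small $b$.
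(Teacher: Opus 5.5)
Your treatment of items 1, 3 and 4, and of the upper bound in item 2, is correct and is essentially the paper's argument. The upper bound runs Lemma~\ref{lemmNonCandidate} inductively up to $b$, with every $m\leq n_*$ absorbed by monotonicity. The lower bound in item 3 comes from Proposition~\ref{propCompareTheta}, and in item 4 from $o_B(n_*)<o_B(b)$.

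\textbf{The gap.} You leave the lower bound in item 2 unproved, and this comes from a miscomputation. You read Lemma~\ref{lemmVarthetaLessOm} as giving $\vartheta(0)=\om$, $\vartheta(1)=\om^2$, $\vartheta(2)=\om^3$. The first clause of that lemma requires $\alpha=\delta+k$ with $\om^\delta=\delta$, and $\delta=0$ does not qualify because $\om^0=1$; the least such $\delta$ is $\ve_0$. So $\vartheta(k)=\om^k$ for finite $k$, i.e.\ $\vartheta(0)=1$, $\vartheta(1)=\om$, $\vartheta(2)=\om^2$. The paper itself sets $o_B(\min B)=\vartheta(1)=\om$.

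With your values, item 2 would actually be false. Take $n=2\min B\notin B$ with $\min B\geq 3$. Then $\alpha_B(n)=0$, $n_*$ does not exist, $b=\min B$ and $o_B(b)=\om$, so you would get $\beta_B(n)=1$. This is the ``problematic'' instance you noticed. Your suggested repair via $o_B(b)\geq\om\cdot 2$ fails exactly there. Nor can $\tilde v$ compensate, since $\beta_B(n)$ depends only on $\alpha_B(n)$ and $o_B(b)$.

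\textbf{The fix.} With the correct values, item 2 closes in one line, as in the paper's $\vartheta(1)\leq o_B(b)<\vartheta(2)$.

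For the lower bound: $\vartheta(0)=1<\vartheta(1)=\om=o_B(\min B)\leq o_B(b)$ by Proposition~\ref{propMonO}. So neither $\beta=0$ nor $\beta=1$ works, and $\beta_B(n)\geq 2$.

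For the upper bound, your induction with $\delta=2$ goes through unchanged. For $m\leq\min B$ we have $o_B(m)\leq\om<\om^2=\vartheta(2)$. Since $n_*$ does not exist, every $\min B<m\leq b$ is a non-candidate, so Lemma~\ref{lemmNonCandidate} yields $o_B(b)<\vartheta(2)$.

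Your base-case assertion that $\vartheta(\alpha+\beta)>\om$ for $\beta\geq 2$ remains true, but because $\vartheta(2)=\om^2$, not $\om^3$.
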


\begin{proof}
    Assume first that $n_*$ does not exist. Then every $m \leq b$ is not an $n_*$-candidate. If $\alpha_B(n) > 0$, then $o_B(\min B) = \om < \vartheta(\alpha_B(n))$, and by using Lemma~\ref{lemmNonCandidate} inductively we get that $o_B(b) < \vartheta(\alpha_B(n))$. So $\beta_B(n) = 0$. If $\alpha_B(n) = 0$, then $o_B(\min B) < \vartheta(2)$, and by the same procedure $o_B(b) < \vartheta(2)$. Furthermore from monotonicity we get $\vartheta(1)\leq o_B(b) < \vartheta(2)$.

    Next suppose that $\alpha_B(n_*) > \alpha_B(n)$. Then observe that for all $m'\leq n_*$ we have $o_B(m') \leq o_B(n_*) < \vartheta(\alpha_B(n) + o_B(n_*))$. By using Lemma~\ref{lemmNonCandidate} inductively for $n_* <  m \leq b$, we get that $o_B(b) < \vartheta(\alpha_B(n) + o_B(n_*))$. In order to prove that $\beta_B(m) = o_B(n_*)$, it suffices to show that for all $\ga < o_B(n_*)$ we have that 
    \[\vartheta(\alpha_B(n) + \ga) < o_B(n_*) < o_B(b).\]
     This follows easily by using Proposition~\ref{propCompareTheta}.

     Finally, assume that $\alpha_B(n_*) = \alpha_B(n)$. We can again use Lemma~\ref{lemmNonCandidate} to get $o_B(b) < \vartheta(\alpha_B(n) + \zeta + 1)$. Considering $o_B(n_*)$, it is clear that $\zeta + 1$ is the minimal candidate for $\beta_B(n)$, so $\beta_B(n) = \zeta + 1$.
\end{proof}

\begin{proposition}\label{propPresO}
Let $C$ be a good successor of $B$, and write $\ug$ for $\ug_B^C$. 
Then for every $n\in\mathbb{N}$ with $b=\baseB n$ holds 
\begin{enumerate}
\item $o_B(n) = o_C(\ug n)$.\label{pres_o}
\item $O_B(n) = O_C^c(\Chgbases b c n)$ if $c\geq\ug b $.\label{pres_O^b}
\item $O_B(n) = O_C(\ug n)$. \label{pres_O}
\item $\alpha_B(n) = \alpha_C^c(\Chgbases bc n)$ if $c\geq \ug b$.\label{pres_alpha^b}
\item $\alpha_B(n) = \alpha_C(\ug n)$. \label{pres_alpha}
\end{enumerate}
\end{proposition}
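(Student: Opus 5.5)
We argue by simultaneous induction on $n$ for all five items. The items are ordered so that each uses only earlier ones at the same $n$, or any item at smaller arguments. The base case $n<\min B$ is immediate. Here $\ug n = n < \min B \leq \min C$, so $o_B(n)=n=o_C(n)$ and likewise for $O$. Since $\Chgbases bc n = \ug n = n$ for $n<b$, the remaining items reduce to the trivial case $u=0$, where both $\alpha$'s vanish.

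For \ref{pres_O^b}, with $c\geq \ug b$, we unfold the definitions. If $n<b$, then $\Chgbases bc n=\ug n$; also $\ug n<\ug b\leq c$, so $O_C^c(\ug n)=o_C(\ug n)=o_B(n)=O_B(n)$ by \ref{pres_o} for $n$. If $n\geq b$, write $n=_b b^ea+r$. By Lemma~\ref{lemmStructureChgbases}\ref{lemmStructureChgbasesRepr} we have $\Chgbases bc n =_c c^{\Chgbases bc e}\ug a+\Chgbases bc r$, so
\[O_C^c(\Chgbases bc n)=\Om^{O_C^c(\Chgbases bc e)}o_C(\ug a)+O_C^c(\Chgbases bc r).\]
Here $e,r<n$ are handled by the induction hypothesis (item \ref{pres_O^b}; note that $O_B^b$ and $O_B$ agree on the relevant values once we phrase \ref{pres_O^b} for $O_B^b$ at all arguments), and $a<b\leq n$ is handled by \ref{pres_o}. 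Item \ref{pres_O} then follows from \ref{pres_O^b} with $c$ the witness. By Lemma~\ref{lemmUp(ba+r)}\ref{lemmUp(ba+r)witnessbase}, $c=\basep C{\ug n}$, and by the argument in Lemma~\ref{alternativedefwitness}, $c\geq \ug b$. Items \ref{pres_alpha^b} and \ref{pres_alpha} come from applying \ref{pres_O^b} to $b^2u$. Lemma~\ref{lemmStructureChgbases} shows that $\Chgbases bc$ sends the decomposition $b^2u+bv+w$ to $c^2u'+cv'+w'$ with $\Chgbases bc(b^2u)=c^2u'$, using divisibility preservation and the structure of the representation.

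The core is item \ref{pres_o}, which we prove by following the case split in the definition of $o_B$, using Lemma~\ref{lemmbasecriticalpres} to see that $\ug n$ falls in the corresponding case for $C$.
\begin{itemize}
\item If $n=\min B$, then $\ug n=\min C$ and both values equal $\om$.
\item If $n=b>\min B$, Lemma~\ref{lemmUp(ba+r)}\ref{lemmUp(ba+r)b-d} gives $\ug b=\ug(b-d)+c'$, and we must identify $c'$ as the predecessor of $\ug b$ in $C$. By Lemma~\ref{lemmalternativedefupgr}, $\ug b=S_C(\ug(b-1))$, so $c'=\basep C{\ug(b-1)}$ is that predecessor. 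Hence $\ug b-c'=\ug(b-d)$, and induction gives the claim.
\item If $n$ is not $B$-critical, Lemma~\ref{lemmUp(ba+r)}\ref{lemmUp(ba+r)ba+r} gives $\ug n=\ug b\cdot\ug a+\ug r$, with $\ug b$ the $C$-base and $\ug r<\ug b$. We need $\ug(ba)=\ug b\,\ug a$; this is the same lemma with $r=0$. Induction then finishes this case.
\item If $n\notin B$ is $B$-critical, write $\ug n=c^2u'+cv'$ with $c=\ug b$ (again by Lemma~\ref{lemmUp(ba+r)}, since $\basep C{\ug n}=\ug b$). Then $\alpha_C(\ug n)=\alpha_B(n)$ by \ref{pres_alpha}, whose proof uses \ref{pres_O^b} only for $b^2u<n$ or $b^2u=n$; there is no circularity, since the $O$-items do not use \ref{pres_o} at $n$ itself when $n\geq b$. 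Likewise $o_C(\tilde v')=o_B(\tilde v)$: here $v'=\ug v$, and the side conditions $u=0$ and $2\leq v<\min B$ transfer because $\ug$ is the identity below $\min B$.
\end{itemize}

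The main obstacle is $\beta_C(\ug n)=\beta_B(n)$. Its definition only involves $\alpha$ and $o(b)$: by induction, $o_C(\ug b)=o_B(b)$, and $\alpha$ agrees, so $\beta$ agrees directly by definition. In the one case where $\zeta$ requires $o_B(\tilde v)$ we use induction on $\tilde v<n$. We expect the real care to be needed in checking that $\ug$ commutes with the decompositions $ba+r$ and $b^2u+bv$ and that $\Chgbases bc$ with $c=\ug b$ agrees with $\ug$ on multiples of $b$. This should follow from Lemma~\ref{lemmUp(ba+r)} and the good-successor condition, which guarantees that the witness for $\ug n$ is exactly $\ug b$ when $n<S_B(b)$.
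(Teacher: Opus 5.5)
The proposal has a genuine gap in the case where $n\notin B$ is $B$-critical, which is the heart of the proposition. Your treatment of items 2--5, the base case, the case $n\in B$, and the transfer $o_C(\widetilde{\ug v})=o_B(\tilde v)$ is fine.

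\textbf{The false claim.} In that case you assume $\basep C{\ug n}=\ug b$, and you assert that the good-successor condition makes $\ug b$ the witness for $\ug n$. This is false. Lemma~\ref{lemmUp(ba+r)}\ref{lemmUp(ba+r)witnessbase}, which you yourself use for item 3, only says that $\basep C{\ug n}$ is the witness $d$, and $d$ can lie far above $\ug b$. For $C=B_{+i}$ with $i\geq 1$, Lemma~\ref{lemmCanonProp}\ref{lemmCanonPropWitness} gives $d=d_i>d_0\geq \ug b$. Since $\beta_C(\ug n)$ is defined from $o_C(d)$ rather than $o_C(\ug b)$, it does not agree with $\beta_B(n)$ ``by definition'', and it can be strictly larger.

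\textbf{A concrete counterexample.} Let $\min B\geq 4$, let $b\in B$ with $S_B(b)>3b$, put $e=\ug b$, and take $C=B_{+i}$ with $i\geq 1$. Then $\ug(2b)=2^{i+1}e$. Step $3b$ adds the bases $3^j2^ie$ for $1\leq j\leq i$, and $\ug(3b)=3^{i+1}2^ie$ has base $d=3^i2^ie$. For $0\leq j<i$ the non-base critical elements $2\cdot 3^j2^ie$ have $o_C$-value $\vartheta(\beta_B(3b)+j)$. Hence $o_C(d)=\vartheta(\beta_B(3b)+i-1)\cdot 2$, and so $\beta_C(\ug(3b))=\beta_B(3b)+i$. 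The identity $o_B(3b)=o_C(\ug(3b))$ survives only because $\beta_B(3b)+i+\om=\beta_B(3b)+\om$.

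\textbf{What is missing.} You need an upper bound on $\beta_C(\ug n)$, together with the absorption of the excess into the last term.
\begin{enumerate}
\item The bound $\beta_B(n)\leq\beta_C(\ug n)$ is immediate from $o_B(b)=o_C(\ug b)\leq o_C(d)$ and $\alpha_C(\ug n)=\alpha_B(n)$.
\item For the converse, the paper first shows that every $C$-critical $m$ with $\ug b<m<\ug n$ satisfies $O_C(m)<O_C(\ug n)$. Otherwise $m<d$, and with $e=\basep C m\geq \ug b$ the monotonicity of $O_C^e$ gives $\Chgbases be n\leq m<S_C(e)$. Then Lemma~\ref{lemmalternativedefupgr} would put the witness at most $e<d$, a contradiction.
\item Consequently every candidate $m$ for $(\ug n)_*$ above $\ug b$ has $\alpha_C(m)=\alpha_B(n)$ and middle digit below $\ug v$.
\item An induction on $m\leq d$ then gives $o_C(d)<\vartheta(\alpha_B(n)+\beta_B(n)+\gamma)$. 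It uses Lemma~\ref{lemmNonCandidate} for non-candidates and the minimality of $\beta_C(m)$ for candidates. Here $\gamma=0$ if $\tilde v=0$, and $\gamma<\om^{o_B(\tilde v)}$ otherwise.
\item The excess $\gamma$ is swallowed by $\om^{o_B(\tilde v)}=\om^{o_C(\widetilde{\ug v})}$, so $\zeta_B(n)=\zeta_C(\ug n)$.
\end{enumerate}

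\textbf{A related misreading in the non-critical case.} Lemma~\ref{lemmUp(ba+r)}\ref{lemmUp(ba+r)ba+r} says $\ug n=\ug(ba)+\ug r$, not $\ug b\cdot\ug a+\ug r$. The identity $\ug(ba)=\ug b\cdot\ug a$ is false in general; in the example above, $\ug(2b)=2^{i+1}\ug b$. That case is easily repaired: apply the induction hypothesis to $ba$ and $r$ directly. This works because $0<\ug r<\ug b\leq\basep C{\ug n}$, so $\ug(ba)$ is exactly the critical part of $\ug n$.
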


\begin{proof} 
If $c$ is the witness for $\ug n$, then $c\geq \ug b$ and
$\basep C {\ug n } = c$ by Lemma~\ref{lemmalternativedefupgr} and Lemma~\ref{lemmUp(ba+r)}, respectively. So \ref{pres_O} and \ref{pres_alpha} follow from \ref{pres_O^b} and \ref{pres_alpha^b}, respectively. 

Moreover it is easy to see that \ref{pres_alpha^b} follows from \ref{pres_O^b}. Indeed, if $n=b^2u+bv+w$ then $\Chgbases bc n = c^2u' + c\cdot\ug v + \ug w$ and $\ug v,\ug w < \ug b\leq c$. Then $O_B(b^2u) = O_C^c(\Chgbases bc b^2u) = O_C^c(c^2u')$, which implies $\alpha_B(n) = \alpha_C^c(\Chgbases bc n)$.

We now prove \ref{pres_o} and \ref{pres_O^b} simultaneously by induction on $n$. The case $n<\min B$ is trivial, so assume $n\geq \min B$. It is easy to see that \ref{pres_O^b} follows from the induction hypothesis for \ref{pres_o}. Indeed, 
write $n=b^{e_1}\cdot a_1+\dots+b^{e_k}\cdot a_k$. Then 
\begin{align}\label{eqC-repr}
\Chgbases bc n = c^{\Chgbases bc e_1}\cdot\ug a_1+\dots + c^{\Chgbases bc e_k}\cdot \ug a_k. \tag{$\star$}
\end{align}
By applying the induction hypothesis of \ref{pres_o} to each $a_i$ and that of \ref{pres_O^b} to each $e_i$,
\begin{align*}
O_B^b(n) &= \Om^{O_B^b(e_1)} o_B(a_1) +\dots + \Om^{O_B^b(e_k)}o_B(a_k)\\
&= \Om^{O_C^c(\Chgbases bc e_1)}o_C(\ug a_1) + \dots + \Om^{O_C^c(\Chgbases bc e_k)}o_C(\ug a_k) = O_C^c(\Chgbases bc n).
\end{align*}
In the last inequality we use that (\ref{eqC-repr}) is written in $c$-normal form, which follows from $\ug b\leq c$.

We now prove that \ref{pres_o} holds for $n$. By the above we can assume (together with the induction hypothesis for all items) that \ref{pres_O^b} to \ref{pres_alpha} holds for $n$.

\begin{Cases}

\item ($n = b \in B$).
If $n=\min B$, then $\ug n=\min C$ so $o_B(n) = o_C(\ug n) = \om$.
Otherwise, let $d$ be the predecessor of $b$ in $B$.
By induction hypothesis, $o_B(b-d) = o_C(\ug(b-d))$.
Let $c = \basep C{\ug(b-1)}$.
By Lemma~\ref{lemmUp(ba+r)}\ref{lemmUp(ba+r)b-d},
\[o_C(\ug b) = o_C(\ug b -c) \cdot 2 =o_C(\ug (b-d)) \cdot 2= o_B (b-d) \cdot 2=o_B(b).\]

\item ($n$ is not $B$-critical).
Write $n=ba+r$ with $0<r<b$.
By Lemma~\ref{lemmUp(ba+r)}\ref{lemmUp(ba+r)ba+r}, $\ug n = \ug ba +\ug r $.
The induction hypothesis then yields
\[o_C(\ug n) = o_C(\ug ba) +o_C(\ug r) = o_B(  ba) +o_B( r) = o_B( n) .\]

\item ($n\notin B$ is $B$-critical).
Write $n=b^2u+bv$ and $\ug n = d^2 u' + d\cdot\ug v$. Observe that $\widetilde{\ug v} = \ug \tilde v$. 

By \ref{pres_alpha} we have that $\alpha_B(n) = \alpha_C(\ug n) \eqqcolon \al$. Assume that $\ug b = c\leq d$. It is clear that 
\[o_B(b) = o_C(c)\leq o_C(d) < \vartheta(\alpha + \beta_C(\ug n)),\]
so $\beta_B(n)\leq \beta_C(\ug n)$.

The rest of the proof is focused on showing that either $\beta_B(n) = \beta_C(\ug n)$, or that $\beta_B(n) + \ga = \beta_C(\ug n)$ for some ordinal $\ga$. In the latter case, there will be a non-zero term $\om^{o_B(\tilde v)}>\ga$, where $\tilde v$ is either $v-2$ or $v$, which will ensure that
\[\beta_B(n) + \om^{o_B(\tilde v)} = \beta_C(\ug n) + \om^{o_B(\tilde v)} = \beta_C(\ug n) + \om^{o_C(\ug\tilde v)}.\]
In the last equality we have used the induction hypothesis on $\tilde v$. It then follows that $\zeta_B(n) = \zeta_C(\ug n)$, so $o_B(n) = o_C(\ug n)$.

First we claim that for every $C$-critical $m$ strictly between $c$ and $\ug n$, we have that $O_C(m) < O_C(\ug n)$. For suppose not. Then, because $O_C$ is monotone between $d$ and $\ug n$, we must have $m<d$. Let $\basep C m = e$ for some $c\leq e < d$, then
\[O_C^e(m) = O_C(m) \geq O_C(\ug n) = O_C^e(\Chgbases be n).\]
By the monotonicity of $O_C^e$ (Lemma~\ref{lemmO_fMon}), we have that $\Chgbases be n \leq m < S_C(e)\leq d$. Since $\ug b\leq e$, Lemma~\ref{lemmalternativedefupgr} implies that the witness for $\ug n$ is less than or equal to $e$, which is less than $d$. A contradiction.

Next we will show by induction on $m$ that for $m \leq d$, we have $o_C(m) < \vartheta(\alpha + \beta_B(n) + \ga)$ for some ordinal $\ga < \om^{o_B(\tilde v)}$. Here $\om^{o_B(\tilde v)}$ is considered to be zero when $\tilde v = 0$.
Then, by setting $m=d$, we get thet $\beta_C(\ug n) \leq \beta_B(n)+\ga$ and the desired result follows.

If $m\leq c$, then this holds by the definition of $\beta_B(n)$ and because $o_B(b) = o_C(c)$. Furthermore, if $m$ is not an $(\ug n)_*$-candidate, then the inductive step follows from Lemma~\ref{lemmNonCandidate}. So assume that $m$ is a candidate for $(\ug n)_*$. Write $m=e^2u'' + ev''$. Because $O_C(m) < O_C(\ug n)$, we must have that $\alpha_C(m) = \alpha$ and $v'' < \ug v$.
We then have to show that
\[o_C(m) = \vartheta(\alpha + \beta_C(m) + \om^{o_C(\widetilde{v''})}) < \vartheta(\alpha + \beta_B(n) + \ga)\]
for some $\ga < \om^{o_B(\tilde v)}$. Since $\widetilde{v'' } < \widetilde{\ug v } = \ug \tilde v$, it suffices to prove that $\beta_C(m) < \beta_B(n) + \om^{o_B(\tilde v)}$.
By the induction hypothesis, there is some $\ga' < \om^{o_B(\tilde v)}$ such that
\[o_C(e) < \vartheta(\alpha + \beta_B(n) + \ga').\]
Then by the minimality of $\beta_C(m)$, we get that $\beta_C(m)\leq \beta_B(n) + \ga'$, and we are done.

\end{Cases}
\end{proof}

\begin{theorem}\label{theoTerm}
If $\mathcal B$ is any dynamical hierarchy and $m\in\mathbb N$, then there is $i\in \mathbb N$ such that $\G^{\mathcal B}_i(m) = 0$.
\end{theorem}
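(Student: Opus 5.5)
We argue by the standard descent. For each $i$ with $\goodp mi{\mathcal B}>0$ we attach the ordinal $o_{B_i}(\goodp mi{\mathcal B})<\vartheta[\ve_{\Om+1}]$ (Lemma~\ref{standardPropOo}\ref{standardPropOo9}), and we show that this sequence is strictly decreasing as long as the process has not reached $0$. Well-foundedness of the ordinals then forbids an infinite run of positive values. So some $i$ has $\goodp mi{\mathcal B}=0$, and from then on the sequence stays at $0$ by definition.

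Fix $i$ with $n\coloneqq\goodp mi{\mathcal B}>0$, and write $\ug_i=\ug_{B_i}^{B_{i+1}}$.

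\begin{enumerate}
\item Since $\mathcal B$ is a dynamical hierarchy, $B_{i+1}$ is a good successor of $B_i$. Hence $\ug_i n<\infty$, and $\goodp m{i+1}{\mathcal B}=\ug_i n-1$ is a genuine natural number.
\item By Proposition~\ref{propPresO}\ref{pres_o}, $o_{B_i}(n)=o_{B_{i+1}}(\ug_i n)$.
\item Note that $\ug_i n\geq 1$: by Lemma~\ref{lemmMonUg} the upgrade is strictly monotone and $\ug_i 0=0$, so $\ug_i n>\ug_i 0=0$.
\item Applying Proposition~\ref{propMonO}\ref{Mon_o} to the base hierarchy $B_{i+1}$ with $\ug_i n-1<\ug_i n$ gives
\[o_{B_{i+1}}(\goodp m{i+1}{\mathcal B})=o_{B_{i+1}}(\ug_i n-1)<o_{B_{i+1}}(\ug_i n)=o_{B_i}(\goodp mi{\mathcal B}).\]
\end{enumerate}

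Thus the attached ordinals strictly decrease along every step where the value is positive. Since there is no infinite strictly decreasing sequence of ordinals, the value $0$ must be reached at some finite $i$.

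Most of the work has already been done in Propositions~\ref{propMonO} and~\ref{propPresO}. The only points to check are the small side conditions. First, $0<\min B$ gives $\ug_i 0=0$. Second, the preservation equality holds for all $n$, including $n<\min B_i$ and the case where $n$ is a base; this is covered by Proposition~\ref{propPresO}. Third, monotonicity of $o_{B_{i+1}}$ is unconditional, so no restriction on bases is needed there. I do not expect any real obstacle beyond making sure $\ug_i n$ is finite, which is exactly the good-successor condition.
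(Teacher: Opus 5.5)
Your proposal is correct and is essentially the paper's proof. The paper likewise combines Proposition~\ref{propPresO} with Proposition~\ref{propMonO} to get $o_{B_{i+1}}(\ug_i n-1)<o_{B_{i+1}}(\ug_i n)=o_{B_i}(n)$ whenever $n=\goodp mi{\mathcal B}>0$, and concludes by well-foundedness; your side checks (finiteness of $\ug_i n$ from the good-successor condition, and $\ug_i n\geq 1$ from monotonicity) are details the paper leaves implicit.
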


\begin{proof}
Let $o_i$ denote $o_{\mathcal B_i}$, $\upgrade i$ denote $\upgrade{\mathcal B_i}^{\mathcal B_{i+1}}$.
Let $I$ be the length of the Goodstein sequence starting on $m$.
Then, if $ \G^{\mathcal B}_i(m)>0$ we have that
\[ o_{i+1}(\G^{\mathcal B}_{i+1}(m)) = o_{i+1}(\upgrade i \G^{\mathcal B}_i(m)-1) < o_{i+1}(\upgrade i \G^{\mathcal B}_i(m) ) = o_{i }( \G^{\mathcal B}_i(m) ) , \]
where the first inequality is by Proposition~\ref{propMonO} and the second by Proposition~\ref{propPresO}.
It follows that $(o_i(\G^{\mathcal B}_i(m)))_{i<I}$ is a decreasing sequence of ordinals, hence it is finite and we must have $o_{I-1}(\G^{\mathcal B}_{I-1}(m)) = 0$.
\end{proof}
 
\section{The canonical dynamical hierarchy}

Our goal is to define a dynamical hierarchy which will yield a Goodstein principle independent of $\sf KP$. In \cite{fernandez2025fractal}, such a base hierarchy was obtained by iteratively taking so-called `greedy successors'. We define another type of successor, the ouroboros successor, which will be more suitable for our purposes.

\begin{definition}\label{defCanon}
Given a base hierarchy $B$ and $i\in\mathbb N\setminus\{0\}$, we define a new base hierarchy $B_{+i} = \bigcup_{n <\infty} B_{+i}^n  $, where $B_{+i}^n$ is defined recursively as follows.
Assume inductively that $B_{+i}^m $ has been defined and is finite for $m<n$.

First, set $B_{+i}^n   = \{ \min B  + 1\}$ if $n\leq \min B$.

For $n>\min B$, let $b = \baseB n$ and $c=\max B_{+i}^{n-1}$.
\begin{enumerate}

\item If $n$ is not $B$-critical, then $B_{+i}^{n } =   B_{+i}^{n-1} $.

\item If $n \in B$, then let $b = \baseB{n-1}$ and set
\[B_{+i}^{n } = B_{+i}^{n-1} \cup \{ ca  \}     ,\]
where $a$ is minimal such that $\Chgbases bc(n-1) < ca$.

\item If $n \notin B$ is $B$-critical, let $b=\baseB{n}$. Then,
\[B_{+i}^{n} = B_{+i}^{n-1} \cup \{d_j\}_{j = 1}^i   ,\]
where we define $d_j =  d_j(n,i)$ recursively by $d_0 =  c $ and
\[ d_{j +1 }   = \Chgbases {b}{d_j} n  .\]

\end{enumerate}
\end{definition}
We define the {\em canonical dynamical hierarchy} $\mathcal C=(C_i)_{i\in\mathbb N}$ to be the unique dynamical hierarchy with $C_0=\{3\}$ and $C_{i+1} = (C_i)_{+(i+2)}$. We will show that $\mathcal C$ is in fact a dynamical hierarchy, namely that it consists of good successors, and that the corresponding Goodstein process is independent of $\sf KP$.
\begin{lemma}\label{lemmB_(+i)found}
    Let $B$ be a base hierarchy and $n\in\N$. Let $C_n = B_{+i}^n$ and $c_n = \max C_n$. Write $\ug_n$ for $\ug_B^{C_n}$.
        Then $\ug_n n<\infty$, and $c_n$ is the witness.
\end{lemma}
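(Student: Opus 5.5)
We argue by induction on $n$, with simultaneous bookkeeping that $C_n$ is a finite base hierarchy, that $C_{n-1}\subseteq C_n$, and that every element of $C_n\setminus C_{n-1}$ is strictly greater than $\ug_{n-1}(n-1)$. The last fact lets us apply Lemma~\ref{lemmrestrictbasehier}: $C_n\cap[0,\ug_{n-1}(n-1)]=C_{n-1}\cap[0,\ug_{n-1}(n-1)]$, so $\ug_n x=\ug_{n-1}x$ for all $x\le n-1$. The same holds for the operators $\Chgbases b{c'}$ with $c'\le \ug_{n-1}(n-1)$. In particular, by the induction hypothesis $c_{n-1}$ is still the witness for $\ug_n(n-1)$, and $\ug_n b\le c_{n-1}$ for $b=\baseB n\le n$. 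To identify the witness for $\ug_n n$ we use Lemma~\ref{alternativedefwitness}: it is the least $c'\in C_n$ with $c'\ge \ug_n b$ and $\Chgbases b{c'}n<S_{C_n}(c')$. Since $c_n=\max C_n$ we have $S_{C_n}(c_n)=\infty$, so $c_n$ always satisfies the upper inequality. It therefore suffices to show that $c_n\ge \ug_n b$ and that no smaller element of $C_n$ qualifies.

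The case $n<\min B$ is vacuous. For $n=\min B$ we have $C_n=\{\min B+1\}$, and its unique element is clearly the witness. For $n>\min B$ we split according to Definition~\ref{defCanon}, writing $c=c_{n-1}$.

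If $n$ is not $B$-critical, then $C_n=C_{n-1}$, $c_n=c$, and $\baseB n=\baseB{n-1}=b$. Any $c'<c$ with $c'\ge\ug b$ satisfies $\Chgbases b{c'}(n-1)\ge S(c')$ by minimality of the witness $c$ for $n-1$. By monotonicity of $\Chgbases b{c'}$ (Lemma~\ref{lemmMonUg}) we then get $\Chgbases b{c'}n\ge S(c')$, so no such $c'$ qualifies.

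If $n\in B$, we have $\Chgbases n{c'}n=c'$, and the original inequality (\ref{eqdefupgr}) forces $c'>\ug(n-1)=\Chgbases{b}{c}(n-1)$. Every old base is at most $c\le\ug(n-1)$, so the witness is the new element $ca$. Moreover $c\mid ca$ keeps $C_n$ a base hierarchy, and $ca>\ug(n-1)$ gives the bookkeeping claim.

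If $n\notin B$ is $B$-critical, we first check that $d_0<d_1<\dots<d_i$ and $d_j\mid d_{j+1}$. For the inequalities: $d_1=\Chgbases bc n>\Chgbases bc(n-1)=\ug(n-1)\ge c$ by monotonicity, since $c\ge\ug b$, and then induct using items 2 and 4 of Lemma~\ref{lemmMonUg}. For divisibility: since $b\mid n$, every term $b^ea$ of the $b$-normal form of $n$ has $e\ge1$, so every term of $\Chgbases b{d_j}n$ carries a factor $d_j$. This makes $C_n$ a base hierarchy, with all new elements exceeding $\ug(n-1)$. For minimality of $c_n=d_i$, consider the possible smaller candidates:
\begin{itemize}
\item for $c'<c$ the argument of the non-critical case applies;
\item for $c'=d_j$ with $0\le j<i$ we have $\Chgbases b{d_j}n=d_{j+1}=S_{C_n}(d_j)$, which is not strictly below $S_{C_n}(d_j)$.
\end{itemize}

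The main obstacle is the transfer between $C_{n-1}$ and $C_n$. We must check carefully that Lemma~\ref{lemmrestrictbasehier} applies: that the new bases lie above $\ug_{n-1}(n-1)$, and that computing $\Chgbases b{c'}n$ only involves upgrades of numbers $<n$. Only under these conditions are the induction hypothesis and the minimality of the old witness legitimately reusable for $\ug_n$.
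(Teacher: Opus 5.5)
Your proof is correct and follows essentially the paper's route: induction on $n$; the observation that every newly added base exceeds $\ug_{n-1}(n-1)$, so that Lemma~\ref{lemmrestrictbasehier} transfers upgrades and the old witness from $C_{n-1}$ to $C_n$; the same three-way case split; and the exclusion of the intermediate bases via $\Chgbases b{d_j}n=d_{j+1}=S_{C_n}(d_j)$. The differences are minor: you exclude bases below $c_{n-1}$ via Lemma~\ref{alternativedefwitness} plus monotonicity where the paper uses monotonicity of witnesses, and your direct normal-form argument for $d_j\mid d_{j+1}$ is cleaner than the paper's appeal to Lemma~\ref{lemmStructureChgbases}, which is stated only for good successors.

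Two small slips to fix. First, strictness $d_{j+1}>d_j$ does not follow from items 2 and 4 of Lemma~\ref{lemmMonUg} alone; you need the same monotonicity argument as for $d_1$, e.g.\ $d_{j+1}=\Chgbases b{d_j}n>\Chgbases b{d_j}b=d_j$, valid since $d_j\ge \ug b$. Second, $\ug_n b\le c_{n-1}$ holds only for $n\notin B$, because $b=n$ when $n\in B$; you never use it in that case, so this is only a matter of stating it correctly.
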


\begin{proof}
    We proceed by induction on $n$. The case $n\leq \min B$ is trivial, so assume otherwise.
    We first prove the following claim.

    \begin{statement}
    Whenever a base is added to $C_n$ that was not yet in $C_{n-1}$, this base is bigger than $c_{n-1}$ and therefore must be $c_n$. Moreover, $c_{n-1}\mid c_n$ and $c_{n-1}\leq \ug_{n-1} (n-1) < c_n$.
    \end{statement}

    In the case where $n\in B$ we have by Lemma~\ref{lemmMonUg}\ref{lemmMonUgLowerBnd} that $c_{n-1}\leq \Chgbases b{c_{n-1}}(n-1)  < c_{n-1}a = c_n$, where $b=\baseB {n-1}$. It is clear that $c_{n-1}\mid c_n$, and by the induction hypothesis we have that $\ug_{n-1}(n-1) = \Chgbases b{c_{n-1}}(n-1)$.
    
    Otherwise $n\notin B$ is $B$-critical. Then $b=\baseB{n-1}=\baseB n$, and by the induction hypothesis together with Lemma~\ref{alternativedefwitness} we have $c_{n-1}\geq \ug_{n-1} b$. Then $d_1 = \Chgbases b{c_{n-1}}n > \Chgbases b{c_{n-1}} b = c_{n-1}$, and $c_{n-1}\mid d_1$ by Lemma~\ref{lemmStructureChgbases}\ref{lemmStructureChgbasesDiv}. Iterating, we see that $d_{j+1}$ is a proper divisor of $d_j$ for each $j < i$. Moreover in this case $\ug_{n-1}(n-1) = \Chgbases b{c_{n-1}}(n-1) < d_1$. This establishes the claim.

    By our claim we know that $C_{n-1}$ is the restriction of $C_n$ to the interval $[0, \ug_{n-1}(n-1)]$. Then by Lemma~\ref{lemmrestrictbasehier}, $\ug_n x = \ug_{n-1}x < \infty$ for all $x < n$, and the witnesses for both upgrades are equal. Let $b=\baseB n$.
    \begin{Cases}
    \item ($n$ is not $B$-critical). Then $c_{n} = c_{n-1}$, and
    \[\ug_n (n-1) = \Chgbases {b}{c_n}(n-1) < \Chgbases {b}{c_n}n < \infty = S_{C_n}(c_n).\]
    By Lemma~\ref{lemmMonUg}\ref{lemmMonUgWitnComp}, $c_n$ is the witness for $\ug_n n$.

    \item ($n$ is $B$-critical).
    Then by the claim and Lemma~\ref{lemmMonUg}\ref{lemmMonUgLowerBnd},
    \[\ug_n(n-1) < c_n \leq \Chgbases b{c_n} n < \infty = S_{C_n}(c_n).\]
    By the definition of the upgrade, we have to prove that $c_n$ is the minimal base satisfying this. By Lemma~\ref{lemmMonUg}\ref{lemmMonUgWitnComp} it suffices to rule out $c_{n-1}$ and potentially the bases between $c_{n-1}$ and $c_n$.
    \begin{Cases}
        \item ($n = b\in B$).
        Then there are no other bases between $c_{n-1}$ and $c_n$, and $\Chgbases b{c_{n-1}}n = c_{n-1}\leq \ug_n(n-1)$.
        \item ($n\notin B$).
        Then $c_{n-1}$ and the bases between $c_{n-1}$ and $c_n$ are of the form $d_j$ for $j<i$, and $\Chgbases b {d_j}n = d_{j+1} = S_C(d_j)$.
    \end{Cases}
    \end{Cases}
\end{proof}

\begin{lemma}\label{lemmGoodSuccB(+i)}
Let $B$ be a base hierarchy and $C=B_{+i}$ for some $i$.
Then $C$ is a good successor of $B$.
\end{lemma}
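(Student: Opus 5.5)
We need to verify the three conditions of a good successor: (1) $\min B\leq \min C$, (2) $\ug n<\infty$ for all $n$, and (3) the no-multiples condition at each base $b>\min B$ of $B$. We also need $C$ to be a base hierarchy, i.e.\ $c\mid S_C(c)$ for each $c\in C$.

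Condition (1) is immediate, since $\min C=\min B+1$. The base-hierarchy property follows from the claim inside the proof of Lemma~\ref{lemmB_(+i)found}. Write $C_n=B_{+i}^n$ and $c_n=\max C_n$. Each new base added at stage $n$ exceeds $c_{n-1}$ and is divisible by it; in the $B$-critical non-base case, each $d_{j+1}$ is a multiple of $d_j$. Since the $C_n$ form an increasing chain whose new elements always lie above the old maximum, consecutive elements of $C$ divide one another.

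For (2), the key is the restriction argument. The same claim shows that $C_n$ is exactly $C\cap[0,\ug_n n]$. For $c_n\leq \ug_n n$ this follows from Lemma~\ref{lemmMonUg}\ref{lemmMonUgLowerBnd}. All later bases are larger than $c_m > \ug_{m-1}(m-1)\geq \ug_n n$ for $m>n$, and we argue inductively using monotonicity. Lemma~\ref{lemmrestrictbasehier} then gives $\ug_B^C n=\ug_B^{C_n} n<\infty$ by Lemma~\ref{lemmB_(+i)found}. The witness is $c_n$, which also identifies $\basep C{\ug n}=c_n$.

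For (3), let $\min B<b\in B$. By the above, $\ug(b-1)=\Chgbases{b'}{c}(b-1)$ with $c=c_{b-1}=\basep C{\ug(b-1)}$, where $b'=\baseB{b-1}$. By construction, stage $b$ adds the base $ca$ with $a$ minimal such that $\Chgbases{b'}c(b-1)<ca$. Every base added later exceeds $ca$. Hence $S_C(\ug(b-1))=ca$, and we need that no multiple of $c$ lies strictly between $\ug(b-1)$ and $ca$. Minimality of $a$ gives $c(a-1)\leq \ug(b-1)$, which is exactly this statement.

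The main obstacle I expect is the bookkeeping in (2) and (3): showing rigorously that $C\cap[0,\ug_n n]=C_n$ and that $S_C(\ug(b-1))$ equals the newly added base $ca$. Both reduce to the monotonicity statement $\ug_{n-1}(n-1)<c_n$ from the claim, combined with induction on $n$. I would isolate this as a preliminary observation that every element of $C\setminus C_n$ is larger than $\ug_B^{C_n}n$, before checking the three conditions.
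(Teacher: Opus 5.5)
Your proposal is correct and follows the paper's proof. The base-hierarchy property and $\min C=\min B+1$ come from the claim in Lemma~\ref{lemmB_(+i)found}, finiteness of $\ug$ comes from Lemma~\ref{lemmrestrictbasehier}, and condition (3) comes from the minimality of $a$ in the choice of $c_{b}=c_{b-1}a$; you additionally make explicit the restriction hypothesis $C\cap[0,\ug_B^{C_n}n]=C_n\cap[0,\ug_B^{C_n}n]$, which the paper uses silently.

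One small correction to your phrase ``all later bases are larger than $c_m$'': when $i\geq 2$, the bases $d_1,\dots,d_{i-1}$ added at stage $m$ lie below $c_m=d_i$. What is true, and all your argument needs, is that every base added at stage $m$ exceeds $\ug_{m-1}(m-1)$, since $\ug_{m-1}(m-1)<d_1$.
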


\begin{proof} 
The fact that $C$ is a base hierarchy follows from the claim in the previous lemma. It is also clear that $\min C = \min B + 1$.

For $n\in\N$, let $C_n = B_{+i}^n$. Write $\ug$ for $\ug_B^C$ and $\ug_n$ for $\ug_B^{C_n}$.
By Lemma~\ref{lemmrestrictbasehier}, we have that $\ug n = \ug_n n < \infty$.

To verify the final condition, suppose that $\min B <n\in B$. Then the witness for $\ug(n-1)$ is $c_{n-1}$, and we have to show that there are no multiples of $c_{n-1}$ in the interval $[\ug(n-1), c_n)$. But this holds because $c_n$ is chosen as the minimal multiple of $c_{n-1}$ above $\Chgbases b{c_{n-1}} (n-1) = \ug_{n-1}(n-1) = \ug(n-1)$, where $b=\baseB{n-1}$.
\end{proof}

Given a base hierarchy $B$, consider $C =B_{+i}$ for some $i$. Then $C$ is the union of $C_m \coloneqq B_{+i}^m$. If $n\notin B$ is $B$-critical, then by Definition~\ref{defCanon}, there are corresponding bases $d_j =d_j(n,i)$ in $C$, for $0\leq j \leq i$. Definition~\ref{defCanon} says that $d_{j+1} = \Chgbases {b}{d_j} n$ holds in some $C_m$, with $b=\baseB n$. By Lemma~\ref{lemmrestrictbasehier}, this remains true in $C$. 
We record the following properties for later use in the proof of independence in Section $7$.

\begin{lemma}\label{lemmCanonProp}
Let $n\notin B$ be $B$-critical with $b=\baseB n$. Write $\ug$ for $\ug_B^C$ and $d_j$ for $d_j(n,i)$.
    \begin{enumerate}
        \item $d_i$ is the witness for $\ug n$ and $d_0$ is the witness for $\ug (n-b)$.\label{lemmCanonPropWitness}
        \item If $n+b\in B$, then there are no bases of $C$ lying strictly between $d_i$ and $\ug(n+b)$.\label{lemmCanonPropBases}
    \end{enumerate}
\end{lemma}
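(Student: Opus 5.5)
We work in the finite approximations $C_m = B_{+i}^m$ of Definition~\ref{defCanon} and transfer the conclusions to $C$ using Lemma~\ref{lemmrestrictbasehier}, exactly as in the proof of Lemma~\ref{lemmGoodSuccB(+i)}. We also use the claim inside the proof of Lemma~\ref{lemmB_(+i)found}: new bases added at stage $m$ exceed $c_{m-1}=\max C_{m-1}$, and $C_{m-1}$ is the restriction of $C_m$ to $[0,\ug_{m-1}(m-1)]$.

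For \ref{lemmCanonPropWitness}, first locate $d_0$. By definition $d_0=c_{n-1}=\max C_{n-1}$, the base present just before stage $n$. Since $n\notin B$ is $B$-critical, $n-b\geq b$ holds, and no $B$-critical number lies strictly between $n-b$ and $n$. So $C_{n-1}=C_{n-b}$, because stages that are not $B$-critical add nothing. Lemma~\ref{lemmB_(+i)found} then shows that $c_{n-b}=d_0$ is the witness for $\ug_{n-b}(n-b)$. By Lemma~\ref{lemmrestrictbasehier}, which we may apply since $C_{n-b}$ agrees with $C$ up to $\ug_{n-b}(n-b)$, this is the witness for $\ug(n-b)$. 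Likewise $c_n=\max C_n=d_i$ by the claim, since the $d_j$ strictly increase. Lemma~\ref{lemmB_(+i)found} says $d_i$ is the witness for $\ug_n n$, and Lemma~\ref{lemmrestrictbasehier}, which applies because every base of $C$ beyond $C_n$ exceeds $\ug_n n$ (from the claim applied at the next stage that adds bases), transfers this to $\ug n$.

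For \ref{lemmCanonPropBases}, suppose $n+b\in B$. Then the stages $n+1,\dots,n+b-1$ are not $B$-critical, since they lie strictly between the consecutive multiples $n$ and $n+b$ of $b$ and the base is still $b$. Hence $C_{n+b-1}=C_n$ with maximum $d_i$. At stage $n+b$ exactly one base $c_{n+b}=d_i a$ is added. By Lemma~\ref{lemmalternativedefupgr}(1), $\ug(n+b)=S_C(\ug(n+b-1))$. Here $\ug(n+b-1)=\ug_n(n+b-1)$ has witness $d_i$ and is at least $d_i$, and the next base of $C$ after $d_i$ is the one added at stage $n+b$, so $\ug(n+b)=d_ia=c_{n+b}$. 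Any base of $C$ strictly between $d_i$ and $\ug(n+b)$ would have to belong to some $C_m$. For $m\leq n+b-1$ there is nothing above $d_i$, and for $m\geq n+b$ the new bases exceed $c_{n+b}$ by the claim. So no such base exists.

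The main obstacle is the bookkeeping in transferring witnesses from $C_m$ to $C$. One must check that the hypothesis of Lemma~\ref{lemmrestrictbasehier}, namely $C\cap[0,\ug n]=C_m\cap[0,\ug n]$, really holds. This uses the strict inequality $\ug_{m-1}(m-1)<c_m$ from the claim, together with the fact that every base added at a later stage exceeds the previous maximum. It also requires arguing that $\ug(n+b-1)$ is strictly below the newly added base, which follows from the choice of $a$ being minimal with $\Chgbases b{d_i}(n+b-1)<d_ia$.
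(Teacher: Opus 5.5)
Your proposal is correct and follows essentially the paper's argument. You identify $d_0=c_{n-1}=c_{n-b}$ (the stages strictly between $n-b$ and $n$ are not $B$-critical) and $d_i=c_n$, apply Lemma~\ref{lemmB_(+i)found}, transfer to $C$ via Lemma~\ref{lemmrestrictbasehier}, and for the second item use $c_{n+b-1}=c_n=d_i$ together with the fact that the only base added at stage $n+b$ is the successor of $d_i$ in $C$. The only differences are cosmetic: you get $\ug(n+b)=c_{n+b}$ from Lemma~\ref{lemmalternativedefupgr} (since $C$ is a good successor) rather than directly from Lemma~\ref{lemmB_(+i)found}, and you spell out the restriction bookkeeping that the paper leaves implicit.
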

\begin{proof}
    Let $c_m=\max C_m$ for each $m\in\N$. 
    From the proof of Lemma~\ref{lemmB_(+i)found}, we see that $c_n$ is one of the elements $d_j$, and for each $j$ we have that $d_j$ is a proper divisor of $d_{j+1}$. Therefore $c_n = d_i$. By Definition~\ref{defCanon}, $d_0=c_{n-1} = c_{n-2}=\dots = c_{n-b}$. Now \ref{lemmCanonPropWitness} follows from Lemma~\ref{lemmB_(+i)found} together with Lemma~\ref{lemmrestrictbasehier}. Suppose that $n+b\in B$. Then $\ug(n+b) = c_{n+b}$ by Lemma~\ref{lemmB_(+i)found} and Lemma~\ref{lemmrestrictbasehier}. Furthermore $d_i=c_n = c_{n+b-1}$, and by the claim in Lemma~\ref{lemmB_(+i)found} together with Definition~\ref{defCanon}, $c_{n+b}$ is the successor of $c_{n+b-1}$ in $C$. This establishes \ref{lemmCanonPropBases}.
\end{proof}

\section{Fundamental Sequences}

 We recall some notions from \cite{BuchholzOrd} and \cite{FWTheta}.

\begin{definition}\label{defTau}
Let $\xi<\ve_{\Om+1}$ be in $\Om$-normal form.
The {\em cofinality} of $\xi$, denoted $\tau(\xi)$, is given recursively by
\begin{enumerate}

\item $\tau(0) = 0$ and $\tau(\zeta+1) = 1$,

\item $\tau (\Om^\al \be+\ga) = \tau  (\ga)$ if $\ga>0$,

\item $\tau (\Om^\al \be ) = \be$ if $\be$ is a limit,

\item $\tau (\Om^\al (\be+1) ) = \tau(\al)$ if $\al$ is a limit, and

\item $\tau(\Om^ {\al+1}(\be+1)) = \Om$.

\end{enumerate}
\end{definition}

\begin{definition}\label{defFS}
    For $\xi < \ve_{\Om+1}$ in $\Om$-normal form and $\theta < \Om$, we define
    \begin{enumerate}
        \item $\fs 0\theta = \fs 1\theta = 0$,
        \item $\fs{(\Om^\al\be + \ga)}{\theta} = \Om^\al\be + \fs\ga\theta$ if $0 <\ga < \Om^\al$,
        \item $\fs{\Om^\al\be}{\theta} = \Om^\al\theta$ if $\be\in\li$,
        \item $\fs{(\Om^\al(\beta + 1))}{\theta} = \Om^\al\be + \fs{\Om^\al}\theta$ if $\be > 0$,
        \item $\fs{\Om^\al}{\theta} = \Om^{\fs \al\theta}$ if $\al\in\li$, and
        \item $\fs {\Om^{\al + 1}}{\theta} = \Om^\al\theta$.
    \end{enumerate}
\end{definition}

\begin{proposition} \label{propCompareFS}
Let $\zeta\in \ve_{\Om+1}$ and $\theta,\eta < \Om$.
    \begin{enumerate}
        \item $\mco{\fs\zeta\theta}\leq \max(\mco\zeta, \theta)$.\label{propCompareFSmco}
        \item If $\eta < \theta$ and $\zeta\in\li$, then $\fs\zeta\eta < \fs\zeta\theta$. \label{propCompareFSmon}
    \end{enumerate}
\end{proposition}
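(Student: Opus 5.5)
Both items will be proved by induction on $\zeta$ along its $\Om$-normal form $\zeta=\Om^\al\be+\ga$, following the clauses of Definition~\ref{defFS}. For clauses 5 and 6 the induction descends into the exponent, so I will phrase the induction on the height of the normal-form term (equivalently, on $\zeta$ itself). In the recursive clauses the relevant subterm, either $\ga$ or $\al$, is strictly smaller than $\zeta$.

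\textbf{Item 1.} The trivial clause gives $\fs0\theta=\fs1\theta=0$, with maximal coefficient $0$. In clause 2 we have $\mco{\fs\zeta\theta}=\max\{\be,\mco\al,\mco{\fs\ga\theta}\}$. Here I have to check that $\Om^\al\be+\fs\ga\theta$ is still in normal form; this holds because $\fs\ga\theta\le\ga<\Om^\al$, so no terms are absorbed. The induction hypothesis on $\ga$ then bounds the maximal coefficient by $\max(\mco\zeta,\theta)$. In clause 3 the maximal coefficient is $\max\{\mco\al,\theta\}$. In clause 4 it is at most $\max\{\be,\mco\al,\mco{\fs{\Om^\al}\theta}\}$, and I will use the induction on $\Om^\al$. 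That term is smaller than $\zeta$ when $\be>0$, and otherwise it reduces to clause 5 or 6. In clause 5 it is $\mco{\fs\al\theta}$, handled by the induction hypothesis on $\al<\zeta$. In clause 6 it is $\max\{\mco\al,\theta\}$, with the convention $\theta=0$ giving $0$.

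\textbf{Item 2.} I will prove the slightly stronger statement that $\theta\mapsto\fs\zeta\theta$ is strictly increasing whenever $\zeta\in\li$. The cases are as follows.
\begin{itemize}
\item Clause 2: $\ga$ is a limit, and by the induction hypothesis $\fs\ga\eta<\fs\ga\theta$. Adding $\Om^\al\be$ on the left preserves strict inequality.
\item Clause 3: $\Om^\al\eta<\Om^\al\theta$ is immediate.
\item Clause 4: this reduces to the claim for $\Om^\al$ by left addition.
\item Clause 5: $\al$ is a limit, so $\fs\al\eta<\fs\al\theta$ by induction, and $\xi\mapsto\Om^\xi$ is strictly increasing.
\item Clause 6: this is again $\Om^\al\eta<\Om^\al\theta$.
\end{itemize}
One subtlety needs care. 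Proposition~\ref{propCompareFS} concerns $\zeta\in\li$, but in clause 4 with $\be>0$ and in clause 2 the subterm must also be a limit. This is guaranteed: $\zeta$ limit with $\ga>0$ forces $\ga$ limit, and $\Om^\al$ with $\al>0$ is a limit. If $\al=0$, then $\Om^0(\be+1)=\be+1$ is a successor and is excluded.

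\textbf{Main obstacle.} The only delicate point is ensuring that the expressions produced by Definition~\ref{defFS} are in $\Om$-normal form, so that the maximal-coefficient computation in item 1 is legitimate. This requires the auxiliary fact $\fs\xi\theta<\xi$ for $\xi>0$, or at least $\fs\ga\theta<\Om^\al$ when $\ga<\Om^\al$, which I will prove simultaneously by the same induction.
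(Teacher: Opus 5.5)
Your plan is the same as the paper's: structural induction on the $\Omega$-normal form, clause by clause through Definition~\ref{defFS} (the paper's proof is just ``induction on $\zeta$''). Your argument for item~2 is correct, since monotonicity in the argument holds for all $\eta<\theta<\Omega$.

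The gap is in item~1, at the step you yourself single out as the main obstacle. The auxiliary fact you plan to prove alongside it, $\xi[\theta]<\xi$ for $\xi>0$, is false for general $\theta<\Omega$: clause~3 gives $\omega[\omega+1]=\omega+1$. Your weaker fallback fails too. We have $\Omega^\omega<\Omega^{\omega+1}$, yet $\Omega^\omega[\omega+1]=\Omega^{\omega[\omega+1]}=\Omega^{\omega+1}$. This cannot be patched inside your argument, because item~1 itself is false in the stated generality. Take $\zeta=\Omega^{\omega+1}(\omega+2)+\Omega^\omega$ and $\theta=\omega+1$. Clause~2 gives $\zeta[\theta]=\Omega^{\omega+1}(\omega+2)+\Omega^{\omega+1}=\Omega^{\omega+1}(\omega+3)$. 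Hence $(\zeta[\theta])^*=\omega+3$, while $\max(\zeta^*,\theta)=\omega+2$. The merging of coefficients you wanted to rule out really does occur.

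The missing ingredient is the hypothesis $\theta<\tau(\zeta)$ for limit $\zeta$. This is Buchholz's convention, and it holds wherever item~1 is used in the proof of Proposition~\ref{propMajorizeFS}: in Case~3.1 one has $\tau=\Omega$, and in Case~3.2 the argument lies below $\tau$. With this hypothesis your induction works. Prove simultaneously that $\xi[\theta]<\xi$ whenever $\theta<\tau(\xi)$; successors are trivial, since $(\delta+1)[\theta]=\delta$. The hypothesis passes to the recursive calls for two reasons:
\begin{itemize}
\item in clause~2, $\tau(\Omega^\alpha\beta+\gamma)=\tau(\gamma)$;
\item in clauses~4 and~5, for limit $\alpha$, $\tau(\Omega^\alpha(\beta+1))=\tau(\Omega^\alpha)=\tau(\alpha)$.
\end{itemize}
Clauses~3 and~6 need no induction hypothesis. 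It then follows that $\Omega^\alpha\beta+\gamma[\theta]$ and $\Omega^\alpha\beta+\Omega^\alpha[\theta]$ really are in normal form, so your coefficient bookkeeping becomes valid.
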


\begin{proof}
    Induction on $\zeta$. These properties are also stated in \cite{BuchholzOrd}.
\end{proof}

\begin{definition}\label{defFixJump}
    We introduce the following notation.\vspace{2pt}
    \begin{enumerate}[itemsep=4pt]

        \item $\FIX = \{\xi < \ve_{\Om+1} \mid \mco{\fs \xi 1} < \mco\xi = \tau(\xi) = \B(\ga) \textit{ for some } \ga >\xi\}$

        \item $\JUMP = \FIX\cup \su\cup\{0\}$

        \item $\vartheta^* (\xi) = \begin{cases}
\B(\zeta)&\text{if $\xi =\zeta+1$,}\\ 
\tau(\xi)&\text{if $\xi\in \FIX$,}\\
0 &\text{otherwise.}
 \end{cases}$

 \item For $\xi=\Om\al+\be$ with $\be<\Om$, set \vspace{5pt}\newline
$\check \xi=\begin{cases}
\Om\al&\text{if $\vartheta^*(\zeta)>0$,}\\ 
\xi &\text{otherwise.}
 \end{cases}$

    \end{enumerate}
\end{definition}

We recall Buchholz's fundamental sequences~\cite{BuchholzOrd}. It will be convenient to define them for all elements of $\vartheta[\ve_{\Om+1}]$. Note that $\check\xi = 0$ is equivalent to $\xi\in\Om\cap\JUMP$.

\begin{definition}\label{defCFS}
Let $\xi\in \vartheta[\ve_{\Om+1}]$ and $\iota <\om$.
Define $\fsc \xi\iota \in \vartheta[\ve_{\Om+1}]$ by
\begin{enumerate}

\item 
$\fsc 0\iota = 0$.

\item
$\fsc{(\om^\al+\be)}\iota = \om^\al+\fsc\be\iota$ if $0<\be<\om^{\al+1}$.

\item If $\xi=\vartheta(\zeta)$, then let $\tau = \tau(\check\zeta)$, and:
\begin{enumerate}

\item If $\zeta\in\Om\cap \JUMP$ then $\fsc{\B( \zeta)}\iota  = \vartheta^*(\zeta)\cdot\iota$.

\item If $0<\tau<\Om$, then
$\fsc{\B( \zeta)}\iota = \B \big ( \fs{\check \zeta} {\fsc \tau\iota} +\vartheta^*(\zeta) \big )$.

\item If $\tau=\Om$, then
$
\fsc{\B(\zeta)}0=\vartheta^*(\zeta)
$ and
$\fsc{\B(\zeta)}{\iota+1}=\vartheta( \fs{\check\zeta} {\fsc{\xi}\iota} )$.

\end{enumerate}
\end{enumerate}
\end{definition}

It is shown in \cite{BuchholzOrd} that the usual property of fundamental sequences hold, namely if $\xi\in\vartheta[\ve_{\Om+1}]\cap\li$, then $\fsc{\xi}n$ is strictly increasing in $n$ and converges to $\xi$. We extend this to hold for $\xi=\vartheta[\ve_{\Om+1}]$ by defining $\fsc{\vartheta[\ve_{\Om+1}]}n = \vartheta(\Om_n)$.

\begin{lemma}\label{lemmZetaIota}
Let $\zeta < \ve_{\Om+1}$, $0<\theta<\Om$, $\xi\in \vartheta[\ve_{\Om+1}]$ and $0<\iota<\om$. 

\begin{enumerate}
\item 
If $\zeta > 1$, then $\fs\zeta\theta > 0$.

\item\label{itZetaIotaInf}
If $\zeta$ is infinite then either $\fs\zeta\theta=\theta$, or else $\fs\zeta\theta$ is infinite.

\item\label{itZetaIotaUnc} If $\zeta$ is uncountable then either $ \zeta[\theta]=\theta$, or else $\zeta[\theta]$ is uncountable.

\item
If $\xi>1$, then $\fsc\xi\iota > 0$.

\item\label{itXiIotaInf}
If $\xi$ is infinite then either $\fsc\xi\iota = \iota$, or else $\fsc\xi\iota$ is infinite.

\end{enumerate}
\end{lemma}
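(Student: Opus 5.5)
The plan is a structural induction that follows the clauses of Definitions~\ref{defFS} and~\ref{defCFS}: items 1--3 by induction on $\zeta$ in $\Om$-normal form, and items 4--5 by induction on $\xi$ along the clauses of Definition~\ref{defCFS}. Items 1--3 feed into the treatment of the $\vartheta$-clauses for items 4--5.

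\textbf{Items 1--3.} Write $\zeta=\Om^\al\be+\ga$ and go through the clauses of Definition~\ref{defFS}.
\begin{itemize}
\item Clauses (2) and (4) produce a nonzero leading term $\Om^\al\be$ with $\be>0$. This makes the result positive. It is infinite unless $\al=0$ and $\be$ is finite, in which case $\zeta=\be+\ga$ is countable. It is uncountable whenever $\al>0$.
\item In the subcase $\al=0$ of clause (2), $\zeta=\be+\ga$ with $\ga<1$ cannot occur with $\ga>0$. So clause (2) forces $\al\geq 1$ and hence gives an uncountable result. In clause (4) with $\al=0$ we have $\zeta=\be+1$, so it is exactly clause (1), which only applies when $\zeta\le 1$.
\item Clause (3) gives $\Om^\al\theta$. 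This is positive, and it equals $\theta$ when $\al=0$; otherwise it is uncountable, hence infinite.
\item Clause (6) gives $\Om^\al\theta$. This is $\theta$ if $\al=0$ and uncountable otherwise.
\item Clause (5) gives $\Om^{\fs\al\theta}$ with $\al$ a limit. By induction, $\fs\al\theta>0$, so the result is uncountable.
\item The successor case $\zeta=\zeta'+1>1$ falls under clause (2) with $\ga=1$, giving $\zeta'\ne 0$. That is positive; for items 2--3 the successor of an infinite (uncountable) ordinal stays infinite (uncountable).
\end{itemize}
This shows that in every case either $\fs\zeta\theta=\theta$ or $\fs\zeta\theta$ keeps the size of $\zeta$.

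\textbf{Items 4--5.} Here $\xi<\vartheta[\ve_{\Om+1}]$.
\begin{itemize}
\item \emph{Additive clause.} Write $\xi=\om^\al+\be$. The result $\om^\al+\fsc\be\iota$ is positive, and it is infinite if $\al>0$. If $\al=0$ then $\be<\om$ and $\xi$ is finite, so item 5 does not apply.
\item \emph{Clause (a).} This gives $\vartheta^*(\zeta)\cdot\iota$. It is positive when $\vartheta^*(\zeta)>0$, and this holds for $\zeta\in\JUMP$ with $\vartheta(\zeta)>1$. It is infinite since $\vartheta^*(\zeta)$ is a value of $\vartheta$ or a cofinality $\tau(\zeta)=\vartheta(\ga)$, both infinite for $\zeta\ne0$. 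The case $\zeta=0$ gives $\vartheta(0)=1$, which is excluded.
\item \emph{Clause (b).} The value is $\vartheta(\cdot)$, always at least $1$. The worry is that it might equal $1$, i.e.\ that the argument is $0$. If the argument $\fs{\check\zeta}{\fsc\tau\iota}+\vartheta^*(\zeta)$ is $0$, then $\fs{\check\zeta}{\fsc\tau\iota}=0$. By item 1 this forces $\check\zeta\le1$, contradicting $0<\tau(\check\zeta)<\Om$ being a limit. So the value is $\vartheta$ of a positive ordinal, which is at least $\om$.
\item \emph{Clause (c).} This gives $\vartheta(\fs{\check\zeta}{\cdot})$ with $\check\zeta$ uncountable. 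By item 3 the argument is either uncountable or equal to $\fsc\xi{\iota-1}$, which is positive by induction on $\iota$ for $\iota\ge2$. For $\iota=1$ we must check that $\fsc\xi0=\vartheta^*(\zeta)$ gives a positive argument, or else use item 1 directly: $\check\zeta>1$ implies $\fs{\check\zeta}{\theta}>0$ when $\theta>0$. The subcase $\theta=0$, i.e.\ $\vartheta^*(\zeta)=0$, gives $\fs{\check\zeta}0$, which might vanish; handling it requires care but still yields a value at least $\vartheta(0)=1$.
\item \emph{Supremum.} For the extension $\fsc{\vartheta[\ve_{\Om+1}]}n=\vartheta(\Om_n)$, the values are at least $\vartheta(1)=\om$.
\end{itemize}

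\textbf{Main obstacle.} The delicate point is item 5 in the $\vartheta$-clauses, where one must ensure the output is not a finite value other than $\iota$. Since every $\vartheta$-value is $1$ or $\om^\theta$ with $\theta\geq 1$, the value $\vartheta(\eta)$ is infinite exactly when $\eta>0$. So it suffices to show that the argument is nonzero, which is items 1 and 4 together with the fact that $\vartheta^*(\zeta)>0$ or $\check\zeta>1$. I expect the care needed to lie in the case $\tau=\Om$, $\iota=1$, $\vartheta^*(\zeta)=0$. There I would argue that $\fs{\check\zeta}0$ is nonzero because $\check\zeta\ge\Om$ has a nonzero leading part preserved by clauses (2) and (4). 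When clause (6) gives $\Om^{\al}\cdot 0=0$, I would check against Buchholz's conventions that $\vartheta^*(\zeta)>0$ there.
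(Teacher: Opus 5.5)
Your overall structure is the same as the paper's: items 1--3 by induction on $\zeta$ through the clauses of Definition~\ref{defFS}, then items 4--5 by induction on $\xi$ through Definition~\ref{defCFS}, with items 1 and 3 used in the $\vartheta$-clauses. Items 1--3 are fine apart from bookkeeping slips. For example, a countable successor $\zeta=\be+1\geq 2$ falls under clause (4) with $\al=0$ and gives $\fs\zeta\theta=\be$; it does not fall under clause (1) or (2), but your conclusions are unaffected. The real problem is item 5, exactly at the point you call the main obstacle.

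Your plan there is to show that the argument of $\vartheta$ is always nonzero. That cannot work, because the argument can be zero. Take $\xi=\vartheta(\Om)=\ve_0$:
\begin{itemize}
\item $\mco\Om=1\neq\Om=\tau(\Om)$, so $\Om\notin\FIX$.
\item Hence $\vartheta^*(\Om)=0$, $\check\Om=\Om$ and $\tau=\Om$.
\item Clause (c) then gives $\fsc{\ve_0}1=\vartheta(\fs\Om 0)=\vartheta(\Om^0\cdot 0)=\vartheta(0)=1$.
\end{itemize}
This is exactly your ``clause (6) gives $\Om^\al\cdot0=0$'' situation, and the check you intend to run ($\vartheta^*(\zeta)>0$ there) fails. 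In clause (a), your claim that $\vartheta^*(\zeta)$ is infinite for $\zeta\neq0$ also fails, at $\zeta=1$, where $\fsc\om\iota=\vartheta(0)\cdot\iota=\iota$.

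The missing observation is that these are precisely the cases covered by the first alternative of item 5, since $\vartheta^*(\zeta)$ and all values of $\vartheta$ are $0$, $1$, or infinite.
\begin{itemize}
\item In clause (a), $\vartheta^*(\zeta)\geq 1$, so $\vartheta^*(\zeta)\cdot\iota$ is either $\iota$ or infinite.
\item In clause (c), $\fsc\xi1=\vartheta(\fs{\check\zeta}{\vartheta^*(\zeta)})$ is either $1=\iota$ or infinite.
\item In clause (c) with $\iota\geq2$, the parameter $\fsc\xi{\iota-1}$ is at least $1$, so item 1 makes the argument positive and $\fsc\xi\iota$ is infinite.
\end{itemize}
Your remark that the value is ``at least $\vartheta(0)=1$'' settles item 4 but not item 5. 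The paper closes the proof exactly this way.

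Finally, in clause (b) you should state explicitly that item 1 needs $\fsc\tau\iota>0$. This comes from the induction hypothesis (item 4) applied to the limit $\tau\leq\mco\zeta<\xi$.
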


\begin{proof}
We prove the first three claims by simultaneous induction on $\zeta>0$. Write $\zeta=\Om^\eta\al+\ga$, with $\al$ additively indecomposable.

If $\ga>0$ then $\zeta[\theta] = \Om^\eta\alpha + \fs\ga\theta$ is uncountable if $\zeta$ is uncountable, infinite if $\zeta$ is infinite, and greater than zero. Now assume that $\ga = 0$.

If $\al$ is a limit, then $\zeta[\theta]=\Om^\eta\theta$, which is uncountable if $\zeta$ is, else it is equal to $\theta>0$.

Otherwise, $\al=1$.
If $\eta$ is a limit, then $\zeta[\theta] =\Om^{\eta[\theta]}$ is uncountable since $\eta[\theta] > 0$.
Otherwise, $\eta =\delta+1$ is a successor and $\zeta[\theta]$ is uncountable unless $\delta=0$.
But then, $\zeta[\theta]=\theta>0$.

Now we prove the last two claims by simultaneous induction on $\xi$. If $\xi = \om^\alpha + \beta$ with $0<\beta<\om^{\alpha+1}$, then $\fsc\xi\iota = \om^\alpha + \fsc\beta\iota$ is infinite if $\xi$ is, and greater than zero if $\xi > 1$. So assume further that $\xi = \vartheta(\zeta)$ for some $\zeta$. Then $\xi > 1$ only happens when $\zeta > 0$ and $\xi$ is infinite, so that we only have to consider the last claim. 

Note that $\vartheta^*(\zeta)$ is either zero or of the form $\vartheta(\zeta')$, in which case it is either $1$ or infinite.
In the case where $\fsc\xi\iota$ is of the form $\vartheta^*(\zeta)\cdot\iota$, we have that $\vartheta^*(\zeta) > 0$, and the assertion follows. 

Now consider the case where $\fsc\xi\iota = \vartheta(\fs{\check\zeta}{\fsc\tau\iota} + \vartheta^*(\zeta))$. 
It suffices to check that 
$\fs{\check\zeta}{\fsc\tau\iota} + \vartheta^*(\zeta)>0$.
For this not to be the case we would need that $\vartheta^*(\zeta) = 0$, so that $\check\zeta = \zeta$.
Since $\zeta\notin\Om\cap\JUMP$ we have that $\check\zeta$ is infinite. Then by the first item, $\fs{\check\zeta}{\fsc\tau\iota} = 0$ can only occur if $\fsc\tau\iota = \fsc{\tau(\zeta)}\iota =0$. Since $\tau(\zeta)\leq \mco\zeta < \xi$, we can apply the induction hypothesis to see that in this case $\tau \leq 1$. Since $\xi > 1$, it must be that $\tau = 1$ and that $\zeta$ is a successor. But this contradicts the fact that $\vartheta^*(\zeta) = 0$.

Now suppose that $\tau(\check\zeta) = \Om$.
Then $\fsc\xi 1 = \vartheta(\fs{\check\zeta}{\vartheta^*(\zeta)})$ is either one or infinite. In any case, by the first claim $\fs{\check\zeta}{\fsc\xi 1} > 0$, so $\fsc\xi 2$ is infinite. Repeating this argument shows that $\fsc\xi \iota$ is infinite for every $\iota \geq 2$.
\end{proof}

\begin{lemma}\label{lemmCFSsuccli}
    If $\alpha<\vartheta[\ve_{\Om+1}]$, then
    \begin{enumerate}
        \item If $\alpha<\om^\al$, then $\al\notin\FIX$.\label{lemmCFSfix}
        \item If $\alpha\in\li$, then $\fsc{\om^\al}\iota \leq \om^{\fsc\al\iota+1}$.\label{lemmCFSli}
        \item If $\al\in\su$, then $\fsc{\om^{\alpha}}\iota = \om^{\alpha-1}\cdot\iota$.\label{lemmCFSsucc}
    \end{enumerate}
\end{lemma}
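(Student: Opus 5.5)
The plan is to treat the three items separately. For each item, first identify an argument $\zeta<\Om$ with $\vartheta(\zeta)=\om^\al$, using Lemma~\ref{lemmVarthetaLessOm} and the surjectivity in Lemma~\ref{propThetaMonEps}. Then read off $\fsc{\om^\al}\iota$ directly from Definition~\ref{defCFS}. Two facts about countable $\al$ are used throughout. The $\Om$-normal form of $\al$ is $\Om^0\al$, so $\mco\al=\al$. Moreover $\tau(\al)=\al$ and $\fs\al\theta=\theta$ when $\al$ is a limit, while $\tau(\al)=1$ when $\al$ is a successor.

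\emph{Item \ref{lemmCFSfix}.} Suppose $\al<\om^\al$ and $\al\in\FIX$. Then $\al=\mco\al=\vartheta(\ga)$ for some $\ga>\al$.
\begin{itemize}
\item If $\ga\geq\Om$, then Lemma~\ref{propThetaMonEps}\ref{propThetaEps} gives $\om^\al=\al$, a contradiction.
\item If $\ga<\Om$, then Lemma~\ref{lemmVarthetaLessOm} gives $\vartheta(\ga)\geq\om^\ga\geq\ga>\al$, again a contradiction.
\end{itemize}

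\emph{Item \ref{lemmCFSli}.} Let $\al$ be a limit.
\begin{itemize}
\item If $\om^\al=\al$, then $\fsc{\om^\al}\iota=\fsc\al\iota\leq\om^{\fsc\al\iota}<\om^{\fsc\al\iota+1}$, and we are done.
\item Otherwise $\al<\om^\al$. Since $\al$ is a limit that is not an epsilon number, it is not of the form $\de+n$ with $\om^\de=\de$. So Lemma~\ref{lemmVarthetaLessOm} gives $\om^\al=\vartheta(\al)$.
\end{itemize}
In the second case, item \ref{lemmCFSfix} shows $\al\notin\FIX$, and $\al$ is neither $0$ nor a successor, so $\al\notin\JUMP$ and $\vartheta^*(\al)=0$. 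Hence $\check\al=\al$ and $\tau=\tau(\al)=\al\in(0,\Om)$. Clause (b) of Definition~\ref{defCFS} then gives
\[
\fsc{\vartheta(\al)}\iota=\vartheta(\fs\al{\fsc\al\iota})=\vartheta(\fsc\al\iota).
\]
By Lemma~\ref{lemmVarthetaLessOm} this is at most $\om^{\fsc\al\iota+1}$.

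\emph{Item \ref{lemmCFSsucc}.} Let $\al$ be a successor. This is the only place needing a genuine case split, according to whether $\al=\de+n$ with $\om^\de=\de$ and $n\geq1$.
\begin{itemize}
\item \emph{Not of that form.} Lemma~\ref{lemmVarthetaLessOm} gives $\om^\al=\vartheta(\al)$, and $\al\in\Om\cap\JUMP$. Clause (a) yields $\fsc{\om^\al}\iota=\vartheta(\al-1)\cdot\iota$. Since $\al-1$ is also not of the exceptional form, $\vartheta(\al-1)=\om^{\al-1}$.
\item \emph{$\al=\de+n$ with $n\geq2$.} Then $\om^\al=\vartheta(\al-1)$, where $\al-1=\de+(n-1)$ is a successor. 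Clause (a) gives $\vartheta(\al-2)\cdot\iota=\om^{\al-1}\cdot\iota$.
\item \emph{$\al=\de+1$.} This is the delicate subcase. Here $\om^\al=\vartheta(\de)$, and I need $\de\in\FIX$.
\end{itemize}

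For the last subcase, by surjectivity $\de=\vartheta(\ga)$ for some $\ga$. I rule out $\ga<\Om$ using Lemma~\ref{lemmVarthetaLessOm}:
\begin{itemize}
\item $\ga=\de$ would give $\vartheta(\de)=\om^{\de+1}\neq\de$;
\item $\ga>\de$ would make $\vartheta(\ga)\geq\om^\ga>\de$;
\item $\ga<\de$ would make $\vartheta(\ga)\in\{\om^\ga,\om^{\ga+1}\}$, which is $<\de$ because $\de$ is an epsilon number.
\end{itemize}
So $\ga\geq\Om>\de$. Since also $\mco{\fs\de1}=1<\de=\tau(\de)$, we get $\de\in\FIX$. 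Hence $\vartheta^*(\de)=\de$ and $\de\in\Om\cap\JUMP$, and clause (a) gives $\fsc{\vartheta(\de)}\iota=\de\cdot\iota=\om^{\al-1}\cdot\iota$.

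I expect this identification of $\de$ as a $\FIX$ element to be the main subtlety. The remaining steps are direct unfolding of the definitions.
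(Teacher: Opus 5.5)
Your proof is correct and follows the paper's argument. In each item you identify the $\vartheta$-preimage of $\om^\al$ via Lemma~\ref{lemmVarthetaLessOm} and unfold Definition~\ref{defCFS}; in item~\ref{lemmCFSsucc} the key step is that an epsilon number $\de$ must be $\vartheta(\ga)$ with $\ga\geq\Om$, hence $\de\in\FIX$, exactly as in the paper. The deviations are minor: in item~\ref{lemmCFSfix} you use Lemma~\ref{propThetaMonEps}\ref{propThetaEps} together with the bound $\vartheta(\ga)\geq\om^\ga$ instead of writing $\al=\om^\be$ and invoking injectivity, and your explicit handling of $\al=\de+n$ with $n\geq2$ via $\om^\al=\vartheta(\al-1)$ is more careful than the paper's ``same calculation as above''.
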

\begin{proof}
    For \ref{lemmCFSfix}, assume $\al$ is in the image of $\vartheta$, so that $\al=\om^\be$. By assumption $\be < \om^\be$, hence by Lemma~\ref{lemmVarthetaLessOm}, $\al = \om^\be$ is equal to either $\vartheta(\be)$ or $\vartheta(\be - 1)$. By the injectivity of $\vartheta$, we cannot have $\al = \vartheta(\ga)$ with $\ga > \al\geq \beta$.

    If $\al=\om^\al$, then \ref{lemmCFSli} holds trivially. So assume $\al\in\li$ and $\al < \om^\al$. Then $\vartheta(\al) = \om^\al$. One checks that 
    \[\fsc{\vartheta(\al)}\iota = \vartheta(\fs{\check\al}{\fsc\al\iota} + \vartheta^*(\al)) = \vartheta(\fsc\al\iota) \leq\om^{\fsc\al\iota + 1}.\]
    For \ref{lemmCFSsucc}, if $\al+1$ is not of the form $\de+n$ where $\om^\de=\de$ and $n<\om$, then
    \[\fsc{\om^{\al+1}}{\iota}=\fsc{\vartheta(\al+1)}{\iota} = \vartheta^*(\al+1)\cdot \iota = \vartheta(\al)\cdot\iota = \om^\al\cdot\iota.\]
    Otherwise suppose that $\al+1$ is of the form $\de+n$. If $n>1$, we can make the same calculation as above. Suppose further that $\al=\de$. By the surjectivity of $\vartheta$, we have $\de=\vartheta(\ga)$, and by Lemma~\ref{lemmVarthetaLessOm}, $\ga \geq\Om$, which implies $\de\in\FIX$. Then
    \[\fsc{\om^{\al+1}}\iota = \fsc{\vartheta(\de)}\iota = \vartheta^*(\de)\cdot\iota = \de\cdot \iota =\om^\al\cdot\iota.\]
\end{proof}

\begin{definition}\label{defFFun}
For $i<\om$ and $\al<\vartheta[\ve_{\Om+1}]$, define $\fsi\al i$ recursively by $\fsi\al 0 = \al$ and $\fsi\al {i+1} = \fsc {\fsi \al i}{i+1} $.
Define $F_\al(n)$ to be the least $\ell$ such that
$\fsi{\fsc{\al} n} \ell = 0$.
\end{definition}

For each $\alpha$, the function $F_\alpha$ is total since $ \fsi{\fsc\alpha n} {i+1} < \fsi{\fsc\al n} i$ whenever the right-hand side is not zero. However, the proof-theoretic strength required to establish totality grows with $\alpha$. In particular for $\alpha=\vartheta[\ve_{\Om+1}]$, the totality of $F_\al$ is not provable in $\sf KP$.
In fact, a more general claim holds: it is a special case of a general principle of Cichon et al.~\cite{BCW} adapted to Buchholz's system of fundamental sequences~\cite{BuchholzOrd,FWTheta}.

\begin{theorem}\label{theoKPInc}
Let $f$ be a computable function, and let $T$ denote one of the systems $\mathsf{RCA}_0 + (\Sigma^{0}_{n}) - \mathrm{IND}$, $\mathsf{ACA_0}$, $\mathsf{ATR_0}$, $\mathsf{KP}^{-}\omega + (\Pi_{n})-\mathsf{IND}$, or $\mathsf{KP}$. If $T$ proves the totality of $f$, i.e.\
$T\vdash \forall x\exists y (y=f(x))$
\footnote{The expression $y=f(x)$ should be understood as $\varphi_f(x,y)$, where $\varphi_f(x,y)$ is a $\Sigma^0_1$ definition of the graph of $f$.}, then $\exists m \ \forall n>m \ \big ( f (n) < F_\al(n) \big)$, where $\al$ is the proof-theoretic ordinal of $T$:
\begin{enumerate}
\item
If $T\equiv \mathsf{RCA}_0 + (\Sigma^{0}_{n}) - \mathrm{IND}$, then $\al = \om_{n+1}$.
\item 
If $T \equiv \sf ACA_0$, then $\alpha = \vartheta(\Om) = \ve_0$.
\item 
If $T \equiv \sf ATR_0$, then $\al = \vartheta(\Om^2) = \Gamma_0$.
\item
If $T \equiv \mathsf{KP}^{-}\omega + (\Pi_{n})-\mathsf{IND}$, then $\al = \vartheta((\Om_n)^\om)$.
\item 
If $T \equiv \sf KP$, then $\al = \vartheta[\ve_{\Om+1}]$.
\end{enumerate}
\end{theorem}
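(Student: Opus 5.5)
The plan is to reduce the statement to the classification of provably total functions via the Hardy hierarchy, and then to compare the Hardy functions with the descent lengths $F_\al$. Concretely, for each listed $T$ with proof-theoretic ordinal $\al$, I will use the standard ordinal-analysis fact (Buchholz--Wainer style, relative to a suitable system of fundamental sequences): every $T$-provably total computable $f$ is eventually dominated by $H_{\fsc\al k}$ for some fixed $k$, where $H$ is the Hardy hierarchy built on the fundamental sequences $\fsc\cdot\iota$ of Definition~\ref{defCFS}. For $\mathsf{RCA}_0+(\Sigma^0_n)$-$\mathrm{IND}$, $\mathsf{ACA}_0$ and $\mathsf{ATR}_0$ this transfers from the corresponding first-order (or $\Pi^1_1$-conservative) theories. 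For the $\mathsf{KP}$ variants I will use Rathjen's and Buchholz's analysis with $\vartheta$. The identifications $\vartheta(\Om)=\ve_0$, $\vartheta(\Om^2)=\Gamma_0$ and $\vartheta[\ve_{\Om+1}]$ will be read off from Proposition~\ref{propCompareTheta} and Lemma~\ref{lemmVarthetaLessOm}.

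The transfer from one system of fundamental sequences to Buchholz's requires that his system satisfies the Bachmann property. By this I mean: if $\fsc\al\iota<\be<\al$, then $\fsc\al\iota\leq\fsc\be 0$ (or a suitable variant). I will take this from \cite{BuchholzOrd,FWTheta}. Together with Lemma~\ref{lemmZetaIota} (the sequences stay nonzero and infinite where required), this shows that the Hardy functions for Buchholz's system are monotone and majorize those for any other natural system up to a shift of the argument. Hence the domination theorem above holds with Buchholz's $\fsc\cdot\iota$.

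The core step is the combinatorial comparison of Cichon et al.~\cite{BCW}. The function $F_\al(n)$ counts the steps of the descent $\fsi{\fsc\al n}{\ell}$, where the index $\iota$ used in $\fsc\cdot\iota$ grows by one at each step. This is a ``Goodstein-type'' descent, and I will show by transfinite induction on $\be$ that its length satisfies $F$-style bounds of the form
\[
H_{\be}(m)\leq m+\text{(length of the descent from $\be$ starting at index $m$)}\leq H_\be(m+1).
\]
The inductive step uses the Bachmann property to handle limits: one fundamental step replaces $\be$ by $\fsc\be\iota$ and increases the index. With this, $F_\al(n)$ eventually exceeds $H_{\fsc\al k}(n)$ for each fixed $k$, since $\fsc\al n\geq\fsc\al k$ for $n\geq k$ and Hardy functions are monotone in the ordinal along Bachmann-related pairs. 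Combining this with the domination of $f$ yields $f(n)<F_\al(n)$ for all large $n$.

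I expect the main obstacle to be verifying the Bachmann property, and the resulting monotonicity of $H$, for Definition~\ref{defCFS} at the $\tau=\Om$ clauses and at $\FIX$ points. There the sequences have a nonstandard start ($\fsc{\vartheta(\zeta)}0=\vartheta^*(\zeta)$), as well as the extension $\fsc{\vartheta[\ve_{\Om+1}]}n=\vartheta(\Om_n)$. My first attempt will be to cite the verification in \cite{BuchholzOrd,FWTheta} directly, and only check by hand that the top-level extension by $\vartheta(\Om_n)$ is Bachmann-compatible with it.
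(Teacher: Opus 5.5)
Your overall architecture (domination of $T$-provably total functions by $H_{\fsc\al k}$ along Buchholz's sequences, plus a Cichon-type comparison of descent lengths via the Bachmann property) is exactly the principle of \cite{BCW} that the paper invokes by citation. The gap is in your last step. By Definition~\ref{defFFun}, $F_\al(n)$ is the length of the descent from $\fsc\al n$ whose indices are $1,2,3,\dots$, not $n,n+1,\dots$. Your sandwich with $m=1$ and $\beta=\fsc\al n$ therefore gives only $F_\al(n)\geq H_{\fsc\al n}(1)-1$. Monotonicity of $H$ in the ordinal then turns this into $F_\al(n)\geq H_{\fsc\al k}(1)-1$: the argument stays $1$ and never becomes $n$. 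With the usual clause $H_\lambda(x)=H_{\fsc\lambda x}(x)$, even the intermediate bound is constant. For $\al=\om^\om$ one has $\fsc\al n=\om^n$ and $\fsc{\om^j}1=\om^{j-1}$ (Lemma~\ref{lemmCFSsuccli}\ref{lemmCFSsucc}), so $H_{\om^n}(1)=H_{\om^{n-1}}(1)=\dots=H_1(1)=2$ for every $n$. Your inference would only be valid if the descent started at index $n$.

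What is missing is an argument that uses the growth of the index along the descent, with the Bachmann property keeping the ordinal from collapsing too fast in the meantime. Take the property in the form ``$\fsc\al m<\gamma<\al$ implies $\fsc\al m\leq\fsc\gamma 1$''. Then induction on $j$ gives $\fsi{\fsc\al n}{j}\geq\fsc\al{n-j}$ for $j\leq n$. Moreover, $\fsc\al{n-j}$ is reachable from $\fsi{\fsc\al n}{j}$ by $\{1\}$-steps, so (again by the Bachmann property) the remaining descent is at least as long as the one from $\fsc\al{n-j}$ with the same indices. Taking $j=\lfloor n/2\rfloor$ and applying your sandwich at index $j+1$ yields $F_\al(n)\geq H_{\fsc\al{\lceil n/2\rceil}}(\lfloor n/2\rfloor+1)-1$. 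You absorb the halving of the argument by applying the domination theorem to the $T$-provably total function $x\mapsto\max(f(2x),f(2x+1))+1$.

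Two smaller corrections. First, the Bachmann property as you wrote it, with $\fsc\beta 0$, is false for Definition~\ref{defCFS}. We have $\fsc{\vartheta(\Om+1)}0=\vartheta(\Om)=\ve_0<\vartheta(\ve_0)=\om^{\ve_0+1}<\vartheta(\Om+1)$, but $\ve_0\in\Om\cap\FIX$ gives $\fsc{\vartheta(\ve_0)}0=\ve_0\cdot 0=0$. You need the index-$1$ version, which is also the one that matches descents starting at index $1$. Second, the Bachmann property of Buchholz's system does not by itself make its Hardy functions majorize those of ``any other natural system''. Either take the classification theorem directly for Buchholz's sequences, or compare systems through norms as in \cite{BCW}.
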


The following proposition is the fundamental tool for majorizing the function $F$.

\begin{proposition}[\cite{FSGoodstein,FWTheta}]\label{propFunMajor}
Suppose that $(\xi_i) _{i \leq I} < \vartheta[\ve_{\Om+1}]$ is such that for all $i<I $,
\[\fsc{\xi_{i}}{i+1}\leq \xi_{i+1} \leq \xi_i.\]
Then, for all $i\leq I$, $\xi_i\geq \fsi{\xi_0}i$.
\end{proposition}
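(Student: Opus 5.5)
The plan is to prove a strengthened statement by induction on $i$. The naive induction, showing only $\xi_i\geq\fsi{\xi_0}i$, does not go through. The reason is that fundamental sequences are not monotone in the ordinal: for example, with $\xi=\om^2$ and $\ga=\om+5$ we have $\ga<\xi$ but $\fsc\ga 1>\fsc\xi 1$. So instead of the order $\leq$ I will track a reachability relation.

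For $k\geq 1$, write $\be\succeq_k\al$ if $\al$ is obtained from $\be$ by finitely many (possibly zero) steps $x\mapsto\fsc xk$. The strengthened claim is
\[
\xi_i\succeq_{i}\fsi{\xi_0}i \quad\text{for all } i\leq I,
\]
with the convention $\succeq_0=\succeq_1$ at $i=0$, where it holds trivially. Since every step strictly decreases a nonzero ordinal, the strengthened claim implies $\xi_i\geq\fsi{\xi_0}i$, which is the proposition.

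The proof needs three auxiliary facts about Buchholz's sequences from Definition~\ref{defCFS}, extended to the top element by $\fsc{\vartheta[\ve_{\Om+1}]}n=\vartheta(\Om_n)$.
\begin{enumerate}
\item[(a)] \emph{Bachmann property.} If $\fsc\xi k<\ga<\xi$, then $\fsc\xi k\leq\fsc\ga 1$. From this, an induction on $\ga$ gives the stronger form: if $\fsc\xi k\leq\ga\leq\xi$, then $\ga\succeq_k\fsc\xi k$.
\item[(b)] \emph{Index monotonicity of reachability.} $\fsc\xi{k+1}\succeq_{k+1}\fsc\xi k$, and consequently $\be\succeq_k\al$ implies $\be\succeq_{k+1}\al$.
\item[(c)] \emph{Compatibility with one more step.} If $\be\succeq_k\al$ with $\al>0$, then $\fsc\be k\succeq_k\fsc\al k$. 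This one is immediate, since either $\be=\al$, or the chain from $\be$ to $\al$ passes through $\fsc\be k$ first.
\end{enumerate}
Facts (a) and (b) are the content normally cited from \cite{BuchholzOrd,FWTheta}. I would prove them by induction on $\xi$ following the clauses of Definition~\ref{defCFS}. The additive clause is routine. For $\xi=\vartheta(\zeta)$ the work reduces to the analogous statements for the $\Om$-level sequences $\fs\zeta\theta$. Those I would obtain from Proposition~\ref{propCompareFS}\ref{propCompareFSmon}, Proposition~\ref{propCompareTheta} (to compare $\vartheta$-terms via maximal coefficients, using Proposition~\ref{propCompareFS}\ref{propCompareFSmco}), and the induction hypothesis applied to $\tau(\check\zeta)<\xi$.

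For the inductive step, assume $\xi_i\succeq_i\al$, where $\al=\fsi{\xi_0}i$. By (b) we get $\xi_i\succeq_{i+1}\al$. If $\al=0$ there is nothing to prove. Otherwise (c) gives $\fsc{\xi_i}{i+1}\succeq_{i+1}\fsc\al{i+1}=\fsi{\xi_0}{i+1}$. The hypothesis $\fsc{\xi_i}{i+1}\leq\xi_{i+1}\leq\xi_i$ together with (a) yields $\xi_{i+1}\succeq_{i+1}\fsc{\xi_i}{i+1}$. By transitivity, $\xi_{i+1}\succeq_{i+1}\fsi{\xi_0}{i+1}$, completing the induction.

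The main obstacle is establishing (a) and (b) for the $\vartheta$-clauses, especially case (c) of Definition~\ref{defCFS}, where $\tau=\Om$ and the sequence is defined by iterating $\vartheta(\fs{\check\zeta}{\cdot})$. There one must show that an ordinal $\ga$ lying between $\vartheta(\fs{\check\zeta}{\fsc\xi k})$ and $\xi$ descends through the earlier terms of that iteration. My first attempt would be to write $\ga=\vartheta(\eta)$ (or reduce to its leading additive component) and compare $\eta$ with $\fs{\check\zeta}{\theta}$ using Proposition~\ref{propCompareTheta}. I expect the subtle cases to be those involving $\FIX$ and $\vartheta^*$.
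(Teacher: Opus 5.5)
The paper does not prove this proposition itself; it quotes it from \cite{FSGoodstein,FWTheta}. Your argument is correct and is the standard one behind that citation: you track the reachability relation $\xi_i\succeq_i\fsi{\xi_0}{i}$, and the only system-specific input is the Bachmann property of Buchholz's sequences, which is what you defer to \cite{FWTheta}. A small simplification: (b) needs no separate case analysis over the $\vartheta$-clauses. The induction on $\ga$ you use for (a), combined with $\fsc{\ga}{1}\leq\fsc{\ga}{m}$, shows that $\fsc{\xi}{k}\leq\ga<\xi$ implies $\ga\succeq_m\fsc{\xi}{k}$ for every $m\geq 1$, and this gives (a) and (b) at once.
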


\section{Independence}



The following lemma roughly says that, given $n>0$, applying the square fundamental sequences to $O_B^b(n)$ (where we possibly ignore a small enough power of $\Om$) is the same as replacing $n$ by a smaller number. 
This result will be used to bound occurences of $\alpha_B$ in the independence proof.

\begin{lemma}\label{lemmUCFS}
Let $B$ be a base hierarchy.
Let $n>0$, $b\in B$ and $O_B^b(n) = \Om^d\xi$ with $d<\min B$. Suppose that $0<\iota = o_B(c)$ for some $c<b$, where either $\iota <\min B - d$ or $\om\leq \iota< \tau(\xi)$.

Then, there is $n' <n$ such that
\[ O_{B}^b( n' ) =  \Om^d(\fs{\xi} \iota) .\]
Moreover, 
\begin{enumerate}
    \item If $b^e\mid n$ with $e>0$, then $n'\leq n-b^{e-1}(b-c)$.
    \item If $n>b$ and $\xi\geq \Om$, then $n'\geq b$.
\end{enumerate}
\end{lemma}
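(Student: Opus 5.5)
Write $n$ in $b$-normal form $n=b^{e_1}a_1+\dots+b^{e_k}a_k$, so that
\[O_B^b(n)=\Om^{O_B^b(e_1)}o_B(a_1)+\dots+\Om^{O_B^b(e_k)}o_B(a_k)=\Om^d\xi .\]
We proceed by induction on $n$, following the clauses of Definition~\ref{defFS} applied to $\xi$.

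First, the factor $\Om^d$ must be split off. Since $d<\min B$ is finite and $o_B(m)=m$ for $m<\min B$, each exponent satisfies $O_B^b(e_j)=d+\lambda_j$. Hence $\xi=\sum_j\Om^{\lambda_j}o_B(a_j)$, and the clauses of Definition~\ref{defFS} are driven by the last term $\Om^{\lambda_k}o_B(a_k)$ of $\xi$. The case distinction then reads as follows.

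\begin{enumerate}[label=(\roman*)]
\item If $k>1$, we apply the induction hypothesis to the tail $b^{e_k}a_k$ and keep the head unchanged. This is legitimate because the tail is smaller than $n$, has the same divisibility by $b^e$, and has the same cofinality $\tau$.
\item If $k=1$ and $o_B(a_1)$ is a limit, then $\fs\xi\iota=\Om^{\lambda_1}\iota$, and we take $n'=b^{e_1}c$. This works since $\iota=o_B(c)$ and $c<b$.
\item If $k=1$ and $o_B(a_1)=\ga+1$ with $a_1>1$, then $a_1-1$ gives $o_B(a_1-1)=\ga$ by Lemma~\ref{standardPropOo}\ref{standardPropOo1}. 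We then take $n'=b^{e_1}(a_1-1)+m$, where $m$ is obtained from the case $a_1=1$ applied to $b^{e_1}$.
\item If $k=1$, $a_1=1$ and $\lambda_1$ is a limit, we use the induction hypothesis on the exponent $e_1<n$, with $d$ replaced by the appropriate shift. The hypotheses on $\iota$ transfer because $\tau(\xi)=\tau(\lambda_1)$. The resulting $e'<e_1$ gives $n'=b^{e'}$.
\item If $k=1$, $a_1=1$ and $\lambda_1=\mu+1$, then $\fs\xi\iota=\Om^\mu\iota$. We need $e'$ with $O_B^b(e')=d+\mu$, i.e.\ $e'=e_1-1$ when $\lambda_1$ is finite, or $e'$ with the same representation when $d+\mu=d+\lambda_1-1$ is infinite. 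Then $n'=b^{e'}c$.
\end{enumerate}

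In the last case (v), the condition on $\iota$ is where the hypothesis $\iota<\min B-d$ enters. If $\lambda_1+d$ is finite, then $\iota$ lands as an honest coefficient $c$ with $o_B(c)=\iota$. The alternative hypothesis $\om\le\iota<\tau(\xi)$ is excluded here, since $\tau(\xi)=\Om$ and $\iota<\Om$ is automatic there. In the limit cases the requirement $\iota<\tau$ is what makes the recursive call valid.

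The ``moreover'' clauses should be checked alongside the construction.

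\begin{enumerate}[label=(\alph*)]
\item If $b^e\mid n$, the $b$-digits of $n$ below position $e$ vanish. In each case $n'$ agrees with $n$ above some position $\ge e-1$ and replaces the relevant digit, or collapses it, by something of size at most $b^{e-1}c+(\text{lower part})$. In every case we must bound the result by $n-b^{e-1}(b-c)$. The sharp case is (ii)/(v) with $e_1=e$, where $n'=b^{e-1}c$ and $n=b^e$; this is exactly the bound.
\item If $\xi\ge\Om$, then $\fs\xi\iota\ge\Om$ unless case (v) with $\lambda_1=1$ occurs, by Lemma~\ref{lemmZetaIota}\ref{itZetaIotaUnc}. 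Combined with $n'$ having $b$-exponent $\ge1$, this gives $n'\ge b$.
\end{enumerate}

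I expect the main obstacle to be the bookkeeping of case (iv) and the interaction of the shift $d$ with finite versus infinite exponents. This matters especially for verifying that the recursive call satisfies the same hypothesis on $\iota$, since $\min B-d$ changes when $d$ is shifted. Here I will redo the split by treating the finite part of the exponent as absorbed into $d$.
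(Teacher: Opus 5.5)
Your case split (induction on $n$, following the clauses of Definition~\ref{defFS}) is the same as the paper's. However, two steps fail as written, and they are exactly the two places where the hypothesis on $\iota$ is needed.

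\textbf{Case (ii).} Here $n'=b^{e_1}c<n$ requires $c<a_1$, not $c<b$, and this is where the hypothesis is used. If $\iota<\min B-d$, then $\iota$ is finite, so $c=\iota<\min B\le a_1$ because $\min B\mid a_1>0$. If instead $\om\le\iota<\tau(\xi)=o_B(a_1)$, then $c<a_1$ follows from Proposition~\ref{propMonO}. Without the hypothesis the construction can overshoot $n$.

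\textbf{Case (iv).} This case is not resolved. Since $\lambda_1$ is an infinite limit, $O_B^b(e_1)=d+\lambda_1=\lambda_1$. So the recursive call on $e_1$ has to be made with $d=0$ and $\xi=\lambda_1$; the hypotheses do transfer, since $\iota<\min B-d\le \min B$ and $\tau(\xi)=\tau(\lambda_1)$. The call returns $e'$ with $O_B^b(e')=\fs{\lambda_1}\iota$, but you need the exponent $d+\fs{\lambda_1}\iota$. These differ whenever $d>0$ and $\fs{\lambda_1}\iota$ is finite.

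For a concrete failure, take $3\le \min B<b$, $n=b^{\min B}$, $d=1$ and $\iota=c=1$. Then $O_B^b(n)=\Om^\om=\Om\cdot\Om^\om$, and the target is $\Om\cdot\Om^1=\Om^2=O_B^b(b^2)$. Recursing on $e_1=\min B$, where $O_B^b(e_1)=\om$ and $\fs\om 1=1$, yields $n'=b$ instead.

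The missing step is the paper's subcase. If $\fs{\lambda_1}\iota$ is finite, it equals $\iota$ by Lemma~\ref{lemmZetaIota}\ref{itZetaIotaInf}. Then the first alternative $\iota<\min B-d$ must hold, and you set $n'=b^{d+\iota}$ directly. This is legitimate because $d+\iota<\min B$ gives $O_B^b(d+\iota)=d+\iota$.

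\textbf{Case (v).} Your remark that the hypothesis $\iota<\min B-d$ enters here is misplaced. No condition on $\iota$ is needed in this case, and the alternative $\om\le\iota<\Om=\tau(\xi)$ is not excluded. No finite/infinite split is needed either. Since $O_B^b(e_1)=d+\mu+1$ is a successor, items \ref{standardPropOo5} and \ref{standardPropOo6} of Lemma~\ref{standardPropOo} give $O_B^b(e_1-1)=d+\mu$. So $e'=e_1-1$ works in all cases.

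\textbf{Minor omission.} The case $a_1=1$, $\lambda_1=0$ (that is, $n=b^d$) is covered by neither (iv) nor (v). There $\fs\xi\iota=0$, and you take $n'=0$.
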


\begin{proof}
Write $n=_b b^ea+r$ and $\xi=\Om^\eta\al+\ga$. We construct $n'$ by induction on $n$. One checks that all of the items at the bottom of the statement are satisfied in each case.

\begin{Cases}
\item ($r>0$).
Then $O_B^b(r) = \Om^d\ga > 0$.
We apply the induction hypothesis to find suitable $r' < r$ and set  $n' = {b}^{e} a +r'$.

\item ($r = 0$).
Then, $\ga=0$ and $o_B(a)=\al$.
\begin{Cases}
\item ($\min B\nmid a>1$). 
Then by Lemma~\ref{standardPropOo}\ref{standardPropOo1}, $\alpha=\delta+1$ where $\de = o_B(a-1)$. By the induction hypothesis applied to $\Om^d\Om^{\eta} = O_B^b(b^e)$, we find suitable $r' < b^e$ such that $\Om^d\cdot\Om^{\eta}[\iota] = O_B^b(r')$. We then set $n' = b^e(a-1) + r'$.

\item ($\min B\mid a>1$).
By Lemma~\ref{standardPropOo}\ref{standardPropOo2}, $\al\in\li$. Then $\tau(\xi) = o_B(a)$, so by the assumption and the monotonicity of $o_B$, we have that $c < a$. We may then set $n' =  b^{e}c$.

\item ($a = 1$). We split into subcases for $e$.
\begin{Cases}
\item ($\min B \nmid e >0$).
In this case by Lemma~\ref{standardPropOo}\ref{standardPropOo5}, $d+\eta = \delta + 1$ is a successor, where $\de = O_B^b(e-1)$. If $\eta = 0$, then $\fs\xi\iota = \fs{\Om^0}\iota = 0$, and we can take $n'=0$. Otherwise $\Om^d\cdot\xi[\iota] = \Omega^\de\iota$ and we take $n'= b^{e-1}c$.

\item ($\min B\mid e > 0$).
Then $\eta\in\li$ so that $\fs\xi\iota = \Om^{\fs\eta\iota}$. If $\fs\eta\iota$ is infinite, then $\Om^d(\xi[\iota]) =\Om^{\fs\eta\iota}$.
Since $\tau(\xi) = \tau(\eta)$, we can use the induction hypothesis to find suitable $e'<e$ such that $O_B^b(e') = \fs\eta\iota$. We then set $n' = b^{e'}$.

Otherwise if $\fs\eta\iota$ is finite, then by Lemma~\ref{lemmZetaIota}\ref{itZetaIotaInf}, $\eta[\iota] =\iota$, so $d+\eta[\iota] <\min B$ by our assumptions. Thus we may set $n'=b^{d+\eta[\iota]} $.
In this case the first item follows from $d+\eta[\iota]<\min B$ while $e\geq \min B$.

\item ($e=0$).
Then $n=1$, and we can take $n'=0$.
\end{Cases}
\end{Cases}
\end{Cases}
\end{proof}

\begin{corollary}\label{corAlphaFS}
    Let $B$ be a base hierarchy. Let $n = b^2 u$ with $b\in B$ and $u>0$. Suppose that $0<\iota =o_B(c)$ for some $c<b$, where either $\iota<\min B - 1$ or $\om\leq \iota < \tau(\alpha_B^b(n))$.

    Then $\fs{\alpha_B^b(n)}\iota \leq \alpha_B^b(n - b^2) + \iota$.
\end{corollary}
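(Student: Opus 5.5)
The plan is to reduce to Lemma~\ref{lemmUCFS} with $d=1$. Since $n=b^2u$, the definition of $\alpha_B^b$ gives $O_B^b(n)=\Om\cdot\alpha_B^b(n)$, where we put $\xi=\alpha_B^b(n)$. The hypotheses match those of Lemma~\ref{lemmUCFS} with $d=1$: $0<\iota=o_B(c)$ with $c<b$, and either $\iota<\min B-1$ or $\om\le\iota<\tau(\xi)$. Lemma~\ref{lemmUCFS} therefore yields $n'<n$ with $O_B^b(n')=\Om\cdot\fs{\xi}{\iota}$. Since $b^2\mid n$, item (1) of that lemma with $e=2$ gives $n'\le n-b(b-c)$.

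Next we compare $n'$ with $n-b^2$. We have $n-b^2=b^2(u-1)$ and $n'\le b^2(u-1)+bc$, so we write $n'=b^2u''+bv''+w''$ with $v'',w''<b$. Then $u''\le u-1$, and if $u''=u-1$ then $v''\le c$.

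If $u''<u-1$, then $b^2u''<b^2(u-1)$. By Lemma~\ref{lemmO_fMon} (monotonicity of $O_B^b$, valid since $o_B$ is increasing on $[0,b)$ by Proposition~\ref{propMonO}),
\[\Om\cdot\fs\xi\iota=O_B^b(n')<O_B^b(b^2(u''+1))\le O_B^b(b^2(u-1))=\Om\cdot\alpha_B^b(n-b^2),\]
which gives $\fs\xi\iota<\alpha_B^b(n-b^2)$.

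If $u''=u-1$, then
\[O_B^b(n')=O_B^b(b^2(u-1))+\Om\cdot o_B(v'')+o_B(w'')\]
by the $b$-normal form: the term $b^1v''$ contributes $\Om^{O(1)}o_B(v'')=\Om\cdot o_B(v'')$, using $o_B(1)=1$. Since $O_B^b(n')$ is a multiple of $\Om$, and $o_B(w'')<\Om$, we get $w''=0$ (using that $o_B(m)=0$ iff $m=0$). Dividing by $\Om$ gives $\fs\xi\iota=\alpha_B^b(n-b^2)+o_B(v'')\le\alpha_B^b(n-b^2)+o_B(c)=\alpha_B^b(n-b^2)+\iota$, using $v''\le c$ and monotonicity of $o_B$.

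The main obstacle is the bookkeeping in the second case. We must justify that the coefficient structure of $O_B^b(n')$ separates as claimed, namely that the left cancellation of $\Om\cdot$ is valid. This holds because ordinal left multiplication by $\Om$ is injective and distributes over sums from the left. We must also handle the degenerate case $u=1$, where $\alpha_B^b(0)=0$ and the bound reads $\fs\xi\iota\le\iota$. This follows directly from the same computation with $u''=0$.
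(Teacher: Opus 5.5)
Your proof is correct and follows the paper's route. You apply Lemma~\ref{lemmUCFS} with $d=1$, use item (1) with $e=2$ to get $n'\le b^2(u-1)+bc$, and conclude via monotonicity of $O_B^b$. The case split on $u''$ is unnecessary, though: the paper applies monotonicity once, $\Om\cdot\fs{\alpha_B^b(n)}\iota=O_B^b(n')\le O_B^b(b^2(u-1)+bc)=\Om\cdot(\alpha_B^b(n-b^2)+\iota)$, and then cancels $\Om$.
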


\begin{proof}
    By applying the previous lemma with $d=1$, we find $n'$ such that
    \begin{enumerate}
    \item 
    $O_B^b(n') = \Om\cdot \fs{\alpha_B^b(n)}\iota$
    \item
    $b\leq n' \leq n - b(b-c) = b^2(u-1) + bc$.
    \end{enumerate}
    It follows that
    \[\Omega\cdot \fs{\alpha_B^b(n)}\iota = O_B^b(n')\leq O_B^b(b^2(u-1) + bc) = \Om\cdot(\alpha_B^b(n-b^2) + \iota)\]
\end{proof}

\begin{lemma}\label{lemmThetaStar}
Let $B$ be a base hierarchy and let $n\in\mathbb N$ be $B$-critical.
\begin{enumerate}
    \item If $o_B(n)$ contains a non-zero term $\om^{o_B(\tilde v)}$, then $\vartheta^*(\zeta_B(n)) = 0$. Otherwise,
    \item If $n_*$ does not exist and $\alpha_B(n) = 0$, then $\vartheta^*(\zeta_B(n)) = \om$.
    \item If $n_*$ does not exist and $\alpha_B(n) > 0$, then $\vartheta^*(\zeta_B(n)) = 0$.
    \item If $n_*$ exists and $\alpha_B(n_*) = \alpha_B(n)$, then $\zeta_B(n)\in \su$ and $\vartheta^*(\zeta_B(n)) = o_B(n_*)$.
    \item If $n_*$ exists and $\alpha_B(n_*) > \alpha_B(n)$, then $\zeta_B(n)\in \FIX$ and $\vartheta^*(\zeta_B(n)) = o_B(n_*)$.
\end{enumerate}
\end{lemma}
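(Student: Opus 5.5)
The plan is to unfold the definitions of $\vartheta^*$ and $\FIX$ at $\xi=\zeta_B(n)=\alpha_B(n)+\beta_B(n)+\om^{o_B(\tilde v)}$, using Lemma~\ref{lemmO*} to pin down $\beta_B(n)$ in each case. Two facts will be used throughout: $\alpha_B(n)$ is a multiple of $\Om$, and $\beta_B(n)<\Om$. Hence the countable tail of $\zeta_B(n)$ is $\beta_B(n)+\om^{o_B(\tilde v)}$, and this tail determines $\tau$, and also whether $\zeta_B(n)$ is a successor.

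\textbf{Item 1.} Suppose $\om^{o_B(\tilde v)}\neq 0$. If $o_B(\tilde v)>0$, then $\zeta_B(n)$ is a limit whose last $\Om$-normal-form term is $\om^{o_B(\tilde v)}$. Then $\tau(\zeta_B(n))=\om^{o_B(\tilde v)}$. By Lemma~\ref{standardPropOo}\ref{standardPropOo9} and the bound $o_B(\tilde v)<o_B(n)$, this $\tau$ is either not in the range of $\vartheta$ or is $\vartheta(\ga)$ with $\ga$ too small. Alternatively I would show $\mco{\zeta_B(n)}\neq\tau(\zeta_B(n))$, since $\mco{\zeta_B(n)}\geq o_B(b)$ or $\beta_B(n)$, which dominates $\om^{o_B(\tilde v)}$ unless it is small. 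The cleanest route is Lemma~\ref{lemmCFSsuccli}\ref{lemmCFSfix}: if $\tau=\om^{o_B(\tilde v)}$ is of the form $\vartheta(\ga)$, then $\ga$ is at most about $o_B(\tilde v)+1<\zeta_B(n)$ by Lemma~\ref{lemmVarthetaLessOm}, so $\zeta_B(n)\notin\FIX$. If $o_B(\tilde v)=0$, then $\tilde v=0$ gives a zero term, so this case is excluded; I should double check that $o_B(\tilde v)=0$ forces $\tilde v=0$. In either case $\zeta_B(n)$ is not a successor, so $\vartheta^*=0$.

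\textbf{Items 2--5.} Now the term is absent, so $\zeta_B(n)=\alpha_B(n)+\beta_B(n)$, and we split according to Lemma~\ref{lemmO*}.
\begin{enumerate}
\item If $n_*$ does not exist and $\alpha_B(n)=0$, then $\beta_B(n)=2$ and $\zeta_B(n)=2$. So $\vartheta^*(2)=\vartheta(1)=\om$.
\item If $n_*$ does not exist and $\alpha_B(n)>0$, then $\zeta_B(n)=\alpha_B(n)$ is a multiple of $\Om$, hence neither a successor nor $0$. To show it is not in $\FIX$, note $\tau(\alpha_B(n))$ is either $\Om$ or a coefficient. If it equals $\mco{\alpha_B(n)}=\vartheta(\ga)$ with $\ga>\alpha_B(n)$, then $\mco{\alpha_B(n)}=o_B(r)$ for some $r<b$ (Lemma~\ref{standardPropOo}\ref{standardPropOo8}). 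Taking the relevant critical $r$ would produce a candidate for $n_*$, which is a contradiction.
\item If $n_*$ exists with $\alpha_B(n_*)=\alpha_B(n)$, then $\beta_B(n)=\zeta+1$, so $\zeta_B(n)=\alpha_B(n)+\zeta+1\in\su$. Its $\vartheta^*$ is $\vartheta(\alpha_B(n)+\zeta)=o_B(n_*)$.
\item If $n_*$ exists with $\alpha_B(n_*)>\alpha_B(n)$, then $\beta_B(n)=o_B(n_*)$, a limit. So $\tau(\zeta_B(n))=o_B(n_*)$, and $o_B(n_*)=\vartheta(\zeta_B(n_*))$ with $\zeta_B(n_*)>\zeta_B(n)$ because $\alpha_B(n_*)>\alpha_B(n)$ with both multiples of $\Om$. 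Membership in $\FIX$ then needs $\mco{\zeta_B(n)}=o_B(n_*)$, which follows from $\mco{\alpha_B(n)}<o_B(n_*)$ (the candidate condition). It also needs $\mco{\fs{\zeta_B(n)}1}<o_B(n_*)$: here $\fs{\zeta_B(n)}1=\alpha_B(n)+\fs{o_B(n_*)}{1}$, with maximal coefficient below $o_B(n_*)$ by Proposition~\ref{propCompareFS}\ref{propCompareFSmco}, although the countable part should be checked directly. Then $\vartheta^*=\tau=o_B(n_*)$.
\end{enumerate}

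\textbf{Main obstacle.} The hardest step is the third case above, $\alpha_B(n)>0$ without $n_*$. Ruling out $\zeta_B(n)\in\FIX$ requires tying a potential witness $\vartheta(\ga)=\mco{\alpha_B(n)}$ back to some $B$-critical $m<b$ with $\alpha_B(m)\ge\alpha_B(n)$ and $o_B(m)>\mco{\alpha_B(n)}$. I would try to argue that $\mco{\alpha_B(n)}=o_B(r)=\vartheta(\ga)$ with $\ga\geq\Om$ forces $r$ to be $B$-critical with $\zeta_B(r)=\ga>\alpha_B(n)$. Then some $m$ slightly above $r$ (or $r$ itself, adjusting for the strict inequality $\mco{\alpha_B(n)}<o_B(m)$) is a candidate. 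If the strictness fails, I would instead use $\mco{\fs\xi1}<\mco\xi$ to get a contradiction.
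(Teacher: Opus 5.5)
Your case split via Lemma~\ref{lemmO*} is the paper's, and your items~2, 4 and~5 are correct. In item~5, $\fs{\zeta_B(n)}1=\alpha_B(n)+1$, so $\mco{\fs{\zeta_B(n)}1}<o_B(n_*)$ follows from the candidate condition.

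\textbf{Item~1 has a genuine gap.} First, $\tau(\zeta_B(n))$ is the whole countable tail $\beta_B(n)+\om^{o_B(\tilde v)}$, not $\om^{o_B(\tilde v)}$. You may reduce to $\beta_B(n)<\om^{o_B(\tilde v)}$, but only because otherwise $\tau$ is additively decomposable and hence not a value of $\vartheta$.

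Second, your route through Lemma~\ref{lemmCFSsuccli}\ref{lemmCFSfix} and Lemma~\ref{lemmVarthetaLessOm} covers only the case $\om^{o_B(\tilde v)}>o_B(\tilde v)$. There $\tau=\vartheta(\gamma)$ forces $\gamma<\Om$, hence $\gamma<\tau\le\zeta_B(n)$. If instead $\om^{o_B(\tilde v)}=o_B(\tilde v)$, then $\tilde v=v\notin B$ is $B$-critical and $\tau=o_B(v)=\vartheta(\zeta_B(v))$ with $\zeta_B(v)\ge\Om$. Nothing in your sketch prevents $\zeta_B(v)>\zeta_B(n)$, and this does happen whenever $\alpha_B(v)>\alpha_B(n)$.

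The missing idea, which is how the paper argues, is the minimality of $\beta_B(n)$:
\begin{itemize}
\item $\FIX$ forces $\zeta_B(v)>\zeta_B(n)$, hence $\alpha_B(v)>\alpha_B(n)$.
\item $\mco{(\alpha_B(n))}\le\mco{\fs{\zeta_B(n)}1}<\mco{\zeta_B(n)}=o_B(v)$.
\item So Proposition~\ref{propCompareTheta} gives $\vartheta(\alpha_B(n)+\gamma)<o_B(v)<o_B(b)$ for every $\gamma<o_B(v)$.
\item Therefore $\beta_B(n)\ge o_B(v)=\om^{o_B(\tilde v)}$, contradicting the reduction above.
\end{itemize}

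\textbf{Item~3 cannot be completed, because it is false as stated.} The paper's proof breaks at the same point. It declares $v+1$ a candidate for $n_*$, where $o_B(v)=\mco{(\alpha_B(n))}$, but candidates must be $B$-critical and $v+1$ is not. Your fallback via $\mco{\fs\xi1}<\mco\xi$ gives nothing, since that inequality is part of the definition of $\FIX$.

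Here is a counterexample. Take $B=\{2,8\}$ and $n=8^2\cdot 6$.
\begin{itemize}
\item Since $6=2^2+2$, $O_B(4)=\Om^\Om$ and $\beta_B(6)=0$, we have $\alpha_B(6)=\Om^\Om$ and $\zeta_B(6)=\Om^\Om+\om$.
\item $O_B(n)=\Om^{o_B(2)}\cdot o_B(6)=\Om^\om\cdot o_B(6)$, so $\alpha_B(n)=\Om^\om\cdot o_B(6)$ and $\mco{(\alpha_B(n))}=o_B(6)$.
\item A candidate $m<8$ would need $o_B(m)>o_B(6)$, i.e.\ $m=7$, which is not $B$-critical. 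So $n_*$ does not exist.
\item By Lemma~\ref{lemmO*}, $\beta_B(n)=0$, and since $v=0$ we get $\zeta_B(n)=\Om^\om\cdot o_B(6)$.
\item Yet $\mco{\fs{\zeta_B(n)}1}=\mco{(\Om^\om)}=\om<o_B(6)=\mco{\zeta_B(n)}=\tau(\zeta_B(n))=\vartheta(\Om^\Om+\om)$, with $\Om^\Om+\om>\zeta_B(n)$.
\end{itemize}
Hence $\zeta_B(n)\in\FIX$ and $\vartheta^*(\zeta_B(n))=o_B(6)\neq 0$.

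In this case one can only prove $\vartheta^*(\zeta_B(n))\in\{0,\mco{(\alpha_B(n))}\}$, with $\mco{(\alpha_B(n))}=o_B(r)$ for some $r<b$. This appears to suffice for the later use in Case~3 of Proposition~\ref{propMajorizeFS}.
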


\begin{proof}
Assume first that
\[o_B(n) = \vartheta(\alpha_B(n) + \beta_B(n) + \om^{o_B(\tilde v)}),\]
where $\om^{o_B(\tilde v)} \neq 0$. Then it is clear that $\zeta_B(n)$ is not a successor, so it suffices to show that $\zeta_B(n)\not\in \FIX$. Suppose for a contradiction that $\zeta_B(n)\in \FIX$.

Since $\Om\mid \alpha_B(n)$, we have that $\tau \coloneqq\tau(\zeta_B(n)) = \beta_B(n) + \om^{o_B(\tilde v)}$. If $\beta_B(n) \geq \om^{o_B(\tilde v)}$, then $\tau$ is not additively indecomposable, which implies $\tau\neq \vartheta(\ga)$ for any $\ga$ and $\zeta_B(n)\notin\FIX$. 
So $\beta_B(n) < \om^{o_B(\tilde v)}$ and $\tau = \om^{o_B(\tilde v)}$. If $\om^{o_B(\tilde v)} > o_B(\tilde v)$, then by Lemma~\ref{propThetaMonEps}\ref{propThetaEps} and $\vartheta(\ga) = \tau =\om^{o_B(\tilde v)}$, we have that $\ga < \Om$. By Lemma~\ref{propThetaMonEps}\ref{propThetaMco}, $\ga < \vartheta(\ga) = \tau$, which again contradicts $\zeta_B(n)\in\FIX$.
So suppose further that $\omega^{o_B(\tilde v)} = o_B(\tilde v)$, in particular $\tilde v = v$ and $\tau = o_B(v)$. Then it follows from Lemma~\ref{standardPropOo}\ref{standardPropOo3} and $\zeta_B(n)\in\FIX$ that $v\notin B$ is $B$-critical, so that $\tau = \vartheta(\zeta_B(v))$. Furthermore, $\alpha_B(v) > \alpha_B(n)$ since otherwise $\zeta_B(v) < \zeta_B(n)$, contradicting $\zeta_B(n)\in\FIX$. 

    We now show that $\beta_B(n) \geq o_B(v) = \om^{o_B(v)}$, yielding a contradiction by our assumption on $\beta_B(n)$ made earlier. Let $b=\baseB n$.
    It is enough to show that for any $\ga < o_B(v)$ we have
    \[\vartheta(\alpha_B(n) + \ga) < \vartheta(\zeta_B(v)) = o_B(v) < o_B(b).\]
    For this we use Proposition~\ref{propCompareTheta}.
    Since $\alpha_B(v) > \alpha_B(n)$, we have $\alpha_B(n) + \ga < \zeta_B(v)$. Furthermore $\ga < o_B(v)$, and it follows from $\zeta_B(n)\in \FIX$ that $\mco{\alpha_B(n)} = \mco{\zeta_B(n)[1]} < \mco{\zeta_B(n)} = \tau = o_B(v)$. This establishes the first item.

    Next, assume that
    \[o_B(n) = \vartheta(\alpha_B(n) + \beta_B(n)).\]
    If $n_*$ does not exist and $\alpha_B(n) = 0$, then $\beta_B(n) = 2$ by Lemma~\ref{lemmO*} and it is clear that $\vartheta^*(\zeta_B(n)) = \om$.
    If $n_*$ does not exist and $\alpha_B(n) > 0$, then by Lemma~\ref{lemmO*}, $\zeta_B(n) = \alpha_B(n)$. Obviously $\alpha_B(n)\notin\su$, so we have to prove that $\alpha_B(n)\notin\FIX$. Suppose again for a contradiction that $\alpha_B(n)\in\FIX$.
    
    Then $\tau(\alpha_B(n)) = \alpha_B(n)^* = o_B(v) = \vartheta(\zeta_B(v))$ for some $B$-critical $v<b$ which is not a base. Again we have that $\alpha_B(v)\geq \alpha_B(n)$ since otherwise $\zeta_B(v) < \alpha_B(n)$. Now observe that since $v$ is $B$-critical, it has the same base as $v+1$. This implies that $\alpha_B(n)\leq \alpha_B(v) \leq \alpha_B(v+1)$ and $\mco{\alpha_B(n)} = o_B(v)<o_B(v+1)$. We see that $v+1$ is a candidate for $n_*$. Since $n_*$ does not exist, we get a contradiction.

Next assume that $n_*$ exists and $\alpha_B({n_*}) > \alpha_B(n)$.
Then $\beta_B(n) = o_B(n_*)$ by Lemma~\ref{lemmO*} and it is easy to see that $\zeta_B(n)\in\FIX$ with $\tau(\zeta_B(n)) = o_B(n_*)$, so that $\vartheta^*(\zeta_B(n)) = o_B(n_*)$.

Finally, if $\alpha_B({n_*}) = \alpha_B(n)$, then by Lemma~\ref{lemmO*} we get $\zeta_B(n) = \zeta_B(n_*) + 1$ so that $\vartheta^*(\zeta_B(n)) = \vartheta(\zeta_B(n_*)) = o_B(n_*)$.
\end{proof}

\begin{proposition}\label{propMajorizeFS}
Let $B$ be a dynamical hierarchy and $C=B_{+(i+1)}$ with $0<i<\min B - 1$.
Then for $n>0$,
\[o_{C}(\ug_B^C n-1) \geq \fsc{o_B(n)}i .\]
\end{proposition}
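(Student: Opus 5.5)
By Proposition~\ref{propPresO}, $o_C(\ug n)=o_B(n)$. Since $o_C$ is monotone, the goal is to show that $o_C(\ug n-1)$ is at least the $i$-th element of the fundamental sequence of $o_C(\ug n)$. I would prove this by induction on $n$, following the case split in the definition of $o_B$ and the clauses of Definition~\ref{defCFS}. Throughout, write $\ug=\ug_B^C$ and $b=\baseB n$. Because $i<\min B-1$, we have $i=o_B(i)$, and $\iota=i$ meets the hypothesis ``$\iota<\min B-d$'' of Lemma~\ref{lemmUCFS} and Corollary~\ref{corAlphaFS}.

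\textbf{Easy cases.} If $\min B\nmid n$, then $o_B(n)$ is a successor by Lemma~\ref{standardPropOo}, so $\fsc{o_B(n)}i=o_B(n)-1=o_B(n-1)=o_C(\ug(n-1))\le o_C(\ug n-1)$.

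\textbf{Non-critical $n$.} If $n$ is not $B$-critical but $\min B\mid n$, write $n=ba+r$. Then $\ug n=\ug(ba)+\ug r$ by Lemma~\ref{lemmUp(ba+r)}, and the fundamental sequence acts on the last summand $o_B(r)$. Since $\ug(ba)$ is a multiple of the $C$-base, the $C$-decomposition of $\ug n-1$ is $\ug(ba)+(\ug r-1)$, so $o_C(\ug n-1)=o_B(ba)+o_C(\ug r-1)$. The induction hypothesis for $r$ then finishes this case.

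\textbf{Base case $n=b\in B$.} If $n=\min B$, the claim reduces to $o_C(\min C-1)=\min B\ge i=\fsc{\om}i$. If $b>\min B$, then $o_B(b)=o_B(b-d)\cdot2$ and $\fsc{o_B(b)}i=o_B(b-d)+\fsc{o_B(b-d)}i$. By Lemma~\ref{lemmUp(ba+r)}\ref{lemmUp(ba+r)b-d}, $\ug b=\ug(b-d)+c$ with $c=\basep C{\ug(b-1)}$. The induction hypothesis applied to $b-d$ should give the required bound after a decomposition argument, using that $C$ is a good successor.

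\textbf{Main case: $n\notin B$ is $B$-critical.} Here $o_B(n)=\vartheta(\zeta_B(n))$, and I would split according to Lemma~\ref{lemmThetaStar}.
\begin{itemize}
\item If $\zeta_B(n)$ has a nonzero last term $\om^{o_B(\tilde v)}$, the fundamental sequence lowers that term via $\fsc{\om^{o_B(\tilde v)}}i$. Lemma~\ref{lemmCFSsuccli} and the induction hypothesis on $\tilde v$ give the bound, since $\ug n-1$ ends with $d\cdot(\ug v-1)+(d-1)$.
\item If $\zeta_B(n)\in\su$ or $\zeta_B(n)\in\FIX$ with $\vartheta^*(\zeta_B(n))=o_B(n_*)$, the fundamental sequence is built from $\vartheta(\fs{\check\zeta}{\cdot})+o_B(n_*)$. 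Here I would exploit the ouroboros bases $d_0<d_1<\dots<d_{i+1}$ from Lemma~\ref{lemmCanonProp}. The witness for $\ug n$ is $d_{i+1}$, and the number $\ug n-1$ has base $d_{i+1}$ with $\Chgbases b{d_i}n=d_{i+1}$.
\item If $\tau(\check\zeta)=\Om$, then $\ug n-1$ unfolds as a $d_{i+1}$-expansion whose coefficients and exponents lead into $d_i$-expansions, iterating $i$ times. This matches the $i$-fold iteration $\vartheta(\fs{\check\zeta}{\fsc\xi{k}})$.
\end{itemize}
At each level I would use Corollary~\ref{corAlphaFS} and Lemma~\ref{lemmUCFS} to compare $\fs{\alpha}{\iota}$ with an actual $O_C$-value, and Proposition~\ref{propCompareTheta} to compare the resulting $\vartheta$-terms.

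The main obstacle is this $\tau=\Om$ case. There I must show that $o_C(\ug n-1)$, computed through the chain of bases $d_j$, dominates the $i$-fold nested collapse, by a secondary induction on $j\le i$. The inductive claim would be that $o_C(d_{j+1}-1)\ge\fsc{o_B(n)}j$, using $o_C(d_{j+1})$ and $\beta_C$, together with Lemma~\ref{lemmNonCandidate} to control the $\beta$-parts.
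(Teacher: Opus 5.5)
Your skeleton matches the paper: induction on $n$ along the definition of $o_B$ and the clauses of Definition~\ref{defCFS}. Your cases $\min B\nmid n$, non-critical $n$, $n=\min B$, and the $\tau=\Om$ iteration along $d_0<\dots<d_{i+1}$ are fine.

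\textbf{The gap in the base case.} The problem is the case $n=b\in B$ with $b>\min B$, which you leave to ``a decomposition argument, using that $C$ is a good successor''. Put $m=b-d$ and $c=\basep{C}{\ug(b-1)}$. Then $c\mid\ug m$ and $o_C(\ug b-1)=o_C(\ug m)+o_C(c-1)$, so you need $o_C(c-1)\geq\fsc{o_B(m)}{i}$. For $m=d$ this is the induction hypothesis. But when $m\notin B$ is $B$-critical, $c$ is a proper divisor of $\ug m$ (as $c\in C$ and $\ug m\notin C$). Then the hypothesis $o_C(\ug m-1)\geq\fsc{o_B(m)}{i}$ is too weak, since it may be realised entirely by the leading digits of $\ug m-1$.

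The good-successor axioms cannot close this, because the proposition fails for general good successors. For $B=\{5\}$, $C=\{6\}$, $i=2$, $n=10$ one gets $o_C(11)=\om+5<\om\cdot 2=\fsc{o_B(10)}{2}$. Two ingredients are missing:
\begin{itemize}
\item Lemma~\ref{lemmCanonProp}\ref{lemmCanonPropBases}, which identifies $c$ with $d_{i+1}(m,i+1)$;
\item a strengthened claim for every critical $m$, namely that the bound is already reached below the witness base: $o_C(d_{i+1}-d_i)\geq\fsc{o_B(m)}{i}$.
\end{itemize}
The paper proves exactly this in all critical subcases, and only then passes to $\ug n-1>d_{i+1}-d_i$. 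Your $\tau=\Om$ claim is of this type, but it must hold in every critical subcase and be fed into the base case.

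\textbf{The gap in the first bullet of the main case.} Suppose $o_B(\tilde v)$ is a successor. Then Lemma~\ref{lemmCFSsuccli}\ref{lemmCFSsucc} gives $\fsc{o_B(n)}{i}=\vartheta(\alpha_B(n)+\beta_B(n)+\om^{o_B(\tilde v-1)}\cdot i)$. Reading off the last digits of $\ug n-1$ yields only one copy of $\om^{o_B(\tilde v-1)}$ inside $\vartheta$. For instance, for $n=3b$ you have $\tilde v=1$, the induction hypothesis on $\tilde v$ says nothing, and everything rests on $\beta_C$.

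The factor $i$ must come from the ouroboros chain. The numbers $d_{j+1}-d_j=\Chgbases{b}{d_j}(n-b)$ all have $\alpha_C$-value $\alpha_B(n)$, and $d_j-d_{j-1}=(d_{j+1}-d_j)_*$. So by Lemma~\ref{lemmO*} the $\beta_C$-part grows by at least $\om^{o_B(\tilde v-1)}$ per step, giving $o_C(d_{j+1}-d_j)\geq\vartheta(\alpha_B(n)+\beta_B(n)+\om^{o_B(\tilde v-1)}\cdot j)$.

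\textbf{The limit subcase.} Here you need $\fsc{o_B(v)}{i}+1\leq o_C(\ug v-1)$, so the induction must carry a strict inequality whenever $\min B\mid n$. Also, for $n=b\cdot\min B$ the shift $\widetilde{\ug v-1}=\min C-3$ is where $i<\min B-1$ is needed.

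Two minor points. The term $\vartheta^*(\zeta)$ sits inside $\vartheta$ in Definition~\ref{defCFS}, not after it. The case $n=2b$, where $\zeta_B(n)\in\Om\cap\JUMP$, needs the separate observation $d_{j+1}=2d_j$.
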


\begin{proof}
Write $\ug$ for $\ug_B^C$. Note that $\min B\geq 3$ by the assumptions on $i$, hence $\min C\geq 4$.
By induction on $n$, we prove the inequality of the statement, and moreover we prove that the inequality is strict if $\min B\mid n$, or equivalently, if $o_B(n)\in\li$.
Let $b=\baseB n$ and $c = \ug b$.
The case $n<\min B$ is easy since then $o_B(n)=n$ and $\fsc {o_B(n)} i = n-1 = o_C(\ug n - 1)$. Assume henceforth that $n\geq \min B$.

If $b\nmid n$, write $n=ba+r$ with $0<r<b$.
By Lemma~\ref{lemmUp(ba+r)}\ref{lemmUp(ba+r)ba+r}, $\ug n=\ug ba+\ug r$.
By the induction hypothesis, $o_C(\ug r -1) \geq \fsc{o_B(r)}i$, and thus 
\begin{align*}
o_C(\ug n -1) &= o_C(\ug ba + \ug r -1)\\
      &= o_C(\ug ba) + o_C(\ug r -1) \geq o_B(ba) + \fsc{o_B(r)}i = \fsc{o_B(n)}i.
\end{align*}
In the second equality we have used that $\ug ba$ is $C$-critical by Lemma~\ref{lemmbasecriticalpres}\ref{criticalpres}, and $\ug r - 1 < \ug b = c \leq \basep C {\ug ba}$. If $\min B\mid n$, then $\min B\mid r$ and we may replace the above inequality by a strict one.

Now assume that $n= b^2u+bv$ is $B$-critical, and let $d_j = d_j(n,i+1)$ for $0\leq j\leq i+1$. 
Then $o_B(n) = \vartheta(\zeta_B(n))$. We will also denote $\tau \coloneqq \tau(\check\zeta_B(n))$.
We will show that $o_C(d_{i+1} - d_i)\geq \fsc{o_B(n)}i$. The desired result then follows since $\ug n - 1 > d_{i+1} - d_i$ by Lemma~\ref{lemmCanonProp}\ref{lemmCanonPropWitness}.

\begin{Cases}
    \item ($o_B(n)$ contains a non-zero term $\om^{o_B(\tilde v)}$).
    Then $\zeta_B(n) = \alpha_B(n) +\beta_B(n) + \om^{o_B(\tilde v)}$. By Lemma~\ref{lemmThetaStar}, $\zeta_B(n)\notin\FIX$, so $\check\zeta_B(n) = \zeta_B(n)$ and $\tau = \beta_B(n)+\om^{o_B(\tilde v)}$. Then
    \begin{equation}\label{eqIndependenceCase1}
    \fsc{o_B(n)}i = \vartheta(\fs{\check\zeta_B(n)}{\fsc\tau i} + \vartheta^*(\zeta_B(n))) = \vartheta(\alpha_B(n) + \beta_B(n) + \fsc{\om^{o_B(\tilde v)}}i)
    \end{equation}
    We will use the following facts in the subcases below.
    \begin{enumerate}
    \item \label{item1Independence}
    Either $v > 2$ or $u>0$. Then
    \[d_{j+1} = \Chgbases {b}{d_j} (b^2u+bv) = d_j^2 u_j + d_j\cdot \ug v\geq 3d_j,\]
    since in the first case, $\ug v \geq 3$, while in the last case $d_j\geq \min C\geq 4$ and $u_j>0$.
    Therefore $d_{j+1}-d_j\notin C$ is $C$-critical.
    \item \label{item2Independence}
    From $\ug v >0$ we get that $\alpha_C(d_{j+1}-d_j) = \alpha_C^{d_j}(d_{j+1})$. Since $O_C^{d_j}(d_j^2u_j) = O_C^{d_j}(\Chgbases {b}{d_j}b^2u) = O_B(b^2u)$, we have $\alpha_C^{d_j}(d_{j+1}) = \alpha_B(n)$ for all $j$. In particular we see that $d_{j}-d_{j-1}$ is a candidate for $(d_{j+1}-d_{j})_*$ when $j>0$, and in fact it is the maximal candidate, i.e.\ $(d_{j+1}-d_{j})_* = d_{j}-d_{j-1}$. By Lemma~\ref{lemmO*}, $\beta_C(d_{j+1} - d_j) = \ga + 1$ where $o_C(d_j - d_{j-1}) = \vartheta(\alpha_B(n) + \ga)$.
    \end{enumerate}
    
    \begin{Cases}
        \item ($o_B(\tilde v)\in\su$).
        By Lemma~\ref{standardPropOo}, we have $\min B \nmid \tilde v$ and $o_B(\tilde v) = o_B(\tilde v - 1) + 1$.
        Hence also $\min B\nmid v$ and $o_B(v) = o_B(v-1)+1$.
        Then $\fsc{\om^{o_B(\tilde v)}}i = \om^{o_B(\tilde v) - 1}\cdot i = \om^{o_B(\tilde v -1)}\cdot i$ by Lemma~\ref{lemmCFSsuccli}\ref{lemmCFSsucc}.
        We claim that for $0 \leq j < i+1$,
    \begin{equation}\label{eqIndependenceCase1.1}
    \vartheta(\alpha_B(n) + \beta_B(n) + \om^{o_B(\tilde v -1)}\cdot j) \leq o_C(d_{j+1} - d_{j}).
    \end{equation}
    Then for $j=i$, $\fsc{o_B(n)}i \leq o_C(d_{i+1} - d_{i})$. 
    By Lemma~\ref{lemmUp(ba+r)}\ref{lemmUp(ba+r)minBdiv}, $\ug v = \ug(v-1)+1$. Therefore $d_{j+1} - d_j = \Chgbases {b}{d_j} (b^2u+b(v-1)) = \Chgbases {b}{d_j}(n-b)$ holds for all $j<i+1$. Then by Lemma~\ref{lemmCanonProp}\ref{lemmCanonPropWitness}, $\ug (n-b) = \Chgbases {b}{d_0} (n-b) = d_1 - d_0$. 
    
    We now proceed in proving equation (\ref{eqIndependenceCase1.1}) by induction on $j$.
    \begin{Cases}
        \item ($\tilde v=1$).
        This case occurs when either $n=3b$ with $3<\min B$, or $n=b^2u+b$.
        Then $o_C(d_1 - d_0) = o_B(n-b) = \vartheta(\alpha_B(n)+\beta_B(n))$ and by using item \ref{item2Independence} above inductively, $o_C(d_{j+1}-d_j) = \vartheta(\alpha_B(n)+\beta_B(n)+j)$.
        \item ($\tilde v>1$).
        Then the base case $j=0$ follows from 
        \[o_C(d_1 - d_0) = o_B(n - b) = \vartheta(\alpha_B(n) + \beta_B(n) + \om^{o_B(\tilde v - 1)})\geq \vartheta(\alpha_B(n)+\beta_B(n)),\]
        and the induction step from
        \begin{align*}
        o_C(d_{j+1} - d_j) &= \vartheta(\alpha_B(n)+\beta_C(d_{j+1}-d_j) + \om^{o_B(\tilde v -1)})\\
        &\geq \vartheta(\alpha_B(n) + (\beta_B(n) + \om^{o_B(\tilde v -1)}\cdot (j-1) + 1) + \om^{o_B(\tilde v - 1)})\\
        &= \vartheta(\alpha_B(n) + \beta_B(n) + \om^{o_B(\tilde v -1)}\cdot j).
        \end{align*}
        For the first line, one checks that $\widetilde{\ug v - 1} = \ug (\tilde v - 1)$ holds in the case that we are considering. In the second line, we have used Lemma~\ref{propThetaMonEps}\ref{propThetaMon}.
    \end{Cases}

        \item ($o_B(\tilde v)\in\li$).
        Then by Lemma~\ref{standardPropOo}, $\min B\mid \tilde v>0$, in particular $\tilde v = v$.
        \begin{Cases}
        \item ($n = b\cdot\min B$).
        Then $\fsc{\om^{o_B(v)}}i = \fsc{\om^\om} i= \om^i$ and we have that $d_1 = d_0\cdot\min C$.
        Now we will use the following facts.
        \begin{enumerate}
        \item From $i<\min B - 1$ and $\min C = \min B + 1$ we get $i\leq \min C - 3$.
        \item One derives from Lemma~\ref{lemmCanonProp}\ref{lemmCanonPropWitness} that $\ug b \leq d_0$, which implies $\beta_B(n)\leq \beta_C(d_1-d_0)$. Here we also rely on the fact that $\alpha_C(d_1-d_0)=\alpha_B(n)=0$.
        \end{enumerate}
        Thus going back to equation (\ref{eqIndependenceCase1}),
        \begin{align*}
            \fsc{o_B(n)}i = \vartheta(\beta_B(n) + \om^i)
            \leq \vartheta(\beta_C(d_1-d_0) + \om^{\min C - 3}) = o_C(d_1-d_0).
        \end{align*}

        \item ($n\neq b\cdot\min B$).
        Whether or not $\om^{o_B(v)} = o_B(v)$, we have $\fsc{\om^{o_B(v)}}i \leq \om^{\fsc{o_B(v)}i+1}$ by Lemma~\ref{lemmCFSsuccli}\ref{lemmCFSli}, and since $\min B\mid v$, by the induction hypothesis $\fsc{o_B(v)}i+1 \leq o_C(\ug v - 1)$. As in the previous case, we have that
        $\beta_B(n)\leq \beta_C(d_1-d_0)$. Then
        \begin{align*}
        \fsc{o_B(n)}i &\leq \vartheta(\alpha_B(n) + \beta_B(n) + \om^{\fsc{o_B(v)}i+1})\\
        &\leq \vartheta(\alpha_B(n) + \beta_C(d_1-d_0) + \om^{o_C(\ug v - 1)}) = o_C(d_1 - d_0).
        \end{align*}
        \end{Cases}
    \end{Cases}

    \item ($n = 2b$).
    Then $\zeta_B(n) = \beta_B(n)$. We see by Lemma~\ref{lemmO*} and Lemma~\ref{lemmThetaStar} that, whether or not $n_*$ exists, we have $\zeta_B(n)\in\Om\cap\JUMP$. Therefore $\fsc{o_B(n)}i = \vartheta^*(\zeta_B(n))\cdot i$.
    Now note that for $0\leq j < i+1$ we have 
    $d_{j+1} = 2d_j$,
    so $o_C(d_0)\cdot 2^{i} =  o_C(d_{i}) = o_C(d_{i+1}-d_i)$. Therefore it suffices to show that $\vartheta^*(\zeta_B(n))\leq o_C(d_0)$. If $n_*$ does not exist, then by Lemma~\ref{lemmThetaStar}, $\vartheta^*(\zeta_B(n)) = \om = o_C(\min C) \leq o_C(d_0)$. Otherwise $\vartheta^*(\zeta_B(n)) = o_B(n_*) < o_B(b) \leq o_C(d_0)$.

    \item ($n=b^2u$ with $u>0$).
    By Lemma~\ref{lemmO*} and Lemma~\ref{lemmThetaStar}, we see that whether or not $n_*$ exists, $\check\zeta_B(n) = \alpha_B(n)$. So $\tau = \tau(\alpha_B(n))$. We collect the following facts, to be used in the subsequent cases.
    \begin{enumerate}
        \item As in case $1$,
        \[d_{j+1} = \Chgbases b {d_j} n = d_j^2u_j\geq 3d_j,\]
        and $d_{j+1}-d_j\notin C$ is $C$-critical.
        \item 
        We again have $\alpha_C^{d_j}(d_{j+1}) = \alpha_B(n)$ for all $j$, however $\alpha_C(d_{j+1}-d_j) = \alpha_C(d_j^2(u_j-1))$ may in general depend on $j$.
    \end{enumerate}

    There are now two cases to consider.
    \begin{Cases}
        \item ($\tau = \Om$).
        We will prove by induction on $0 \leq j < i+1$ that
        \begin{equation}\label{eqIndependenceCase3.1}
        \fsc{o_B(n)}j \leq \vartheta(\alpha_C({d_{j+1} - d_{j}^2}) + \beta_C(d_{j+1}-d_j) + \om^{o_C(d_j-1)}) = o_C(d_{j+1}-d_j).
        \end{equation}
        The base case $j=0$ follows from
        \[\fsc{o_B(n)}0 = \vartheta^*(\zeta_B(n)) \leq o_B(b) \leq o_C(d_1-d_0),\]
        since $\vartheta^*(\zeta_B(n))$ is either $o_B(n_*)$ or zero.
        For the induction step, assume that $j>0$.
        Then
        \[\fsc{o_B(n)}{j} = \vartheta(\fs{\alpha_B(n)}{\fsc{o_B(n)} {j-1}}) = \vartheta(\alpha_C^{d_j}(d_{j+1})[\fsc{o_B(n)}{j-1}])\]
        We will use Lemma~\ref{propCompareTheta} to show that this last expression is less than $o_C(d_{j+1}-d_j)$. From the induction hypothesis, Proposition~\ref{propCompareFS}\ref{propCompareFSmon} and Corollary~\ref{corAlphaFS}, we get
        \[\fs{\alpha_C^{d_j}(d_{j+1})}{\fsc{o_B(n)}{j-1}} \leq 
        \fs{\alpha_C^{d_j}(d_{j+1})}{o_C(d_j-d_{j-1})} \leq \alpha_C({d_{j+1} - d_{j}^2}) + o_C(d_{j}-d_{j-1}).\]
        Furthermore $o_C(d_j - d_{j-1})< \om^{o_C(d_j - 1)}$. Now it suffices to prove that
        \[\mco{\fs{\alpha_C^{d_j}(d_{j+1})}{\fsc{o_B(n)}{j-1}}} < o_C(d_{j+1} - d_j).\]
        This follows by Proposition~\ref{propCompareFS}, since $\alpha_C^{d_j}(d_{j+1})^* = \alpha_B(n)^*$ is of the form $o_B(v)$ with $v<b$, so $\ug v < d_0$, and $\fsc{o_B(n)}{j-1}\leq o_C(d_j-d_{j-1}) < o_C(d_{j+1}-d_j)$ by the induction hypothesis.

        \item ($\tau < \Om$).
        In this case $\tau = o_B(v)$ for some $v<b$, and since $\alpha_B(n)\notin\su$, $\tau$ must be infinite. We claim that
        \begin{align*}
        \fsc{o_B(n)}i &= \vartheta(\fs{\alpha_B(n)}{\fsc\tau i} + \vartheta^*(\zeta_B(n)))\\
        &= \vartheta(\fs{\alpha_C^{d_0}(d_1)}{\fsc\tau i} + \vartheta^*(\zeta_B(n)))\\
        &\leq \vartheta(\alpha_C(d_1 - d_{0}^2) + \beta_C(d_1-d_0) + \om^{o_C(d_0-1)}) = o_C(d_1-d_0).
        \end{align*}
        Again we will use Proposition~\ref{propCompareTheta} together with Corollary~\ref{corAlphaFS}. There are two cases to consider.
        \begin{Cases}
        \item ($\fsc{\tau} i < \om$).
        By Lemma~\ref{lemmZetaIota}\ref{itXiIotaInf}, $\fsc\tau i = i$. Therefore $0<\fsc{\tau} i < \min C - 1$ satisfies the conditions of Corollary~\ref{corAlphaFS}. So $\fs{\alpha_C^{d_0}(d_1)}{\fsc\tau i}\leq  \alpha_C({d_1- d_{0}^2}) + \fsc{\tau}i$. By the induction hypothesis and $\ug v < d_0$, we get that $\fsc\tau i = \fsc{o_B(v)}{i} \leq o_C(\ug v - 1) < o_C(d_0-1)$. Moreover since $\vartheta^*(\zeta_B(n))$ is either $o_B(n_*)$ or zero, $\vartheta^*(\zeta_B(n)) < o_C(d_0-1)$. Putting everything together,
        \[\fs{\alpha_C^{d_0}(d_1)}{\fsc\tau i} + \vartheta^*(\zeta_B(n)) < \alpha_C({d_1 - d_{0}^2}) + \beta_C(d_1-d_0) + \om^{o_C(d_0-1)}.\]
        As in case $3.2$, all of the coefficients of $\fs{\alpha_C^{d_1}(d_0)}{\fsc\tau i}$, as well as the term $\vartheta^*(\zeta_B(n))$, are less than $o_C(d_1-d_0)$. Since $o_C(d_1-d_0)$ is additively indecomposable,
        \[\mco{\left(\fs{\alpha_C^{d_0}(d_1)}{\fsc\tau i} + \vartheta^*(\zeta_B(n))\right)} < o_C(d_1-d_0).\]

        \item ($\fsc{o_B(v)}i\geq \om$).
        By the induction hypothesis, $\om \leq \fsc\tau i = \fsc{o_B(v)}i \leq o_C(\ug v - 1)$, so 
        \[\fs{\alpha_C^{d_0}(d_1)}{\fsc\tau i} + \vartheta^*(\zeta_B(n))
        \leq \fs{\alpha_C^{d_0}(d_1)}{o_C(\ug v - 1)} + \vartheta^*(\zeta_B(n))).\]
        Obviously $o_C(\ug v -1)$ satisfies the conditions of Corollary~\ref{corAlphaFS}, and we proceed in the same way as in the previous case.
        \end{Cases}
    \end{Cases}
\end{Cases}

Finally, assume that $n = b\in B$.
If $n=\min B$ then $o_B(n) = \om$ and $\fsc\om i = i < \min C-1$, so we have that $\fsc\om i < \ug n-1 = o_C(\ug n - 1)$.
Otherwise, let $d$ be the predecessor of $b$ in $B$ and let $m = b-d$. By preservation, $o_C(\ug n) = o_B(n) = o_C(m)\cdot 2 = o_C(\ug m)\cdot 2$. If $m=d\in B$, then $\ug n = 2\cdot \ug m$ by Lemma~\ref{LemmUpgrTwiceBase}, and we can use the induction hypothesis on $m$ to get
\[
o_C(\ug n - 1)  = o_C(\ug m) + o_C(\ug m - 1)
  > o_B(m)+\fsc{o_B(m)}i =\fsc{o_B(n)}i.
\]
Otherwise $m = b-d$ is $B$-critical. Let $d_j \coloneqq d_j(m,i+1)$ for $j\leq i+1$. Instead of applying the induction hypothesis on $m$, we need the stronger condition that $o_C(d_{i+1} - d_i)\geq \fsc{o_B(m)} i$, which we have shown above. By Lemmas~\ref{lemmUp(ba+r)}\ref{lemmUp(ba+r)b-d} and \ref{lemmCanonProp}\ref{lemmCanonPropBases}, $\ug n = \ug m + \basep C{\ug(n-1)} = \ug m + d_{i+1}$.
Then 
\[
o_C(\ug n - 1) = o_C(\ug m) + o_C(d_{i+1} - 1)
  > o_B(m)+\fsc{o_B(m)}i =\fsc{o_B(n)}i.
\]
This concludes the proof.
\end{proof}

\begin{theorem}\label{theoGoodInd}
Theorem~\ref{theoTerm} is not provable in $\sf KP$.
\end{theorem}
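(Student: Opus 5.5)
The plan is to use the canonical dynamical hierarchy $\mathcal C=(C_i)$ with $C_0=\{3\}$ and $C_{i+1}=(C_i)_{+(i+2)}$. By Lemma~\ref{lemmGoodSuccB(+i)} this is a dynamical hierarchy. We show that the function $g(m)=$ least $\ell$ with $\G^{\mathcal C}_\ell(m)=0$ eventually dominates a function of the form $F_{\vartheta[\ve_{\Om+1}]}$ (up to a shift). Theorem~\ref{theoKPInc} then shows that $\sf KP$ cannot prove the totality of $g$. Since $g$ is computable and its totality is exactly the instance of Theorem~\ref{theoTerm} for $\mathcal C$, Theorem~\ref{theoTerm} is unprovable in $\sf KP$.

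\textbf{Step 1: majorization along a single sequence.} Fix a starting value $m$ and set $m_k=\G^{\mathcal C}_k(m)$ and $\xi_k=o_{C_k}(m_k)$. By Proposition~\ref{propPresO} we have $\xi_k=o_{C_{k+1}}(\ug_k m_k)$. The successor $C_{k+1}=(C_k)_{+(k+2)}$ has the form $B_{+(i+1)}$ with $i=k+1$. Since $\min C_k=k+3$, the hypothesis $0<i<\min C_k-1$ holds. Proposition~\ref{propMajorizeFS} therefore gives
\[\xi_{k+1}=o_{C_{k+1}}(\ug_k m_k-1)\geq \fsc{\xi_k}{k+1},\]
and Proposition~\ref{propMonO} gives $\xi_{k+1}\leq\xi_k$. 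Proposition~\ref{propFunMajor} then yields $\xi_k\geq\fsi{\xi_0}k$ for all $k$. Hence the sequence cannot reach $0$ before $\fsi{\xi_0}k$ does, so $g(m)\geq$ the least $\ell$ with $\fsi{\xi_0}{\ell}=0$.

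\textbf{Step 2: choosing starting values that hit $\fsc{\vartheta[\ve_{\Om+1}]}n$.} I must produce, for each $n$, a number $m$ (computable from $n$, possibly after running the process a few steps) whose ordinal is at least $\vartheta(\Om_n)=\fsc{\vartheta[\ve_{\Om+1}]}n$. The target is an $m$ with $o_{C_0}(m)\geq\vartheta(\Om_n)$, or, more conveniently, a value at some stage $k$ with $\xi_k\geq\vartheta(\Om_n)$. The natural candidate is a tower in base $b=\min C_0=3$: let $t_0=1$ and $t_{j+1}=b^{t_j}$, and take $m=b^{2}\cdot t_{n+1}$ or a similar $B$-critical number. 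Then $O_{C_0}(b^2u)$ is the tower $\Om_{n}$ up to small adjustments, by Definition~\ref{defO_f^b} and $o(1)=1$, and $o_{C_0}(m)=\vartheta(\alpha+\dots)\geq\vartheta(\Om_n)$ by Proposition~\ref{propCompareTheta} and Lemma~\ref{propThetaMonEps}.

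One subtlety is that the fundamental-sequence index in Step 1 starts at $1$ at stage $0$, whereas $F_\al(n)$ uses $\fsi{\fsc\al n}{\ell}$ with indices $1,2,\dots$. I handle this either by the monotonicity of fundamental sequences in the argument (a Bachmann-type property from \cite{BuchholzOrd}), or by starting the comparison at $\xi_0\geq\fsc{\vartheta[\ve_{\Om+1}]}n$ and using $\fsi{\xi_0}{\ell}\geq\fsi{\fsc{\al}n}\ell$. The latter again needs monotonicity of $\fsi{\cdot}{\ell}$ for comparable ordinals along fundamental sequences, which is the standard Cichon–Buchholz property I would cite from \cite{FWTheta}.

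\textbf{Step 3: conclusion.} The map $n\mapsto m(n)$ is primitive recursive, so $h(n)=g(m(n))$ is provably total in $\sf KP$ if Theorem~\ref{theoTerm} is. Since $h(n)\geq F_{\vartheta[\ve_{\Om+1}]}(n)$ for all large $n$, this contradicts Theorem~\ref{theoKPInc}. I expect the main obstacle to be Step 2. It requires exact ordinal bookkeeping, verifying that the chosen $m$ really has $o_{C_0}(m)\geq\vartheta(\Om_n)$, where the $\beta$ and $\tilde v$ corrections could interfere. It also requires the index-alignment and monotonicity argument needed to compare $\fsi{\xi_0}{\ell}$ with $\fsi{\fsc{\al}n}{\ell}$.
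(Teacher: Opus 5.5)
Your Steps 1 and 3 match the paper's proof. You use the same hierarchy $\mathcal C$ and the same check that $C_{k+1}=(C_k)_{+((k+1)+1)}$ with $0<k+1<\min C_k-1$. You also use the same chain: Proposition~\ref{propPresO}, Proposition~\ref{propMonO}, Proposition~\ref{propMajorizeFS} and Proposition~\ref{propFunMajor}, followed by Theorem~\ref{theoKPInc}.

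\textbf{The gap is in Step 2.} You leave it open, and the repair you propose does not work as stated. With $m=3^2\cdot t_{n+1}$ you only get $\xi_0\geq\vartheta(\Om_n)$; in fact $\xi_0=\vartheta(\Om^{\Om_n+1})$. So Step 1 controls $\fsi{\xi_0}{\ell}$, not $\fsi{\fsc{\al}{n}}{\ell}$. You bridge this with a ``monotonicity of $\fsi{\cdot}{\ell}$ for comparable ordinals'', but no such principle holds: $5<\om$, yet $\fsi{5}{1}=4>1=\fsi{\om}{1}$.

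What you would actually need is a Bachmann-type property of Buchholz's system: if $\fsc{\al}{n}<\eta<\al$ then $\fsc{\al}{n}\leq\fsc{\eta}{1}$. This makes $\fsc{\al}{n}$ reachable from $\xi_0$ by $\{1\}$-steps, and from there one can propagate $\fsi{\fsc{\al}{n}}{\ell}\leq\fsi{\xi_0}{\ell}$. The paper neither states nor proves this, so your argument would rest on an external result that you have misidentified. Also, the index `subtlety' you raise is not real: Step 1 already uses index $k+1$ at stage $k$, exactly as in $\fsi{\cdot}{k+1}=\fsc{\fsi{\cdot}{k}}{k+1}$.

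\textbf{The missing idea is to choose the starting value so that its ordinal is exactly $\fsc{\al}{n}$.} The paper does this with the tower $3_n$, where $3_0=1$ and $3_{j+1}=3^{3_j}$. For $C_0=\{3\}$, induction gives $O_{C_0}(3_j)=\Om_j$. So for $n\geq 2$, writing $3_n=3^2u$ gives:
\begin{enumerate}
\item $\alpha_{C_0}(3_n)=\Om_n$, since $\Om\cdot\Om^{\Om_{n-1}}=\Om^{\Om_{n-1}}$;
\item $\beta_{C_0}(3_n)=0$, since $\vartheta(\Om_n)$ is an epsilon number above $\om=o_{C_0}(3)$;
\item $v=0$.
\end{enumerate}
Hence $o_{C_0}(3_n)=\vartheta(\Om_n)=\fsc{\al}{n}$ with $\al=\vartheta[\ve_{\Om+1}]$. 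Taking $\xi_0=\fsc{\al}{n}$ in your Step 1 gives $g(3_n)\geq F_\al(n)$ immediately, with no comparison lemma, and your Step 3 then finishes the proof.
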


\begin{proof}
Consider the canonical dynamical hierarchy $\mathcal C$ with $C_0=\{3\}$ and $C_{i+1} = (C_i)_{+(i+2)}$. By Lemma~\ref{lemmGoodSuccB(+i)}, each base hierarchy in $\mathcal C$ is a good successor of the previous one. Write $\ug_i$ for $\ug_{C_i}^{C_{i+1}}$ and $o_i$ for $o_{C_i}$.

Let $G \colon\mathbb N\to \mathbb N$ be such that $G(k)$ is the termination time of the Goodstein sequence for $\mathcal C$ starting on $3_{k}$.
Theorem~\ref{theoTerm} implies that $G$ is a total function over $\sf KP$.
We show that $\sf KP$ does not prove its totality, hence it does not prove Theorem~\ref{theoTerm}.
By Theorem~\ref{theoKPInc}, it suffices to show that $G$ is not dominated by $F_\al$, where $\al=\vartheta[\ve_{\Om+1}]$.

For $i\leq G(k)$, let $n_i = \mathbb G^\mathcal C_i(3_{k})$.
Notice that $o_0(n_0) = o_0(3_k) = \vartheta(\Om_k) = \fsc{\al}k$.
We claim that if $i\leq G(k)$ then $\fsi{\fsc\al k}i = \fsi{o_0(n_0)}{i}\leq o_i(n_i)$.
Indeed, the sequence $(o_i(n_i))_{i\leq I}$ satisfies the assumptions of Proposition~\ref{propFunMajor}, since
\[o_{i }(n_{i }) = o_{i+1}(\ug_i n_{i }) > o_ {i+1} (\ug_i n_i-1) \geq  \fsc{o_{i }(n_{i })}{i+1},\]
where the first equality is by Proposition~\ref{propPresO}, the second by Proposition~\ref{propMonO}, and the third by Proposition~\ref{propMajorizeFS}.

It follows that $\fsi{\fsc\al k}i > 0$ whenever $n_i>0$, hence $G(k) \geq F_\al(k)$.
\end{proof}

\begin{remark}\label{remInd}
    We could have also set $C_0 = \{2\}$, $C_{i+1} = (C_i)_{+(i+1)}$, and $n_0=2_{k}+1$. Then by Lemma~\ref{lemmUp(ba+r)}\ref{lemmUp(ba+r)minBdiv}, $\ug n_0 - 1 = \ug (n_0 - 1)$. Therefore $o_1(n_1) = o_0(n_0 - 1) = \vartheta(\Om_{k})$. Now we can repeat the argument of the previous theorem, treating the process as if it starts at $n_1$ instead of $n_0$.

\end{remark}

\begin{remark}
    The dynamical hierarchy $\mathcal C$ appearing in the proof of Theorem~\ref{theoGoodInd} contains base hierarchies which are infinite, except for the first one. As demonstrated in \cite{fernandez2025fractal}, this feature is not essential: using Lemma~\ref{lemmrestrictbasehier}, one can construct a single dynamical hierarchy $\mathcal D=(D_i)_{i\in\N}$ which establishes independence and has the property that every $D_i$ is finite.
\end{remark}

\section{Phase transitions}\label{section_phtr}

In this section, let $B$ be a base hierarchy. We introduce the notation
\[\mathcal{I}(B) \coloneqq \sup_{n\in\N} o_B(n).\]
Note that $\mathcal{I}(B) > \om = o_B(\min B)$. We will investigate further the connection between $\mathcal{I}(B)$ and the structure of $B$.

\begin{lemma}\label{lemmIom^2}
    The following are equivalent.
    \begin{enumerate}
        \item $\mathcal{I}(B) = \om^2$.\label{lemmIom^2_1}
        \item $S_B(b) = 2b$ for every $b\in B$.\label{lemmIom^2_2}
    \end{enumerate}
\end{lemma}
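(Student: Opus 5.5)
We prove both directions by inspecting the definition of $o_B$ case by case, using monotonicity (Proposition~\ref{propMonO}) and Lemma~\ref{standardPropOo}.

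\emph{(\ref{lemmIom^2_2}) $\Rightarrow$ (\ref{lemmIom^2_1}).} Assume $S_B(b)=2b$ for every $b\in B$. Then $B=\{m2^k : k\in\N\}$ with $m=\min B$, and a number $n\geq m$ with $b=\baseB n$ satisfies $b\leq n<2b$. In particular no $n\notin B$ is $B$-critical, since such an $n$ would have $n\geq 2b$. So only three cases of the definition of $o_B$ ever occur, and we show by induction on $n$ that $o_B(n)<\om^2$:
\begin{enumerate}[label=\textup{(\roman*)}]
\item $o_B(\min B)=\om$;
\item $o_B(b)=o_B(b-d)\cdot 2$ with $d=b/2$, so $b-d=d$ and $o_B(b)=o_B(d)\cdot 2$;
\item $o_B(n)=o_B(b)+o_B(r)$ with $r<b$.
\end{enumerate}
By induction $o_B(m2^k)=\om\cdot 2^k$, and every other value is a finite sum of such terms plus a finite part. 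Hence $o_B(n)<\om^2$ for all $n$, while $o_B(m2^k)=\om\cdot 2^k$ is cofinal in $\om^2$. This gives $\mathcal I(B)=\om^2$. (If $B$ is finite the hypothesis fails for $\max B$, since $S_B(\max B)=\infty$; so $B$ is infinite and the supremum really is reached.)

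\emph{(\ref{lemmIom^2_1}) $\Rightarrow$ (\ref{lemmIom^2_2}).} We argue by contraposition. Suppose some $b\in B$ has $S_B(b)\neq 2b$. Since $b\mid S_B(b)$, either $S_B(b)\geq 3b$ or $S_B(b)=\infty$, and in both cases $2b<S_B(b)$. Then $n=2b\notin B$ is $B$-critical with base $b$ and $u=0$, $v=2$. Since $2<\min B$ (as $\min B\geq 2$), we are in the subcase $u=0$, $2\leq v<\min B$ exactly when $\min B>2$, and there $\tilde v=0$; otherwise $\tilde v=v=2$. In either case $o_B(2b)=\vartheta(\beta_B(2b)+\ldots)\geq\vartheta(\beta_B(2b))$, where $o_B(b)<\vartheta(\beta_B(2b))$ and $o_B(b)\geq\om$. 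By Lemma~\ref{lemmVarthetaLessOm}, every $\vartheta(\gamma)$ with $\gamma<\Om$ that exceeds $\om$ is at least $\om^2$, because it is a power $\om^\theta>\om$. Hence $o_B(2b)\geq\om^2$, and by monotonicity $\mathcal I(B)>\om^2$: the value $o_B(2b+1)$, if we do not want to rely on $o_B(2b)$ alone, exceeds $\om^2$. So $\mathcal I(B)\neq\om^2$.

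The main obstacle is the bookkeeping in the first direction. We must verify that $\baseB n$ and the $B$-critical classification behave as claimed when $B$ consists exactly of doublings, including the edge case $n<\min B$. The second direction reduces cleanly to Lemma~\ref{lemmVarthetaLessOm}.
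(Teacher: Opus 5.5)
Your direction (2)$\Rightarrow$(1) is correct and is the paper's argument. The converse misses a case. You assert that $2b$ has $u=0$ and $v=2$, but that requires $2<b$. When $b=\min B=2$ (e.g.\ $B=\{2\}$), one has $2b=4=b^2$, so $u=1$, $v=0$ and $\tilde v=0$. Moreover $O_B(4)=\Om^{O_B(2)}=\Om^\Om$, so $\alpha_B(4)=\Om^\Om$. Consequently $\beta_B(4)=0$ and $o_B(4)=\vartheta(\Om^\Om)$.

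Your key inequality $o_B(b)<\vartheta(\beta_B(2b))$ is therefore false here, since $\vartheta(\beta_B(4))=\vartheta(0)=1$. The definition of $\beta_B$ only gives $o_B(b)<\vartheta(\alpha_B(2b)+\beta_B(2b))$; you implicitly used $\alpha_B(2b)=0$, which holds only for $b>2$. The sentence ``since $2<\min B$ (as $\min B\geq 2$)'' is a symptom of the same slip. This is exactly the paper's Case~1, which it treats separately ($o_B(2b)=\vartheta(\Om^\Om)$).

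The repair is short and makes your argument uniform. In every case,
\[
o_B(b)<\vartheta(\alpha_B(2b)+\beta_B(2b))\leq o_B(2b),
\]
where the second inequality follows from Lemma~\ref{propThetaMonEps}\ref{propThetaMon}, since $\alpha_B(2b)$ is a multiple of $\Om$. Because $o_B(b)\geq\om$ and every value of $\vartheta$ is of the form $\om^\theta$ by definition, this gives $o_B(2b)\geq\om^2$. You then need neither Lemma~\ref{lemmVarthetaLessOm} nor $\zeta_B(2b)<\Om$.

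Alternatively, for $b=2$, $o_B(4)=\vartheta(\Om^\Om)$ is a fixed point of $\xi\mapsto\om^\xi$ by Lemma~\ref{propThetaMonEps}\ref{propThetaEps}, hence $o_B(4)\geq\ve_0>\om^2$. With this case added, the rest of your argument goes through, including passing to $o_B(2b+1)>o_B(2b)$ to conclude $\mathcal I(B)>\om^2$.
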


\begin{proof}
    If \ref{lemmIom^2_2} holds then we see that $o_B(b_k) = \om\cdot 2^{k-1}$, where $b_k$ is the $k$-th base of $B$. 

    On the other hand if \ref{lemmIom^2_2} does not hold, then there exists a $B$-critical element $2b$ which is not a base. By the following case distinction, $o_B(2b)\geq \om^2$, so that \ref{lemmIom^2_1} does not hold.
    \begin{Cases}
    \item ($b=\min B = 2$).
    Then $o_B(2b) = \vartheta(\Om^\Om)$.
    \item ($b\neq \min B = 2$).
    Then $o_B(2b) = \vartheta(\beta_B(2b) + \om^\om)$.
    \item ($\min B > 2$).
    Then $o_B(2b) = \vartheta(\beta_B(2b))$, and since $o_B(b)\geq\om$ we have $\beta_B(2b)\geq 2$.
    \end{Cases}
\end{proof}

\begin{lemma}\label{lemmIom_2}
    The following are equivalent.
    \begin{enumerate}
        \item $\om^2 <\mathcal{I}(B) \leq \om^\om$.\label{lemmIom_2_1}
        \item $S_B(b) \leq 3b\leq b\cdot \min B$ for every $b\in B$, and $S_B(b) = 3b$ for some $b\in B$.\label{lemmIom_2_2}
    \end{enumerate}
    Moreover, $\mathcal I(B) = \om^\om$ exactly when there are infinitely many $b\in B$ with $S_B(b)=3b$.
\end{lemma}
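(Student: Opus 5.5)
The plan is to compute $o_B$ explicitly for hierarchies satisfying \ref{lemmIom_2_2}, and to exhibit a single large value of $o_B$ whenever \ref{lemmIom_2_2} fails.

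\textbf{From \ref{lemmIom_2_2} to \ref{lemmIom_2_1}.} Assume \ref{lemmIom_2_2}, so $\min B\geq 3$ and $S_B(b)\in\{2b,3b\}$ for every $b\in B$.
\begin{itemize}
\item \emph{Critical elements.} The only $B$-critical elements that are not bases are the numbers $2b$ with $S_B(b)=3b$. For these we have $u=0$ and $v=2<\min B$, so $\tilde v=0$. Hence $\alpha_B(2b)=0$ and $o_B(2b)=\vartheta(\beta_B(2b))$.
\item \emph{Computing $\beta_B(2b)$.} The candidates for $(2b)_*$ are exactly the earlier elements $2b'$ with $S_B(b')=3b'$, and all of them have $\alpha=0$. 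Enumerate the bases $b$ with $S_B(b)=3b$ as $b^{(1)}<b^{(2)}<\cdots$. By Lemma~\ref{lemmO*}, $\beta_B(2b^{(1)})=2$, and inductively $\beta_B(2b^{(j)})=j+1$.
\item \emph{The values.} Lemma~\ref{lemmVarthetaLessOm} applies here: $0$ is not a fixed point of $\om^\cdot$, so $\vartheta(k)=\om^k$ for finite $k\geq 1$. Therefore $o_B(2b^{(j)})=\om^{j+1}$. In particular $o_B(2b^{(1)})=\om^2$ and $o_B(2b^{(1)}+1)>\om^2$, which gives $\mathcal I(B)>\om^2$.
\item \emph{Upper bound.} By induction on $n$, using the clauses for bases ($o_B(b)=o_B(b-d)\cdot 2$) and for non-critical $n$ ($o_B(ba)+o_B(r)$), one shows that $o_B(n)<\om^{j+2}$ whenever $n$ lies below the next base after $2b^{(j)}$. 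Together with monotonicity (Proposition~\ref{propMonO}), this gives $\sup o_B=\om^\om$ if there are infinitely many $j$, and a value below $\om^\om$ otherwise. This proves both the implication and the ``moreover'' clause in this direction.
\end{itemize}

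\textbf{From \ref{lemmIom_2_1} to \ref{lemmIom_2_2}, by contraposition.} Since $\mathcal I(B)>\om^2$, Lemma~\ref{lemmIom^2} gives some $b\in B$ with $S_B(b)>2b$. It then suffices to produce some $n$ with $o_B(n)>\om^\om$ whenever \ref{lemmIom_2_2} fails. There are three cases.
\begin{itemize}
\item \emph{$\min B=2$.} Take $n=2b$, which is critical and not a base.
 \begin{itemize}
 \item If $b=2$, then $n=b^2$, so $\alpha_B(n)>0$ and $o_B(n)$ is an epsilon number by Lemma~\ref{propThetaMonEps}\ref{propThetaEps}.
 \item If $b>2$, then $v=2=\min B$, so $\tilde v=2$ and $\zeta_B(n)\geq\om^{o_B(2)}=\om^\om$. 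Hence $o_B(n)\geq\vartheta(\om^\om)>\om^\om$.
 \end{itemize}
\item \emph{$\min B\geq3$ and $S_B(b)>3b$.} Take $n=3b$.
 \begin{itemize}
 \item If $b=3=\min B$, then $n=b^2$ and we argue as above.
 \item If $b>\min B=3$, then $\tilde v=3$ and $\zeta_B(n)\geq\om^\om$.
 \item If $\min B>3$, then $\tilde v=1$ and $\zeta_B(n)=\beta_B(n)+\om$ with $\beta_B(n)\geq 2$ (because $o_B(b)\geq\om$). Hence $o_B(n)=\om^{\beta_B(n)+\om}>\om^\om$ by Lemma~\ref{lemmVarthetaLessOm}.
 \end{itemize}
\item \emph{Converse of the ``moreover'' clause.} This is already covered by the explicit computation in the first part.
\end{itemize}

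\textbf{Main obstacle.} The delicate step is the precise identification of $\beta_B(2b^{(j)})$ via Lemma~\ref{lemmO*}. One must check that the maximal candidate is always $2b^{(j-1)}$: bases and non-critical elements are never candidates, and all the $\alpha$-values are $0$. A second point needing care is the uniform bound $o_B(n)<\om^{j+2}$ between consecutive triplings. This requires noticing that doubling and adding values below $\om^{j+2}$ stays below $\om^{j+2}$, which I would handle by a routine induction on $n$.
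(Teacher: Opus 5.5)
Your argument is essentially the paper's. In the forward direction you identify $2b^{(j-1)}$ as $(2b^{(j)})_*$ and read off $\beta_B(2b^{(j)})=j+1$ from Lemma~\ref{lemmO*}; this is the paper's ``$\zeta$ keeps growing by one''. In the converse you exhibit a single critical non-base element ($b^2$, $2b$ or $3b$) with a large value. You differ from the paper only in the case split: you use $3b$ whenever $\min B\ge 3$ instead of separating off $S_B(b)>b\cdot\min B$, and you dispose of the all-doubling case up front with Lemma~\ref{lemmIom^2}.

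One inequality is false as stated. In the subcase $\min B>3$ you have $o_B(3b)=\vartheta(\beta_B(3b)+\om)=\om^{\beta_B(3b)+\om}$, but this need not exceed $\om^\om$. If $\beta_B(3b)$ is finite, then $\beta_B(3b)+\om=\om$. For example, take $B=\{4,16,\dots\}$ and $b=4$: then $\beta_B(12)=2$ and $o_B(12)=\vartheta(\om)=\om^\om$. You only get $o_B(3b)\ge\om^\om$, which is exactly what the paper asserts. The conclusion $\mathcal I(B)>\om^\om$ still holds by taking $n=3b+1$ and using Proposition~\ref{propMonO}.

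The range in your upper-bound induction also needs correcting. ``Below the next base after $2b^{(j)}$'' means below $3b^{(j)}$, which leaves $[3b^{(j)},2b^{(j+1)})$ uncovered. The range should be $n<2b^{(j+1)}$, or all $n$ if $b^{(j+1)}$ does not exist. With that range the induction goes through exactly as you describe.
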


\begin{proof}
    Suppose \ref{lemmIom_2_2} holds. Consider the least $b\in B$ such that $S_B(b) = 3b$. Then $2b$ is $B$-critical, and $(2b)_*$ does not exist. By Lemma~\ref{lemmO*},
    $o_B(2b) = \vartheta(2) = \om^2$, so $\om^2 <\mathcal{I}(B)$. We now show by induction on $n$ that $o_B(n)<\om^\om$, which implies $\mathcal{I}(B)\leq \om^\om$. We assume that $n=2b$ is not a base since the other cases are trivial. Then
    $o_B(n) = \vartheta(\beta_B(n))$. By Lemma~\ref{lemmO*}, $\beta_B(n)$ is either two or of the form $\zeta+1$, where $o_B(n_*) = \vartheta(\zeta) < \om^\om$ by the induction hypothesis. It follows from Lemma~\ref{propThetaMonEps}\ref{propThetaMon} that $\zeta<\om$, so $\beta_B(n)<\om$ and $o_B(n)<\om^\om$.

    To see the final assertion of the statement, note that if $S_B(b) = 3b$, then $(2b)_*$ is the previous element $2d$ with $d\in B$ and $S_B(d)=3d$. If there are infinitely many such $b\in B$ with $S_B(b)=3b$, then $\zeta$ will keep growing by one, and $\mathcal{I}(B) = \sup_n(\vartheta(n)) = \om^\om$. Conversely, if there is some $b\in B$ such that all its successors $d\in B$ (including $b$ itself) satisfy $S_B(d)=2d$, then $o_B(b)<\om^\ell$, and for all successors $d\in B$, $o_B(d) = o_B(b)\cdot 2^k < \om^{\ell+1}$.

    Conversely if \ref{lemmIom_2_2} does not hold, then we have three cases.
    \begin{Cases}
    \item ($b\cdot \min B < S_B(b)$ for some $b\in B$).
        Then $b\cdot\min B$ is $B$-critical and not a base, and we consider the following subcases.
        \begin{Cases}
        \item ($b=\min B = 2$).
        Then $o_B(b\cdot \min B) = \vartheta(\Om^\Om)>\om^\om$. 
        \item ($b=\min B > 2$).
        Then $o_B(b\cdot\min B) = \vartheta(\Om) > \om^\om$.
        \item $(b>\min B)$.
        Then $o_B(b\cdot\min B) = \vartheta(\beta_B(b\cdot\min B) + \om^\om)>\om^\om$.
        \end{Cases}
        
        \item ($3b <S_B(b) \leq b\cdot \min B$ for some $b\in B$).
        Then $3b$ is $B$-critical and not a base, and $o_B(3b) = \vartheta(\beta_B(3b) + \om)\geq \om^\om$.

    \item ($S_B(b) = 2b$ for every $b\in B$). By Lemma~\ref{lemmIom^2}, $\mathcal{I}(B) = \om^2$.

    \end{Cases}
    In any case we see that \ref{lemmIom_2_1} does not hold.
\end{proof}

\begin{theorem}\label{TheoProvRCAo}
    Let $k\geq 0$ be given.
    $\sf RCA_0$ proves the following. Let $\mathcal B = (B_i)_{i\in\N}$ be any dynamical hierarchy. Suppose that $B_0$ is such that $S_{B_0}(b) \leq \min(3b, b\cdot\min {B_0})$ for every $b\in B_0$, and $S_{B_0}(b)=3b$ for $k$ many $b\in B_0$. Then for every $n$ there is some $i$ such that $\goodp ni {\mathcal B} = 0$.
\end{theorem}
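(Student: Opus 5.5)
The plan is to formalize the termination argument of Theorem~\ref{theoTerm} inside $\sf RCA_0$. The first step is to pin down exactly which ordinals occur. By Lemma~\ref{lemmIom_2}, the hypothesis on $B_0$ gives $\om^2<\mathcal I(B_0)\le\om^\om$. Since only finitely many ($k$) bases satisfy $S_{B_0}(b)=3b$, the proof of Lemma~\ref{lemmIom_2} gives a sharper bound: $\mathcal I(B_0)<\om^{\ell}$ for an explicit $\ell$ depending only on $k$. Indeed, each such $b$ raises the parameter $\zeta$ in $\beta_{B_0}(2b)=\zeta+1$ by one, so $o_{B_0}(2b)\le\vartheta(k+1)=\om^{k+1}$ for these $b$. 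Between such bases the values are only doubled, so $o_{B_0}(n)<\om^{k+3}$ for all $n$. Here $\ell=k+3$ is a fixed standard number, since $k$ is fixed in the metatheory.

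The key point is that every ordinal in the argument lies below $\om^{\ell}$, which is independent of $n$. Concretely, $o_{B_0}(n)$ can be written as a Cantor normal form $\om^{\ell-1}c_{\ell-1}+\dots+\om^0c_0$, which we identify with the tuple $(c_{\ell-1},\dots,c_0)\in\N^{\ell}$ ordered lexicographically. Restricted to this fragment, the map $o_B$ is primitive recursive, and hence available in $\sf RCA_0$. This is because, under the hypotheses, the cases of the definition only involve $\vartheta$ on finite arguments, for which Lemma~\ref{lemmVarthetaLessOm} gives $\vartheta(m)=\om^m$ for $m\ge 1$ finite.

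We would then check in $\sf RCA_0$ that Propositions~\ref{propMonO} and~\ref{propPresO} hold for $o_{B_i}$ along the sequence. These are $\Delta^0_1$ statements, proved by the same inductions on $n$ that appear in the paper, and these inductions are $\Sigma^0_1$ (in fact $\Delta^0_0$ relative to the primitive recursive functions involved). Preservation $o_{B_i}(m)=o_{B_{i+1}}(\ug_i m)$ then shows that all later values also stay below $\om^{\ell}$. As in the proof of Theorem~\ref{theoTerm}, $i\mapsto o_{B_i}(\goodp ni{\mathcal B})$ is a strictly decreasing sequence in $\N^{\ell}$ under the lexicographic order, for as long as the Goodstein value is positive.

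It remains to show that $\sf RCA_0$ proves the well-foundedness of $\om^{\ell}$ for each fixed standard $\ell$, applied to the given sequence. This is the standard fact that $\mathrm{WO}(\om^{\ell})$ is provable from $\Sigma^0_1$-induction. The argument proceeds by an external induction on $\ell$: assuming a strictly decreasing sequence, the leading coefficient is nonincreasing, so by $\Sigma^0_1$-induction it is eventually constant, and we then recurse on the tail.

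The main obstacle I expect is the bookkeeping showing that the relevant parts of the paper's monotonicity and preservation proofs (Propositions~\ref{propMonO} and~\ref{propPresO}) only use ordinals below $\om^{\ell}$. Those proofs invoke $\vartheta$ and Proposition~\ref{propCompareTheta} in general form, so I would restrict them explicitly to the fragment where $\zeta_{B}(n)<\om$. There the comparisons reduce to comparing finite exponents, which keeps everything arithmetical and of bounded complexity.
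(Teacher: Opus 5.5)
Your argument is correct in outline but is implemented differently from the paper's. The paper does not work in a fragment. It formalizes in $\mathsf{RCA_0}$ a complete primitive recursive notation system for ordinals below $\vartheta[\ve_{\Om+1}]$, with normal forms, the order $\prec$ and maximal coefficients. It then proves the needed facts about $\vartheta$, about the upgrade (via the $\Delta^0_0$ formula $\mathrm{Hist}$) and about $o_B$ (via $\mathrm{Trace}$) in full generality. As a result, monotonicity and preservation hold for every $B_i$ without any analysis of its shape. It obtains $o_{B_0}(n)<\om^{k+3}$ from the formalized Lemmas~\ref{lemmIom^2} and~\ref{lemmIom_2}. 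It then concludes with Sommer's result that $\mathsf{RCA_0}$ proves transfinite induction for $\Pi^0_1$ formulas below $\om^\om$, applied to $\neg\exists i\,(f(i)\preccurlyeq\alpha)$ to produce a $\preccurlyeq$-minimal value of $f$. You instead code $\om^\ell$ as $\N^\ell$ under the lexicographic order, where $\vartheta$ only meets finite arguments. You then prove well-foundedness of the decreasing sequence directly, by external induction on $\ell$ and the $\Sigma^0_1$ least number principle applied to the leading coefficient. Your version is more elementary and self-contained. The paper's uniform formalization is what it reuses for the later phase-transition theorems, where the ordinals go far beyond $\om^\om$.

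One step needs a real argument rather than bookkeeping. Your primitive recursive description of $o_B$ on the fragment is justified by the hypotheses on $B_0$, but $B_1,B_2,\dots$ are arbitrary good successors. Preservation along the Goodstein sequence does not by itself put $o_{B_{i+1}}$ in the fragment. Computing $o_{B_{i+1}}(\ug_i m)$ needs $o_{B_{i+1}}$ at arguments outside the image of $\ug_i$, for instance at bases of $B_{i+1}$ between $\ug_i b$ and the witness $d$ of $\ug_i(2b)=2d$. There are two ways to close this.

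The first is to use a truncated assignment with a top element, and prove monotonicity and preservation only where the value is below the top.

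The second, cleaner option is to show that the hypothesis is inherited. Suppose $S_B(b)\in\{2b,3b\}$ for all $b\in B$ and $C$ is a good successor of $B$.
\begin{itemize}
\item If $S_B(b)=2b$, then $S_C(\ug b)=2\ug b$, by Lemma~\ref{LemmUpgrTwiceBase} and $\ug b\mid S_C(\ug b)$.
\item If $S_B(b)=3b$, the bases from $\ug b$ up to $d$ are doublings, by minimality of the witness (Lemma~\ref{lemmalternativedefupgr}). Moreover $\ug(3b-1)=2d+\ug(b-1)<3d$ by Lemma~\ref{lemmUp(ba+r)}, so the third good-successor condition forces $S_C(d)=\ug(3b)=3d$.
\end{itemize}
Since $\min C\geq\min B$, $C$ satisfies the hypothesis with the same $k$. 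The statement ``every gap of $B_i$ is a factor $2$ or $3$, with at most $k$ factors $3$'' is $\Pi^0_1$ in $\mathcal B$, so it propagates along $i$ by induction in $\mathsf{RCA_0}$. Then every $o_{B_i}$ maps into $\om^{k+3}$, and your restricted versions of Propositions~\ref{propMonO} and~\ref{propPresO} never leave the fragment.
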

\begin{proof}
    First we fix a $\Delta^0_0$-formula $\mathrm{Hist}(B,C, s,n)$ of second order arithmetic which expresses that $s$ is a (coded) calculation of the upgrade together with the operators $\Chgbases bc$, up to $n$, where $b\leq \baseB n$ and $c$ ranges up to the witness of $\ug n$. The parameters $B, C$ will be omitted if they are clear from the context. More precisely, $s$ should contain:
    
    \begin{enumerate}
        \item The values $\ug m$ for $m\leq n$, together with their witness if $m\geq \min B$.
        \item The values $\Chgbases bc m$ for $m\leq n$, $b\leq \baseB n$ and $c\leq \basep C{\ug n}$.
    \end{enumerate}
    
    All of the data can be obtained from $s$ by primitive recursion. In particular we will write $s_n$ for the value of $\ug n$ inside $s$.
    Then the upgrade has the following $\Sigma^0_1$ definition.
    \[\ug n = m \iff \exists s (\mathrm{Hist}(s,n)\land s_n = m)\]
    Moreover, totality of the upgrade is expressed by the $\Pi^0_2$-formula $\forall n\exists s \mathrm{Hist}(s,n)$. We use this formula to formalize the expression ``$C$ is a good successor of $B$", which in turn is used to formalize ``$(B_i)_i$ is a dynamical hierarchy". The hypothesis on $B_0$ in the statement is also easily formalized. 
    
    We now argue that all of the properties of the upgrade operator, which are proven in \cite{fernandez2025fractal}, are provable in $\sf RCA_0$. We consider only the ones needed for proving termination and we add for convenience to every statement the good successor hypothesis, so that $\ug$ is total.  
    Apart from induction, these proofs use only logical rules. Therefore it suffices to check that every formula we apply induction on is at most $\Sigma^0_1$. For example, for monotonicity of the upgrade we apply induction on the $\Delta^0_0$-formula
    \[\mathrm{Hist(s,n)} \rightarrow (\forall m < n: s_m < s_n).\]
    One checks now that we can use the same trick every time induction is needed.
    We prove all other items of Lemma~\ref{lemmMonUg}, as well as Lemmas \ref{lemmStructureChgbases}, \ref{lemmbasecriticalpres} and \ref{lemmalternativedefupgr}. Item \ref{lemmUp(ba+r)witnessbase} of Lemma~\ref{lemmUp(ba+r)} requires no induction. For item \ref{lemmUp(ba+r)ba+r}, one proves first the equality $\Chgbases bc (ba+r) = \Chgbases bc ba + \ug r$ by induction on $a$, whenever $c\leq \basep C {\ug(ba+r)}$. We can express this equation again as a $\Delta^0_0$-formula using $\mathrm{Hist}$. After that we prove \ref{lemmUp(ba+r)ba+r}. Moreover \ref{lemmUp(ba+r)minBdiv} follows easily from \ref{lemmUp(ba+r)ba+r}. Finally, \ref{lemmUp(ba+r)b-d} requires no induction but uses the Lemmas we have just discussed.

    To be able to treat ordinals below $\vartheta[\ve_{\Om+1}]$ within $\sf RCA_0$, we use the notation system given in \cite{FernWierCiE2020}. Namely, terms representing ordinals 
    are built up from the constant $0$ and the functions $x\mapsto \vartheta(x)$, $(x,y)\mapsto x+y$ and $(x,y)\mapsto \Om^xy$. It is shown in \cite{FernWierCiE2020} that this system is complete, i.e.\ for every $\xi < \vartheta[\ve_{\Om+1}]$, there exists a term which represents $\xi$. We can then define the following primitive recursive functions and relations, simultaneously by recursion on terms:
    
    \begin{enumerate}[label = (\arabic*)]
        \item $x\mapsto \bar x$, the normal form of $x$. Every subterm of $\bar x$ should be in $\Omega$-normal form.
        \item $x\prec y$, stating that the ordinal of $x$ is less than the ordinal of $y$.
        \item $x\mapsto \mco x$, the maximal coefficient of $x$.
    \end{enumerate}
    For example, $\vartheta(x)\prec \vartheta(y)$ if either $x\prec y$ and $\mco x \prec \vartheta(y)$, or $\mco y \succcurlyeq x$. Deciding whether $\mco y = x$ comes down to reducing $\mco y$ and $x$ to normal form and checking if the terms match. As another example, to transform $x+y$ into normal form, write $\bar x = x_1 + \dots + x_m$ and $\bar y = y_1+\dots + y_n$ with $x_i,y_i$ additively indecomposable, and output $x_1+\dots +x_k + y_1 + \dots + y_n$, where $k$ is maximal such that $x_k\succcurlyeq y_1$.

    Moreover we have the following primitive recursive functions and relations.
    \begin{enumerate}[label = (\arabic*)]
    \item $\mathrm{Term}(x)$, stating that $x$ is the code of some ordinal.
    \item
    $\mathrm{SubTerm}(x,y)$, stating that $x$ is a subterm of $y$.
    \item $\lvert x \rvert$, the size of the term $x$.
    \end{enumerate}
    We then prove in $\sf RCA_0$ that $\mco x\prec \vartheta(y)$ whenever $x$ is a subterm of $y$, by induction on $\lvert x\rvert + \lvert y \rvert$. In particular item \ref{propThetaMco} of Lemma~\ref{propThetaMonEps} follows in $\sf RCA_0$. The other items of Lemma~\ref{propThetaMonEps}, together with Proposition~\ref{propCompareTheta}, follow from this and the definition of $\prec$ (except for the surjectivity of $\vartheta$, which we do not need for termination).

    We now work towards a definition of the ordinal assignments in $\sf RCA_0$.
    The function $O_f^b$ in Definition \ref{defO_f^b} can be defined by primitive recursion in $f$ and $b$. Hence it has a $\Delta^0_0$ definition with $f$ and $b$ as parameters. Lemmas \ref{lemmO_fMon} and \ref{lemmO_fMultOm}, which are proven by induction \cite{fernandez2025fractal}, therefore hold in $\sf RCA_0$.

    Note that, with our notation system, we can introduce the primitive recursive function $x\mapsto \om^x$ by checking whether $\bar x$ is of the form $\vartheta(y) + \vartheta(0)+\dots + \vartheta(0)$ for $y\succcurlyeq \Om$ (Lemma~\ref{lemmVarthetaLessOm}).
    Then for every $n$, we can compute (the code of) $o_B(n)$ by primitive recursion in $B$. Indeed, we can derive the value of $\beta_B(n)$ from Lemma~\ref{lemmO*}. So we fix a $\Delta^0_0$-formula $\mathrm{Trace}(B, w,n)$, expressing that $w$ is a correct calculation which contains
    
    \begin{enumerate}
        \item The values $o_B(m)$ for $m\leq n$.
        \item The values $O_B^b(m)$ for $m\leq n$ and $b\leq \baseB n$.
        \item The values $\alpha_B^b(m)$ for $m\leq n$ and $b\leq \baseB n$.
    \end{enumerate}
    In other words $\mathrm{Trace}$ is the analogue of $\mathrm{Hist}$, but verifies calculations for the ordinal assignment.
    Now the proofs of Lemma \ref{standardPropOo}, Proposition~\ref{propMonO}, Lemmas \ref{lemmInductiveStepAlpha} to \ref{lemmO*}, and Proposition~\ref{propPresO}, are formalized in $\sf RCA_0$, by using the formula $\mathrm{Trace}$ whenever induction is needed. We have also covered all properties of $\ug$ and $\vartheta$ which are used in these proofs. 

    We can prove Lemma~\ref{lemmIom^2} and Lemma~\ref{lemmIom_2} in $\sf RCA_0$, again since the proofs require only induction on $\Sigma^0_1$-formulas. By the conditions on $B_0$ given in the statement, $\sf RCA_0$ proves that $o_{B_0}(n) < \om^{k+3}$ for every $n$. 

    Finally, we prove in $\sf RCA_0$ the statement $\forall n \exists i (\goodp ni {\mathcal B} = 0)$. Let $n$ be arbitrary.
    We use for the function $i \mapsto \goodp ni{\mathcal B}$ a $\Sigma^0_1$-definition which uses our definition for the upgrade stated earlier.
    We define the mapping $f$ which sends $i\in\N$ to (the code of) $o_i(\goodp ni {\mathcal B})$. Then $f$ also has a $\Sigma^0_1$-definition which uses the definitions of $\goodp ni{\mathcal B}$ and $o_{B_i}$ for all $i$. In particular note that the definition of $f$ has as a parameter the sequence $(B_i)_i$.
    By Propositions \ref{propMonO} and \ref{propPresO}, $\sf RCA_0$ proves that $f(i+1)\prec f(i)$ whenever $i$ is such that $\goodp ni {\mathcal B}\neq 0$. 

    Now consider the formula $\varphi(\alpha) \equiv \exists i (f(i)\preccurlyeq \al)$ which has parameters $\al$ and $(B_i)_i$. It suffices to find a coded ordinal $\alpha$ which is minimal with respect to $\preccurlyeq$. Then $\alpha = f(i)$ for some $i$, and $\goodp ni {\mathcal B} = 0$ since otherwise $f(i+1)\prec f(i)$.
    For this we use the principle of transfinite induction on the $\Pi^0_1$-formula $\psi \equiv \lnot\varphi$ \cite{sommer1995transfinite}. Namely, for every $\ga\prec\om^\om$,
    \[\mathsf{RCA_0} \vdash \forall \alpha((\forall \beta \prec\alpha\;\psi(\beta) ) \rightarrow \psi(\al) ) \rightarrow \forall\al \prec \ga \;\psi(\al).\]
    If we take $\ga = \om^{k+1}+1$, then $\sf RCA_0$ proves the negation of the consequent, hence the negation of the antecedent, namely the existence of a minimal $\alpha$ such that $\varphi(\al)$.
\end{proof}

On the contrary, by repeating the proof of Theorem~\ref{theoGoodInd}, we obtain the following.

\begin{theorem}\label{TheoIndRCAo}
    Let $B_0$ be any base hierarchy which does not satisfy the conditions stated in the previous theorem. Define $B_{i+1} = (B_i)_{+(i+1)}$ for every $i$. Then $\sf RCA_0$ does not prove that for all $n$, there is some $i$ such that $\goodp ni {\mathcal B} = 0$.
\end{theorem}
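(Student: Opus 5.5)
The plan is to mirror the proof of Theorem~\ref{theoGoodInd}, with $\om_{n+1}$-style ordinals in place of $\vartheta[\ve_{\Om+1}]$.

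\textbf{Step 1: choose a starting family of sizable ordinal.} Since $B_0$ violates the hypotheses of Theorem~\ref{TheoProvRCAo}, the case analysis in the proofs of Lemmas~\ref{lemmIom^2} and~\ref{lemmIom_2} applies. One of the following holds: $S_{B_0}(b)=3b$ for infinitely many $b$; or there is some $b\in B_0$ with $S_{B_0}(b)>\min(3b,b\cdot\min B_0)$ (here $3b$ or $b\cdot\min B_0$ is a non-base critical element); or $S_{B_0}(b)=2b$ for every $b$. The last case cannot occur for $k=0$, since then the conditions of Theorem~\ref{TheoProvRCAo} hold. In each remaining case I will produce a family of numbers $m_k$ such that $o_{B_0}(m_k)\geq \om^{k}$ (or $\geq\om^\om$), with $k\mapsto m_k$ elementary. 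In the first case I take $m_k=2b_k$, where $b_k$ is the $k$-th base with $S(b)=3b$, which gives $o(m_k)=\vartheta(k+1)$. In the second case, a single non-base critical element already gives $o\geq\om^\om$. From there, using Lemma~\ref{propThetaMonEps} and the fact that $O_B$ turns $b$-towers into $\Om$-towers, I will find $m_k$ with $o_{B_0}(m_k)\geq \fsc{\om^\om}{k}=\om^k$. Everything then hinges on $\sup_k o_{B_0}(m_k)\geq\om^\om$, the proof-theoretic ordinal of $\mathsf{RCA}_0$ (Theorem~\ref{theoKPInc} with $n=0$ gives $\om_1$ in the paper's indexing).

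\textbf{Step 2: majorize fundamental sequences.} I will rerun the chain of inequalities from the proof of Theorem~\ref{theoGoodInd}:
\[o_i(n_i)=o_{i+1}(\ug_i n_i)>o_{i+1}(\ug_i n_i-1)\geq \fsc{o_i(n_i)}{c_i},\]
using Propositions~\ref{propPresO}, \ref{propMonO} and~\ref{propMajorizeFS}. Here $B_{i+1}=(B_i)_{+(i+1)}$, so Proposition~\ref{propMajorizeFS} applies with its parameter equal to $i$. It requires $0<i<\min B_i-1$. Since $\min B_{i+1}=\min B_i+1$, this holds from $i=1$ onward whenever $\min B_0\geq 2$, possibly after discarding the first few steps as in Remark~\ref{remInd}. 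The index mismatch ($i$ rather than $i+1$) is harmless: I will either shift the sequence by one step or note that Proposition~\ref{propFunMajor} tolerates a constant index shift, since $\fsc{\xi}{j}$ is increasing in $j$.

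\textbf{Step 3: conclude.} Proposition~\ref{propFunMajor} then gives $o_i(n_i)\geq\fsi{o_0(m_k)}{i}$, so the termination time satisfies $G(k)\geq F_{\om^\om}(k)$ up to elementary reindexing. By Theorem~\ref{theoKPInc} and the choice $T=\mathsf{RCA}_0$, $G$ is not provably total, and hence the termination statement is not provable.

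The main obstacle is Step 1, because $F_\al$ is defined through the specific sequence $\fsc{\al}{k}$. I need $o_0(m_k)\geq\fsc{\om^\om}{k}$ and a bound on ordinals that behaves well under Proposition~\ref{propFunMajor}, namely that $\xi\geq\eta$ implies $\fsi{\xi}{i}\geq\fsi{\eta}{i}$ fails in general. To handle this, I will choose $m_k$ so that $o_0(m_k)$ is exactly a member of a chain reachable from $\om^\om$ by fundamental-sequence steps, for instance $\vartheta(k+1)=\om^{k+1}$ in the infinitely-many-$3b$ case. In the other cases I will pick $m_k$ realizing exactly $\om^{k}$ via Lemma~\ref{lemmVarthetaLessOm} and coefficient choices inside the $\Om$-normal form.
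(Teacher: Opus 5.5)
Your outline matches the paper's proof. You use $\mathcal I(B_0)\geq\om^\om$ to choose starting values, then rerun the proof of Theorem~\ref{theoGoodInd} with Proposition~\ref{propMajorizeFS} at parameter $i$ and the one-step shift of Remark~\ref{remInd}: start at $m+1$ with $\min B_0\nmid m+1$, so that $o_1(n_1)=o_0(m)$.

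Your alternative justification of the shift is backwards, though. Proposition~\ref{propMajorizeFS} gives $o_{i+1}(n_{i+1})\geq\fsc{o_i(n_i)}{i}$, but Proposition~\ref{propFunMajor} needs the larger term $\fsc{o_i(n_i)}{i+1}$. Monotonicity in the index therefore does not help, and only the genuine shift works.

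The paper itself just takes any $n_k$ with $o_0(n_k)\geq\fsc{\om^\om}{k}$. You are right that such an inequality cannot be fed into Proposition~\ref{propFunMajor} as it stands. In your first case (all gaps at most $\min(3b,b\cdot\min B_0)$, infinitely many equal to $3b$), the exact values $o_0(2b_k)=\om^{k+1}=\fsc{\om^\om}{k+1}$ settle the matter.

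The gap is in the remaining cases, where you promise $m_k$ with $o_{B_0}(m_k)$ exactly $\om^k$ ``via coefficient choices''. This is impossible in general, because the image of $o_{B_0}$ is far from an initial segment.

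\begin{itemize}
\item Take $B_0=\{3\}$, which violates the hypotheses since $S(3)=\infty>9$. One computes $o(3),o(4),o(5)=\om,\om+1,\om+2$, then $o(6)=\vartheta(2)=\om^2$, $o(7),o(8)=\om^2+1,\om^2+2$, and $o(9)=\vartheta(\Om)=\ve_0$. By monotonicity no value of $o_{B_0}$ lies in $[\om^3,\om^\om)$ at all.
\item Likewise $\{4\}$ jumps from $\om^2+3$ directly to $o(12)=\om^\om$.
\end{itemize}

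So in these cases there are neither exact hits nor values in $[\fsc{\om^\om}{k},\om^\om)$; such values would also suffice, via the Bachmann property of Buchholz's sequences. Starting from a value such as $\ve_0$, the chain only guarantees $\fsc{\ve_0}{1}=1$ at the next step.

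The missing idea is to choose the target ordinal according to $B_0$ and hit \emph{its} fundamental sequence exactly.

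\begin{itemize}
\item For $\{3\}$, use the towers of Theorem~\ref{theoGoodInd}. Their values $\vartheta(\Om_k)=\fsc{\vartheta[\ve_{\Om+1}]}{k}$ even give $\sf KP$-unprovability.
\item When all gaps are finite, take for example $B_0=\{4^j : j\geq 1\}$. Its image contains $o(3\cdot 4^k)=\vartheta(\om\cdot k)=\fsc{\om^{\om^2}}{k}$, but no $\om^k$ with $k\geq 3$. You then have to aim at a target such as $\om^{\om^2}$. You must also show that $F_{\al'}$ is not provably total in $\mathsf{RCA}_0$ for such $\al'\geq\om^\om$, which Theorem~\ref{theoKPInc} does not give directly.
\end{itemize}
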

\begin{proof}
    Let $\al = \om^\om$. By Lemma~\ref{lemmIom^2} and Lemma~\ref{lemmIom_2}, we know that $\mathcal{I}(B_0)\geq \al$. Then for every $k$, we can find $n_k\in\N$ such that $o_0(n_k) \geq \fsc\al k$. We define $G(k)$ as the termination time of the Goodstein sequence starting at $n_k$, and by Theorem~\ref{theoKPInc} it suffices to show that $G$ is not dominated by $F_\al$.
    We can now repeat the argument of Theorem~\ref{theoGoodInd} and use the trick explained in Remark~\ref{remInd}.
\end{proof}

We can also show that $\sf RCA_0$ does not prove the statement of Theorem~\ref{TheoProvRCAo} while quantifying over $k$, in the following way. We consider the following function $G:\N\rightarrow \N$. Given $k$, we consider some base hierarchy $B_{0,k}$ satisfying 
    \begin{enumerate}
    \item
    $S_{B_{0,k}}(b)\leq 3b\leq b\cdot \min B_{0,k}$ for every $b\in B_{0,k}$.
    \item 
    $S_{B_{0,k}}(b) = 3b$ for $k$ many $b\in B_{0,k}$.
    \end{enumerate}
    Let $B_{i+1,k} = (B_{i,k})_{+(i+1)}$ for every $i$. We take some $n$ such that $o_{B_{0,k}}(n) > \om^k$, and set $G(k)$ equal to the termination time of the Goodstein sequence at $n$. If $\sf RCA_0$ was able to prove the statement of Theorem~\ref{TheoProvRCAo} uniformly over $k$, then in particular it would prove the totality of $G$. But $G(k)\geq F_\al(k)$ for all $k$, where $\al=\om^\om$.

\begin{lemma}\label{lemmIom_3}
    The following are equivalent.
    \begin{enumerate}
        \item $\om^\om < \mathcal{I}(B) < \om^{\om^\om}$.\label{lemmIom_3_1}
        \item $S_B(b) \leq b\cdot \min B$ for every $b\in B$, and $S_B(b) > 3b$ for some $b\in B$.\label{lemmIom_3_2}
    \end{enumerate}
\end{lemma}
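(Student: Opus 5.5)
The plan mirrors the proof of Lemma~\ref{lemmIom_2}: prove (2)$\Rightarrow$(1) directly, and obtain (1)$\Rightarrow$(2) by contraposition, splitting on how (2) fails.

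\emph{(2)$\Rightarrow$(1).} First the lower bound. Since (2) holds, Lemma~\ref{lemmIom_2} shows that $\mathcal I(B)\le\om^\om$ is impossible: its condition (2) would require $S_B(b)\le 3b$ for every $b$. Hence $\mathcal I(B)>\om^\om$ by Lemmas~\ref{lemmIom^2} and~\ref{lemmIom_2}. Concretely, if $S_B(b)>3b$ then $3b$ is $B$-critical and not a base, and Case~2 of that proof gives $o_B(3b)=\vartheta(\beta_B(3b)+\om)\ge\om^{\om}$. Then $o_B(3b+1)>\om^\om$ witnesses strictness.

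For the upper bound, note that $S_B(b)\le b\cdot\min B$ means every $B$-critical non-base $n$ with $b=\baseB n$ satisfies $n=bv$ with $v<\min B$, so $u=0$. Thus $\alpha_B(n)=0$, $\tilde v\in\{v-2,v\}$, and $o_B(\tilde v)=\tilde v<\om$. So $\zeta_B(n)=\beta_B(n)+\om^{k}$ with $k<\om$. I will prove by induction on $n$ that $o_B(n)<\om^{\om^\om}$, which is an $\ve$-like bound for these operations. The non-critical case and the base case are closed under $+$ and $\cdot 2$ below the indecomposable $\om^{\om^\om}$. In the critical case, Lemma~\ref{lemmO*} says $\beta_B(n)$ is $2$, or $o_B(n_*)$, or $\zeta+1$ with $o_B(n_*)=\vartheta(\zeta)$. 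In every case the induction hypothesis gives $\beta_B(n)<\om^{\om^\om}$, since $\zeta<\vartheta(\zeta)$ by Lemma~\ref{propThetaMonEps}\ref{propThetaMco}. So $\zeta_B(n)<\om^{\om^\om}<\Om$. By Lemma~\ref{lemmVarthetaLessOm}, $\vartheta(\zeta)\in\{\om^\zeta,\om^{\zeta+1}\}$, and neither exponent is an epsilon number plus a finite amount below $\om^{\om^\om}$. Therefore $o_B(n)=\om^{\zeta_B(n)}$ or $\om^{\zeta_B(n)+1}$.

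This is the main obstacle: $\om^{\zeta}<\om^{\om^\om}$ only requires $\zeta<\om^\om$, but the induction so far only gives $\beta_B(n)<\om^{\om^\om}$. So I will strengthen the hypothesis to $o_B(n)<\om^{\om^\om}$ with $\zeta_B(n)<\om^\om$. I will show $\beta_B(n)<\om^\om$ via Lemma~\ref{lemmO*}, as follows.
\begin{itemize}
\item In the case $\alpha_B(n_*)=\alpha_B(n)=0$, we have $\beta_B(n)=\zeta_B(n_*)+1<\om^\om$.
\item The case $\alpha_B(n_*)>\alpha_B(n)$ cannot occur, since all $\alpha$'s vanish.
\item If $n_*$ does not exist, then $\beta_B(n)=2$.
\end{itemize}
Hence $\zeta_B(n)=\beta_B(n)+\om^k<\om^\om$. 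For bases and non-critical $n$, the values are sums and doublings of smaller values. This shows $o_B(n)<\om^{\om^\om}$ for all $n$. Since the bound $\om^{\om^\om}$ is never attained as a supremum this way, we need more: every $o_B(n)<\vartheta(\om^\om)=\om^{\om^\om}$ does not exclude $\mathcal I(B)=\om^{\om^\om}$. For strictness, I will argue that $\zeta_B(n)<\om^{m}$ for a uniform $m$. In a chain of candidates $n_*$, each step adds $1$ and then $\om^k$ with $k\le\min B$, so $\zeta_B(n)<\om^{\min B+1}$ by induction. Hence $\mathcal I(B)\le\om^{\om^{\min B+1}}<\om^{\om^\om}$.

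\emph{(1)$\Rightarrow$(2), by contraposition.} If $S_B(b)\le 3b$ for every $b$, then Lemmas~\ref{lemmIom^2} and~\ref{lemmIom_2} give $\mathcal I(B)\le\om^\om$. Otherwise, if some $S_B(b)>b\cdot\min B$, then $b\cdot\min B$ is $B$-critical and not a base, and Case~1 of the proof of Lemma~\ref{lemmIom_2} applies. This yields $o_B(b\cdot\min B)$ equal to $\vartheta(\Om^\Om)$ or $\vartheta(\Om)$, which is $\ge\ve_0>\om^{\om^\om}$. In the remaining subcase $b>\min B$, we instead take the critical number $b^2$ (if $b^2<S_B(b)$) to get $\alpha_B>0$, or $b\cdot\min B$, giving $\vartheta(\beta+\om^\om)\ge\om^{\om^\om}$. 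In every case $\mathcal I(B)\ge\om^{\om^\om}$.
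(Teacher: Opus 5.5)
Your proposal is correct and follows essentially the paper's route. The lower bound comes from $o_B(3b)=\vartheta(\beta_B(3b)+\om)\ge\om^\om$. The upper bound is an induction on $n$: since all $\alpha_B$ vanish, Lemma~\ref{lemmO*} gives a uniform finite-exponent bound on $\beta_B(n)$ (the paper gets $o_B(n)<\om^{\om^{\min B}}$, you get $\om^{\om^{\min B+1}}$). The converse goes through the two preceding lemmas or the critical non-base $b\cdot\min B$.

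The only slip is the order of cases in the converse. The condition $S_B(b)\le 3b$ for all $b$ does not by itself give $\mathcal I(B)\le\om^\om$: if $\min B=2$ and $S_B(2)=6$, then $o_B(4)=\vartheta(\Om^\Om)$. So first dispose of the case where some $S_B(b)>b\cdot\min B$, as the paper does, and only then invoke the two preceding lemmas.
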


\begin{proof}
    If \ref{lemmIom_3_2} holds, consider some $b$ such that $3b<S_B(b)\leq b\cdot \min B$. Then
    $o_B(3b) = \vartheta(\beta_B(3b) + \om) \geq \om^\om$, so $\om^\om <\mathcal{I}(B)$. We show by induction on $n$ that $o_B(n)<\om^{\om^{\min B}}$. Assume $n\notin B$ is $B$-critical, the other cases are trivial. Then $n$ is of the form $bv$ with $v<\min B$, and $o_B(n) = \vartheta(\beta_B(n) + \om^{o_B(v-2)}) \leq \vartheta(\beta_B(n) + \om^{\min B-2})$. By applying Lemma~\ref{lemmO*} and possibly the induction hypothesis on $n_*$, $\beta_B(n) < \om^{\min B}$. It follows that $o_B(n)<\om^{\om^{\min B}}$.

    If \ref{lemmIom_3_2} does not hold, then either $S_B(b)\leq b\cdot \min B$ and $S_B(b)\leq 3b$ for every $b\in B$, in which case the previous two lemmas imply that \ref{lemmIom_3_1} does not hold, or $S_B(b)>b\cdot \min B$ for some $b\in B$. In the latter case we have that $b\cdot \min B$ is $B$-critical and not a base itself. By the same case distinction as in the first case of Lemma~\ref{lemmIom_2}, one sees that $o_B(b\cdot \min B)\geq \om^{\om^\om}$.
\end{proof}

By repeating the proofs of Theorem~\ref{TheoProvRCAo} and Theorem~\ref{TheoIndRCAo}, we obtain the following results.

\begin{theorem}\label{TheoProvRCAoPi2}
    Let $k\geq 2$ be given.
    $\mathsf{RCA_0} + (\Sigma^0_2)-\mathrm{IND}$ proves the following. Let $\mathcal B = (B_i)_{i\in\N}$ be any dynamical hierarchy. Suppose that $B_0$ is such that $S_{B_0}(b) \leq b\cdot\min {B_0}$ for every $b\in B_0$, and $\min B_0\leq k$. Then for every $n$ there is some $i$ such that $\goodp ni {\mathcal B} = 0$.
    $\mathsf{RCA_0} + (\Sigma^0_2)-\mathrm{IND}$ does not prove this statement quantified over $k$.
\end{theorem}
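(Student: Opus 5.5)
The plan mirrors Theorem~\ref{TheoProvRCAo} for the positive part and Theorem~\ref{TheoIndRCAo} for the unprovability part. The ordinal bound comes from the proof of Lemma~\ref{lemmIom_3}, made uniform in $k$.

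\emph{Provability.} First I formalize everything exactly as in the proof of Theorem~\ref{TheoProvRCAo}. This covers the formulas $\mathrm{Hist}$ and $\mathrm{Trace}$, the term notation system for ordinals below $\vartheta[\ve_{\Om+1}]$, and the fact that Propositions~\ref{propMonO} and~\ref{propPresO} hold in $\sf RCA_0$, since they only need $\Sigma^0_1$-induction. Next, using only $\Sigma^0_1$-induction, I prove that under the hypotheses $S_{B_0}(b)\le b\cdot\min B_0$ and $\min B_0\le k$, every $n$ satisfies $o_{B_0}(n)<\om^{\om^{k}}$. The argument is the induction of Lemma~\ref{lemmIom_3}. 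In the critical case $n=bv$ with $v<\min B_0\le k$, the summand $\om^{o(\tilde v)}$ is below $\om^{k}$. Lemma~\ref{lemmO*} together with the induction hypothesis applied to $n_*$ gives $\beta_{B_0}(n)<\om^{k}$, so $\zeta_{B_0}(n)<\om^k$. Since this $\zeta$ is countable, Lemma~\ref{lemmVarthetaLessOm} yields $o_{B_0}(n)=\vartheta(\zeta_{B_0}(n))\le\om^{\zeta+1}<\om^{\om^k}$. This inequality is a $\Delta^0_0$ statement about a $\mathrm{Trace}$, so the induction is harmless. Finally, I define $f(i)=o_i(\goodp ni{\mathcal B})$, which is $\Sigma^0_1$-definable and strictly decreasing while $\goodp ni{\mathcal B}>0$, and I need a $\preccurlyeq$-minimal $\alpha$ satisfying $\exists i\,(f(i)\preccurlyeq\alpha)$. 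For this I use transfinite induction along $\om^{\om^{k}}+1$ for $\Pi^0_1$ formulas. For each fixed $k$, this instance is provable in $\mathsf{RCA_0}+(\Sigma^0_2)\text{-}\mathrm{IND}$, because the proof-theoretic ordinal of that theory is $\om_3=\om^{\om^\om}$ and transfinite induction up to each $\om^{\om^k}$ for such formulas is available there (cf.\ \cite{sommer1995transfinite}). The conclusion then follows exactly as before.

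\emph{Unprovability of the uniform version.} Here I follow the remark after Theorem~\ref{TheoIndRCAo}. For each $k$, I choose a base hierarchy $B_{0,k}$ with $\min B_{0,k}=k$ and $S(b)=b\cdot k$, so the hypotheses hold. I set $B_{i+1,k}=(B_{i,k})_{+(i+1)}$; this is a dynamical hierarchy by Lemma~\ref{lemmGoodSuccB(+i)}. I then pick $n_k$ that is $B_{0,k}$-critical of the form $b\,v$ with $v$ close to $k$, iterated through the $n_*$ chain, so that $o_{B_{0,k}}(n_k)\ge\fsc{\om^{\om^\om}}{k'}$ for some $k'$ growing with $k$. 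The natural witness is $\vartheta(\om^{k-2})$-type values, which are roughly $\om^{\om^{k-2}}$. Let $G(k)$ be the termination time. As in Theorem~\ref{theoGoodInd}, using Proposition~\ref{propMajorizeFS}, whose hypothesis $0<i<\min B-1$ holds along the process after shifting by a bounded number of steps as in Remark~\ref{remInd}, together with Proposition~\ref{propFunMajor}, I get $G(k)\ge F_\al(k')$ with $\al=\om_3$. Theorem~\ref{theoKPInc} with $n=2$ then shows that $G$ is not provably total.

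\emph{Main obstacle.} I expect the main difficulty to be justifying the transfinite induction step precisely. The formula $\psi$ is $\Pi^0_1$ with set parameters, and I need $\mathsf{RCA_0}+\Sigma^0_2\text{-}\mathrm{IND}$ to prove $\mathrm{TI}(\om^{\om^k},\Pi^0_1)$ for each $k$. My first attempt would be Gentzen's jump: from $\Sigma^0_2$-induction, transfinite induction for $\Pi^0_1$ lifts from $\gamma$ to $\om^\gamma$, starting from $\om$. A secondary check is that the choice of $n_k$ really reaches $\fsc{\om^{\om^\om}}{k'}$ with $\min B_{0,k}$ small enough relative to the index $i$ in Proposition~\ref{propMajorizeFS}.
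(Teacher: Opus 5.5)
Your proposal is correct and follows the same route as the paper: the bound $o_{B_0}(n)<\om^{\om^{\min B_0}}\le\om^{\om^k}$ from the proof of Lemma~\ref{lemmIom_3}, then the formalization of Theorem~\ref{TheoProvRCAo} with $\Pi^0_1$ transfinite induction up to $\om^{\om^k}+1$, and for the uniform part the argument from the remark after Theorem~\ref{TheoIndRCAo}; concretely, $B_{0,k}=\{k^j : j\geq 1\}$ and $n_k=k(k-1)$ give exactly $o_{B_{0,k}}(n_k)=\om^{\om^{k-3}}=\fsc{\om^{\om^\om}}{k-3}$ for $k\geq 4$. One correction to your sketch of the jump: $\Sigma^0_2$-induction yields $\Pi^0_2$ transfinite induction up to each $\om^k$, and Gentzen's jump turns this into $\Pi^0_1$ transfinite induction up to $\om^{\om^k}$, whereas $\Pi^0_1$ transfinite induction does not by itself lift from $\gamma$ to $\om^\gamma$ (iterating that would reach $\ve_0$).
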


\begin{theorem}\label{TheoIndRCAoPi2}
    Let $B_0$ be any base hierarchy which does not satisfy the conditions stated in Theorem~\ref{TheoProvRCAoPi2}. Define $B_{i+1} = (B_i)_{+(i+1)}$ for every $i$. Then $\mathsf{RCA_0} + (\Sigma^0_2)-\mathrm{IND}$ does not prove that for all $n$, there is some $i$ such that $\goodp ni{\mathcal B} = 0$.
\end{theorem}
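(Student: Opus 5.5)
The statement to prove is Theorem~\ref{TheoIndRCAoPi2}. I would follow the template of Theorem~\ref{TheoIndRCAo}, with $\al=\om^{\om^\om}$ in place of $\om^\om$. By Theorem~\ref{theoKPInc}, the proof-theoretic ordinal of $\mathsf{RCA_0}+(\Sigma^0_2)-\mathrm{IND}$ is $\om_3=\om^{\om^\om}$. It therefore suffices to exhibit a function $G$, whose totality follows from the termination statement for the given hierarchy, such that $G(k)\geq F_\al(k)$ for all $k$.

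\textbf{Step 1: lower bound on $\mathcal I(B_0)$.} Here ``does not satisfy the conditions'' should mean that $S_{B_0}(b)>b\cdot\min B_0$ for some $b\in B_0$. If instead only the bound $\min B_0\le k$ fails, the statement holds for some larger $k$, so that reading is vacuous. Under this hypothesis, Lemma~\ref{lemmIom_3} and its proof (the case distinction copied from the first case of Lemma~\ref{lemmIom_2}) give $o_{B_0}(b\cdot\min B_0)\geq \om^{\om^\om}$, hence $\mathcal I(B_0)\geq\al$.

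\textbf{Step 2: choosing starting points.} For every $k$ we need $n_k$ with $o_0(n_k)\geq \fsc\al k$. Since $o_{B_0}(b\cdot\min B_0)\ge\al>\fsc\al k$, the value $n_k=b\cdot\min B_0$ itself works for all $k$. Define $G(k)$ to be the termination time of the process started at $n_k$.

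\textbf{Step 3: the majorization argument.} I would repeat the proof of Theorem~\ref{theoGoodInd}: Propositions~\ref{propPresO}, \ref{propMonO} and \ref{propMajorizeFS} give $\fsc{o_i(n_i)}{i+1}\le o_{i+1}(n_{i+1})\le o_i(n_i)$, and then Proposition~\ref{propFunMajor} yields $G(k)\ge F_\al(k)$. The technical point is that Proposition~\ref{propMajorizeFS} needs $C=B_{+(i+1)}$ with $0<i<\min B-1$. With $B_{i+1}=(B_i)_{+(i+1)}$, and $\min B_{i}=\min B_0+i$ by Lemma~\ref{lemmGoodSuccB(+i)}, the index $i$ lags behind what is needed at the first step. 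I would handle this as in Remark~\ref{remInd}: drop the first step or two, or shift the index by one. This loses only a constant in the argument of $F_\al$, since $\fsi{\cdot}{i}$ is monotone in the starting ordinal, and starting from $\fsc\al{k}$ shifted by a fixed amount still dominates $F_\al(k-c)$. I expect this index bookkeeping, together with confirming that $\min B_0\ge 3$ or otherwise getting past the case $\min B_0=2$ (where $i<\min B-1$ fails initially), to be the main obstacle. For $\min B_0=2$ I would start the comparison one step later: $\min B_1=3$, and preservation of $o$ by Proposition~\ref{propPresO} moves the ordinal bound forward.

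Finally, the termination statement for this hierarchy implies the totality of $G$. If $\mathsf{RCA_0}+(\Sigma^0_2)-\mathrm{IND}$ proved that statement, it would prove the totality of $G$, and Theorem~\ref{theoKPInc} would then give $G(k)<F_\al(k)$ eventually. This contradicts Step 3.
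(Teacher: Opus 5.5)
Your outline is the paper's: rerun Theorem~\ref{TheoIndRCAo} with $\alpha=\omega^{\omega^\omega}$, use Lemma~\ref{lemmIom_3} for the lower bound, and handle the index lag via Remark~\ref{remInd}. Your reading of the hypothesis in Step~1 is also right. However, Step~2 breaks the argument. With $n_k=b\cdot\min B_0$ for every $k$, the function $G$ is constant. Its totality is then trivially provable in $\mathsf{RCA_0} + (\Sigma^0_2)-\mathrm{IND}$, so no contradiction with Theorem~\ref{theoKPInc} can arise; equivalently, $G(k)\geq F_\alpha(k)$ must fail for all large $k$, since $F_\alpha$ is unbounded.

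The culprit is your claim in Step~3 that $\xi\mapsto\fsi{\xi}{i}$ is monotone in the starting ordinal. It is not: $\fsc{\omega}{1}=1$ while $\fsc{5}{1}=4$, so the descent from $\omega$ stops after two steps and the one from $5$ after five. Applied to your sequence, Proposition~\ref{propFunMajor} yields only $o_i(n_i)\geq\fsi{\xi_0}{i}$ with $\xi_0=o_{B_0}(b\cdot\min B_0)$ fixed. Moreover $\fsi{\xi_0}{\ell}=0$ for some $\ell$ that does not depend on $k$. Your ``lose only a constant'' bookkeeping rests on the same false monotonicity. The shift of Remark~\ref{remInd} needs none: take $n_0=m+1$ with $\min B_0\nmid m+1$, so that $o_1(n_1)=o_0(m)$.

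So the bare inequality $o_0(n_k)\geq\fsc{\alpha}{k}$ is not what the majorization uses. The proof of Theorem~\ref{TheoIndRCAo} is terse exactly here, but it is modelled on Theorem~\ref{theoGoodInd}, where $n_k=3_k$ varies with $k$ and $o_0(3_k)=\vartheta(\Om_k)=\fsc{\alpha}{k}$ exactly. You need $n_k$ depending on $k$ such that the descent from $o_0(n_k)$ is provably at least as long as the descent from $\fsc{\alpha}{k}$. If $\fsc{\gamma}{k}\leq o_0(n_k)<\gamma$, the Bachmann property of Buchholz's sequences is the natural tool: it lets you reach $\fsc{\gamma}{k}$ from $o_0(n_k)$ by $\{1\}$-steps and compare the two descents.

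Be aware that $\gamma=\alpha$ is not always available. For $B_0=\{3\}$ one has $o_{B_0}(8)=\omega^2+2$ and $o_{B_0}(9)=\varepsilon_0$. Hence no $n$ satisfies $\omega^{\omega^k}=\fsc{\alpha}{k}\leq o_{B_0}(n)<\alpha$ once $k\geq 1$. In such cases you must aim at a larger target, for example numbers whose ordinals approach $\mathcal I(B_0)$, such as $3_k$ for $\{3\}$. You then still have to argue that the resulting descents outlast $F_\alpha(k)$ for infinitely many $k$.
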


\begin{lemma}\label{lemmIom_n}
    For $n>0$, the following are equivalent.
    \begin{enumerate}
    \item $\om_{2n+1} < \mathcal{I}(B) < \om_{2n+3}$.\label{lemmIom_n_1}
    \item $S_B(b)\leq b^2$ for every $b\in B$, and there exists a chain of bases $b_0< \dots < b_n$ with maximal length $n$ such that $S_B(b_{i}) > b_i\cdots b_0$ holds for $0\leq i \leq n$.\label{lemmIom_n_2}
    \end{enumerate}
\end{lemma}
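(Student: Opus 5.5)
We prove both directions by relating the length of a chain of ``large gaps'' $S_B(b_i) > b_i\cdots b_0$ to the height of the exponential tower $\mathcal I(B)$ reaches. The preceding lemmas give the pattern to follow. Throughout we assume $S_B(b)\le b^2$ for all $b$, since otherwise some $b^2$ is $B$-critical and not a base. Then $O_B(b^2)=\Om^2$, so $\alpha_B(b^2)\ge\Om$, and Lemma~\ref{propThetaMonEps}\ref{propThetaEps} gives $o_B(b^2)\ge\vartheta(\Om)=\ve_0$, which exceeds every $\om_k$. Under $S_B(b)\le b^2$, every $B$-critical $m\notin B$ has the form $bv$ with $v<b$, and
\[
o_B(bv)=\vartheta\bigl(\beta_B(bv)+\om^{o_B(\tilde v)}\bigr)
\]
lies below $\Om$. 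Because $\vartheta$ is then essentially $\om^{(\cdot)}$ (Lemma~\ref{lemmVarthetaLessOm}), we get the rough equality $o_B(bv)\approx\om^{\beta_B(bv)+\om^{o_B(\tilde v)}}$. Each new $B$-critical coefficient $v$ therefore contributes two exponentials: one from $\vartheta$ and one from $\om^{o_B(\tilde v)}$. This is why the levels are $\om_{2n+1}$.

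\emph{Lower bound.} Given a chain $b_0<\dots<b_n$ with $S_B(b_i)>b_i\cdots b_0$, we prove by induction on $i$ that the number $m_i:=b_i\cdot(b_{i-1}\cdots b_0)$ is $B$-critical and not a base, and that $o_B(m_i)\ge\om_{2i+2}$ or so.
\begin{enumerate}
\item For the base case, $m_0=b_0$ is a base. We use instead the hypothesis $S_B(b_0)>b_0$, together with an argument like Lemma~\ref{lemmIom_3}, to obtain an element $b_0v$ with $o_B\ge\om^\om$.
\item For the step, $b_{i-1}\cdots b_0<b_i$ because it is below $S_B(b_{i-1})\le b_i$. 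So $v=b_{i-1}\cdots b_0$ is a legitimate coefficient with $o_B(v)\ge o_B(m_{i-1})$, using Proposition~\ref{propMonO}\ref{Mon_o}. Since $v\ge\min B$, we have $\tilde v=v$, and hence
\[
o_B(m_i)\ge\vartheta\bigl(\om^{o_B(m_{i-1})}\bigr)\ge\om^{\om^{o_B(m_{i-1})}}.
\]
\end{enumerate}
With the correct indexing this yields $o_B(m_n)\ge\om_{2n+1}$. To get strict inequality for $\mathcal I(B)$, pass to a slightly larger critical number, for example $m_n+b_n$ or the successor-like element below $S_B(b_n)$.

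\emph{Upper bound.} This direction is the main obstacle. We show by induction on $m$ that if $m$ is $B$-critical, not a base, and the longest chain ending at $\baseB m$ has length $k$, then $o_B(m)<\om_{2k+3}$. We handle the two components of $o_B(bv)$ separately.
\begin{enumerate}
\item \textbf{The coefficient $v<b$.} Since $m=bv<S_B(b)$, either $v$ is small, or $v$ lies in the range of some $d<b$ with $dv'$ critical. Maximality of the chain then forces $v$ to be ``covered'' by a chain of length $k$ at most. The delicate point is to make precise that the chain condition $S_B(b_i)>b_i\cdots b_0$ is exactly what controls how large a critical coefficient fits below $S_B(b)$.
\item \textbf{The term $\beta_B(m)$.} Lemma~\ref{lemmO*} bounds it by $o_B(m_*)+1$, or by $2$. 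The $m_*$-candidates have the same or smaller chain length, so they are handled by the induction hypothesis. The bound on $\beta_B(m)$ then closes because $\vartheta$-values below $\Om$ absorb such additions.
\end{enumerate}

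Finally, the equivalence with maximal length $n$ follows by combining the two bounds at $n$ and $n+1$. Bases and non-critical elements are trivial via additive closure and the doubling in Case 2 of the definition of $o_B$.
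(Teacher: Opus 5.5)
Your lower bound is correct and is essentially the paper's. For $m_i=b_i\cdots b_0$ one has $o_B(m_i)\ge\vartheta(\om^{o_B(m_{i-1})})$, which gives $o_B(m_n)\ge\om_{2n+1}$. The base-case detour through Lemma~\ref{lemmIom_3} is unjustified, since $S_B(b_0)>b_0$ holds for every base and does not make $3b_0$ critical. It is also unnecessary: $o_B(b_0)\ge\om$ already gives $o_B(b_1b_0)\ge\vartheta(\om^\om)=\om_3$.

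The genuine gap is in the upper bound, where your inductive invariant is too coarse to control the coefficient. Let $d_0=\min B$, and let $d_{i+1}$ be the least base above $d_i$ with $S_B(d_{i+1})>d_{i+1}\cdots d_0$. This greedy chain has maximal length, because $d_i\le b_i$ for any chain $b_0<\dots<b_n$. Now suppose $d_k<b<d_{k+1}$ with $k\ge1$. Minimality of $d_{k+1}$ only gives $S_B(b)\le b\,d_k\cdots d_0$, so a critical $m=bv$ may well have $d_k\le v<d_k\cdots d_0$. In that case $\baseB{v}\ge d_k$, so your hypothesis applied to $v$ sees the same chain length $k$ and yields only $o_B(v)<\om_{2k+3}$. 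Then $o_B(m)=\vartheta(\beta_B(m)+\om^{o_B(v)})$ can only be bounded by $\om_{2k+5}$, and the induction does not close. Your remark that maximality makes $v$ ``covered'' by a chain of length at most $k$ is exactly where it breaks: you need $v$ to sit one level lower.

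The missing idea is to index the induction by the position of $m$ itself rather than by its base. The paper proves that $d_i\cdots d_0\le m<d_{i+1}\cdots d_0$ implies
\[\om_{2i+1}\le o_B(m)<\vartheta\bigl(\om^{o_B(d_i\cdots d_0-1)}\cdot\om\bigr).\]
Take a critical $m=dv\notin B$ in that interval. There are three cases:
\begin{itemize}
\item $d=d_i$, where $v<d_i$ directly;
\item $d_i<d<d_{i+1}$, where minimality of $d_{i+1}$ gives $S_B(d)\le d\cdot d_i\cdots d_0$;
\item $d=d_{i+1}$, where $m<d_{i+1}\cdots d_0$.
\end{itemize}
All three give $\tilde v\le v\le d_i\cdots d_0-1$, so the coefficient lies in a strictly lower interval.

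The $\beta$-term is handled by Lemma~\ref{lemmO*}: $\beta_B(m)$ is either $2$ or $\zeta+1$ with $\vartheta(\zeta)=o_B(m_*)$. The induction hypothesis on $m_*$, together with strict monotonicity of $\vartheta$ on $\Om$, then gives $\zeta+1<\om^{o_B(d_i\cdots d_0-1)}\cdot\om$.

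This sharper, $\vartheta$-shaped bound is also needed for the conclusion. A pointwise bound $o_B(m)<\om_{2n+3}$ only gives $\mathcal{I}(B)\le\om_{2n+3}$, whereas (1) demands strict inequality. The paper obtains strictness because every $m\ge d_n\cdots d_0$ is bounded by the single fixed ordinal $\vartheta(\om^{o_B(d_n\cdots d_0-1)}\cdot\om)$, which lies below $\om_{2n+3}$.
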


\begin{proof}
    First, we argue that if \ref{lemmIom_n_2} holds, then the following greedy algorithm will give us a chain of bases of maximal length satisfying the conditions of \ref{lemmIom_n_2}:
    \begin{statement}
        Let $d_0 = \min B$, and for $i>0$ define $d_i\in B$ recursively as the least base above $d_{i-1}$ which satisfies $S_B(d_i) > d_i\cdots d_0$, if such a $d_i$ exists.
    \end{statement}
    Indeed, let $b_0<\cdots <b_n$ be a chain given by \ref{lemmIom_n_2}. Then we see by induction on $i$ that $d_i\leq b_i$ since $d_i$ is chosen to be minimal, and hence if $b_{i+1}$ exists then so does $d_{i+1}$. So $d_i$ is defined for $i\leq n$, and by the maximality of $n$, $d_{n+1}$ does not exist.

    Now let $d_0<d_1<\cdots$ be defined by our algorithm.
    We will show the following by induction on $m\geq d_0$:
    
    \begin{statement}
        If $d_i\cdots d_0\leq m < d_{i+1}\cdots d_0$, 
        then \[\om_{2i+1} \leq o_B(m) < \vartheta(\om^{o_B(d_i\cdots d_0 - 1)}\cdot\om) < \om_{2i+2}(\min B).\]
        Here $\om_k(\ell)$ denotes the tower of $k$ many $\omega$, with a power of $\ell$ added to the top $\omega$.
        If $d_{n+1}$ is undefined, then the above inequality holds for all $m\geq d_n\cdots d_0$.
    \end{statement}
    Then if \ref{lemmIom_n_2} holds, we can assume without loss of generality that $b_i=d_i$ for all $i$, and \ref{lemmIom_n_1} follows from the last line of the above claim.

    The base case $m = d_0=\min B$, as well as the cases where $m$ is a base or a non-$B$-critical element, are obvious. Consider the case $m = d_{i+1}\cdots d_0$. Then
    \[o_B(m) = \vartheta(\beta_B(m) + \om^{o_B(d_i\cdots d_0)}).\]
    By Lemma~\ref{lemmO*}, $\beta_B(m)$ is either two or $\zeta + 1$, where $o_B(m_*) = \vartheta(\zeta)$. Possibly applying the induction hypothesis on $m_*$, we get 
    \[\beta_B(m) < \om^{o_B(d_i\cdots d_0 - 1)}\cdot \om \leq \om^{o_B(d_i\cdots d_0)}.\]
    Therefore $o_B(m) = \vartheta(\om^{o_B(d_i\cdots d_0)})$, and the claim follows by the induction hypothesis together with $o_B(d_i\cdots d_0)<o_B(d_{i+1}\cdots d_0 - 1)$.

    Now assume that $m\notin B$ is $B$-critical, $d_i\cdots d_0 < m$, and in case $d_{i+1}$ exists, $m < d_{i+1}\cdots d_0$. The lower bound in the claim then follows by applying the induction hypothesis to $d_i\cdots d_0$. For the upper bound, note that as before, $\beta_B(m) < \om^{o_B(d_i\cdots d_0-1)}\cdot\om$.

    Let $m=dv$ with $d=\baseB m$ and $v<d$. Then $o_B(m) = \vartheta(\beta_B(m) + \om^{o_B(\tilde v)})$, where $\tilde v$ is either $v$ or $v-2$. To prove $o_B(m)<\vartheta(\om^{o_B(d_i\cdots d_0-1)}\cdot \om)$, it suffices that $\tilde v \leq d_i\cdots d_0-1$. 
    Note that $d_i\leq d$, and $d\leq d_{i+1}$ if $d_{i+1}$ exists. We consider several cases.
    \begin{Cases}
        \item $(d=d_i)$.
        Then $\tilde v\leq v < d = d_i \leq d_i\cdots d_0$, hence $\tilde v \leq d_i\cdots d_0 - 1$.

        \item $(d_i<d<d_{i+1})$.
        By the definition of $d_{i+1}$, $m<S_B(d)\leq d\cdot d_i\cdots d_0$, so $v<d_i\cdots d_0$ and
        $\tilde v \leq v \leq d_i\cdots d_0-1$.

        \item $(d=d_{i+1})$.
        From $m < d_{i+1}\cdots d_0$ we get $v<d_i\cdots d_0$, so $\tilde v \leq v\leq d_i\cdots d_0 - 1$.
    \end{Cases}

    Now for the converse implication, assume that \ref{lemmIom_n_2} does not hold.
    \begin{Cases}
        \item ($S_B(b) > b^2$ for some $b\in B$).
        Then we have a $B$-critical element $b^2\notin B$, and by an easy case distinction one checks that
        $o_B(b^2) \geq \vartheta(\Om) > \om_{2n+3}$.
        In the cases which follow, we assume that $S_B(b)\leq b^2$ for all $b\in B$. We refer to the algorithm which gives the bases $d_i$ at the beginning of the proof as the greedy algorithm.
        
        \item (The greedy algorithm gives a finite chain $d_0<\dots<d_m$ with $0<m\neq n$). 
        Then by our claim, \ref{lemmIom_n_1} holds for $m$ instead of $n$.

        \item (The greedy algorithm gives an infinite chain $d_0<d_1<\cdots$).
        Then by our claim, $\mathcal{I}(B) = \ve_0$, in particular \ref{lemmIom_n_1} does not hold.

        \item (The greedy algorithm gives $d_0$).
        Since $d_1$ does not exist, for all $b\in B$ we have $S_B(b)\leq b\cdot b_0 = b\cdot\min B$. By the previous three lemmas, $\mathcal{I}(B)<\om_3\leq \om_{2n+1}$.
    \end{Cases}
\end{proof}

\begin{theorem}\label{TheoProvRCAoPin}
    Let $n>0$ and $k\geq 2$ be given. $\mathsf {RCA_0} + (\Sigma^0_{2n+2})-\mathrm{IND}$ proves the following. Let $\mathcal B = (B_i)_{i\in\N}$ be any dynamical hierarchy. Suppose that $B_0$ is such that 
    \begin{itemize}
    \item
    $S_{B_0}(b) \leq b^2$ for every $b\in B_0$.
    \item 
    There is no chain of bases $b_0<\cdots < b_{n+1}$ satisfying $S_{B_0}(b_i) > b_i\cdots b_0$ for $0\leq i \leq n+1$.
    \item
    $\min B_0\leq k$.
    \end{itemize}
    Then for every $m$ there is some $i$ such that $\goodp mi {\mathcal B} = 0$. $\mathsf {RCA_0} + (\Sigma^0_{2n+1})-\mathrm{IND}$ does not prove this statement quantified over $k$.
\end{theorem}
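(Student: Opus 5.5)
The proof has two halves: provability in $\mathsf{RCA_0}+(\Sigma^0_{2n+2})$-$\mathrm{IND}$, and unprovability of the version quantified over $k$. Both follow the pattern of Theorem~\ref{TheoProvRCAo} and Theorem~\ref{TheoIndRCAo}, with Lemma~\ref{lemmIom_n} playing the role that Lemmas~\ref{lemmIom^2} and \ref{lemmIom_2} played there.

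\emph{Provability.} First I reuse the formalization from Theorem~\ref{TheoProvRCAo} verbatim. The upgrade is given a $\Sigma^0_1$ definition via $\mathrm{Hist}$, and the ordinal assignment via $\mathrm{Trace}$. The results Lemma~\ref{standardPropOo}, Propositions~\ref{propMonO} and \ref{propPresO}, and Lemma~\ref{lemmO*} are then all proved by induction on $\Delta^0_0$ or $\Sigma^0_1$ formulas, so they hold already in $\mathsf{RCA_0}$.

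Next I formalize the upper-bound half of the proof of Lemma~\ref{lemmIom_n}. Take the greedy chain $d_0<d_1<\cdots$. By hypothesis it has length at most $n+1$, so it stops at some $d_j$ with $j\le n$. The claim in that proof then gives
\[
o_{B_0}(m)<\vartheta(\om^{o_{B_0}(d_j\cdots d_0-1)}\cdot\om)<\om_{2j+2}(\min B_0)\le \om_{2n+2}(k)
\]
for every $m$. The $\Delta^0_0$-definable greedy chain is bounded, and that claim is again an induction on $m$ over a $\Delta^0_0$ formula built from $\mathrm{Trace}$, so it goes through in $\mathsf{RCA_0}$. Here $k$ is a fixed standard number, which makes $\om_{2n+2}(k)$ a fixed ordinal term below $\om_{2n+3}$.

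Now define, as before, $f(i)=o_i(\goodp mi{\mathcal B})$. This map is $\Sigma^0_1$, and $\mathsf{RCA_0}$ proves that it strictly decreases while $\goodp mi{\mathcal B}\neq 0$. Set $\varphi(\al)\equiv\exists i\,(f(i)\preccurlyeq\al)$. It remains to find a $\preccurlyeq$-minimal $\al$ with $\varphi(\al)$, using transfinite induction for the $\Pi^0_1$ formula $\neg\varphi$ up to $\om_{2n+2}(k)+1$.

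The key external input is the classical result on transfinite induction in fragments of arithmetic \cite{sommer1995transfinite}. It says that $\Sigma^0_{r+1}$-$\mathrm{IND}$ proves transfinite induction for $\Pi^0_1$ (indeed $\Sigma^0_1$) formulas below $\om_{r+2}$. More precisely, for $\Pi^0_1$ formulas, $\Sigma^0_{r}$-induction yields transfinite induction up to each $\ga\prec\om_{r+1}$. For $\Pi^0_1$ formulas, $\Sigma^0_{2n+2}$-$\mathrm{IND}$ should therefore cover every $\ga\prec\om_{2n+3}$, and in particular $\om_{2n+2}(k)+1$. The exact indexing is the main obstacle. I would first check it against the base case: Theorem~\ref{TheoProvRCAo} used $\Sigma^0_1$ for bounds below $\om^\om=\om_2$. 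I would then adjust the cited theorem so that $\Sigma^0_{2n+2}$ reaches $\om_{2n+3}$.

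\emph{Unprovability.} For each $k$, choose a base hierarchy $B_{0,k}$ satisfying the three hypotheses and having a greedy chain of length exactly $n+1$ with $\min B_{0,k}=k$. Lemma~\ref{lemmIom_n} and its claim give $o_{B_{0,k}}$ values cofinal in $\om_{2n+2}(k)$. This lets me pick $m_k$ with $o_{B_{0,k}}(m_k)\ge\fsc{\om_{2n+2}}{k}$, since that fundamental-sequence term lies below $\om_{2n+2}(k)$.

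Then set $B_{i+1,k}=(B_{i,k})_{+(i+1)}$ and let $G(k)$ be the termination time of the sequence starting at $m_k$. Repeating the proof of Theorem~\ref{theoGoodInd}, with Remark~\ref{remInd} to handle the starting index and the requirement $0<i<\min B-1$ of Proposition~\ref{propMajorizeFS}, Proposition~\ref{propFunMajor} gives $G(k)\ge F_{\om_{2n+2}}(k)$. The ordinal of $\mathsf{RCA_0}+(\Sigma^0_{2n+1})$-$\mathrm{IND}$ is $\om_{2n+2}$, so by Theorem~\ref{theoKPInc} that system cannot prove $G$ total. It therefore cannot prove the uniform statement.
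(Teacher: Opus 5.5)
Your provability half follows the paper's route. You reuse the $\mathrm{Hist}$/$\mathrm{Trace}$ formalization from Theorem~\ref{TheoProvRCAo}, take the bound $o_{B_0}(m)<\om_{2n+2}(k)$ from the claim in the proof of Lemma~\ref{lemmIom_n}, and finish with $\Pi^0_1$ transfinite induction. Your indexing is right: $\Sigma^0_{2n+2}$-induction gives $\Pi^0_1$-TI up to every $\ga\prec\om_{2n+3}$.

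The gap is in the unprovability half, at the sentence ``Lemma~\ref{lemmIom_n} and its claim give $o_{B_{0,k}}$ values cofinal in $\om_{2n+2}(k)$.'' They give nothing of the sort. The lemma only places $\mathcal I(B)$ in the interval $(\om_{2n+1},\om_{2n+3})$. The claim only gives an upper bound $\om_{2i+2}(\min B)$ and a lower bound $\om_{2i+1}$, and that lower bound does not depend on $k$. What you need is $o_{B_{0,k}}(m_k)\ge\fsc{\om_{2n+2}}{k}=\om_{2n+1}(k)$, and this fails for an arbitrary hierarchy with greedy chain $d_0<\dots<d_n$ and $\min B_{0,k}=k$. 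For $n=1$, take $B_{0,k}=\{k,2k\}\cup\{2k(k+1)2^j : j\in\N\}$. It satisfies all three hypotheses, with greedy chain $k<2k$. But $o(2k\cdot k)=\vartheta(2+\om^{\om})=\om^{\om^\om}$, the later bases only double this, and so $\mathcal I(B_{0,k})=\om^{\om^\om+1}$ for every $k$. That is far below $\om_3(k)$, so the majorization argument yields nothing for such a family. Nor can the claim's upper bound stand in for the supremum. Already in the bottom segment every critical non-base $bv$ has $v<\min B$, hence $\tilde v=v-2\le k-3$, so the bound $\om_{2n+2}(\min B)$ is not sharp.

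What is missing is the analogue of the explicit growth computation the paper relies on after Theorem~\ref{TheoIndRCAo} (``$\zeta$ will keep growing by one'' in Lemma~\ref{lemmIom_2}). You must choose $B_{0,k}$ deliberately. For example, put many bases $b$ in each segment $[d_i,d_{i+1})$, and after $d_n$, with $S_B(b)=b\cdot d_i\cdots d_0$; this is the largest ratio that does not extend the greedy chain. Then prove by induction along the $n_*$'s, via Lemma~\ref{lemmO*}, that the $\zeta$-values climb, so that some $m_k$ has $o_{B_{0,k}}(m_k)\ge\om_{2n+1}(k)$ for all large $k$. Failing that, aim for $o_{B_{0,k}}(m_k)\ge\fsc{\om_{2n+2}}{g(k)}$ for some unbounded elementary $g$, and reparametrize $G$. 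The construction also has to be verifiable in the weak theory, so that the uniform statement really implies totality of $G$. Once this lower bound is in place, the rest of your argument goes through as in the paper: Remark~\ref{remInd}, Proposition~\ref{propFunMajor}, and Theorem~\ref{theoKPInc} with ordinal $\om_{2n+2}$.
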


\begin{theorem}
    Let $n>0$ and let $B_0$ be any base hierarchy which does not satisfy the conditions stated in Theorem~\ref{TheoProvRCAoPin}. Define $B_{i+1} = (B_i)_{+(i+1)}$ for every $i$. Then $\mathsf{RCA_0} + (\Sigma^0_{2n+2})-\mathrm{IND}$ does not prove that for all $m$, there is some $i$ such that $\goodp mi {\mathcal B} = 0$.
\end{theorem}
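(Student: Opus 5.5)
The statement is the independence half: if $B_0$ violates the hypotheses of Theorem~\ref{TheoProvRCAoPin} and $B_{i+1}=(B_i)_{+(i+1)}$, then $\mathsf{RCA_0}+(\Sigma^0_{2n+2})-\mathrm{IND}$ does not prove termination. I will argue exactly as in Theorem~\ref{TheoIndRCAo}. Put $\al=\om_{2n+3}$. By Theorem~\ref{theoKPInc}, this is the proof-theoretic ordinal of $\mathsf{RCA_0}+(\Sigma^0_{2n+2})-\mathrm{IND}$. It then suffices to produce starting values $n_k$ whose termination times $G(k)$ are not dominated by $F_\al$.

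The first step is to show $\mathcal I(B_0)\geq \om_{2n+3}$ whenever $B_0$ fails one of the conditions. I will argue by cases on which condition fails.
\begin{enumerate}
\item If $S_{B_0}(b)>b^2$ for some $b$, then by the first case in the converse direction of Lemma~\ref{lemmIom_n}, $o_{B_0}(b^2)\geq\vartheta(\Om)=\ve_0>\om_{2n+3}$.
\item If there is a chain $b_0<\dots<b_{n+1}$ with $S_{B_0}(b_i)>b_i\cdots b_0$, then the greedy algorithm from the proof of Lemma~\ref{lemmIom_n} produces $d_0<\dots<d_{n+1}$. The claim in that proof, applied with $i=n+1$ to $m=d_{n+1}\cdots d_0$, gives $o_{B_0}(m)\geq\om_{2(n+1)+1}=\om_{2n+3}$.
\item If only $\min B_0\leq k$ fails, the statement is not uniform in $k$, so this case does not yield independence for a fixed $k$. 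This is why the statement is really about the first two conditions, or reads $k$ as implicitly chosen.
\end{enumerate}
In either of the first two cases, $\mathcal I(B_0)\geq\al$. For each $k$ I can then pick $n_k$ with $o_0(n_k)\geq\fsc\al k$; this uses that $\fsc\al k<\al\leq\mathcal I(B_0)$ and that the values $o_0(m)$ are cofinal in $\mathcal I(B_0)$.

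Next I define $G(k)$ as the termination time of the sequence for $\mathcal B$ starting at $n_k$, and show $G(k)\geq F_\al(k)$. There is an index mismatch to handle. Proposition~\ref{propMajorizeFS} needs $C=B_{+(i+1)}$ with $0<i<\min B-1$, and here step $i$ uses $(B_i)_{+(i+1)}$, one less than in the canonical hierarchy. I will use the shift of Remark~\ref{remInd}. Each successor raises the minimum base by one, so $\min B_i=\min B_0+i$, and the constraint $i<\min B_i-1$ holds once $\min B_0\geq 2$. After discarding the first step, step $i$ is treated as the $(i-1)$-th fundamental-sequence step. Monotonicity (Proposition~\ref{propMonO}) and preservation (Proposition~\ref{propPresO}) absorb the one-step lag: $o_1(n_1)\geq o_0(n_0-1)$, or equality in the $\min B\nmid n_0$ case.

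With this, the sequence $\xi_i=o_i(\goodp {n_k}i{\mathcal B})$ satisfies $\fsc{\xi_i}{i+1}\leq\xi_{i+1}\leq\xi_i$, using Propositions~\ref{propPresO}, \ref{propMonO} and \ref{propMajorizeFS}. Proposition~\ref{propFunMajor} then gives $\xi_i\geq\fsi{\fsc\al k}i$ up to the shift, so the sequence cannot reach $0$ before $F_\al(k)$ steps. Hence $G$ is not dominated by $F_\al$, and Theorem~\ref{theoKPInc} yields unprovability.

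I expect the main obstacle to be the bookkeeping of this index shift together with the choice of $n_k$. The bound $\xi_0\geq\fsc\al k$ gives domination of $\fsi{\fsc\al k}i$ only if I also use monotonicity of the iterated fundamental sequences in the starting ordinal. I would handle this by choosing $n_k$ with $o_0(n_k)$ exactly equal to $\fsc\al k$ where possible, or else by invoking the standard Bachmann-property monotonicity of Buchholz's system from \cite{BuchholzOrd}.
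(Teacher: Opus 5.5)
You follow the route the paper intends. The paper gives no separate proof; this theorem is meant to follow by repeating Theorem~\ref{TheoIndRCAo} with $\al=\om_{2n+3}$. Your case analysis giving $\mathcal I(B_0)\ge\al$ and your handling of the index shift from Remark~\ref{remInd} are fine. However, the point you flag at the end is a genuine gap, which the paper's one-line sketch also glosses over, and neither of your remedies closes it.

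\textbf{Why the selection rule fails.} Choosing $n_k$ with $o_0(n_k)\ge\fsc{\al}{k}$ carries no information by itself. Since $\mathcal I(B_0)>\al$, a single fixed $N$ with $o_0(N-1)\ge\al$ meets the requirement for every $k$, and then $G$ is constant. In the chain case, for instance, $N=d_{n+1}\cdots d_0+1$ works, because $o_0(d_{n+1}\cdots d_0)=\vartheta(\om^{o_0(d_n\cdots d_0)})=\om_{2n+3}$ exactly.

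\textbf{Why your two remedies fail.}
\begin{itemize}
\item There is no monotonicity of $\xi\mapsto\fsi{\xi}{j}$ to invoke: $\fsi{\om}{1}=\fsc{\om}{1}=1<4=\fsi{5}{1}$.
\item Exact equality $o_0(n_k)=\fsc{\al}{k}$ is also impossible. By the claim in the proof of Lemma~\ref{lemmIom_n}, every $m<d_{n+1}\cdots d_0$ has $o_0(m)<\om_{2n+2}(\min B_0)$, while $\fsc{\al}{k}=\om_{2n+2}(k)$. So for $k>\min B_0$ the range of $o_0$ misses the whole interval $[\fsc{\al}{k},\al)$.
\end{itemize}

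\textbf{What the Bachmann property actually gives.} It is only local: if $\fsc{\ga}{k}<\xi<\ga$, then $\fsc{\ga}{k}\le\fsc{\xi}{1}$. Consider a sequence obeying the hypotheses of Proposition~\ref{propFunMajor} that eventually reaches $0$ and at some time lies in $[\fsc{\ga}{k},\ga)$. It must then take the value $\fsc{\ga}{k}$ exactly, namely at the last time it is $\ge\fsc{\ga}{k}$. Since fundamental sequences are non-decreasing in the index, Proposition~\ref{propFunMajor} applied to the tail from that point yields at least $F_\ga(k)$ further steps.

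\textbf{What a correct proof needs.} You must choose $n_k$ so that $o_1(n_1)$ lands in such a window. Two options:
\begin{itemize}
\item Take $\ga=\mathcal I(B_0)$, in which the range of $o_0$ is cofinal with no maximum. Pick $n_k$ with $\fsc{\ga}{k}\le o_0(n_k-1)<\ga$. Then $\fsc{\ga}{k}\le o_1(n_1)<o_0(n_k)<\ga$, which gives $G(k)\ge F_\ga(k)$. You then still need a separate comparison lemma showing that $F_\ga$ is not dominated by $F_\al$ when $\ga\ge\al$; this is not in the paper.
\item Alternatively, engineer entry into the window of $\al$ itself at a later stage of the process.
\end{itemize}
Without one of these, the final inequality $G(k)\ge F_\al(k)$ is not established.
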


\begin{lemma}\label{lemmIeps0}
    The following are equivalent.
    \begin{enumerate}
        \item $\mathcal{I}(B) < \ve_0$.\label{lemmIeps0_1}
        \item $S_B(b)\leq \min(b^2, bc)$ for every $b\in B$, where $c\geq 2$ is independent of $b$.\label{lemmIeps0_2}
    \end{enumerate}
\end{lemma}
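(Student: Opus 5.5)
The plan is to reduce the statement to Lemma~\ref{lemmIom_n} together with the three earlier lemmas (Lemmas~\ref{lemmIom^2}, \ref{lemmIom_2} and \ref{lemmIom_3}). In effect, $\mathcal I(B)<\ve_0$ is to be characterised as the union of all the finite cases treated so far. Throughout I use the greedy chain $d_0=\min B<d_1<\cdots$ from the proof of Lemma~\ref{lemmIom_n}, where $d_{i}$ is the least base above $d_{i-1}$ with $S_B(d_i)>d_i\cdots d_0$.

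For \ref{lemmIeps0_2}$\Rightarrow$\ref{lemmIeps0_1}, assume $S_B(b)\leq\min(b^2,bc)$ for all $b\in B$. First I show the greedy chain is finite. Since the products $d_i\cdots d_0$ grow without bound, pick $i$ with $d_i\cdots d_0\geq c$. Any base $b>d_i$ then satisfies $S_B(b)\leq bc\leq b\cdot d_i\cdots d_0$, so $d_{i+1}$ cannot exist. Let $m$ be the length of the greedy chain.
\begin{itemize}
\item If $m\geq 1$, then since also $S_B(b)\leq b^2$, the claim inside the proof of Lemma~\ref{lemmIom_n} (the case where $d_{m+1}$ is undefined) gives $\mathcal I(B)\leq\om_{2m+2}(\min B)<\ve_0$.
\item If $m=0$, i.e.\ only $d_0$ exists, then $S_B(b)\leq b\cdot\min B$ for all $b$, and Lemmas~\ref{lemmIom^2}, \ref{lemmIom_2} and \ref{lemmIom_3} give $\mathcal I(B)<\om^{\om^\om}$.
\end{itemize}
One caveat: Lemma~\ref{lemmIom_n} is phrased with the chain having ``maximal length $n$''. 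I will instead cite the internal claim directly, which is stated for whatever chain the greedy algorithm produces.

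For \ref{lemmIeps0_1}$\Rightarrow$\ref{lemmIeps0_2}, I argue by contraposition and suppose no such $c$ exists. There are two possibilities.
\begin{itemize}
\item If $S_B(b)>b^2$ for some $b\in B$, then $b^2\notin B$ is $B$-critical. The case analysis from Case~1 of the converse of Lemma~\ref{lemmIom_n} gives $o_B(b^2)\geq\vartheta(\Om)=\ve_0$. Concretely, $\alpha_B(b^2)\geq\Om$, so $\zeta_B(b^2)\geq\Om$, and Lemma~\ref{propThetaMonEps}\ref{propThetaEps} combined with Lemma~\ref{propThetaMonEps}\ref{propThetaMon} bounds it below by $\vartheta(\Om)$. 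Hence $\mathcal I(B)\geq\ve_0$.
\item Otherwise $S_B(b)\leq b^2$ for every $b$, but for every $c$ some base has $S_B(b)>bc$. I claim the greedy chain is then infinite. If it stopped at $d_m$, every base $b>d_m$ would satisfy $S_B(b)\leq b\cdot d_m\cdots d_0$. The finitely many bases $b\leq d_m$ each satisfy $S_B(b)\leq b^2\leq b\cdot d_m$. Taking $c=d_m\cdots d_0$ would then work, a contradiction. With an infinite chain, the claim in Lemma~\ref{lemmIom_n} gives $o_B(d_i\cdots d_0)\geq\om_{2i+1}$ for all $i$, so $\mathcal I(B)\geq\ve_0$. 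This is exactly Case~3 of that proof.
\end{itemize}

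The main obstacle is the bookkeeping at the boundary cases. One is checking that the claim in Lemma~\ref{lemmIom_n} really applies when the greedy chain has length $0$ or $1$, and handling those cases via the earlier lemmas. The other is making the lower bound $o_B(b^2)\geq\ve_0$ precise in each subcase ($b=\min B=2$, $b=\min B>2$, $b>\min B$), in the same way as Case~1 of Lemma~\ref{lemmIom_2}.
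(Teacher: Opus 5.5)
Your proposal is correct and follows essentially the same route as the paper. For \ref{lemmIeps0_2}$\Rightarrow$\ref{lemmIeps0_1} you bound the length of the chain and then invoke the claim in the proof of Lemma~\ref{lemmIom_n} or the earlier lemmas; the paper bounds every admissible chain by $\log_2(c)+1$, while you show directly that the greedy chain stops once $d_i\cdots d_0\geq c$. For the converse you use the same two cases as the paper: $S_B(b)>b^2$ gives $o_B(b^2)\geq\vartheta(\Om)=\ve_0$, and otherwise an infinite greedy chain is fed into that claim. Your explicit proof that the greedy chain is infinite in the second case fills in what the paper calls ``easily construct''. In the first case, Lemma~\ref{propThetaMonEps}\ref{propThetaEps} alone already suffices, since $\vartheta(\zeta_B(b^2))$ is then an epsilon number and hence at least $\ve_0$.
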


\begin{proof}
    If \ref{lemmIeps0_2} holds, then every chain of bases $b_0<b_1<\cdots$ satisfying $S_B(b_i)>b_i\cdots b_0$ is necessarily of length less than some finite number $n_c$ (one can take $n_c = \log_2(c)+1$). By the previous lemmas, \ref{lemmIeps0_1} holds.

    On the other hand if \ref{lemmIeps0_2} does not hold, then we have two cases.
    \begin{Cases}
    \item ($S_B(b)>b^2$ for some $b\in B$).
    In this case $o_B(b^2)\geq \vartheta(\Om)= \ve_0$.
    \item ($S_B(b)\leq b^2$ for every $b\in B$, but for every constant $c$ there is a $b$ such that $S_B(b)>bc$).
    Then we easily construct an infinite chain $b_0<b_1<\cdots$ satisfying $S_B(b_i)>b_i\cdots b_0$ for all $i$, and from the claim in the proof of the previous lemma we get $\mathcal{I}(B)=\ve_0$. 
     \end{Cases}
\end{proof}

\begin{theorem}\label{TheoProvACAo}
    Let $c\geq 2$ be given. Then $\mathsf {ACA_0}$ proves the following. Let $\mathcal B = (B_i)_{i\in\N}$ be any dynamical hierarchy. Suppose that $B_0$ is such that $S_{B_0}(b) \leq \min(b^2,bc)$ for every $b\in B_0$. 
    Then for every $n$ there is some $i$ such that $\goodp ni {\mathcal B} = 0$.
    $\mathsf {ACA_0}$ does not prove this statement quantified over $c$.
\end{theorem}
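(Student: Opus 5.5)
The theorem has two halves: provability in $\mathsf{ACA_0}$ for a fixed $c$, and unprovability of the version quantified over $c$. Both follow the template of Theorems~\ref{TheoProvRCAo} and \ref{TheoIndRCAo}, with the ordinal bound now supplied by Lemma~\ref{lemmIom_n} and Lemma~\ref{lemmIeps0}.

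\textbf{Provability for fixed $c$.} I reuse verbatim the formalization from the proof of Theorem~\ref{TheoProvRCAo}: the $\Delta^0_0$ formulas $\mathrm{Hist}$ and $\mathrm{Trace}$, the term notation system of \cite{FernWierCiE2020}, and the verification that Propositions~\ref{propMonO} and \ref{propPresO} only need $\Sigma^0_1$-induction. All of this already holds in $\mathsf{RCA_0}\subseteq\mathsf{ACA_0}$. The new ingredient is an explicit bound on the ordinals.
\begin{enumerate}
\item Set $n_c=\log_2(c)+1$, as in the proof of Lemma~\ref{lemmIeps0}. The hypothesis $S_{B_0}(b)\le\min(b^2,bc)$ implies that no chain $b_0<\dots<b_{n_c}$ satisfies $S_{B_0}(b_i)>b_i\cdots b_0$. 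This is immediate: $b_i\cdots b_0\ge 2^{i}b_i$ must be less than $bc$.
\item The claim inside the proof of Lemma~\ref{lemmIom_n} then gives $o_{B_0}(m)<\om_{2n_c+2}(\min B_0)$ for every $m$. That claim is proved by an induction on $m$ of a statement of the form ``$\mathrm{Trace}(B_0,w,m)\to$ the value of $o_{B_0}(m)$ in $w$ is $\prec$ a fixed term''. This is $\Delta^0_0$ in the data, so it is available in $\mathsf{RCA_0}$.
\item The bound depends on $\min B_0$, which is not fixed. I avoid this by using the cruder uniform bound $o_{B_0}(m)\prec\om_{2n_c+3}$, which follows since $\min B_0<\om$.
\item Finally I run the descent argument of Theorem~\ref{TheoProvRCAo}. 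Define $f(i)=o_i(\goodp ni{\mathcal B})$, which is $\Sigma^0_1$-definable (in fact $\Delta^0_1$ relative to $\mathcal B$). Then $f$ is strictly decreasing while the sequence is nonzero, and $f(0)\prec\om_{2n_c+3}$. Since $\mathsf{ACA_0}$ proves transfinite induction along every $\om_k$ for arithmetical formulas, a $\preccurlyeq$-minimal value of $f$ exists, and at that index $\goodp ni{\mathcal B}=0$.
\end{enumerate}
The point to be careful with is that the parameter $(B_i)_i$ is a set variable, while $\psi$ must be arithmetical. It is, since it is $\Pi^0_1$ in $(B_i)_i$.

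\textbf{Unprovability when quantified over $c$.} For each $k$, I choose a base hierarchy $B_{0,k}$ satisfying $S(b)\le\min(b^2,bc_k)$ for a suitable $c_k$, such that the greedy chain of Lemma~\ref{lemmIom_n} has length at least $k$. Then $\mathcal I(B_{0,k})>\om_{2k+1}$, so I can pick $n_k$ with $o_{B_{0,k}}(n_k)\ge\fsc{\ve_0}k$. Note that $\fsc{\ve_0}k=\vartheta(\Om)\{k\}$ is a tower of height about $k$. I set $B_{i+1,k}=(B_{i,k})_{+(i+1)}$ and let $G(k)$ be the termination time starting at $n_k$.

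Using Proposition~\ref{propMajorizeFS}, Proposition~\ref{propFunMajor} and the shift trick of Remark~\ref{remInd}, I get $G(k)\ge F_{\ve_0}(k)$. The hypothesis $0<i<\min B-1$ of Proposition~\ref{propMajorizeFS} must hold at each step. To arrange this I take $\min B_{0,k}$ large, or shift the index as in Remark~\ref{remInd}, so that $\min B_{i,k}=\min B_{0,k}+i$ stays ahead of $i+1$. If $\mathsf{ACA_0}$ proved the uniform statement, it would prove $G$ total. This contradicts Theorem~\ref{theoKPInc} with $\al=\vartheta(\Om)=\ve_0$.

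\textbf{Main obstacle.} I expect the hardest step to be the effective construction of $B_{0,k}$ together with a matching $n_k$, uniformly and computably in $k$. The plan is to define $B_{0,k}$ so that its first bases realize the greedy chain: $d_0$ large, $d_{i+1}=S(d_i)$ just above $d_i\cdots d_0$, and afterwards $S(b)=2b$. This keeps $S(b)\le b^2$, and hence satisfies the hypothesis with $c=c_k=d_k\cdots d_0$. I then take $n_k=d_k\cdots d_0$. The claim in Lemma~\ref{lemmIom_n} gives $o(n_k)\ge\om_{2k+1}\ge\fsc{\ve_0}k$, and this last comparison still has to be checked against Buchholz's fundamental sequence for $\vartheta(\Om)$.
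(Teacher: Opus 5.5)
Your proposal is correct and is essentially the argument the paper intends. The paper gives no separate proof beyond repeating Theorems~\ref{TheoProvRCAo} and \ref{TheoIndRCAo} with the bound of Lemma~\ref{lemmIeps0} and the construction in the paragraph after Theorem~\ref{TheoIndRCAo}, and your replacement of $\om_{2n_c+2}(\min B_0)$ by $\om_{2n_c+3}$ is exactly why no bound on $\min B_0$ is needed here, unlike in Theorems~\ref{TheoProvRCAoPi2} and \ref{TheoProvRCAoPin}.

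Two small refinements: in step 2 the induction must carry the claim's $m$-dependent bound $\vartheta(\om^{o_{B_0}(d_i\cdots d_0-1)}\cdot\om)$ (still decidable in $B_0$ by bounded search) rather than a fixed term, and since $\xi\mapsto\fsi{\xi}{i}$ is not monotone (e.g.\ $\fsc{(\om\cdot 2)}{1}=\om+1>\om=\fsc{\om^2}{1}$), the inequality $o_{B_{0,k}}(n_k)\ge\fsc{\ve_0}{k}$ does not by itself feed into Proposition~\ref{propFunMajor} (the paper is equally terse on this), but your $n_k=d_k\cdots d_0$ actually satisfies $o_{B_{0,k}}(n_k)=\om_{2k+1}=\fsc{\ve_0}{2k+2}$ exactly, by the computation $o_B(d_{i+1}\cdots d_0)=\vartheta(\om^{o_B(d_i\cdots d_0)})$ in the proof of Lemma~\ref{lemmIom_n}, which yields $G(k)\ge F_{\ve_0}(2k+2)$ and suffices after reindexing.
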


\begin{theorem}\label{TheoIndACAo}
    Let $B_0$ be any base hierarchy which does not satisfy the conditions stated in Theorem~\ref{TheoProvACAo}. Define $B_{i+1} = (B_i)_{+(i+1)}$ for every $i$. Then $\sf ACA_0$ does not prove that for all $n$, there is some $i$ such that $\goodp ni {\mathcal B} = 0$.
\end{theorem}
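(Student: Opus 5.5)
The statement is Theorem~\ref{TheoIndACAo}. I will follow the template of Theorem~\ref{TheoIndRCAo}, which in turn repeats the argument of Theorem~\ref{theoGoodInd}.

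\emph{Step 1 (ordinal lower bound).} Suppose $B_0$ fails the hypothesis of Theorem~\ref{TheoProvACAo}, i.e.\ there is no constant $c\geq 2$ with $S_{B_0}(b)\leq\min(b^2,bc)$ for all $b\in B_0$. By Lemma~\ref{lemmIeps0}, $\mathcal I(B_0)\geq\ve_0$. Put $\al=\ve_0=\vartheta(\Om)$. Since $\mathcal I(B_0)$ is a supremum of values $o_{B_0}(n)$ and $\fsc\al k<\al$, for every $k$ we can choose $n_k$ with $o_{B_0}(n_k)\geq\fsc\al k$. Define $G(k)$ to be the termination time of the Goodstein sequence for $\mathcal B=(B_i)_i$, where $B_{i+1}=(B_i)_{+(i+1)}$, started at $n_k$. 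By Theorem~\ref{theoKPInc} applied with $T=\mathsf{ACA}_0$, it suffices to show $G(k)\geq F_\al(k)$ for all large $k$. Then $G$ is not dominated by $F_\al$, so $\mathsf{ACA}_0$ cannot prove that $G$ is total, and hence cannot prove termination.

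\emph{Step 2 (shift the indices).} Proposition~\ref{propMajorizeFS} needs the successor $(B_i)_{+(i'+1)}$ together with $0<i'<\min B_i-1$, and it yields $o_{i+1}(\ug_i n-1)\geq\fsc{o_i(n)}{i'}$. Here $i'=i$, but Proposition~\ref{propFunMajor} requires index $i+1$ at step $i$. Following Remark~\ref{remInd}, I will start the comparison one step later. Note that $\min B_{i+1}=\min B_i+1$ by Lemma~\ref{lemmGoodSuccB(+i)}, so $\min B_i=\min B_0+i$. The condition $i<\min B_i-1$ therefore becomes $0<\min B_0-1$, which holds. If $\min B_0=2$, the index $i=0$ is a boundary case; I will handle it by discarding the first step or by using the Remark's trick of choosing $n_k$ with $\min B_0\nmid n_k$.

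\emph{Step 3 (majorization).} Set $\xi_j=o_{j+1}(n'_{j+1})$ along the sequence. By Proposition~\ref{propPresO}, Proposition~\ref{propMonO} and Proposition~\ref{propMajorizeFS},
\[
\xi_j=o_{j+2}(\ug_{j+1}n'_{j+1})>o_{j+2}(\ug_{j+1}n'_{j+1}-1)\geq\fsc{\xi_j}{j+1},
\]
and $\xi_j$ stays at least $\fsc{\al}k$ up to harmless index bookkeeping. Proposition~\ref{propFunMajor} then gives $\xi_j\geq\fsi{\xi_0}j$. To compare with $\fsi{\fsc\al k}{j}$, I will use monotonicity of the iterated fundamental-sequence map in its starting ordinal; this is the standard Bachmann-property fact from \cite{BuchholzOrd,FWTheta}. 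Alternatively, I will choose $n_k$ so that $o_{B_0}(n_k)=\fsc\al k$ exactly. This is possible because $\fsc{\ve_0}k=\om_{k+1}$ is hit by the greedy chain of Lemma~\ref{lemmIom_n}: the ordinals $o_B(d_{k}\cdots d_0)$ are $\vartheta$-towers, and I will select the element whose ordinal is exactly $\om_{k+1}$, or else apply Proposition~\ref{propFunMajor} with $\xi_0=\fsc\al k$ after one inequality step. Either way $G(k)\gtrsim F_\al(k)$, and the conclusion follows.

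\emph{Main obstacle.} Everything except Step 1 is a verbatim rerun of earlier arguments. The delicate point is the case split in Lemma~\ref{lemmIeps0}. When $S_{B_0}(b)>b^2$, we get $o(b^2)\geq\vartheta(\Om)$ directly. When $S_{B_0}(b)\leq b^2$ for all $b$, the greedy chain is infinite, and I must check that the ordinals $o_{B_0}(n)$ reach every $\fsc{\ve_0}k$ rather than merely being unbounded below $\ve_0$. The claim in the proof of Lemma~\ref{lemmIom_n}, giving $o_B(d_i\cdots d_0)\geq\om_{2i+1}$, supplies exactly this.
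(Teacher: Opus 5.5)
Your skeleton is the paper's. Lemma~\ref{lemmIeps0} gives $\mathcal I(B_0)\geq\ve_0$, and the argument of Theorem~\ref{theoGoodInd} is then rerun with the one-step shift of Remark~\ref{remInd}. After the shift, Proposition~\ref{propMajorizeFS} is applied at original step $j\geq 1$ with index $j$, and $0<j<\min B_j-1=\min B_0+j-1$ always holds. (The exclusion of $i=0$ has nothing to do with $\min B_0=2$; it is exactly what the shift removes.)

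\textbf{The gap is in Step~3}, at the passage from $\xi_0\geq\fsc{\al}{k}$ to $\xi_j\geq\fsi{\fsc\al k}{j}$. Monotonicity of $\beta\mapsto\fsi{\beta}{j}$ is false for Buchholz's sequences. For example, $\om\cdot 2<\om^2$, yet $\fsi{\om\cdot2}{1}=\om+1>\om=\fsi{\om^2}{1}$. The iteration from $\om^2$ reaches $0$ after four steps ($\om^2,\om,2,1,0$), while from $\om\cdot 2$ it needs six. The Bachmann property gives only that $\fsc\al k<\beta<\al$ implies $\fsc\al k\leq\fsc\beta 1$. From this, $\fsc\al k$ is reachable from $\beta$ by fundamental-sequence steps, and it is this reachability, not the plain order, that propagates along the iteration. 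So a start $\geq\fsc\al k$ is harmless only when $o_{B_0}(n_k)$ lies strictly below the limit being approximated.

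Your fallback of hitting $\fsc{\ve_0}{k}$ exactly does not work either. Moreover, the case you call direct is precisely where the argument breaks.
\begin{itemize}
\item $\fsc{\ve_0}{k}$ is a tower of height $k-1$, since $\fsc{\ve_0}{1}=1$ and $\fsc{\ve_0}{2}=\om$. The greedy products, however, only produce the odd towers $\om_{2i+1}$.
\item More seriously, take $B_0=\{3,27\}$, which violates $S_{B_0}(3)\leq 3^2$. Here $o_{B_0}(n)\leq\om^2+2$ for $n<9$ and $o_{B_0}(9)=\vartheta(\Om)=\ve_0$. So for $k\geq 3$ there is no $n$ with $o_{B_0}(n)=\fsc{\ve_0}{k}$, and none with $\fsc{\ve_0}{k}\leq o_{B_0}(n)<\ve_0$.
\item In that example, the least $n$ with $o_{B_0}(n)\geq\fsc{\ve_0}{k}$ is $9$ for every $k\geq 3$. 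Your $G$ would then be eventually constant.
\end{itemize}

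What is missing is the following:
\begin{itemize}
\item a choice of $n_k$ with $\fsc{\al'}{k}\leq o_{B_0}(n_k)<\al'$, for some limit $\al'\geq\ve_0$ that the values of $o_{B_0}$ approach from below (for instance $\al'=\mathcal I(B_0)$, which is never attained);
\item the Bachmann/reachability argument above, to transfer the lower bound through the iteration;
\item a comparison showing that $F_{\al'}$ is not dominated by $F_{\ve_0}$.
\end{itemize}
The paper's one-line template (``find $n_k$ with $o_0(n_k)\geq\fsc\al k$ and repeat'') leaves this point implicit, so it cannot be cited to close the gap.
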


\begin{lemma}\label{lemmIgam0}
    The following are equivalent.
    \begin{enumerate}
        \item $\mathcal{I}(B) < \Gamma_0$.\label{lemmIgam0_1}
        \item One of the following holds:
        \begin{itemize}
        \item $\min B = 2$ and $S_B(b)\leq b^2$ for every $b\in B$.
        \item $\min B > 2$ and $S_B(b)\leq \min(b^3, b^2c)$ for every $b\in B$, where $c\geq 2$ is independent of $b$.
        \end{itemize} \label{lemmIgam0_2}
    \end{enumerate}
\end{lemma}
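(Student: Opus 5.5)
We prove both directions by analysing which values $o_B(n)$ can take when $n\notin B$ is $B$-critical, in the same spirit as Lemmas~\ref{lemmIom_n} and~\ref{lemmIeps0}. Recall $\Gamma_0=\vartheta(\Om^2)$. By Proposition~\ref{propCompareTheta}, for $\zeta<\Om^2$ with $\mco\zeta<\Gamma_0$ we have $\vartheta(\zeta)<\Gamma_0$. Conversely, any $\zeta\geq\Om^2$ gives $\vartheta(\zeta)\geq\Gamma_0$, because $\mco{\Om^2}=1<\vartheta(\zeta)$. So the task is to control when $\zeta_B(n)\geq\Om^2$, that is, when $\alpha_B(n)\geq\Om^2$, that is, when $O_B(b^2u)\geq\Om^3$. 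The other source of large values is an unbounded growth of $\alpha_B(n)$ within $[\Om,\Om^2)$ together with $\beta_B$, and this is the analogue of the unbounded chains in Lemma~\ref{lemmIeps0}.

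For (2)$\Rightarrow$(1) we treat the two bullets separately.

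\emph{First bullet} ($\min B=2$, $S_B(b)\le b^2$). Every critical non-base $n$ below $S_B(b)$ has the form $bv$ with $u=0$. Then $\zeta_B(n)=\beta_B(n)+\om^{o_B(v)}<\Om$, and the value is bounded by Lemmas~\ref{lemmIom_n}/\ref{lemmIeps0}. The point is that when $\min B=2$, the case $u=0$, $2\le v<\min B$ never occurs, so $\tilde v=v$. Lemma~\ref{lemmIeps0} does not apply directly, since the bound $bc$ is missing. We therefore prove by induction on $n$ that $o_B(n)<\vartheta(\Om)\cdot\om$ or some similar bound below $\Gamma_0$. The reasoning is that $\beta_B(n)$ is computed via Lemma~\ref{lemmO*} from $o_B(n_*)$, and we run the induction with the invariant $o_B(m)<\Gamma_0$.

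\emph{Second bullet} ($\min B>2$, $S_B(b)\le\min(b^3,b^2c)$). Now $n=b^2u+bv$ with $u<c$ and $u<b$. So $O_B(b^2u)=\Om^2 o_B(u)$, $\alpha_B(n)=\Om\cdot o_B(u)$ with $o_B(u)\le o_B(c)$, and all $\zeta_B(n)<\Om^2$. We then prove by induction on $n$ that $o_B(n)<\vartheta(\Om\cdot\om_k)$ for a suitable $k$ depending on $c$, or simply that $o_B(n)<\Gamma_0$. For this we use Proposition~\ref{propCompareTheta}, noting that the maximal coefficients $o_B(u)$, $\beta_B(n)$ and $o_B(v)$ are bounded by earlier values via the induction hypothesis and Lemma~\ref{lemmO*}.

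The delicate point is that "$<\Gamma_0$" for each $n$ does not give $\sup<\Gamma_0$. We need a uniform bound, so we should bound $\mco{\zeta_B(n)}$ uniformly, say below $\ve_0$ or below some fixed $\vartheta(\Om\cdot\gamma)$. Since $u<c$ forces $o_B(u)\le o_B(c)$, a fixed ordinal, the expected invariant is $o_B(n)<\vartheta(\Om\cdot(o_B(c)+1))$. This should be preserved because $\beta_B(n)$ and the terms $\om^{o_B(v)}$ stay below that value. For the first bullet the analogous uniform invariant is $o_B(n)<\vartheta(\Om)\cdot 2$ or $\vartheta(\Om+1)$, and this must be checked carefully against the "$\beta_B$" growth.

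For (1)$\Rightarrow$(2), suppose (2) fails. Then there are three cases. (a) $\min B=2$ and $S_B(b)>b^2$ for some $b$: here $b^2\cdot2=b^2\min B$ is critical and not a base, with $O_B(b^2\cdot 2)=\Om^2\cdot\om$ when $b=2$, or $\alpha\ge\Om^2$ in general, so $o_B\ge\Gamma_0$. (b) $\min B>2$ and $S_B(b)>b^3$: then $b^3$ is critical with $\alpha_B(b^3)=\Om^2$, giving $\vartheta(\Om^2)=\Gamma_0$. (c) $\min B>2$, $S_B(b)\le b^3$, but $S_B(b)/b^2$ is unbounded: here we build an infinite chain of bases along which $o_B(u)$ for $b^2u$ grows cofinally in $\Gamma_0$, mimicking the greedy chain of Lemma~\ref{lemmIom_n} one level up, and conclude $\mathcal I(B)=\Gamma_0$.

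I expect the main obstacle to be case (c) together with the uniform upper bound for the first bullet. Both require an analogue of the tower computation in Lemma~\ref{lemmIom_n} for the iteration $\xi\mapsto\vartheta(\Om\cdot\xi)$, and I would model that argument closely on the claim proved there.
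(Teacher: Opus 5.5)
Your overall plan matches the paper's proof. For (2)$\Rightarrow$(1) you use uniform bounds $\vartheta(\Om\cdot\gamma)$ with $\gamma<\Gamma_0$ fixed, and for the converse you use the same three-way split with a $\xi\mapsto\vartheta(\Om\cdot\xi)$ chain. However, case (a) fails as written.

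From $S_B(b)>b^2$ and $b\mid S_B(b)$ you only get $S_B(b)\geq b^2+b$. So $2b^2=b^2\min B$ need not lie below $S_B(b)$, and then it is not $B$-critical. For instance, $B=\{2\}\cup\{6\cdot 2^k : k\in\N\}$ is a base hierarchy with $\min B=2$ in which $b=2$ is the only base with $S_B(b)>b^2$. Here $S_B(2)=6$ and $8=6+2$ has base $6$, so your witness does not exist. Your value for $b=2$ is also wrong: when $8<S_B(2)$ one has $O_B(8)=\Om^{O_B^2(3)}=\Om^{\Om+1}$.

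The repair is to use $b^2$ itself, as the paper does. Since $b<b^2<S_B(b)$, $b^2\notin B$ is critical with $u=1$. Then $O_B(b^2)=\Om^{O_B^b(2)}$ equals $\Om^\Om$ if $b=2$ and $\Om^\om$ if $b>2$ (because $o_B(2)=\om$), whence $o_B(b^2)\geq\vartheta(\Om^\om)>\Gamma_0$. In (b), likewise, $\alpha_B(b^3)=\Om^2$ holds only for $\min B>3$; for $\min B=3$ it is $\Om^\om$ or $\Om^\Om$. This is harmless, since $\alpha_B(b^3)\geq\Om^2$ is all you need.

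The steps you flag as the main obstacle are closed by Lemma~\ref{lemmInductiveStepAlpha}, which is exactly the preservation step you leave as ``should be preserved''.
\begin{itemize}
\item Under the first bullet, $\alpha_B\equiv 0$. The lemma with $\alpha=\Om$, $\de=0$, started from $o_B(\min B)=\om$, gives $o_B(n)<\vartheta(\Om)=\ve_0$ for all $n$. No separate $\beta_B$ bookkeeping is needed. The paper instead cites the claim in the proof of Lemma~\ref{lemmIom_n}, which gives the same bound.
\item Under the second bullet, $\alpha_B(n)=\Om\cdot o_B(u)$ with $u<c$. The lemma with $\alpha=\Om^2$ first gives $o_B(c)<\Gamma_0$. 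Then with $\alpha=\Om\cdot o_B(c)$ it gives $\mathcal I(B)\leq\vartheta(\Om\cdot o_B(c))<\vartheta(\Om^2)$ by Proposition~\ref{propCompareTheta}. Your invariant with $o_B(c)+1$ also works, but only if you do these two steps in that order.
\item For (c), take $b_0=\min B$ and $m_0=b_0^2$. Let $b_{i+1}$ be the least base above $b_i$ with $S_B(b_{i+1})>b_{i+1}^2m_i$, and set $m_{i+1}=b_{i+1}^2m_i$. Because $S_B(b_{i+1})\leq b_{i+1}^3$, you get $m_i<b_{i+1}$. Hence $\alpha_B(m_{i+1})=\Om\cdot o_B(m_i)$, and by induction $o_B(m_{i+1})\geq\vartheta(\Om\cdot o_B(m_i))\geq\fsc{\Gamma_0}{i+1}$. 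This gives $\mathcal I(B)\geq\Gamma_0$, which is all that is needed; you do not need equality.
\end{itemize}
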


\begin{proof}
    Assume \ref{lemmIgam0_2}. In the case where $\min B = 2$, we have that $\mathcal{I}(B)\leq \ve_0 < \Gamma_0$ by the claim we proved in Lemma~\ref{lemmIom_n}. Suppose further that the second item of \ref{lemmIgam0_2} holds. Note that for all $n\in\N$, either $\alpha_B(n) = 0$ or $\alpha_B(n)=\Om$. Therefore we can use Lemma~\ref{lemmInductiveStepAlpha} inductively to see that $o_B(n) < \vartheta(\Om\cdot o_B(c))$ as well as $o_B(n)<\Gamma_0$. Then using $o_B(c)<\Gamma_0$ and Proposition~\ref{propCompareTheta}, $\mathcal I(B)\leq \vartheta(\Om\cdot o_B(c)) < \vartheta(\Om^2)=\Gamma_0$.

    Conversely, suppose that \ref{lemmIgam0_2} does not hold. 
    \begin{Cases}
        \item ($\min B=2$).
        Then there is a $B$-critical element $b^2$ with $o_B(b^2) \geq \vartheta(\Om^\om) > \Gamma_0$.

        \item ($\min B > 2$).
        \begin{Cases}
            \item ($S_B(b)>b^3$ for some $b\in B$).
            Then $o_B(b^3) \geq \vartheta(\Om^3) > \Gamma_0$.

            \item ($S_B(b)\leq b^3$ for every $b\in B$).
            Then for every given $c$ we can find $b\in B$ such that $S_B(b)>b^2c$. So we can construct an infinite chain of bases $b_0<b_1<\cdots$ such that $b_0=\min B$ and for $i>0$, $b_i$ is the least base above $b_{i-1}$ satisfying $S_B(b_i)>b_i^2\cdots b_0^2$. It is now easy to show by induction that $o_B(b_i^2\cdots b_0^2)\geq \fsc{\Gamma_0}i$ for every $i$. Indeed, $\fsc{\Gamma_0}{0} = \vartheta^*(\Om^2) = 0\leq o_B(b_0^2)$, and
            \[\fsc{\Gamma_0}{i+1} = \vartheta(\fs{\Om^2}{\fsc{\Gamma_0}{i}}) = \vartheta(\Om\cdot \fsc{\Gamma_0}{i})\leq \vartheta(\Om\cdot o_B(b_i^2\cdots b_0^2))\leq o_B(b_{i+1}^2\cdots b_0^2).\]
            Since $\sup_{i\in\N}\fsc{\Gamma_0}{i} = \Gamma_0$, we get $\mathcal{I}(B)\geq \Gamma_0$. In fact it is easily seen that $\mathcal{I}(B) = \Gamma_0$ in this case. Nevertheless, \ref{lemmIgam0_1} does not hold.
        \end{Cases}
    \end{Cases}
\end{proof}

\begin{theorem}\label{TheoProvATRo}
    Let $c\geq 2$ be given. Then $\mathsf {ATR_0}$ proves the following. Let $\mathcal B = (B_i)_{i\in\N}$ be any dynamical hierarchy. Suppose that $B_0$ is such that one of the following holds:
    \begin{itemize}
    \item
    $\min B_0 = 2$ and $S_{B_0}(b)\leq b^2$ for every $b\in B_0$.
    \item
    $\min B_0 > 2$ and $S_{B_0}(b) \leq \min(b^3,bc)$ for every $b\in B_0$. 
    \end{itemize}
    Then for every $n$ there is some $i$ such that $\goodp ni {\mathcal B} = 0$.
    $\mathsf {ATR_0}$ does not prove this statement quantified over $c$.
\end{theorem}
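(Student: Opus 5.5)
The plan follows the template of Theorem~\ref{TheoProvRCAo} and its analogues. There are two parts: a provability part, carried out inside $\mathsf{ATR_0}$ for each fixed $c$, and an unprovability part for the version quantified over $c$.

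\textbf{Provability.} First, the hypotheses on $B_0$ imply $\mathcal I(B_0)<\Gamma_0$, and this can be verified inside $\mathsf{ATR_0}$.
\begin{itemize}
\item In the case $\min B_0=2$ and $S_{B_0}(b)\le b^2$, the claim in the proof of Lemma~\ref{lemmIom_n} gives $o_{B_0}(n)<\ve_0$.
\item In the case $\min B_0>2$ and $S_{B_0}(b)\le \min(b^3,bc)$, the hypotheses imply those of Lemma~\ref{lemmIgam0}, since $bc\le b^2c$. As in that proof, $\alpha_{B_0}(n)\in\{0,\Om\}$ for all $n$. Using Lemma~\ref{lemmInductiveStepAlpha} inductively then gives $o_{B_0}(n)<\vartheta(\Om\cdot o_{B_0}(c))$, which is a fixed ordinal below $\Gamma_0=\vartheta(\Om^2)$ by Proposition~\ref{propCompareTheta}.
\end{itemize}
These inductions are on $\Delta^0_0$ formulas built from $\mathrm{Trace}$, so they go through in $\mathsf{RCA_0}$.

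Next, as in Theorem~\ref{TheoProvRCAo}, we formalize the upgrade via $\mathrm{Hist}$, the ordinal notation system of \cite{FernWierCiE2020}, and the assignment $o_B$ via $\mathrm{Trace}$. We then prove Propositions~\ref{propMonO} and \ref{propPresO} in $\mathsf{RCA_0}$. Define $f(i)=o_i(\goodp ni{\mathcal B})$. This is $\Sigma^0_1$-definable, and $f(i+1)\prec f(i)$ whenever $\goodp ni{\mathcal B}\neq0$.

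It then suffices to find a $\preccurlyeq$-minimal $\alpha\preccurlyeq\ga:=\vartheta(\Om\cdot o_{B_0}(c))+1$ satisfying $\exists i\,(f(i)\preccurlyeq\alpha)$. For this we invoke the well-foundedness of $\ga<\Gamma_0$, which $\mathsf{ATR_0}$ proves for each fixed notation below $\Gamma_0$ in the set-based form. In $\mathsf{ACA_0}\subseteq\mathsf{ATR_0}$, arithmetical comprehension forms the set $X=\{\alpha\prec\ga:\exists i\, f(i)\preccurlyeq\alpha\}$. The set $X$ is nonempty, because $f(0)\prec\ga$. By well-foundedness, $X$ has a minimal element $f(i)$, and minimality forces $\goodp ni{\mathcal B}=0$.

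The main obstacle is justifying that well-foundedness of each initial segment $\vartheta(\Om\cdot\delta)$ with $\delta<\Gamma_0$ is available in $\mathsf{ATR_0}$ for our specific notation system. The first thing to try is to cite the standard fact that $|\mathsf{ATR_0}|=\Gamma_0$ in the sense that $\mathsf{ATR_0}\vdash\mathrm{WO}(\ga)$ for each $\ga<\Gamma_0$. One then checks that the translation between $\vartheta$-notations and Veblen notations is primitive recursive and order preserving, and this check is provable in $\mathsf{RCA_0}$.

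\textbf{Unprovability of the version quantified over $c$.} For each $c$, pick a base hierarchy $B_{0,c}$ with $\min B_{0,c}=3$ and $S(b)\le\min(b^3,bc)$. Arrange a chain $b_0<\dots<b_k$, with $k\approx\log_3 c$, satisfying $S(b_i)>b_i^2\cdots b_0^2$; this is possible since the last constraint only requires $bc$ to be large relative to the product. Set $B_{i+1,c}=(B_{i,c})_{+(i+1)}$. As in Case~2.2 of Lemma~\ref{lemmIgam0}, $o_{B_{0,c}}(b_k^2\cdots b_0^2)\ge\fsc{\Gamma_0}k$.

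Let $G(c)$ be the termination time of the Goodstein sequence starting there, using Remark~\ref{remInd} to shift the start. The argument of Theorem~\ref{theoGoodInd}, via Proposition~\ref{propMajorizeFS} and Proposition~\ref{propFunMajor}, gives $G(c)\ge F_{\Gamma_0}(k)$. If $\mathsf{ATR_0}$ proved the quantified statement, it would prove $G$ total, contradicting Theorem~\ref{theoKPInc}. A possible extra obstacle here is that $k$ grows only logarithmically in $c$. Either reparametrize $G$ by $k$, taking the least suitable $c$, or note that the domination in Theorem~\ref{theoKPInc} is robust under such elementary reindexing.
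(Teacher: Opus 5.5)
Your proposal is correct and follows the route the paper intends. The paper gives no separate proof: it only indicates repeating Theorems~\ref{TheoProvRCAo} and \ref{TheoIndRCAo}, with the ordinal bounds from Lemma~\ref{lemmIgam0} and the chain from its Case~2.2, and your use of arithmetical comprehension plus set-form well-foundedness in place of $\Pi^0_1$-transfinite induction is an inessential variation.

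One point to tighten: your $\ga=\vartheta(\Om\cdot o_{B_0}(c))+1$ depends on the universally quantified $B_0$, so it is not a \emph{fixed} notation and $\mathrm{WO}(\ga)$ cannot be invoked directly. The repair is to note that $o_{B_0}(c)$ depends only on $B_0\cap[0,c]$, and to take the maximum over these finitely many cases. Two incidental slips do not affect the argument. First, for $n=b^2u+bv+w$ one has $\alpha_{B_0}(n)=\Om\cdot o_{B_0}(u)$ with $u<c$, not just $0$ or $\Om$. Second, the achievable chain length is far below $\log_3 c$, but your reparametrization by $k$ absorbs this.
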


\begin{theorem}\label{TheoIndATRo}
    Let $B_0$ be any base hierarchy which does not satisfy the conditions stated in Theorem~\ref{TheoProvATRo}. Define $B_{i+1} = (B_i)_{+(i+1)}$ for every $i$. Then $\sf ATR_0$ does not prove that for all $n$, there is some $i$ such that $\goodp ni {\mathcal B} = 0$.
\end{theorem}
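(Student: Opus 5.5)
The plan is to copy the proof of Theorem~\ref{TheoIndRCAo} with $\al = \Gamma_0 = \vartheta(\Om^2)$ in place of $\om^\om$. The only genuinely new ingredient is the lower bound $\mathcal I(B_0)\geq \Gamma_0$. Once that bound is available, the independence argument of Theorem~\ref{theoGoodInd} applies verbatim.

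\textbf{Step 1: show $\mathcal I(B_0)\geq\Gamma_0$.} I would split according to how $B_0$ fails the hypotheses of Theorem~\ref{TheoProvATRo}, following the converse direction of Lemma~\ref{lemmIgam0}.
\begin{enumerate}
\item If $\min B_0=2$ and $S_{B_0}(b)>b^2$ for some $b$, then $b^2\notin B_0$ is $B_0$-critical, and $o_{B_0}(b^2)\geq\vartheta(\Om^\om)>\Gamma_0$.
\item If $\min B_0>2$ and $S_{B_0}(b)>b^3$ for some $b$, then $o_{B_0}(b^3)\geq\vartheta(\Om^3)>\Gamma_0$.
\item In the remaining case $\min B_0>2$, $S_{B_0}(b)\leq b^3$ for all $b$, and the growth bound involving the constant $c$ fails for every $c$. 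Here I build the greedy chain $b_0=\min B_0<b_1<\cdots$, where $b_{i}$ is the least base above $b_{i-1}$ with $S_{B_0}(b_i)>b_i^2\cdots b_0^2$.
\end{enumerate}
In the third case I then prove by induction that $o_{B_0}(b_i^2\cdots b_0^2)\geq\fsc{\Gamma_0}i$. The induction step uses
\[
\fsc{\Gamma_0}{i+1}=\vartheta(\Om\cdot\fsc{\Gamma_0}i),
\]
together with Proposition~\ref{propCompareTheta} and the fact that $\alpha_{B_0}(b_{i+1}^2\cdots b_0^2)$ has coefficient $o_{B_0}(b_i^2\cdots b_0^2)$, exactly as in Lemma~\ref{lemmIgam0}.

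\textbf{Main obstacle.} I expect the hardest point to be checking that the failure of the stated hypothesis of Theorem~\ref{TheoProvATRo} for every $c$ really yields the unboundedness needed to run the greedy chain. Concretely, this means showing that for each $c$ there is $b$ with $S_{B_0}(b)$ large enough relative to $b$. I would derive this directly from the negated hypothesis, combined with $S_{B_0}(b)\leq b^3$, and the divisibility $b\mid S_{B_0}(b)$ from the definition of a base hierarchy. The goal is to reduce to the situation treated in case 2.2 of Lemma~\ref{lemmIgam0}.

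\textbf{Step 2: choose starting points.} Since $\mathcal I(B_0)\geq\Gamma_0$ and $\fsc{\Gamma_0}k<\Gamma_0$, for each $k$ I pick $n_k$ with $o_{B_0}(n_k)\geq\fsc{\Gamma_0}k$. I let $G(k)$ be the termination time of the process for $B_{i+1}=(B_i)_{+(i+1)}$ started at $n_k$. As in Remark~\ref{remInd}, I shift by one step, if necessary, so that the hypothesis $0<i<\min B-1$ of Proposition~\ref{propMajorizeFS} holds along the sequence. Note that $\min B_i$ grows by one at each step while the index $i$ grows by one as well.

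\textbf{Step 3: dominate $F_{\Gamma_0}$.} Propositions~\ref{propPresO}, \ref{propMonO} and \ref{propMajorizeFS} give
\[
\fsc{o_i(n_i)}{i+1}\leq o_{i+1}(n_{i+1})\leq o_i(n_i).
\]
Proposition~\ref{propFunMajor} then yields $o_i(n_i)\geq\fsi{\fsc{\Gamma_0}k}i$, so $G(k)\geq F_{\Gamma_0}(k)$. By Theorem~\ref{theoKPInc} with $T=\mathsf{ATR}_0$, the totality of $G$ is therefore not provable in $\mathsf{ATR}_0$, and hence neither is the termination statement.
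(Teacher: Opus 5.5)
There is a genuine gap in Step~1, case~3, at the point you call the main obstacle. The plan to overcome it cannot work, because the implication you want is false.

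Taken literally, the second hypothesis of Theorem~\ref{TheoProvATRo} is $S_{B_0}(b)\le\min(b^3,bc)$. If it fails for every $c$ (with $\min B_0>2$ and $S_{B_0}(b)\le b^3$), you only get, for each $c$, a base $b$ with $S_{B_0}(b)>bc$. Your greedy chain $S_{B_0}(b_i)>b_i^2\cdots b_0^2$ needs bases with $S_{B_0}(b)>b^2c$ for arbitrarily large $c$, and neither $S_{B_0}(b)\le b^3$ nor $b\mid S_{B_0}(b)$ can upgrade one to the other.

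Take $B_0=\{3^{2^k}: k\in\N\}$, so that $S_{B_0}(b)=b^2$. This is a base hierarchy with $\min B_0=3$ and $S_{B_0}(b)\le b^3$, and it violates $S_{B_0}(b)\le bc$ for every $c$. Yet every $n$ satisfies $n<(\basep{B_0}{n})^2$, so $\alpha_{B_0}\equiv 0$. Lemma~\ref{lemmInductiveStepAlpha}, applied inductively with $\al=\Om$, then gives $o_{B_0}(n)<\vartheta(\Om)=\ve_0$ for all $n$. Hence $\mathcal I(B_0)=\ve_0<\Gamma_0$ (equality by Lemma~\ref{lemmIeps0}), your chain does not exist, and the conclusion of Step~1 fails. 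Since $\mathsf{ATR_0}$ proves the well-foundedness of $\ve_0$, the argument of Theorem~\ref{TheoProvRCAo} even shows that $\mathsf{ATR_0}$ proves termination for this $B_0$. So under the literal reading the statement itself is false.

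The way out is that the $bc$ in Theorem~\ref{TheoProvATRo} must be a slip for $b^2c$. That is the condition of Lemma~\ref{lemmIgam0}, on which both $\mathsf{ATR_0}$ theorems rest, and with $b^2c$ the $B_0$ above satisfies the hypothesis with $c=2$. Under that reading the obstacle disappears and no divisibility argument is needed. Given $b_0<\dots<b_i$, put $c=b_i^2\cdots b_0^2$. Failure of the hypothesis gives a base $b$ with $S_{B_0}(b)>b^2c$, and $b^3\ge S_{B_0}(b)>b^2c$ forces $b>c\ge b_i$, so $b_{i+1}$ exists. Lemma~\ref{lemmIgam0} then yields $\mathcal I(B_0)\ge\Gamma_0$, and your Steps~2--3 are exactly the paper's route: the proof of Theorem~\ref{TheoIndRCAo} with $\al=\Gamma_0$, that is, Theorem~\ref{theoGoodInd} together with the shift of Remark~\ref{remInd}.

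A minor point on your case~2: when $\min B_0>3$ one has $o_{B_0}(3)=3$, so $\alpha_{B_0}(b^3)=\Om^2$ and you only get $o_{B_0}(b^3)\ge\vartheta(\Om^2)=\Gamma_0$, not $\vartheta(\Om^3)$. This still suffices.
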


\begin{lemma}\label{lemmIBH}
    For $n\geq 2$, the following are equivalent.
    \begin{enumerate}
        \item $\mathcal{I}(B) \leq \vartheta(\Om_n)$.\label{lemmIBH_1}
        \item $S_B(b)\leq b_n$ for every $b\in B$.\label{lemmIBH_2}
    \end{enumerate}
\end{lemma}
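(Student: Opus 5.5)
We first fix the meaning of $b_n$: by analogy with $3_k$ in the proof of Theorem~\ref{theoGoodInd}, we read $b_0=1$ and $b_{k+1}=b^{b_k}$, so that $O_B^b(b_k)=\Om_k$ (up to the convention for $k\le 1$). The plan mirrors Lemmas~\ref{lemmIeps0} and \ref{lemmIgam0}. The real content is a uniform bound: if every $B$-critical non-base $m$ with $d=\baseB m$ satisfies $m<d_n$, then $O_B(d^2u)<\Om_n$ at every critical point. Hence $\alpha_B(m)<\Om_{n}$ (indeed $\alpha_B(m)\cdot\Om\le\Om_n$), and the coefficients of $\alpha_B(m)$ are values $o_B(r)$ with $r<d$ by Lemma~\ref{standardPropOo}\ref{standardPropOo8}.

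For \ref{lemmIBH_2}$\Rightarrow$\ref{lemmIBH_1}, we prove by induction on $m$ that $o_B(m)<\vartheta(\Om_n)$. The cases $m\in B$ and $m$ non-critical follow from the induction hypothesis, because $\vartheta(\Om_n)$ is additively indecomposable (in fact an epsilon number by Lemma~\ref{propThetaMonEps}\ref{propThetaEps}). In the remaining case $m\notin B$ is $B$-critical, so $m<S_B(d)\le d_n$, and we apply Proposition~\ref{propCompareTheta} with $\xi=\Om_n$. First, $\zeta_B(m)=\alpha_B(m)+\beta_B(m)+\om^{o_B(\tilde v)}<\Om_n$, since $\alpha_B(m)$ is a multiple of $\Om$ lying strictly below $\Om_n$ (here $n\ge2$ is used). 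Second, $\mco{\zeta_B(m)}<\vartheta(\Om_n)$: by the induction hypothesis, the coefficients $o_B(r)$ with $r<d$, the value $o_B(\tilde v)$, and $\beta_B(m)\le o_B(d)$ all lie below $\vartheta(\Om_n)$, and closure under $\om^{(\cdot)}$ handles the last summand. This is exactly Lemma~\ref{lemmInductiveStepAlpha} with $\alpha=\Om_n$ and $\de=0$, and we would invoke it directly. Hence $o_B(m)<\vartheta(\Om_n)$, so $\mathcal I(B)\le\vartheta(\Om_n)$.

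For \ref{lemmIBH_1}$\Rightarrow$\ref{lemmIBH_2}, we argue by contraposition. Suppose $S_B(b)>b_n$ for some $b\in B$. Then $m=b_n$ is $B$-critical, is not a base, and has $\baseB m=b$. Write $m=b^2u$ with $u=b^{b_{n-1}-2}>0$. Then $O_B(b^2u)=\Om_n$ up to the finite-coefficient adjustments coming from $o_B(1)=1$; more precisely $O_B(b_n)=\Om^{O_B(b_{n-1})}$, and by induction this equals $\Om_n$. Hence $\alpha_B(m)$ satisfies $\Om\cdot\alpha_B(m)=\Om_n$, so $\alpha_B(m)=\Om_n$ since $n\ge2$ and $\Om_n$ is a limit power. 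It follows that $o_B(m)\ge\vartheta(\alpha_B(m))=\vartheta(\Om_n)$. To obtain strict inequality, we look at $m+1$, or at $m$ itself if $\beta_B(m)>0$, which holds because $o_B(b)\ge\om$ forces $\beta_B(m)\ge1$ in some cases. In any case $o_B(m+1)>o_B(m)\ge\vartheta(\Om_n)$, so $\mathcal I(B)>\vartheta(\Om_n)$.

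We expect the main obstacle to be the bookkeeping at small indices. Specifically, we must verify that $O_B(b_k)=\Om_k$ exactly, where the case $\min B=2$ matters for the exponent $b_1=b$ and the maps $e\mapsto O_B(e)$ of small exponents. We must also confirm that $S_B(b)\le b_n$ gives $\alpha_B(m)<\Om_n$ rather than merely $\le$; this holds because $m<b_n$ strictly, and Lemma~\ref{lemmO_fMon} gives $O_B(b^2u)<O_B(b_n)=\Om_n$. If the convention for $b_n$ differs by an index shift, the same argument applies with $\Om_{n}$ replaced accordingly.
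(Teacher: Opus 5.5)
Your proposal is correct and follows the paper's proof. One direction applies Lemma~\ref{lemmInductiveStepAlpha} inductively with $\alpha=\Om_n$ and $\de=0$, using $\alpha_B(m)<\Om_n$. For the converse, you exhibit the critical non-base $b_n$ with $\alpha_B(b_n)=\Om_n$; your extra step $o_B(b_n+1)>o_B(b_n)\geq\vartheta(\Om_n)$ makes explicit the strict inequality the paper leaves implicit. Drop the aside about $\beta_B(b_n)>0$: by Lemma~\ref{lemmO*} it equals $0$ whenever $(b_n)_*$ does not exist.
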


\begin{proof}
    If \ref{lemmIBH_2} holds, then for every $m\in\N$, $\alpha_B(m) < \Om_n$. By using Lemma~\ref{lemmInductiveStepAlpha} inductively, $o_B(m) < \vartheta(\Om_n)$ for every $m\in\N$.

    On the other hand if \ref{lemmIBH_2} does not hold, then we find a $B$-critical element $b_n$ with $\alpha_B(b_n) = \Om_n$, hence $o_B(b_n) \geq \vartheta(\Om_n)$.
\end{proof}

\begin{theorem}\label{TheoProvKPPin}
    If $n\geq 2$, then $\mathsf {KP^-\om} + (\Pi_n)-\mathrm{IND}$ proves the following. Let $\mathcal B = (B_i)_{i\in\N}$ be any dynamical hierarchy. Suppose that $B_0$ is such that $S_{B_0}(b)\leq b_n$ for every $b\in B_0$.
    Then for every $m$ there is some $i$ such that $\goodp mi {\mathcal B} = 0$.
\end{theorem}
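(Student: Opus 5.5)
We follow the template of the proof of Theorem~\ref{TheoProvRCAo}, now inside $\mathsf{KP^-\om} + (\Pi_n)-\mathrm{IND}$. This theory contains enough arithmetic (via $\om$ as a set and $\Delta_0$-separation) to formalize everything that proof formalized in $\sf RCA_0$. That includes the $\mathrm{Hist}$ and $\mathrm{Trace}$ formulas, the properties of $\ug$ (Lemmas~\ref{lemmMonUg}--\ref{lemmalternativedefupgr}), the term notation system for $\vartheta[\ve_{\Om+1}]$ from \cite{FernWierCiE2020} together with Proposition~\ref{propCompareTheta} and Lemma~\ref{propThetaMonEps} (except surjectivity), and finally Propositions~\ref{propMonO} and \ref{propPresO}. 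All inductions used there are on $\Delta_0$/$\Sigma_1$ formulas over $\om$, hence available. In particular the theory proves that the map $f\colon i\mapsto o_{B_i}(\goodp mi{\mathcal B})$, defined by a $\Sigma_1$ formula with parameter $(B_i)_i$, satisfies $f(i+1)\prec f(i)$ whenever $\goodp mi{\mathcal B}\neq 0$.

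Next we formalize Lemma~\ref{lemmIBH}. Under the hypothesis $S_{B_0}(b)\leq b_n$, we get $\alpha_{B_0}(m)<\Om_n$ for all $m$ directly from the definition of $O_{B_0}$: every exponent tower in $b^2u$ has height below $n$, so $O_{B_0}(b^2u)<\Om_n$. Applying Lemma~\ref{lemmInductiveStepAlpha} by induction on $m$ then gives $o_{B_0}(m)\prec\vartheta(\Om_n)$. This is again an induction on a formula of the form $\mathrm{Trace}(w,m)\to(\text{term comparison})$, which is $\Delta_0$ in the parameters. Hence $f(0)\prec\vartheta(\Om_n)$ for every starting value $m$. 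Note that $\vartheta(\Om_n)\preccurlyeq\vartheta((\Om_{n-1})^\om)$ for $n\geq 2$, since $\Om_n=\Om^{\Om_{n-1}}$ has exponent $\Om_{n-1}$ and the relevant comparison holds by Proposition~\ref{propCompareTheta}. More simply, $\vartheta(\Om_n)$ lies below the proof-theoretic ordinal $\vartheta((\Om_n)^\om)$ of the theory listed in Theorem~\ref{theoKPInc}.

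Finally we conclude as before. Let $\varphi(\al)\equiv\exists i\,(f(i)\preccurlyeq\al)$, which is $\Sigma_1$, and let $\psi\equiv\lnot\varphi$, which is $\Pi_1$. It suffices to have transfinite induction for $\Pi_1$ (a fortiori $\Pi_n$) formulas along $\prec$ restricted to $\vartheta(\Om_n)+1$. From this we obtain a $\prec$-minimal $\al$ with $\varphi(\al)$. Such an $\al$ equals $f(i)$ for some $i$, and minimality forces $\goodp mi{\mathcal B}=0$.

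The main obstacle is exactly this last ingredient: establishing that $\mathsf{KP^-\om}+(\Pi_n)-\mathrm{IND}$ proves transfinite induction along the notation system up to $\vartheta(\Om_n)$ for the needed formula class. I would invoke the standard ordinal analysis (Rathjen's well-ordering proofs, e.g.\ via distinguished sets/accessible parts) showing that this theory proves well-foundedness of each initial segment below $\vartheta((\Om_n)^\om)$, with the induction schema instantiated at the required complexity. Here one must check that $\Pi_1$ instances with set parameters are covered. If only the statement for $\vartheta(\Om_{n})$ with smaller $n$ is cleanly available, I would first try to reduce via the monotonicity $\vartheta(\Om_n)\prec\vartheta(\Om_{n-1}\cdot\om)$-type bounds. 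That comparison is the part I am least sure matches the stated indexing.
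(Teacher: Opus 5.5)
Your proposal is correct and follows the route the paper intends; the paper gives no separate proof. You formalize the termination argument of Theorem~\ref{TheoProvRCAo}, get $\alpha_{B_0}(m)<\Om_n$ and hence $o_{B_0}(m)\prec\vartheta(\Om_n)$ via Lemma~\ref{lemmInductiveStepAlpha} as in Lemma~\ref{lemmIBH}, and conclude from $\vartheta(\Om_n)\prec\vartheta((\Om_n)^\om)$, the ordinal of $\mathsf{KP^-\om}+(\Pi_n)$-$\mathrm{IND}$ given in Theorem~\ref{theoKPInc}.

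Discard your side remarks $\vartheta(\Om_n)\preccurlyeq\vartheta((\Om_{n-1})^\om)$ and $\vartheta(\Om_n)\prec\vartheta(\Om_{n-1}\cdot\om)$, because both are false. Both $(\Om_{n-1})^\om=\Om^{\Om_{n-2}\cdot\om}$ and $\Om_{n-1}\cdot\om$ lie below $\Om_n$ and have maximal coefficient $\om$, so Proposition~\ref{propCompareTheta} gives the reverse strict inequalities. No reduction to a smaller index is possible, and the argument must rest solely on the correct comparison with $\vartheta((\Om_n)^\om)$.
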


\begin{theorem}\label{TheoIndKPPin}
    Let $n\geq 2$, and let $B_0$ be any base hierarchy which does not satisfy the conditions stated in Theorem~\ref{TheoProvKPPin}. Define $B_{i+1} = (B_i)_{+(i+1)}$ for every $i$. Then $\mathsf {KP^-\om} + (\Pi_n)-\mathrm{IND}$ does not prove that for all $n$, there is some $i$ such that $\goodp ni {\mathcal B} = 0$.
\end{theorem}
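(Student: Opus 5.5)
The plan is to repeat the proof of Theorem~\ref{theoGoodInd}, modelled on Theorem~\ref{TheoIndRCAo}, with $\al = \vartheta((\Om_n)^\om)$. This is the proof-theoretic ordinal of $\mathsf{KP^-\om} + (\Pi_n)-\mathrm{IND}$ given by Theorem~\ref{theoKPInc}. Since $B_0$ fails the hypothesis of Theorem~\ref{TheoProvKPPin}, there is some $b\in B_0$ with $S_{B_0}(b) > b_n$. Then $b_n$ is $B_0$-critical and not a base, and the proof of Lemma~\ref{lemmIBH} gives $\alpha_{B_0}(b_n) = \Om_n$, hence $o_{B_0}(b_n)\geq \vartheta(\Om_n)$.

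The real work is to show that $\mathcal I(B_0) \geq \al$. I would look at the numbers $m_k = b^{n-1}\cdot b^{\,k'}$, or more precisely at numbers lying below $S_{B_0}(b)$ of the form $b_{n-1}^{\,}$-towers whose top exponent is replaced by a larger value. The aim is to find, for every $k$, some $m_k < S_{B_0}(b)$ with base $b$ such that $\alpha_{B_0}(m_k) \geq \Om_{n-1}^{\Om\cdot k}$, or at least $\geq \fs{(\Om_n)^\om}{\cdot}$ evaluated along the fundamental sequence.

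One concern is that only $b_n < S(b)$ is guaranteed, so a tower of height $n$ with a larger top may not fit below $S(b)$. The first thing I would try is to take numbers $b_n\cdot j$ for $j<b$ together with Proposition~\ref{propCompareTheta}. Then I would check via Lemma~\ref{lemmO*} that $\beta$ grows, so that $o_{B_0}(b_n j)$ climbs toward $\vartheta(\Om_n\cdot\om)$. If even this exceeds what fits, I would instead use the structure of $\mathcal B$, which is generated by $(\cdot)_{+(i+1)}$. After the first step the successor hierarchies contain long chains of bases $d_j$ with $d_{j+1} = \Chgbases b{d_j}(b_n)$, and these produce iterated towers.

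Once the bound $\mathcal I(B_0)\geq\al$ holds, or more precisely once we have $n_k$ with $o_0(n_k)\geq \fsc\al k$, the rest follows Theorem~\ref{TheoIndRCAo}. Let $G(k)$ be the termination time starting from $n_k$. Propositions~\ref{propPresO}, \ref{propMonO} and \ref{propMajorizeFS} then give $o_{i+1}(n_{i+1}) \geq \fsc{o_i(n_i)}{i}$. There is an off-by-one shift, which I would handle with the trick of Remark~\ref{remInd}. One must also check the side condition $0<i<\min B_i - 1$ of Proposition~\ref{propMajorizeFS}. This holds because $\min B_{i+1} = \min B_i + 1$, possibly after discarding finitely many initial steps, which changes $G$ only by a constant. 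Proposition~\ref{propFunMajor} then gives $G(k)\geq F_\al(k)$, and Theorem~\ref{theoKPInc} yields the unprovability.

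The main obstacle is the lower bound $\mathcal I(B_0)\geq \vartheta((\Om_n)^\om)$, as opposed to merely $\vartheta(\Om_n)$. I expect to exploit that $S(b)>b_n$ forces $S(b)\geq b_n\cdot b$, because $b\mid S(b)$ and $S(b)$ is a multiple of $b$ above $b_n$. Then the numbers $b_n\cdot j$ and $b_{n}\cdot b^{\,}$-multiples yield $\alpha$-values $\Om_{n-1}\cdot$ (terms involving $o(j)$). Iterating with $\beta$ via Lemma~\ref{lemmO*} should then reach the fundamental sequence of $\vartheta((\Om_n)^\om)$.
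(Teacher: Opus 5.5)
There is a genuine gap. It is exactly the step you single out as the main obstacle, and it cannot be closed. The rest of your outline matches the paper's template: rerun Theorem~\ref{theoGoodInd} with the shift of Remark~\ref{remInd}. The side condition of Proposition~\ref{propMajorizeFS} also holds for $i\geq 1$, since $\min B_i=\min B_0+i$.

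\textbf{The lower bound does not follow and is false.} From the failure of $S_{B_0}(b)\le b_n$ you cannot obtain $\mathcal I(B_0)\geq\vartheta((\Om_n)^\om)$, let alone numbers $n_k$ with $o_0(n_k)\geq\fsc{\al}{k}$ for $\al=\vartheta((\Om_n)^\om)$.
\begin{itemize}
\item Your proposed source of room is wrong. Since $b_n$ is itself a multiple of $b$, the facts $b\mid S_{B_0}(b)$ and $S_{B_0}(b)>b_n$ only give $S_{B_0}(b)\geq b_n+b$. So the numbers $b_n\cdot j$ with $j\geq 2$ need not have base $b$ at all.
\item The fallback via the chains $d_j$ in later hierarchies cannot help. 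By Propositions~\ref{propPresO} and~\ref{propMonO}, every ordinal $o_{B_i}(\goodp mi{\mathcal B})$ along a run is at most $o_{B_0}(m)<\mathcal I(B_0)$, so $B_0$ alone caps the strength.
\item Counterexample for $n=2$: let $B_0=\{3\}\cup\{30\cdot 2^k : k\in\N\}$. Then $S_{B_0}(3)=30>27=3_2$. But $o_{B_0}(27)=\vartheta(\Om^\Om)$ and $o_{B_0}(30\cdot 2^k)=\vartheta(\Om^\Om)\cdot 2^{k+1}$. Every number below $27$ has $\alpha_{B_0}<\Om^\Om$, and $o_{B_0}(30\cdot 2^k+r)=o_{B_0}(30\cdot 2^k)+o_{B_0}(r)$ for $0<r<30\cdot 2^k$. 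Hence $\mathcal I(B_0)=\vartheta(\Om^\Om)\cdot\om<\vartheta(\Om^{\Om\cdot\om})=\vartheta((\Om_2)^\om)$.
\end{itemize}

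\textbf{What the paper's argument gives.} The paper gives no separate proof. Its implicit argument uses only Lemma~\ref{lemmIBH}: a $B_0$-critical $b_n$ with $\alpha_{B_0}(b_n)=\Om_n$, so $\mathcal I(B_0)>\vartheta(\Om_n)$, followed by a rerun of Theorems~\ref{theoGoodInd} and~\ref{TheoIndRCAo}. It therefore hits the same mismatch you noticed, because $\vartheta(\Om_n)<\vartheta((\Om_n)^\om)$.

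Such a bound can only be played against theories whose ordinal does not exceed what $B_0$ supplies. An example is $\mathsf{KP^-\om}+(\Pi_{n-1})\text{-}\mathrm{IND}$, whose ordinal $\vartheta((\Om_{n-1})^\om)$ lies below $\vartheta(\Om_n)$.

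Now grant the ordinal $\vartheta((\Om_n)^\om)$ that Theorem~\ref{theoKPInc} assigns to $\mathsf{KP^-\om}+(\Pi_n)\text{-}\mathrm{IND}$. For the example above, termination only needs transfinite induction below $\vartheta(\Om^\Om)\cdot\om$, formalized as in the proof of Theorem~\ref{TheoProvRCAo}. So that theory proves it, and the example contradicts the statement as written.

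No choice of $n_k$ rescues the plan. The repair must be made in the statement, not in the lower-bound step. Either weaken the theory to $\Pi_{n-1}$-induction, or strengthen the failure hypothesis so that the values $\alpha_{B_0}(m)$ reach every $(\Om_n)^k$.
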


\section{Concluding remarks}\label{secConc}

We have determined the precise ordinal interpretation for the fractal Goodstein process and used this to establish various independence results for theories between $\sf RCA_0$ and $\sf KP$. In particular, when working with the ouroboros successors, we see that the structure of the first base hierarchy in our dynamical system decides the proof-theoretic strength of termination of the fractal Goodstein process. Having the bases in the first hierarchy farther apart generally results in longer termination times.

Besides letting the first base hierarchy vary, we can obtain independence results for intermediate theories by using a different type of successor than the ouroboros one. For this a different ordinal assignment would be needed that is more suitable for the specific successor at hand. 
In a sense, the ouroboros successors are even more dense than the greedy successors which are used in \cite{fernandez2025fractal}. A first step would be to study the ordinal assignments for sparser successors, like the minimalistic one from Example~\ref{exSucc}.

In \cite{fernandez2025fractal}, the question of combining the fractal Goodstein process with the Ackermann function is also considered. We believe that this approach leads to a principle with even higher proof-theoretic strength.



\bibliographystyle{plain}
\bibliography{bibliogr}

\affiliationone{Department of Mathematics WE16\\
Ghent University\\
Ghent,Belgium
\email{\tt  Milan.Morreel@UGent.be\\Andreas.Weiermann@UGent.be}\\
}
\affiliationtwo{Department of Philosophy\\
University of Barcelona\\
Barcelona, Spain\\
\email{\tt fernandez-duque@ub.edu}
}

\end{document}